\numberwithin{equation}{section}
\numberwithin{figure}{section}
\numberwithin{table}{section}
\newtheorem{theorem}{Theorem}[section]
\newtheorem{proposition}{Proposition}[section]
\theoremstyle{definition}
\newtheorem{definition}{Definition}[section]
\theoremstyle{remark}
\newtheorem{remark}{Remark}[section]
\renewcommand{\div}{ {\rm div} }
\providecommand{\abs}[1]{\left| #1 \right|}   
\providecommand{\norm}[1]{\left\| #1 \right\|}  
\providecommand{\jump}[1]{ \left\llbracket #1 \right\rrbracket}
\providecommand{\prts}[1]{ \left( #1 \right) }
\providecommand{\depvar}[1]{ \left[   #1   \right] }
\providecommand{\seminorm}[1]{ \left[   #1   \right] }
\author{Tian Jing \and Dehua Wang}
\address{Department of Mathematics, University of Michigan, Ann Arbor, MI 48109, USA.}
\email{tnjing@umich.edu}
\address{Department of Mathematics, University of Pittsburgh, Pittsburgh, PA 15260, USA.}
\email{dhwang@pitt.edu}
\title[two-phase MHD]
{Strong solutions to the  three-dimensional two-phase magnetohydrodynamic equations}
\keywords{3-D MHD, two-phase, surface tension, interface, strong solutions, Hanzawa transformation}
\subjclass[2020]{35Q30, 35Q35, 76D05, 76W05, 76D27, 76Txx}
\begin{document}

\begin{abstract}

In this paper, we study the existence of strong solutions to the two-phase magnetohydrodynamic equations in a bounded domain $\Omega\subseteq \mathbb{R}^3$. The fluids are incompressible, viscous, and resistive. The surface tension is considered. The equations are reformulated using the Hanzawa transformation, which turns the free interface into a fixed one for a short time. The study of the new equations is then divided into the principal part and the nonlinear part. Due to the effect of the magnetic field and the complexity of the transformation in generic bounded domains, the Fr{\'e}chet derivatives of nonlinearities have to be carefully estimated. The equations can then be solved using the fixed-point argument by finding a contraction mapping, which follows the estimates of the nonlinear part.

\end{abstract}

\maketitle

\section{Introduction and Main Results}

In this paper, we study the existence of strong solutions 
to the two-phase magnetohydrodynamic (MHD) equations. 
The two immiscible fluids are incompressible, viscous and resistive.
They occupy an open, bounded, simply connected $C^3$ domain $\Omega\subseteq \mathbb{R}^3$. 
The domains of
inner and outer fluids are represented by the open sets $\Omega^+(t)$
and $\Omega^-(t)$, respectively. The fluid-fluid interface is denoted by the set $\Gamma(t)$, which is a closed surface in $\Omega$; see Figure \ref{F1}.
These three sets are disjoint and we have $\Omega^+(t) \cup \Gamma(t) \cup \Omega^-(t)  =\Omega$.
In this work, we assume that $\Gamma(t)$ and $\Omega^+(t)$ 
do not touch the boundary $\partial\Omega$.
We consider the following equations of the two-phase MHD flow:
\begin{align}
	\partial_t u + (u\cdot \nabla) u -(\nabla\times B)\times B 
	+ \nabla p -  \nu^\pm \triangle u  =0   \quad & \text{in} \;  \Omega^{\pm}(t), 
	\label{main.1.u}
	\\ 
	\partial_{t}B-\nabla\times(u\times B)+\nabla\times(\sigma\nabla\times B)=0    \quad  & \text{in} \;   \Omega,
	\label{main.1.B}
	\\
	{\rm div} u=0  \quad & \text{in} \;   \Omega^{\pm}(t),  \label{main.divu.0}
	\\ 
	{\rm div} B=0 \quad & \text{in} \;   \Omega, 
	\label{main.divB.0}
	\\
	-  \jump{ \nu^\pm (\nabla u + \nabla u^\top) -pI } \! n = \kappa Hn    \quad & \text{on} \;   \Gamma(t), \label{eq:s,interface} \\ 
	V_{\Gamma}  = u\cdot n   \quad & \text{on} \;   \Gamma(t), 
	\label{main.1.chi}
	\\
	u|_{\partial\Omega}=0,  \ B|_{\partial\Omega}=0, & 
	\label{main.boundary} 
	\\
	u|_{t=0}=u_{0}, \ B|_{t=0}=B_{0} \label{main.1.initial}, & 
\end{align}
where $u$, $ B$ and $p$ stand for the velocity, magnetic field and pressure, respectively.
The density of both fluids is assumed to be 1 everywhere and the magnetic diffusion coefficient $\sigma$ remains a constant.
The viscosity coefficient  
takes different constant values $\nu^\pm$  in $\Omega^\pm (t)$. 
The surface tension coefficient $ \kappa$ is a positive constant.
 On the interface $\Gamma(t)$,  the mean curvature $H$ is a function, 
the outward (pointing to $\partial\Omega$) unit normal vector and the 
 speed  of the interface $\Gamma(t)$ are denoted by $n$ and $V_\Gamma$, respectively.  
The notation $\jump{f}$ stands for the jump of $f$ across the interface $\Gamma$, 
 i.e., for all $t\geq 0$ and all $x\in \Gamma(t)$,
 \[
 	\jump{f}(x) := \lim_{\varepsilon\to 0^+} f(x+\varepsilon n (x)) - \lim_{\varepsilon\to 0^+} f(x-\varepsilon n(x));
 \]
 when $f$ does not have enough regularity, its one-side limits at $\Gamma(t)$ are 
 considered in the sense of trace.
We refer to  \cite{Jing.varifold.MHD} for more details on the model above; and \cite{Abels.varifold, Fischer.Hensel.uniqueness, Pruss.quali} for the two-phase Navier-Stokes equations. For interested readers, we also refer to \cite{Robinson.3D-NS, Novotny.compressible.book} and \cite{Davidson.MHD} for the classical theory of Navier-Stokes equations and MHD equations.

\begin{figure}[h]
		\begin{tikzpicture}  
		\draw plot [smooth cycle, tension=1] coordinates {(1.3,0)  (0,0.8) (-0.9,0) (0,-1.3)  };

            \draw plot [smooth cycle, tension=1] coordinates {(2.2,0)  (0,1.9) (-2.4,0) (0,-2.1)  };

   \draw[->, thick] (1.3, 0.2) -- (1.6, 0.3) ;
   
   \node at (1.7,0) {$n$};
   
			\node at (0.8,0.8) {$\Gamma(t)$};
                \node at (0.2,-0.2) {$\Omega^+(t)$};
			\node at (-0.8,1.1) {$\Omega^-(t)$};
            \node at (-2.1,-0.3) {$\partial\Omega$};
		\end{tikzpicture}
		\caption{Two-phase flow in a bounded domain}
		\label{F1}
\end{figure}
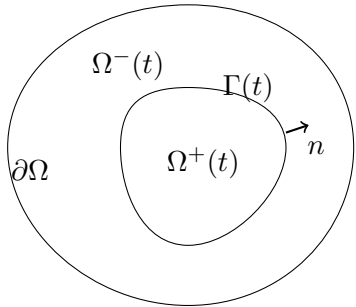

\subsection{Related research}

When the magnetic field vanishes, the problem \eqref{main.1.u}-\eqref{main.1.initial} turns into the two-phase Navier-Stokes equations with surface tension.
In \cite{Pruss.two-phase}, Prüss and Simonett studied the problem in $\mathbb{R}^{n+1}$, 
where the interface can be expressed as the graph of a  function defined on $\mathbb{R}^n$. 
For any time interval, 
if the initial velocity and initial interface satisfy some smallness conditions (dependent on the time interval),
then the unique strong solution exists. 
In \cite{Pruss.analytic.solu} also by Prüss and Simonett, a different type 
of existence theory was obtained.
The smallness condition 
in \cite{Pruss.analytic.solu}
is  only required for the initial interface,
which implies the local existence of strong solutions.
The same equations have been studied in a bounded domain
by K\"ohne, Pr\"uss and Wilke in \cite{Pruss.quali} for strong solutions.
Moreover, Abels and Wilke have studied the two-phase
Navier–Stokes-Mullins–Sekerka system in \cite{Abels.Wilke.NSMS.strong}.

Besides the strong solutions studied in the above-mentioned works, there has also been some research on global weak solutions to two-phase flows with surface tension.
In \cite{Plotnikov}, Plotnikov has studied the two-phase Navier-Stokes equations for incompressible non-Newtonian fluids in $\mathbb{R}^2$.
The case of incompressible non-Newtonian fluids in $\mathbb{R}^3$ has been studied by Abels in \cite{Abels.varifold}.
In  \cite{Yere},  Yeressian has studied the case of Newtonian fluids in $\mathbb{R}^3$.
In \cite{Novotny.dissipative},
Feireisl and Novotn\'y have studied the
dissipative varifold solution.
The weak-strong uniqueness of strong solutions and varifold solutions to the two-phase incompressible Navier-Stokes equations has been studied by Fischer and Hensel in \cite{Fischer.Hensel.uniqueness}.

To the best of our knowledge, research on free-boundary MHD equations has mainly focused on the fluid-vacuum model, where the fluid is surrounded by the vacuum.
We refer interested readers to \cite{Solonnikov.free-boundary.mhd, Lee.uniform.viscous.mhd, Lee.IVP.mhd, Guo.Zeng.Ni.decay-rates} and their references
for the case of viscous flows; 
and
 \cite{Gu.Luo.mhd.surTen, Luo.Zhang.apriori.MHD} and their references
for the case with no viscosity and no magnetic diffusivity.
The existence of varifold solutions to the two-phase MHD problem \eqref{main.1.u}-\eqref{main.1.initial} was obtained in \cite{Jing.varifold.MHD}. 
The goal of this paper is to study the strong solutions to the  problem \eqref{main.1.u}-\eqref{main.1.initial}.

\subsection{Main results}

Based on the settings in  \cite{Pruss.quali}, 
we give the definition of strong solutions
to the two-phase MHD equations.

\begin{definition}[Strong solution] \label{definition.strong-solution}
Let $q\geq 1 $ be a fixed number.
Let $u_0\in W^{2-\frac{2}{q}, \, q}(\Omega\setminus \Gamma_0) \cap C(\overline{\Omega}) $ and
$B_0\in W^{2-\frac{2}{q}, \, q}(\Omega)$,
where
$\Gamma_0$ is a closed  $W^{3-\frac{2}{q} ,  \, q}$ surface in $\Omega$.
We call $(u,B,p,\Gamma)$ a strong solution to the two-phase MHD equations \eqref{main.1.u}-\eqref{main.1.initial} on $[0,T]$ 
if:
\begin{enumerate}
\item $ u\in  W^{1,q}([0,T]; L^q(\Omega)) \cap  L^q([0,T]; W^{2,q}(\Omega\setminus\Gamma(t))) 
	\cap C([0,T]\times \Omega) $;
\item $ B \in  W^{1,q}([0,T]; L^q(\Omega)) \cap  L^q([0,T]; W^{2,q}(\Omega))$;
\item $ p \in  L^q([0,T]; \dot{W}^{1,q}(\Omega\setminus\Gamma(t))) $;
\item $\Gamma(t)$ is the graph of a height function $h$ on some closed $C^3$ reference surface $\Sigma$ (see Figure \ref{F2}),
such that
$\Gamma(0)=\Gamma_0$ and
$ h \in  W^{2-\frac{1}{2q} ,q } ([0,T]; L^q(\Sigma)) 
\cap  W^{1,q} ([0,T]; W^{2-\frac{1}{q}, q}(\Sigma))
\cap  L^q ([0,T]; W^{ 3 - \frac{1}{q}, q}(\Sigma)) $
i.e., $\Gamma(t)= \{ x+h(t,x)n_\Sigma (x): x\in\Sigma \}$, where $n_\Sigma$ is the outward 
unit normal vector of $\Sigma$;
\item
 There exists a function $\tilde{p} \in W^{\frac{1}{2} - \frac{1}{2q} ,q } ([0,T]; L^q(\Sigma))
\cap L^q ([0,T]; W^{ 1 - \frac{1}{q} ,q }(\Sigma)) $ 
such that  $ \jump{p}(t, x+ h(t,x)n_\Sigma(x)) = \tilde{p}(t,x) $ almost everywhere on $[0,T]\times\Sigma$;
\item For almost every $t\in [0,T]$, the equations \eqref{main.1.u}-\eqref{main.1.initial} 
are satisfied almost everywhere on $\Omega$ or $\Gamma(t)$.
\end{enumerate}

\end{definition}
\begin{remark}
    In Definition \ref{definition.strong-solution}, the notation $f\in L^q([0,T]; W^{s,q}(\Omega\setminus\Gamma(t))) $ means that there exists a diffeomorphism $\Phi(t,\cdot)$ on $\Omega$ dependent on time, such that $f\circ \Phi \in L^q([0,T]; W^{s,q}(\Omega\setminus\Sigma) )$, where $f\circ\Phi (t,x):= f(t,\Phi(t,x))$. 
\end{remark}

\begin{figure}[h]
	\begin{tikzpicture}
		\draw[blue] plot [smooth cycle, tension=1] coordinates {(1.2,0)  (0,1.2)  (-1.4,0)   (0,-1.3)  };
		
		\node[blue] at (1,-1) {$\Sigma$};
		
		\draw plot [smooth cycle, tension=1] coordinates {(2.2,0)  (0,1.9) (-2.4,0) (0,-2.1)  };
		
		\draw[red] plot [smooth cycle, tension=1] coordinates {(1.5,0) (0.7,0.8) (0,1.6) (-0.8, 0.7)(-1.6,0) (-0.9,-0.7) (0,-1.5) (0.7, -0.7) }; 
		\node[red] at (-0.5,0.5) {$\Gamma(t)$};
		
		\draw[thick] (0,1.6) -- (0,1.2);
		\draw[->,dashed] (1.8,1.45) -- (0.1,1.45);
		\node at (2.5,1.4) {$h(t,x)$};
	\end{tikzpicture}
	\caption{Reference surface and height function}
	\label{F2}
\end{figure}
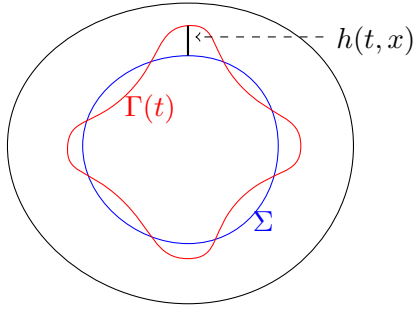

When the initial interface $\Gamma_0$ is $C^3$, we have the following  existence result.
\begin{theorem} \label{existence.result.1}
Let $q>5$ be a fixed number and $\Omega$ be a bounded $C^3$ domain. 
Assume that the initial data
\[
u_0\in W^{2-\frac{2}{q}, \, q}(\Omega\setminus \Gamma_0) \cap C(\overline{\Omega}),
\quad B_0\in W^{2-\frac{2}{q}, \, q}(\Omega),
\quad and \quad \Gamma_0\in C^3,
\]
 satisfy the following compatibility conditions:
\begin{enumerate}
\item  $\div u_0 =0$  in $\Omega\setminus \Gamma_0$,  $\div B_0=0$ in $\Omega$;
\item  $u_0 =B_0 =0$ on $\partial\Omega$;
\item  $\Gamma_0$ is a  closed interface  and $\Gamma_0 \cap \partial\Omega=\emptyset$;
\item  $(I-n_{\Gamma_0} \otimes n_{\Gamma_0} )\jump{\nu^\pm \prts{\nabla u_0+ \nabla u_0^\top} }n_{\Gamma_0} =0 $.
\end{enumerate}
Then there exists $T>0$ such that the original problem \eqref{main.1.u}-\eqref{main.1.initial}
has a unique strong solution on $[0,T]$
with the reference surface $\Sigma = \Gamma_0$.
\end{theorem}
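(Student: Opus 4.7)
The plan is to follow the Hanzawa-transformation strategy adapted to the MHD setting. First I would fix the reference surface $\Sigma := \Gamma_0$ and, for a height function $h:[0,T]\times\Sigma\to\mathbb{R}$ in the regularity class specified in Definition \ref{definition.strong-solution}(4), define a diffeomorphism $\Theta_h(t,\cdot):\Omega\to\Omega$ that coincides with the identity away from a tubular neighborhood of $\Sigma$ and sends $\Sigma$ to $\Gamma(t)=\{x+h(t,x)n_\Sigma(x):x\in\Sigma\}$. Setting $\bar u=u\circ\Theta_h$, $\bar B=B\circ\Theta_h$, $\bar p=p\circ\Theta_h$, the system \eqref{main.1.u}--\eqref{main.1.initial} becomes a quasilinear problem for $(\bar u,\bar B,\bar p,h)$ on the fixed domain $\Omega\setminus\Sigma$, with the transformed differential operators and the transformed mean curvature $H(h)$ on $\Sigma$.

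Second, I would decompose the transformed system schematically as $\mathbb{L}(\bar u,\bar B,\bar p,h)=\mathbb{N}(\bar u,\bar B,\bar p,h;u_0,B_0)$, where $\mathbb{L}$ is the linearization at $h=0$: a two-phase Stokes system for $(\bar u,\bar p)$ with linearized transmission conditions $-\jump{\nu^\pm(\nabla\bar u+\nabla\bar u^\top)-\bar p I}n_\Sigma=\kappa\Delta_\Sigma h\,n_\Sigma$ and kinematic condition $\partial_t h=\bar u\cdot n_\Sigma$, coupled with a parabolic problem $\partial_t\bar B-\sigma\Delta\bar B=0$ (after using $\div\bar B=0$ to rewrite $\nabla\times(\sigma\nabla\times\bar B)$) with homogeneous Dirichlet data. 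The residual operator $\mathbb{N}$ gathers the convective terms $(\bar u\cdot\nabla)\bar u$, the Lorentz force $(\nabla\times\bar B)\times\bar B$, the induction term $\nabla\times(\bar u\times\bar B)$, and the differences between the $\Theta_h$-transformed operators and their flat counterparts, including the fully nonlinear contribution $\kappa(H(h)-\Delta_\Sigma h)n(h)$. Maximal $L^q$-regularity for $\mathbb{L}$ on $[0,T]$ is available through the work of K\"ohne--Pr\"uss--Wilke \cite{Pruss.quali} for the two-phase Stokes part, together with standard parabolic maximal regularity for $\bar B$, yielding an isomorphism between the solution space $\mathbb{E}_T$ of Definition \ref{definition.strong-solution} and the corresponding data space $\mathbb{F}_T$.

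Third, I would carry out the contraction-mapping step. Using the compatibility conditions (1)--(4), a reference solution $(\bar u^*,\bar B^*,\bar p^*,h^*)\in\mathbb{E}_T$ matching the initial data can be constructed, and I work in the closed ball $\mathbb{B}_T(R):=\{(\bar u,\bar B,\bar p,h)\in\mathbb{E}_T:(\bar u,\bar B,\bar p,h)|_{t=0}=(u_0,B_0,p_0,0),\;\|(\bar u,\bar B,\bar p,h)-(\bar u^*,\bar B^*,\bar p^*,h^*)\|_{\mathbb{E}_T}\leq R\}$. The goal is to show that $\mathbb{L}^{-1}\mathbb{N}$ is a contraction on $\mathbb{B}_T(R)$ for $T$ sufficiently small. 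This reduces to carefully estimating the Fr\'echet derivatives of $\mathbb{N}$ in $\mathbb{E}_T\to\mathbb{F}_T$ and showing that each nonlinear contribution picks up a small factor as $T\to 0^+$. The choice $q>5$ is essential here, since it gives the embeddings $h\in C([0,T];C^2(\Sigma))$ and $(\bar u,\bar B)\in C([0,T];C^1(\overline{\Omega\setminus\Sigma}))$, so that $\Theta_h$ stays genuinely $C^1$-close to the identity and products and compositions can be controlled in the anisotropic Slobodeckij spaces that occur in $\mathbb{F}_T$.

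The main obstacle is precisely this third step. Because $\Omega$ is a generic $C^3$ domain rather than a half-space, and because the MHD coupling introduces the quadratic terms $(\nabla\times\bar B)\times\bar B$ and $\nabla\times(\bar u\times\bar B)$ in addition to the usual Navier--Stokes nonlinearity, the Hanzawa-transformed operators depend on $h$ through the inverse Jacobian $(D\Theta_h)^{-1}$ in a geometrically nontrivial way. Establishing that the Fr\'echet derivatives of the transformed Lorentz and induction terms, and of the nonlinear surface-tension term $\kappa(H(h)-\Delta_\Sigma h)n(h)$ in the boundary norm $W^{1/2-1/(2q),q}([0,T];L^q(\Sigma))\cap L^q([0,T];W^{1-1/q,q}(\Sigma))$, are bounded by a small constant uniformly on $\mathbb{B}_T(R)$ requires delicate product and composition estimates in these anisotropic spaces. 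Once these estimates are in place, Banach's fixed-point theorem yields the unique solution on $[0,T]$, and undoing $\Theta_h$ recovers the strong solution $(u,B,p,\Gamma)$ in the sense of Definition \ref{definition.strong-solution}.
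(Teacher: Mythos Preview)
Your proposal is correct and follows essentially the same approach as the paper: Hanzawa transformation with $\Sigma=\Gamma_0$, linear/nonlinear splitting with maximal $L^q$-regularity for the two-phase Stokes block (from \cite{Pruss.quali}) decoupled from a standard parabolic problem for $\bar B$, an auxiliary solution carrying the initial data, and a contraction argument based on Fr\'echet-derivative estimates of the transformed nonlinearities. The paper makes explicit two technical devices you leave implicit---the auxiliary drift term $b\cdot\nabla_\Sigma h$ in the linearized kinematic equation (with $b$ an extension of $u_0|_\Sigma$, needed for the linear isomorphism in \cite{Pruss.quali}) and the decomposition of the transformed derivatives into the building blocks $\mathcal{M}_0,\dots,\mathcal{M}_4$---but these are exactly the ``delicate product and composition estimates'' you flag as the main obstacle.
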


\begin{remark}
    The  compatibility condition (4) in Theorem \ref{existence.result.1} guarantees the solvability of the linearized problem with initial data with the given conditions and zero source terms. This solution is an auxiliary solution, and the strong solution is a perturbation of this linearized solution.
\end{remark}

When the initial interface has less regularity $W^{3-\frac{2}{q} ,  \, q}$, we have the following result.

\begin{theorem} \label{existence.result.2}
Let $q>5$ be a fixed number and $\Omega$ be a bounded $C^3$ domain. 
Let $\Sigma $ be an arbitrary closed $C^3$ surface in $\Omega$ with
$\Sigma \cap \partial\Omega=\emptyset$.
For all $M_0 >0$, there exists $\varepsilon_0 = \varepsilon_0 (\Sigma, M_0)>0$, such that 
for all admissible initial data $(u_0, B_0,\Gamma_0)$ in Definition \ref{existence.result.1}
with  
\begin{equation}\label{IC1}
\norm{u_0}_{W^{2-\frac{2}{q} ,  \, q } (\Omega\setminus\Gamma_0)}\leq M_0,\quad
\norm{u_0}_{L^\infty}\leq M_0,\quad
\norm{B_0}_{W^{2-\frac{2}{q} ,  \, q } (\Omega)}\leq M_0,\quad
\norm{h_0}_{W^{3-\frac{2}{q} ,  \, q } (\Sigma)} \leq \varepsilon_0,
\end{equation}
there exists $T = T(\Sigma,M_0,\varepsilon_0)>0$ and 
the problem \eqref{main.1.u}-\eqref{main.1.initial}
has a unique strong solution on $[0,T]$ with reference surface $\Sigma$.
Here $h_0$ denotes the height function of $\Gamma_0$ on $\Sigma$.  
\end{theorem}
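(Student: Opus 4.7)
The plan is to reduce Theorem \ref{existence.result.2} to the framework developed for Theorem \ref{existence.result.1} by exploiting the smallness of the initial height function $h_0$ on the fixed reference surface $\Sigma$. The essential new difficulty compared to Theorem \ref{existence.result.1} is that $\Sigma$ no longer coincides with $\Gamma_0$, so the Hanzawa transformation is not the identity at time zero, and the compatibility condition (4) lives on $\Gamma_0$ rather than on $\Sigma$.

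First, I would apply the Hanzawa transformation $\Theta_h$ based on $\Sigma$ (rather than on $\Gamma_0$) to convert the free-boundary system \eqref{main.1.u}--\eqref{main.1.initial} into a quasilinear problem on the fixed domain $\Omega\setminus\Sigma$ with unknowns $(u,B,p,h)$ and initial data
\[
u(0) = u_0\circ\Theta_{h_0},\qquad B(0)=B_0\circ\Theta_{h_0},\qquad h(0)=h_0.
\]
As in the proof of Theorem \ref{existence.result.1}, the transformed system splits into a principal linear part with constant coefficients (on each side of $\Sigma$) and a nonlinear remainder. The remainder vanishes to first order as $h\to 0$, and its Fr\'echet derivatives in $(u,B,p,h)$ are controlled in the maximal-regularity norms from Definition \ref{definition.strong-solution}; this is precisely the content already needed in Theorem \ref{existence.result.1}.

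Next, I would construct an auxiliary reference solution $(u^*,B^*,p^*,h^*)$ to the principal linear system on a time interval $[0,T]$, attaining the prescribed initial data. Writing $(u,B,p,h) = (u^*+\tilde u,\, B^*+\tilde B,\, p^*+\tilde p,\, h^*+\tilde h)$, the perturbation has vanishing initial data and satisfies the linearized system with the nonlinear remainder (plus the compatibility defect, see below) as right-hand side. I would then set up a fixed-point map sending $(\tilde u,\tilde B,\tilde p,\tilde h)$ to the solution of the linearized problem with right-hand side given by the nonlinear defect at $(u^*+\tilde u,\ldots)$, and close the argument in a ball of radius depending on $M_0$ and $\varepsilon_0$ in the zero-initial-trace subspace, provided $T$ and $\varepsilon_0$ are small enough for contraction.

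The main obstacle will be quantifying, in the correct trace spaces, the compatibility defect produced by pulling $u_0$ back from $\Omega\setminus\Gamma_0$ to $\Omega\setminus\Sigma$: condition (4) holds on $\Gamma_0$ for $u_0$, but its transport to $\Sigma$ for $u_0\circ\Theta_{h_0}$ differs from the flat version by terms of size $O(\|h_0\|_{W^{3-2/q,q}(\Sigma)})$. These mismatches must be absorbed as small data in the right-hand side of the linearized problem, which is legitimate because the maximal-regularity theory tolerates arbitrary (not necessarily compatible) data that is small in the appropriate norm. In parallel, one needs estimates of $h^*$ in $W^{2-\frac{1}{2q},q}_t L^q_x\cap W^{1,q}_t W^{2-\frac{1}{q},q}_x\cap L^q_t W^{3-\frac{1}{q},q}_x$ that depend linearly on $\|h_0\|_{W^{3-2/q,q}(\Sigma)}$, so that every occurrence of $h^*$ inside the nonlinearities contributes a factor $\lesssim \varepsilon_0$. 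Once these two ingredients are in place, the contraction goes through as in Theorem \ref{existence.result.1}, with the smallness of $\varepsilon_0$ playing the role that the smallness of $T$ alone did there; this is exactly what forces $T$ to depend on $\varepsilon_0$ (in addition to $\Sigma$ and $M_0$) in the conclusion.
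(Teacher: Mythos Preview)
Your overall strategy---Hanzawa transformation based on $\Sigma$, auxiliary solution carrying the initial data, perturbation with zero trace, and a contraction closed by the smallness of $\varepsilon_0$ together with that of $T$---matches the paper's route. However, there is a genuine gap in the way you propose to handle the compatibility defect.

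The claim that ``maximal-regularity theory tolerates arbitrary (not necessarily compatible) data that is small in the appropriate norm'' is incorrect. The compatibility conditions in Theorem~\ref{u-problem,max,regularity} are structural hypotheses for solvability in the maximal-regularity class, not quantitative bounds: if $g_3(0)\neq\div u_0$ or $\mathcal{P}_\Sigma g_4(0)\neq\mathcal{P}_\Sigma\jump{\nu^\pm(\nabla u_0+\nabla u_0^\top)}n_\Sigma$, the linear problem has no solution in $\mathcal{W}^T$ regardless of how small the mismatch is. Consequently, your auxiliary solution $(u^*,B^*,p^*,h^*)$ to the \emph{homogeneous} principal linear system with data $(\overline{u}_0,\overline{B}_0,h_0)$ does not exist in the stated class, because $\div\overline{u}_0\neq 0$ and the tangential stress jump of $\overline{u}_0$ on $\Sigma$ is nonzero once $h_0\neq 0$.

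The paper resolves this not by smallness but by exact cancellation: one extends the defects $\div\overline{u}_0$ and $\mathcal{P}_\Sigma\jump{\nu^\pm(\nabla\overline{u}_0+\nabla\overline{u}_0^\top)}n_\Sigma$ to time-dependent functions $\alpha_3\in\mathcal{S}_3^{T_0}$, $\alpha_4\in\mathcal{S}_4^{T_0}$ and defines the auxiliary solution by $z_\alpha:=S_{(\overline{u}_0,\overline{B}_0,h_0,b)}(0,0,\alpha_3,\alpha_4,0)$. The fixed-point equation then reads $Lz=G(z+z_\alpha)-Lz_\alpha$, and at $t=0$ one has $G_3(z_\alpha(0))-\alpha_3(0)=0$ and $\mathcal{P}_\Sigma\bigl(G_4(z_\alpha(0))-\alpha_4(0)\bigr)=0$, because these identities are precisely the $\Theta_{h_0}$-pullbacks of the original conditions $\div u_0=0$ and $\mathcal{P}_{\Gamma_0}\jump{\nu^\pm(\nabla u_0+\nabla u_0^\top)}n_{\Gamma_0}=0$. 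Thus the right-hand side for the zero-trace problem is \emph{exactly} compatible, and the solution operator $S_{(0,0,0,b)}$ applies. The extra term $-Lz_\alpha$ is harmless in the contraction estimate since it is independent of $z$. Once you replace your ``small defect absorption'' step by this construction, the rest of your outline (Propositions \ref{DN,small,bounded,1}--\ref{b-u,DN,small,temp1} with the modification that $\|h_\alpha\|_{\mathcal{C}_4^T}\leq\varepsilon_0+\varepsilon_1$ rather than $\varepsilon_1$) goes through as you describe.
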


\begin{remark}
    The first condition in \eqref{IC1} implies  
    $\norm{u_0}_{L^\infty}\leq C(\Gamma_0) M_0 $. In our proof, a uniform upper bound is needed. Thus, an additional condition $\norm{u_0}_{L^\infty}<M_0$ is included. 
\end{remark}

Theorem \ref{existence.result.1} and Theorem \ref{existence.result.2} can be proved in the spirit of \cite{Pruss.quali}.
The main difficulties of this work stem from two aspects: the coupling of magnetic equations; and the complexity of nonlinear terms after the Hanzawa transformation.
It becomes unclear if the maximal regularity theory and the techniques used in estimates of nonlinear terms in \cite{Pruss.quali} can still be applied to obtain a contraction mapping for the fixed-point argument, since the magentic terms have changed the structure of equations.

The Hanzawa transformation  requires a composition of each variable with the pulling diffeomorphism $\Theta_h$,
which is generated using the height function $h$. 
We usually abbreviate $\Theta_h$ to $\Theta$ and  denote the displacement  by $\theta:=\Theta- I$, where $I$ is the identity map.
The mapping $\Theta$ pulls the reference surface $\Sigma$  to the free interface $\Gamma(t)$, i.e., $f_{\text{new}}=f_{\text{old}} \circ \Theta$, where $f$ stands for variables $u$, $B$ or $p$. 
As a  result, the expressions of derivatives of  
$f_{\text{new}}$ 
contain one or multiple of the following terms related to $\Theta$:
$$\mathcal{M}_0 := \prts{ I-hL_{\Sigma}}^{-1},\quad
\mathcal{M}_1 :=
 (I+\nabla\theta)^{-1} \nabla\theta ,\quad
\mathcal{M}_2 :=
\prts{ \triangle\Theta^{-1} }\circ \Theta, $$
$$ \mathcal{M}_3 := \partial_t\theta \prts{ I - (I+\nabla\theta)^{-1} \nabla\theta  }, \quad \text{and}\quad
\mathcal{M}_4 :=
  \prts{\nabla \Theta}^{-\top}  \prts{\nabla \Theta}^{-1} - I;$$
see Section
\ref{Transformed equations} and \cite{Pruss.quali} for more details.
Similar to \cite{Pruss.quali}, the equations will be written into the form $Lz=Gz $ where $z=(u,B,p,h)$,  $L$ and $G$ stand for the linear principal terms and the nonlinear terms, respectively.
The main challenges then become the following issues: 
\begin{enumerate}
\item  whether the linear operator $L$ is still invertible;
\item  whether we can construct a contraction mapping using $G$.
\end{enumerate}
The first issue can be resolved using the maximal regularity theory of two-phase Stokes equations in \cite{Pruss.quali} and the theory of parabolic equations in \cite{Ladyzenskaja}.

The second difficulty  requires a careful analysis of the Fr\'{e}chet derivatives of the operator $G$.
In \cite{Pruss.analytic.solu,Pruss.two-phase}, the detailed estimates of nonlinear terms have been studied when the reference surface $\Sigma$ is a hyperplane $\Sigma=\mathbb{R}^n$ in $\mathbb{R}^{n+1}$. In this case, the Weingarten tensor $L_\Sigma=0$, which implies
$\mathcal{M}_0 = I$ and $\Theta= x+ \eta(x_{n+1}/\rho_0) h(x') e_{n+1}$.
The case of curved $\Sigma$ has been discussed in \cite{Pruss.quali}.
In our work, the reference surface $\Sigma$ is a generic closed surface and the magnetic field is coupled into the system. 
Thus, we need a rigorous calculation of all Fr\'{e}chet derivatives in the general case. 
After the Hanzawa transformation, the nonlinear terms become intricate combinations of various operators.
Thus,  their Fr\'{e}chet derivatives are quite complicated; see Section \ref{Nonlinear part} for details.

We briefly explain the ideas in the estimates of these terms. 
To estimate the Fr\'{e}chet derivatives of $\mathcal{M}_0$ and $\mathcal{M}_1$, we need to represent the inverse of the matrix using the adjugate matrix divided by the determinant.
Since $\text{det}(I-hL_\Sigma)$ is  a polynomial of $h$, we can obtain a lower bound provided that $h$ satisfies some suitable smallness conditions. We can then obtain the $C^k$ norms of these terms. 
Using the inverse function theorem, we  can decompose the Laplace operator in $\mathcal{M}_2$ into
\[
\sum_{i}\sum_{j}  \prts{ (\nabla\Theta)^{-1}_{ij} \circ  \Theta^{-1} }
\prts{ \prts{ \partial_j (\nabla\Theta)^{-1}_{ik} } \circ  \Theta^{-1} } .
\]
Then the product rule, chain rule and  estimates of inverse matrices can  be applied to estimate the Fr\'{e}chet derivative of $\mathcal{M}_2$.
The principle components of $\mathcal{M}_3$ and $\mathcal{M}_4$ are 
similar to terms in $\mathcal{M}_1$ and $\mathcal{M}_2$, and thus can be estimated using similar methods.
By applying similar ideas,  we can obtain the estimate of the surface tension term and the mean curvature term. The latter one has a stronger nonlinearity and contains higher-order differential operators, which leads to  more involved Fr\'{e}chet derivatives and estimates.
When the initial interface $\Gamma_0$ is $C^3$, using the fixed-point arguments as in \cite{Pruss.quali}, we can prove Theorem \ref{existence.result.1}. 
If the initial interface $\Gamma_0$ is generalized to a $W^{3-\frac{2}{q},q}$ surface, then an argument similar to \cite{Pruss.quali} can also be applied to prove Theorem \ref{existence.result.2},
provided that $\Gamma_0$ is a perturbation of some $C^3$ reference surface $\Sigma$.

We will organize the rest of this paper as follows. 
In Section \ref{preliminary}, we review some basic background knowledge.
In Section \ref{Transformation of problem}, 
we use the Hanzawa transformation 
to transform the free-interface problem
into a fixed-interface problem. 
The new equations will be separated into the linear (principal) part and the nonlinear part.
In Section \ref{Linear part}, we study the solvability of the linear part. 
Then we express the nonlinear part using an operator and estimate its Fréchet derivative 
in Section \ref{Nonlinear part}.
Finally, we prove Theorem \ref{existence.result.1} and
Theorem \ref{existence.result.2} in Section \ref{Local existence}.

\section{Preliminaries} \label{preliminary} 

In this section, we introduce the notation, definitions and basic properties for the convenience of the reader.

\subsection{Notation}

The gradient  $\nabla f$ of a scalar function $f$ is considered  as a column vector by default.
When $f$ is a vector-valued function, the gradient of each component is viewed as a column vector in the matrix by default, i.e., 
$(\nabla f )_{ij}:= \partial_i f_j $. 
Then we have the formula
\[\nabla (f\circ g) = \prts{ \nabla g } \prts{ (\nabla f)\circ g }. \]
We denote the $r$-neighborhood of a point $x$ by $B(x;r)$.
For a set $A$, we define 
\[
B(A;r) := \bigcup_{x\in A} B(x;r).
\]
We sometimes use $a \lesssim b$ if $a\leq Cb$ for some universal constant  $C>0$. 
The projection matrix on a surface $S$ is denoted by  $\mathcal{P}_S := I-n_S\otimes n_S$,
where $n_S$ is the normal vector of $S$.
In complicated formulas, we use $\depvar{ \ \cdot \ }$ to denote the values that functions are evaluated at,
e.g. we use $f \depvar{g(x)}$ to express $ (f\circ g)(x)$.

\subsection{Function spaces}

\subsubsection{Continuous and differentiable functions}
In this paper, we mainly consider two types of domains in $\mathbb{R}^3$:
a bounded, open, 3-dimensional domain $\Omega$; and a closed, 2-dimensional surface $\Sigma$.
We say $\Sigma$ is $C^k$ if it can be locally parameterized using $C^k$ functions.
We say $\Omega$  is a $C^k$ domain if its boundary $\partial\Omega$ is a $C^k$ surface.

Let $f$ be a function from $[0,T]$ to a Banach space $X$. 
For any $t_0\in [0,T] $, we say $f$ is continuous at $t_0$ if 
\[
\lim_{t\to t_0} \norm{f (t)- f (t_0)}_{X}=0.
\]
If $f$ is continuous on $[0,T]$ then we say $f\in C([0,T];X)$.
We say $  g(t_0) \in X$ is the derivative of $f$ at $t_0$ if
\[
\lim_{t\to t_0} \norm{ \frac{ f (t)- f (t_0)} { t-t_0 } - g(t_0) }_{X}=0.
\]
If $f$ has a continuous derivative $g\in C([0,T];X)$, then we say $f\in C^1([0,T];X)$.
Similarly, we can define the space $C^k([0,T];X)$ for $k\in\mathbb{Z}$, $k\geq 0$.
We will frequently use the special case that $X=C^m(\Omega)$ or $X=C^m(\Sigma)$ for $m\in\mathbb{Z}$, $m\geq 0$.
\begin{remark}
For vector-valued 
$C^k([0,T];C^m(\Omega))$ or $C^k([0,T];C^m(\Sigma))$,
we define their norms by taking the $C^k([0,T];C^m)$ norm of each component  
and then taking the vector norm.
We view matrices (and thus for matrix-valued functions) as vectors when calculating their matrix norms.
We will use $\norm{f}_{\infty}$ to abbreviate 
$\norm{f}_{L^\infty([0,T];L^\infty (\Omega) )}$, 
$\norm{f}_{L^\infty([0,T];L^\infty (\Sigma) )}$,
$\norm{f}_{ L^\infty (\Omega) }$, or  $\norm{f}_{ L^\infty (\Sigma) }$,
 depending on the context.

\end{remark}

\subsubsection{Fractional Sobolev spaces}

Given $s \in(0,+\infty)\setminus \mathbb{Z}$,
the fractional Sobolev norm is defined as
 \[
\norm{f}_{W^{s,q}(\Omega)} 
:= \norm{f}_{W^{\lfloor s \rfloor,q}(\Omega)}  +  \seminorm{f}_{W^{s- \lfloor s \rfloor,q}(\Omega)} ,
\]
where $\seminorm{f}_{W^{\alpha,q}(\Omega)}$ with $\alpha\in(0,1)$
stands for the Gagliardo seminorm
\[
\seminorm{f}_{W^{\alpha,q}(\Omega)} 
:= \prts{ 
				\int_\Omega \int_\Omega \frac{\abs{f(x)-f(y)}^q   }{\abs{x-y}^{n+ \alpha q}} dydx
				}^{\frac{1}{q}} .
\]
For a Banach-space-valued function, i.e., $f:\Omega\to X$ where $X$ is a Banach space,
the Gagliardo seminorm is defined similarly by replacing $\abs{f(x)-f(y)}  $ with $\norm{f(x)-f(y)}_X$.
This enables us to define the space $W^{s,q}([0,T];X)$.
For a compact  hypersurface $\Sigma$ in $\mathbb{R}^n$, 
the Sobolev space $W^{s,q}(\Sigma)$ can be defined similarly since $\Sigma$ 
can be locally mapped to Euclidean spaces.

\subsection{Calculus on surfaces}

Let $f$ be a function defined in an open domain $\Omega\subseteq \mathbb{R}^m$.
Let $\Sigma$ be a hypersurface in $\Omega$ whose normal vector field is denoted by $n=n_\Sigma$.
For all $x\in\Sigma$, the surface gradient $\nabla_\Sigma f$ is defined as 
\[
\nabla_\Sigma f := \nabla f  - (n \cdot \nabla f  ) n = (I-n\otimes n) \nabla f,
\]
which is the  projection of $\nabla f$ onto $T_x \Sigma$.
If $F$ is a vector-valued function, then we can define the surface divergence by
\[
\div_\Sigma F := {\rm tr} (\nabla_\Sigma F) .
\]
Thus, we can also define the Laplace–Beltrami operator $\triangle_\Sigma$ by
\[
\triangle_\Sigma f := \div_\Sigma \ \nabla_\Sigma f .
\]
In fact, the surface derivatives only depend on the value of the function on $\Sigma$, 
which is discussed in e.g.  \cite[Remark 7.26]{Ambrosio.BV}. 
 The Weingarten tensor of $\Sigma$ is defined as $L_\Sigma := -\nabla_\Sigma n_\Sigma$,
 which is a matrix-valued function defined on $\Sigma$.
For each $x\in\Sigma $, we have $L_\Sigma n_\Sigma =0$; 
and $L_\Sigma  $ is an isomorphism on $T_x \Sigma$. 
The principal curvatures of $\Sigma$ at $x$ are the eigenvalues of the matrix $L_\Sigma\depvar{x}$;
and the mean curvature $H_\Sigma \depvar{x}$  
is defined  as $H_\Sigma \depvar{x}= {\rm tr} L_\Sigma \depvar{x}$. 
 For more details about the calculus on surfaces, we refer  to \cite{Ambrosio.BV, Maggi.BV}.

\subsection{Nearest point projection} \label{section.nearest,point}

For the reader's convenience, we restate the theorem of  nearest point projection in 
 \cite[Section 2.12.3]{Simon.nearest-point}   with  some modification.
\begin{theorem}[\!\!\cite{Simon.nearest-point}, Theorem 1, simplified]
Let $\Sigma$ be a  compact, $(m-1)$-dimensional,   $C^k$ manifold  in $\mathbb{R}^m$.
There exists $\varrho_0(\Sigma)>0$ and a $C^{k-1}$ projection mapping 
$\Pi: B(\Sigma; \varrho_0)\to \Sigma$, such that for all $x\in  B(\Sigma; \varrho_0)$:
\begin{enumerate}  
\item $x-\Pi(x) \perp T_{\Pi (x)}\Sigma$ ;
\item $ {\rm dist} (x,\Sigma) = \abs{x-\Pi(x)}$;
\item for all $y \in\Sigma$ and $y\neq x$ we have ${\rm dist} (x,\Sigma) < \abs{x-y}$;
\item for all $y\in\Sigma$ and $\lambda\in(0,\varrho_0)$, $\Pi(y + \lambda n(y))= y$.
\end{enumerate}
\end{theorem}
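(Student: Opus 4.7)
The plan is to construct $\Pi$ by inverting the normal exponential map $F : \Sigma \times \mathbb{R} \to \mathbb{R}^m$, $F(y, \lambda) := y + \lambda n(y)$, on a suitable tubular neighborhood of $\Sigma$. Since $\Sigma$ is $C^k$, its unit normal field $n = n_\Sigma$ is $C^{k-1}$, so $F$ is of class $C^{k-1}$.

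First I would compute the differential $dF_{(y_0,0)}$. On tangent vectors $v \in T_{y_0}\Sigma$ it acts as the identity, and $\partial_\lambda F(y_0, 0) = n(y_0)$. Because $n(y_0) \perp T_{y_0}\Sigma$, this differential is a linear isomorphism from $T_{y_0}\Sigma \oplus \mathbb{R}$ onto $\mathbb{R}^m$, so the inverse function theorem yields a local $C^{k-1}$ inverse of $F$ near each $(y_0, 0)$. The main obstacle is to upgrade this \emph{local} invertibility to \emph{uniform global} invertibility of $F$ on some strip $\Sigma \times (-\varrho_0, \varrho_0)$. I would handle this by a contradiction/compactness argument: if no uniform $\varrho_0$ worked, one could extract sequences $(y_k, \lambda_k) \neq (\tilde{y}_k, \tilde{\lambda}_k)$ with $F(y_k, \lambda_k) = F(\tilde{y}_k, \tilde{\lambda}_k)$ and $\lambda_k, \tilde{\lambda}_k \to 0$. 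Compactness of $\Sigma$ produces convergent subsequences $y_k \to y_\ast$ and $\tilde{y}_k \to \tilde{y}_\ast$; passing to the limit forces $y_\ast = \tilde{y}_\ast$, contradicting local injectivity near $(y_\ast, 0)$.

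Once $F$ is shown to be a $C^{k-1}$ diffeomorphism from $\Sigma \times (-\varrho_0, \varrho_0)$ onto an open set containing $B(\Sigma; \varrho_0)$ (possibly after shrinking $\varrho_0$), I would define $\Pi(x)$ to be the first component of $F^{-1}(x) =: (\Pi(x), \lambda(x))$; the map $\Pi$ is $C^{k-1}$ as a component of $F^{-1}$. Property (1) is then immediate, since $x - \Pi(x) = \lambda(x)\, n(\Pi(x)) \perp T_{\Pi(x)}\Sigma$, and property (4) follows from $F\prts{y, \lambda n(y)} = y + \lambda n(y)$ combined with the injectivity of $F$ on the tube. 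For (2) and (3) I would argue separately via the nearest-point problem: the continuous function $y \mapsto \abs{x-y}$ attains its infimum on compact $\Sigma$ at some $y^\ast$, and the first-order optimality condition along $\Sigma$ forces $x - y^\ast \perp T_{y^\ast}\Sigma$. Hence $x = F(y^\ast, \mu)$ for some $\mu$ with $\abs{\mu} < \varrho_0$; the injectivity of $F$ then forces $(y^\ast, \mu) = (\Pi(x), \lambda(x))$, giving (2), and uniqueness of the minimizer $y^\ast$ delivers the strict inequality in (3).

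The hard part is really the local-to-global step in the second paragraph: guaranteeing a \emph{uniform} tube width $\varrho_0 > 0$ requires ruling out normal lines emanating from distant points of $\Sigma$ that meet at arbitrarily small distance, and this is exactly where the compactness (equivalently, closedness plus boundedness) of $\Sigma$ is essential. Everything else follows routinely from the inverse function theorem and a standard optimization argument.
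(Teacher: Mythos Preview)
The paper does not supply its own proof of this theorem: it merely restates (with slight simplification) a result from \cite{Simon.nearest-point} and refers the reader there. Your proposal is the standard tubular-neighborhood argument and is correct in outline; the only slip is the typo $F\prts{y,\lambda n(y)}$ in the discussion of property~(4), which should read $F(y,\lambda)$. Since there is no paper proof to compare against, there is nothing further to add.
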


\subsection{Fr\'{e}chet derivative}

Given Banach spaces $X$ and $Y$ and an operator $F:X\to Y$. 
Suppose that for $x\in X$ there exists a linear operator $A: X\to Y$ 
such that 
\begin{equation}
\lim_{\norm{h}_X \to 0}\frac{ \norm{F(x+h)-F(x) -Ah}_Y }{ \norm{h}_X }=0 ,
\end{equation}
 then we say that $F$ is Fr\'{e}chet differentiable at $x$. 
The linear operator $A$ is called the Fr\'{e}chet derivative of $F$ at $x$ and is 
denoted by $DF(x)$. 

Suppose that $DF$ exists in an open neighborhood $U$ of $x\in X$, then we have an operator
 $DF: U\to \mathcal{L}(X;Y)$ and the second derivative $D^2 F$ at $x$ can be defined using the same way.
Notice that $D^2 F(x)\in \mathcal{L}(X; \mathcal{L}(X;Y)) $.
Similarly, we can obtain the $n$-th derivative $D^n F$. 
The space 
\begin{equation}
\mathcal{L}(X; \mathcal{L}(X; \cdots \mathcal{L}(X;Y))) 
\end{equation} is equal to the space of multilinear operators
\begin{equation}
\mathcal{L}^{(n)}(X\times\cdots \times X ;Y),
\end{equation}
 which is usually abbreviated to 
$\mathcal{L}^{(n)}(X^n;Y)$.

The product rule and chain rule are still valid for Fr\'{e}chet derivatives.
Given $F: X\to Y_1$ and  $G: X\to Y_2$.  Suppose that the product is well-defined,
 i.e., there exists a bilinear mapping  $Y_1 \times Y_2 \to Z$ which is called the ``product''. 
 Then the value of $D(FG)$ at   $x\in X$ is a linear mapping, i.e.
  $D(FG)\depvar{x} \in\mathcal{L}( X ; Z)$, such that for each $h\in X$ we have 
\begin{equation}
D(FG)\depvar{x}h= \prts {DF\depvar{x} h} \prts {G\depvar{x}} +  \prts {F\depvar{x}} \prts {DG\depvar{x} h}. 
\end{equation}
Given $F: X\to Y$ and  $G: Y\to Z$, the Fr\'{e}chet derivative of $G\circ F : X\to Z$ at an arbitrary $x\in X$ is a linear operator 
$D (G\circ F)[x]\in \mathcal{L}(X;Z)$.
For all $h\in X$ we have
\begin{equation}
D(G\circ F)\depvar{x} h=  DG\depvar{F(x)} ( DF\depvar{x} h ) . 
\end{equation}
The proofs of these properties can be found in    \cite[Section 4.3]{Zeidler.Functional.1}.

\section{Reformulation of the Problem} \label{Transformation of problem}

In this section, we apply the Hanzawa transformation to the original equations 
\eqref{main.1.u} - \eqref{main.1.initial}.
This allows us to turn the free-interface problem into a fixed-interface problem. 
We refer to \cite{Pruss.quali, Pruss.green.book} for more details about this method.

\subsection{Representation of the free interface}

The key point of this part is to represent the free interface $\Gamma(t)$ 
using  a fixed reference surface and a height function.
This method  is called normal parameterization; see \cite[Section 2]{Pruss.green.book}
for more details.

Given a closed $C^3$ surface $\Sigma\subseteq \Omega$. By the theory of nearest point projection introduced in Section \ref{preliminary},
there exists a tubular neighborhood  $B(\Sigma;\varrho_0)\subseteq \Omega$,
such that the mapping
\begin{equation}
F(x,r):=x+r n_{\Sigma}(x) 
\end{equation}
is a diffeomorphism from  $\Sigma\times (-\varrho_0,\varrho_0) $ to $B(\Sigma;\varrho_0)$. 
Its inverse mapping is
\begin{equation}
F^{-1}(x)=(\Pi(x), d(x)),
\end{equation}
where $\Pi(x)$ is the projection of $x$ onto $\Sigma$, 
and $d(x)$ is the signed distance between $x$ and $\Sigma$, 
where the positive side of $\Sigma$  coincides with the direction of exterior normal vector $n_{\Sigma}$.
Suppose that the free interface $\Gamma(t)\subseteq B(\Sigma;\rho_0) $ for $t\in[0,T]$, then it can be represented using a height function $h(t,x)$ defined on $[0,T]\times \Sigma$.

For every height function $h$, 
we can define a diffeomorphism in $\Omega$ using the same idea as in \cite{Pruss.quali}:
\begin{equation} \label{def,Theta,diffeo}
\Theta_h(x) := 
\left\{ 
\begin{aligned}
x   + \eta(d(x) / \varrho_0 ) & h(\Pi(x)) n_{\Sigma}(\Pi(x)), & \quad x\in B(\Sigma;\varrho_0),
\\
& x  , & \quad x\notin B(\Sigma;\varrho_0),
\end{aligned}
\right.
\end{equation}
where $\eta$ is a smooth cut-off function on $\mathbb{R}$ such that
 $0\leq \eta \leq 1$; $\eta(s)=1$ when $\abs{s} < 1/3$, and $\eta(s)=0$ when $\abs{s} > 2/3$.
For convenience, we  define $\rho:= \rho_0/3$; and define the displacement of the point  $x$ under the diffeomorphism by
\begin{equation} \label{def,theta,change}
\theta_h(x) :=  \Theta_h(x) -x .
\end{equation}

\begin{remark}
In order to make $\Theta_h$ a bijection, we need to guarantee that  $\norm{h}_{C^0(\Sigma)}$ is sufficiently small.
As an example,
we consider a mapping in $\mathbb{R}^2$.
Suppose that $\Sigma$ is the  $x$-axis and $h(x)=b>0$, 
then $\Theta(x,y)= (x, y+ b\eta(y / \varrho_0))$.
To make $\Theta$ a bijection, 
it is necessary that the function $f(y):= y+ b\eta(y/\varrho_0)$ should be a bijection, 
which requires $b$, i.e., $\abs{h}$, to be sufficiently  small.
\end{remark}

\begin{remark}

The derivative of the distance function is 
\begin{equation}
\nabla d(x) = n(\Pi(x)).
\end{equation} 
To calculate the derivative of the projection mapping $\Pi$,
we take the derivative on the equation $ x - \Pi (x) = d(x)n(\Pi (x)) $, which implies
\begin{equation}
\begin{aligned}
&
I - \nabla \Pi (x) = d(x) \nabla_x \prts{ n (\Pi(x)) }  + \nabla_x d(x) \otimes n (\Pi(x))
\\
&
=  d(x) \nabla \Pi (x) \nabla_{\Sigma} n (\Pi(x))  + n (\Pi(x)) \otimes n (\Pi(x)).
\end{aligned}
\end{equation}
Recall that the Weingarten tensor is $L_{\Sigma}:= - \nabla_{\Sigma} n  $, 
so we have
\begin{equation}
 I - n (\Pi(x)) \otimes n (\Pi(x))  
= \nabla \Pi (x) \prts{ I - d(x) L_{\Sigma} (\Pi(x)) } ,
\end{equation}
which implies 
\begin{equation} \label{derivative.proj}
\nabla \Pi (x)
=
 \mathcal{P}_{\Sigma}  (\Pi(x)) \prts{ I - d(x) L_{\Sigma} (\Pi(x)) }^{-1}   .
\end{equation}

\end{remark}

We refer to  \cite[Section 2.3]{Pruss.green.book} and \cite[Section 2.12.3]{Simon.nearest-point}
for more details on the nearest point projection and related calculations.

\subsection{Fluid terms}
Using the diffeomorphism $\Theta_h$,
the original equations with interface $\Gamma(t)$ can be transformed into
equations with interface $\Sigma$. 
The interface $\Gamma(t)$ can be determined 
as long as the function $h(t,x): [0,T]\times\Sigma$ can be obtained. 
In order to do this,
we need to change the variables in \eqref{main.1.u}-\eqref{main.1.initial}  
with the help of $\Theta_h$.
We shall use similar arguments to \cite{Pruss.quali} 
with more details included for completeness.

We first consider \eqref{main.1.u}, which is equivalent to 
\begin{equation} \label{diff,rewrite.eq:u}
\left(	\partial_t u + (u\cdot \nabla) u - (B\cdot \nabla) B + \frac{1}{2}\nabla(\abs{B}^2)
	+ \nabla p   -  \nu^\pm \triangle u    
\right) \circ \Theta_h =0
\end{equation}
since $\Theta_h$  is a bijection.
We  use  $\Theta_h^{-1}(t,\cdot)$ 
to denote the inverse mapping (with respect to the space variable)  of
$ \Theta_{h(t,\cdot)} $ at time $t$,
which implies
 $\Theta_h^{-1}(t,\Theta_h (t,x))=x$.
Given any function $f$ in $\Omega$, we define 
\begin{equation}
\overline{f}(t, x):= f(t,\Theta_h (t,x) ),
\end{equation}
where $f$ can be $u$, $B$, $p$, etc.  
For any fixed $t$, by the definition of $\Theta_h$ we have
 $\Theta_h^{-1} (t,\cdot) =(\Theta_{h(t,\cdot) })^{-1}   $,
 which implies that  
\begin{equation}
f(t,x ) = \overline{f}(t, \Theta_h^{-1} (t,x)) .
\end{equation}
In this paper, we will omit the subscript $h$ in $\Theta_h$ when there is no confusion.
As opposed to partial derivatives with respect to specific variables, 
we will use $\partial_0$ temporarily  
to denote the partial derivative with respect to the first time variable only. 
This helps distinguish $\partial_0 u(t,\Theta(t,x))$ and $\partial_t u(t,\Theta(t,x))$, where the latter requires the chain rule.

Using  similar arguments  to \cite{Pruss.quali},
we write the transformed  equations in terms of the new variables.
 More details can be found in  \cite[Section 1.3]{Pruss.green.book}.
For the time derivative, we have
\begin{equation} \label{t,derivative,u;rewrite}
\partial_t u(t, x) 
= \partial_t( \overline{u}(t,\Theta^{-1} (t,x) )) 
= (\partial_0 \overline{u}) \circ \Theta^{-1}
+
\sum_{i}((\partial_i \overline{u}) \circ \Theta^{-1} ) \partial_t \Theta_{i}^{-1},
\end{equation}
which implies
\begin{equation} \label{t,derivative,u;rewrite2}
\partial_t u \circ \Theta 
= \partial_0 \overline{u} 
+ (\partial_t \Theta^{-1}\circ \Theta) \nabla  \overline{u}
=\partial_t \overline{u} 
+ (\partial_t \Theta^{-1}\circ \Theta) \nabla  \overline{u}.
\end{equation}
Next, we calculate the formula of each entry of $\nabla\overline{u}$, which is
\begin{equation} \label{grad.diff.rewrite}
\begin{aligned}
\partial_{\alpha} u_{\beta} (t,x)
&=
\partial_{x_{\alpha}} u_{\beta} (t,x)
=
\partial_{x_{\alpha}} \overline{u}_{\beta}( t , \Theta^{-1} (t,x) )
= 
\sum_{i} ((\partial_{i} \overline{u}_{\beta})\circ \Theta^{-1} ) \partial_{x_{\alpha}} \Theta^{-1}_{i}
\\
&
=
\sum_{i} ((\partial_{i} \overline{u}_{\beta})\circ \Theta^{-1} ) (((\nabla \Theta)^{-1})_{\alpha i}\circ \Theta^{-1}).
\end{aligned}
\end{equation}
Taking the time derivative of the equation $\Theta(t,\Theta^{-1}(t,x))=x$, we obtain
\begin{equation} \label{dt,Theta,transform}
(\partial_0 \Theta)\circ \Theta^{-1}
+
(\partial_t\Theta^{-1}) \prts{(\nabla\Theta)\circ \Theta^{-1} } =0.
\end{equation}
Composing \eqref{dt,Theta,transform} with $\Theta$ gives
\[
\partial_0 \Theta
+
(\partial_t\Theta^{-1} \circ \Theta) (\nabla\Theta)  =0.
\]
Then we have
\begin{equation} \label{dt,Theta,inverse,1}
\prts{\partial_t\Theta^{-1} } \circ \Theta
=
-  (\partial_0 \Theta) \prts{ \nabla\Theta }^{-1} 
= 
-  (\partial_t \Theta) \prts{ \nabla\Theta }^{-1},
\end{equation}
where $\partial_0 \Theta$ is changed back to  $\partial_t  \Theta$ in the second equality.
From \eqref{dt,Theta,inverse,1} and the equality 
$$(\nabla\Theta)^{-1}=I - (I+\nabla\theta)^{-1} \nabla\theta,$$ we obtain
\begin{equation} \label{dt,rewrite,simple}
\prts{ \partial_t\Theta^{-1}} \circ\Theta
=
-  \partial_t\Theta (\nabla\Theta)^{-1}
=
-  \partial_t\theta  \prts{ I - (I+\nabla\theta)^{-1} \nabla\theta  }.
\end{equation}
Similarly, we obtain from \eqref{grad.diff.rewrite} that
\begin{equation} \label{grad,rewrite,simple}
(\nabla u)\circ  \Theta 
=
(\nabla \Theta)^{-1} \nabla\overline{u}
=
\nabla\overline{u} 
-
\prts{ (I+\nabla\theta)^{-1} \nabla\theta }\nabla\overline{u},
\end{equation}
\begin{equation} \label{diffeo.rewrite.div}
({\rm div}u )\circ\Theta
= 
\prts{ (\nabla \Theta)^{-1} }^{\top}: \nabla \overline{u}
= 
{\rm div}\overline{u} - \prts{ (I+\nabla\theta)^{-1} \nabla\theta }: \nabla\overline{u}
.
\end{equation}
Taking one more derivative on \eqref{grad.diff.rewrite},
we obtain the Laplacian of each entry of $u$: 
\begin{equation} \label{laplace,u,diffeo}
\begin{aligned}
\triangle u_{\beta} 
&=
\sum_{\alpha} \partial_{x_{\alpha}} \partial_{x_{\alpha}} \overline{u}_{\beta}(t,\Theta^{-1}(t,x)) 
=
\sum_{\alpha} \partial_{x_{\alpha}}
\left(
\sum_{i} ((\partial_{i} \overline{u}_{\beta})\circ \Theta^{-1} ) \partial_{x_{\alpha}} \Theta^{-1}_{i}
\right)
\\
&
=
\sum_{\alpha}\sum_{i} \partial_{x_{\alpha}} \left( (\partial_{i} \overline{u}_{\beta})\circ \Theta^{-1}  \right) \partial_{x_{\alpha}}\Theta_{i}^{-1}
+
\sum_{\alpha}\sum_{i}  \left( (\partial_{i} \overline{u}_{\beta})\circ \Theta^{-1}  \right) \partial_{x_{\alpha} x_{\alpha}} \Theta_{i}^{-1} 
\\
&
=
\sum_{\alpha}\sum_{i} \sum_{j} \left(   (\partial_{ji} \overline{u}_{\beta})\circ \Theta^{-1}  \right) \partial_{x_{\alpha}}\Theta_{i}^{-1} \partial_{x_{\alpha}}\Theta_{j}^{-1}
+
\sum_{i} \triangle \prts{ \Theta_{i}^{-1} }  \left( (\partial_{i} \overline{u}_{\beta})\circ \Theta^{-1}  \right)  
\\
&
= \prts{ \prts{\nabla \Theta^{-1}}^{\top}  \prts{\nabla \Theta^{-1}}  } : 
\prts{ \prts { \nabla^2 \overline{u}_{\beta} } \circ \Theta^{-1}  }
+
\triangle \Theta^{-1} \cdot \prts{ (\nabla \overline{u}_{\beta}) \circ \Theta^{-1} },
\end{aligned}
\end{equation}
where the notation $\nabla^2 \overline{u}_{\beta}$ denotes the Hessian matrix of $\overline{u}_{\beta}$.
The subscript in \eqref{laplace,u,diffeo} can be removed to obtain the vector equality
\begin{equation} \label{laplace,u;rewrite2}
(\triangle u )\circ \Theta
= \prts{ \prts{ \prts{\nabla \Theta^{-1}}^{\top}  \prts{\nabla \Theta^{-1}}   } \circ \Theta }  : \nabla^2 \overline{u}
+
\prts{ \prts{\triangle  \Theta^{-1} } \circ \Theta }   (\nabla \overline{u}).
\end{equation}
In the first term of the right-hand side of \eqref{laplace,u;rewrite2},
using the formula of inverse functions, we have
\begin{equation} \label{laplace,u;rewrite,another}
\begin{aligned}
&
 \prts{ \prts{\nabla \Theta^{-1}}^{\top}  \prts{\nabla \Theta^{-1}}   } \circ \Theta 
 =
 \prts{\nabla \Theta}^{-\top}  \prts{\nabla \Theta}^{-1}   ,
\end{aligned}
\end{equation}
which implies that
\begin{equation} \label{laplace,u;rewrite}
\begin{aligned}
(\triangle u )\circ \Theta
&= \prts{  \prts{\nabla \Theta}^{-\top}  \prts{\nabla \Theta}^{-1} }  : \nabla^2 \overline{u}
+
\prts{ \prts{\triangle  \Theta^{-1} } \circ \Theta }  \cdot  (\nabla \overline{u})
\\
&=
\triangle \overline{u} +
\prts{  \prts{\nabla \Theta}^{-\top}  \prts{\nabla \Theta}^{-1} - I }  : \nabla^2 \overline{u}
+
\prts{ \prts{\triangle  \Theta^{-1} } \circ \Theta }  \cdot  (\nabla \overline{u}).
\end{aligned}
\end{equation}

Using exactly the same argument as for $u$,
we can transform the terms which contain $B$ and $p$.
Using also \eqref{dt,Theta,inverse,1}, we can 
 rewrite \eqref{diff,rewrite.eq:u} as
\begin{equation} \label{eq:u,transformed}
\begin{aligned}
&
 \partial_t \overline{u}  
-  \partial_t \Theta  \, \prts{ \nabla\Theta }^{-1}   \nabla  \overline{u} 
+  \overline{u}   \prts{ (\nabla \Theta)^{-1} } \nabla  \overline{u} 
-    \overline{B}   \prts{ (\nabla \Theta)^{-1} } \nabla  \overline{B} 
\\
&
+ \frac{1}{2} \prts{ (\nabla \Theta)^{-1} } \nabla \prts{ \abs{ \overline{B} }^2 }
+ \prts{ (\nabla \Theta)^{-1} } \nabla \overline{p}
- \nu^\pm
 \prts{   (\nabla \Theta)^{-\top} (\nabla \Theta)^{-1}     }  : \nabla^2 \overline{u}
\\
&
- \nu^\pm
\prts{ \prts{\triangle  \Theta^{-1} } \circ \Theta }  \cdot  (\nabla \overline{u})
=0.
\end{aligned}
\end{equation}
Using the fact that
$\nabla\times(u\times B)= -(u\cdot \nabla )B + (B\cdot \nabla )u $ 
and $\nabla\times(\nabla \times B)=\nabla({\rm div} B) -\triangle B$,
we rewrite \eqref{main.1.B} as
\begin{equation} \label{main.1.B.simple}
\begin{aligned}
\prts{ \partial_t B + (u\cdot \nabla) B - (B\cdot \nabla)u -\sigma \triangle B} \circ \Theta =0.
\end{aligned}
\end{equation}
Using the same arguments as in
\eqref{t,derivative,u;rewrite2} , \eqref{grad.diff.rewrite} 
and \eqref{laplace,u;rewrite}, 
we can obtain the representation of $\partial_t B$, $\nabla B$ and $\triangle B$,
which imply
\begin{equation} \label{main.1.B;rewrite}
\begin{aligned}
&
 \partial_t \overline{B} 
-  \partial_t \Theta  \, (\nabla\Theta)^{-1}      \nabla  \overline{B} 
+  \overline{u}   (\nabla\Theta)^{-1} \nabla  \overline{B} 
-    \overline{B}   (\nabla\Theta)^{-1} \nabla  \overline{u} 
\\
&
- \sigma 
 \prts{   (\nabla \Theta)^{-\top} (\nabla \Theta)^{-1}     }  : \nabla^2 \overline{B}
- \sigma 
\prts{ \prts{\triangle  \Theta^{-1} } \circ \Theta }  \cdot  (\nabla \overline{B})
=0.
\end{aligned}
\end{equation}
The divergence-free conditions \eqref{main.divu.0} and \eqref{main.divB.0}
can be treated in the same way using \eqref{diffeo.rewrite.div}, which implies
\begin{equation} \label{divu,0,transform}
\begin{aligned}
0 = \div u = 
{\rm div}\overline{u} - \prts{ (I+\nabla\theta)^{-1} \nabla\theta }: \nabla\overline{u},
\end{aligned}
\end{equation}
\begin{equation} \label{divB,0,transform}
\begin{aligned}
0 = \div B = 
{\rm div}\overline{B} - \prts{ (I+\nabla\theta)^{-1} \nabla\theta }: \nabla\overline{B}.
\end{aligned}
\end{equation}
We will show in Section \ref{Transformed equations} that \eqref{divB,0,transform} can actually be ignored in the transformed problem.

\subsection{Geometric terms}

In the previous section, we have obtained the transformation of 
\eqref{main.1.u} to \eqref{main.divB.0}. 
It remains to transform 
\eqref{eq:s,interface} and \eqref{main.1.chi},
which require suitable representations of tangent vectors, normal vectors and curvatures of the free interface $\Gamma(t)$. 
To treat these terms,  we follow the arguments in e.g. \cite{Pruss.quali, Pruss.green.book}.
We have included more details in Appendix \ref{Appendix. Geometric Terms} for completeness.  

Without loss of generality, we temporarily omit the time variable in the free interface $\Gamma(t)$ and  height function $h(t,x)$.
The normal vector $n_\Gamma$ and mean curvature $H_\Gamma$  of  $\Gamma$ can be expressed in terms of $\Sigma$ and $h$ using formulas:
\begin{equation} \label{interface normal vector representing}
n_\Gamma
=  \beta \prts{ n_\Sigma - \prts{ I-hL_{\Sigma}}^{-1} \nabla_{\Sigma}h } ,
\end{equation}
\begin{equation}
\label{interface curvature representing}
H_{\Gamma}
=
\beta
{\rm tr} \prts{ \prts{I-hL_{\Sigma}}^{-1} \prts{ L_{\Sigma} + \nabla_{\Sigma}\alpha } } 
-
\beta^3 \prts{  \prts{I-hL_{\Sigma}}^{-1} \alpha    } 
 \,
 \nabla_{\Sigma} \alpha \, \alpha ^\top.
\end{equation}
where 
\begin{equation} 
\alpha := \prts{ I-hL_{\Sigma}}^{-1} \nabla_{\Sigma}h,
\end{equation}  
\begin{equation} \label{curvature.beta.term}
  \beta : =
\frac{  1 }{  \abs{ n_\Sigma - \prts{ I-hL_{\Sigma}}^{-1} \nabla_{\Sigma}h} },
\end{equation}
 as used in \cite{Pruss.green.book}.

Using the formulas of $n_\Gamma$ and $H_\Gamma $ in \eqref{interface normal vector representing} 
and \eqref{interface curvature representing}, 
we are able to transform equations  
\eqref{eq:s,interface} and \eqref{main.1.chi}.
Composing $\Theta$ with \eqref{eq:s,interface}, we have
\begin{equation} \label{stress,eq,rewrite,temp1}
- \prts{  \jump{ \nu^\pm \prts{ \nabla u + \nabla u^\top } -pI }n_{\Gamma} }\circ \Theta
= \prts{ \kappa H_{\Gamma} n_{\Gamma} }\circ \Theta.   
\end{equation}
Due to the effect of $\Theta$, the equation \eqref{stress,eq,rewrite,temp1} is defined on $\Sigma$ rather than $\Gamma(t).$
We calculate the projections of \eqref{stress,eq,rewrite,temp1} to $n_{\Sigma}(x)$ 
and $T_x\Sigma$ respectively using the same arguments as in  \cite[Section 2]{Pruss.quali} 
with some details included for  convenience.
Since $\prts{ I-hL_{\Sigma}}^{-1} \nabla_{\Sigma}h$ is tangent to $\Sigma$, we have
 $n_{\Gamma}\cdot n_{\Sigma}=\beta$.
Taking the inner product of \eqref{stress,eq,rewrite,temp1} and $n_{\Sigma} / \beta$, we obtain
the projection of the equation onto the normal vector
\begin{equation}  \label{stress,eq,rewrite,temp1.5}
- \prts{  \jump{\prts{ \nu^\pm \prts{ \nabla u + \nabla u^\top
}\circ \Theta
 } }
\prts{ n_{\Sigma} - \prts{ I-hL_{\Sigma}}^{-1} \nabla_{\Sigma}h } 
} \cdot n_{\Sigma}
 + \jump{p} 
=
\kappa H_{\Gamma} .
\end{equation}
From $\nabla u\circ \Theta= \nabla \overline{u} - \mathcal{M}_1  \nabla \overline{u}$ 
(see  Section \ref{Transformed equations} for definitions of $\mathcal{M}_i$, $i=0,\cdots, 4$)
we have
\begin{equation}
\begin{aligned}
\prts{
\nu^\pm \prts{ \nabla u + \nabla u^\top
}
}\circ \Theta
=
\nu^\pm
\prts{
	\nabla \overline{u} - \mathcal{M}_1  \nabla \overline{u}
	+ (\nabla \overline{u})^{\top} - (\mathcal{M}_1  \nabla \overline{u} )^{\top}
} .
\end{aligned}
\end{equation}
Thus, we can rewrite \eqref{stress,eq,rewrite,temp1.5} as
\begin{equation} \label{stress,eq,rewrite,temp2}
\begin{aligned}
&
\jump{p}  - \kappa H_{\Gamma}
\\
&
=
\prts{  \jump{\prts{ \nu^\pm \prts{ \nabla \overline{u} + \nabla \overline{u}^\top
} }} n_{\Sigma}} \cdot  n_{\Sigma}
 -
\prts{  \jump{\prts{ \nu^\pm \prts{ \nabla \overline{u} + \nabla \overline{u}^\top
} }} \mathcal{M}_0 \nabla_{\Sigma}h} \cdot  n_{\Sigma}
\\
& \quad -
\prts{  \jump{ \nu^\pm \prts{ \mathcal{M}_1\nabla \overline{u} +  (\mathcal{M}_1\nabla \overline{u} ) ^\top
} } \prts{ n_{\Sigma} - \mathcal{M}_0 \nabla_{\Sigma}h }  } \cdot  n_{\Sigma}
\\
&=:
\prts{  \jump{ \nu^\pm \prts{ \nabla \overline{u} + \nabla \overline{u}^\top
} } n_{\Sigma}} \cdot  n_{\Sigma}
+ \mathcal{G}_1.
\end{aligned}
\end{equation}
Letting 
\begin{equation} \label{def,G2}
 \mathcal{G}_2:= \kappa ( H_{\Gamma} - DH_{\Gamma}\depvar{0} h ),
\end{equation}
where $DH_{\Gamma}\depvar{0} $ is the Fr{\'e}chet derivative of $H_\Gamma$ at $h=0$.
Similarly to \cite{Pruss.quali}, we  linearize the mean curvature $H_{\Gamma}$ using the equality 
\[ \kappa  H_{\Gamma} = \kappa DH_{\Gamma}\depvar{0} h +  \mathcal{G}_2. \]
The equation \eqref{stress,eq,rewrite,temp2} can then be written as
\begin{equation} \label{normal,proj,stress,eq}
\jump{p} - \kappa DH_{\Gamma}\depvar{0}h -  \mathcal{G}_2 
=
\prts{  \jump{ \nu^\pm \prts{ \nabla \overline{u} + \nabla \overline{u}^\top
} } n_{\Sigma}} \cdot  n_{\Sigma}
+ \mathcal{G}_1  ,
\end{equation}
which is the normal projection of \eqref{stress,eq,rewrite,temp1}.
Next, we calculate the tangential projection of \eqref{stress,eq,rewrite,temp1}.
We first notice that for a symmetric matrix 
$A=(a_{ij})_{d\times d}$ and the normal vector $n$, we have 
for any $1\leq i\leq d$ that
\begin{equation}
(\mathcal{P}An)_i 
= \sum_{k=1}^{d} \sum_{j=1}^{d} (\delta_{ik}- n_i n_k) a_{kj} n_j
=\sum_{j=1}^{d} a_{ij} n_j 
  -    n_i \sum_{k=1}^{d}  n_k  \prts{ \sum_{j=1}^{d} a_{kj} n_j },
\end{equation}
i.e., $\mathcal{P}An = An - ((An)\cdot n)n$.
Now we let $A:=\jump{\nu^\pm (\nabla u + \nabla u^{\top})}\circ\Theta$
and
recall that $\alpha= \mathcal{M}_0 \nabla_{\Sigma}h$.
We cancel $ \kappa H_{\Gamma} $ and $ \jump{p}$ by multiplying   \eqref{stress,eq,rewrite,temp1.5}  with $n_\Gamma$ and subtract it from 
\eqref{stress,eq,rewrite,temp1}. This implies 
\begin{equation} \label{stress,eq,rewrite,temp3}
A  (n_{\Sigma}-\alpha)
=
\prts{ \prts{ A (n_{\Sigma}-\alpha) }\cdot n_{\Sigma} }  (n_{\Sigma}-\alpha)
\end{equation}
Using \eqref{stress,eq,rewrite,temp3} (in the third equality below), we have
\begin{equation} \label{tangential,stress,eq,1}
\begin{aligned}
\mathcal{P}_{\Sigma} A (n_{\Sigma}-\alpha)
&= ( I-n_{\Sigma}\otimes n_{\Sigma} ) A (n_{\Sigma}-\alpha)
\\
&
= A (n_{\Sigma}-\alpha) - \prts{ \prts{A (n_{\Sigma}-\alpha)}\cdot n_{\Sigma} } n_{\Sigma} 
\\
&
= \prts{ \prts{ A (n_{\Sigma}-\alpha) }\cdot n_{\Sigma} }  (n_{\Sigma}-\alpha)
	- \prts{ \prts{A (n_{\Sigma}-\alpha)}\cdot n_{\Sigma} } n_{\Sigma} 
	\\
	&
= - \prts{ \prts{ A (n_{\Sigma}-\alpha) }\cdot n_{\Sigma} }  \alpha.
\end{aligned}
\end{equation}
Substituting with
\begin{equation}
\begin{aligned}
A&=\jump{\nu^\pm (\nabla u + \nabla u^{\top})}\circ\Theta
\\
&= \jump{ \nu^\pm (\nabla \overline{u} + \nabla \overline{u}^{\top}  
-\mathcal{M}_1 \nabla \overline{u} - \prts{\mathcal{M}_1 \nabla \overline{u}}^{\top}
)},
\end{aligned}
\end{equation}
we obtain
\begin{equation} \label{tangential,stress,eq,2}
\begin{aligned}
& \mathcal{P}_{\Sigma}   \jump{ \nu^\pm (\nabla \overline{u} + \nabla \overline{u}^{\top}  
-\mathcal{M}_1 \nabla \overline{u} - \prts{\mathcal{M}_1 \nabla \overline{u}}^{\top}
)}    (n_{\Sigma}-\alpha)
\\
& =
- \prts{ \prts{ \jump{ \nu^\pm (\nabla \overline{u} + \nabla \overline{u}^{\top}  
-\mathcal{M}_1 \nabla \overline{u} - \prts{\mathcal{M}_1 \nabla \overline{u}}^{\top}
)} (n_{\Sigma}-\alpha) }\cdot n_{\Sigma} }  \alpha.
\end{aligned}
\end{equation}
Expanding the brackets in the left-hand side of \eqref{tangential,stress,eq,2} 
and rearranging the terms, we have
\begin{equation} \label{def,G3}
\begin{aligned}
&\mathcal{P}_{\Sigma} 
\jump{
\nu^\pm \prts{\nabla \overline{u} + \nabla \overline{u}^{\top}  
}
}n_{\Sigma}
\\
&=
\mathcal{P}_{\Sigma}
\jump{
  \nu^\pm (I-\mathcal{M}_1) \nabla \overline{u} +  \nu^\pm \prts{ (I-\mathcal{M}_1) \nabla \overline{u} }^{\top}  
} \mathcal{M}_0 \nabla_{\Sigma}h \\
&\quad +
\mathcal{P}_{\Sigma}
\jump{
 \nu^\pm  \prts{ \mathcal{M}_1 \nabla \overline{u} 
+  ( \mathcal{M}_1 \nabla \overline{u} ) ^{\top}  }  
} n_{\Sigma}
\\
& \quad -
\prts{
\prts{
\jump{
 \nu^\pm (I-\mathcal{M}_1) \nabla \overline{u} +  \nu^\pm \prts{ (I-\mathcal{M}_1) \nabla \overline{u} }^{\top}  
} ( n_{\Sigma} -  \mathcal{M}_0 \nabla_{\Sigma}h ) 
} \cdot n_{\Sigma}
}
\mathcal{M}_0 \nabla_{\Sigma}h
\\
 & =: - \mathcal{G}_3 . 
\end{aligned}
\end{equation}
We refer readers to \cite{Pruss.quali, Pruss.green.book, Pruss.analytic.solu} for more details on the derivation of $\mathcal{G}_1$, $\mathcal{G}_2$, and $\mathcal{G}_3$.

Now we can combine the tangential and normal projections by multiplying
\eqref{normal,proj,stress,eq} with $n_\Sigma$ and then add it to \eqref{def,G3}.
This derives the transformation of \eqref{eq:s,interface}.
From  \eqref{grad,rewrite,simple},  \eqref{interface curvature representing}, \eqref{normal,proj,stress,eq}, and \eqref{def,G3}, 
we obtain
\begin{equation} \label{eq:s,interface,transformed,temp1}
\begin{aligned}
- \jump{\prts{ \nu^\pm \prts{ \nabla \overline{u} + \nabla \overline{u}^\top
} }} n_{\Sigma}
+  \jump{p} n_{\Sigma} 
- \kappa \prts{ DH\depvar{0} h } n_\Sigma
=  \prts{ \mathcal{G}_1 + \mathcal{G}_2 } n_\Sigma + \mathcal{G}_3 .
\end{aligned}
\end{equation}
In \eqref{derivative,curvature,at,0},
we will find that 
$DH_{\Gamma}\depvar{0}=\prts{ {\rm tr}L_{\Sigma}^2 + \triangle_{\Sigma} }$,
which enables us to obtain the final version of the transformed equation (as in \cite{Pruss.quali})
\begin{equation} \label{eq:s,interface,transformed}
\begin{aligned}
- \jump{\prts{ \nu^\pm \prts{ \nabla \overline{u} + \nabla \overline{u}^\top
} }} 
+  \jump{p} n_{\Sigma} 
- \kappa \triangle_\Sigma h  
=  \prts{ \mathcal{G}_1 + \mathcal{G}_2 +  \kappa {\rm tr}L_\Sigma^2 h } n_\Sigma + \mathcal{G}_3 .
\end{aligned}
\end{equation}

Now we transform \eqref{main.1.chi}. 
By \cite[Section 2.5.2]{Pruss.green.book}, the velocity of the interface satisfies
\begin{equation} \label{normal,velocity,eq,rewrite,temp1}
\beta\partial_t h = V_{\Gamma} \circ \Theta_t^{-1}  = (u\cdot n_{\Gamma}) \circ \Theta_t^{-1} =\beta  \overline{u}\cdot \prts{ n_{\Sigma} - \mathcal{M}_0\nabla_{\Sigma}h   } ,
\end{equation}
which implies that
$
\partial_t h   =   \overline{u} \cdot \prts{ n_{\Sigma}- \mathcal{M}_0\nabla_{\Sigma}h }
$
and can then be rewritten as
\begin{equation} \label{normal,velocity,eq,rewrite}
\partial_t h   -   \overline{u} \cdot n_{\Sigma} +  b\cdot\nabla_\Sigma h
= \prts{ I- \mathcal{M}_0 } \nabla_{\Sigma}h \cdot \overline{u}   + (b-\overline{u})\nabla_{\Sigma}h. 
\end{equation}
The term $ b\in W^{1-\frac{1}{2q} , q}([0,T];L^q(\Sigma)) \cap L^q([0,T]; W^{2-\frac{1}{q} ,q}(\Sigma) )$ 
is an auxiliary function,
which will be specially selected in latter sections.
For details on $V_{\Gamma}$ and the derivation of \eqref{normal,velocity,eq,rewrite}, we refer to Section 2.2.5 in \cite{Pruss.green.book}.

\subsection{Transformed equations} \label{Transformed equations}

Similarly to \cite{Pruss.quali}, 
we abbreviate some terms which will be frequently used in later calculations.
We refer readers to \cite{Pruss.quali, Pruss.green.book} for more details about these terms.
In \eqref{interface normal vector representing}, we define
\begin{equation}
\begin{aligned}
\mathcal{M}_0 := \prts{ I-hL_{\Sigma}}^{-1} .
\end{aligned}
\end{equation}
In \eqref{grad,rewrite,simple}, we define
\begin{equation}
\begin{aligned}
\mathcal{M}_1 :=
 (I+\nabla\theta)^{-1} \nabla\theta.
\end{aligned}
\end{equation}
In \eqref{laplace,u;rewrite}, we define
\begin{equation}
\begin{aligned}
\mathcal{M}_2 :=
\prts{ \triangle\Theta^{-1} }\circ \Theta, \quad \text{and}  
\quad
\mathcal{M}_4 :=
  \prts{\nabla \Theta}^{-\top}  \prts{\nabla \Theta}^{-1} - I  .
\end{aligned}
\end{equation}
In \eqref{dt,rewrite,simple}, we define
\begin{equation}
\begin{aligned}
\mathcal{M}_3 := \partial_t\theta \prts{ I - (I+\nabla\theta)^{-1} \nabla\theta  }  .
\end{aligned}
\end{equation}
We also  remind the readers that the abbreviations $\mathcal{G}_1$, $\mathcal{G}_2$
and $\mathcal{G}_3$ in the transformation of the surface tension equation are defined in
\eqref{stress,eq,rewrite,temp2}, \eqref{def,G2} and \eqref{def,G3}.

Notice that  \eqref{main.1.B} has the form $\partial_t B = \nabla\times F$. 
Thus, taking the divergence of the equation, we obtain
\begin{equation}
\begin{aligned}
\partial_t {\rm div} B  = 0.
\end{aligned}
\end{equation}
Since the solution $\overline{B}$ in the transformed equations satisfies ${\rm div} B_0 =0$,
the equation \eqref{main.divB.0} will always be satisfied and thus can be ignored in the transformed problem.

Notice that the viscosity $\nu^\pm$ in the transformed problem is independent of time
since the interface in the transformed problem is a fixed surface. 
Finally, using \eqref{eq:u,transformed},  \eqref{main.1.B;rewrite},  
\eqref{divu,0,transform}, \eqref{eq:s,interface,transformed}
and \eqref{normal,velocity,eq,rewrite},
the equations 
\eqref{main.1.u} - \eqref{main.1.initial} 
can be  transformed  to
\begin{equation} \label{transformed.eq.1}
\begin{aligned}
 \partial_t \overline{u}  
+ \nabla\overline{p}
-\nu^\pm \triangle \overline{u}
  =  
&- \frac{1}{2} \nabla \prts{ \abs{ \overline{B} }^2 }
-  \overline{u} \nabla  \overline{u} 
+    \overline{B}   \nabla  \overline{B} 
+\mathcal{M}_3 \nabla\overline{u}
+  \overline{u}   \mathcal{M}_1   \nabla  \overline{u} 
\\
& 
-  \overline{B}   \mathcal{M}_1   \nabla  \overline{B}   
+ \frac{1}{2} \mathcal{M}_1  \nabla \prts{ \abs{ \overline{B} }^2 }
+ \mathcal{M}_1  \nabla\overline{p}
+
\overline{\nu}
\mathcal{M}_4
: \nabla^2 \overline{u}\\
&+
\nu^\pm
\mathcal{M}_2  \cdot  (\nabla \overline{u})  ,
\end{aligned}
\end{equation}
\begin{equation} 
\begin{aligned}
 \partial_t \overline{B}  
-\sigma\triangle \overline{B}
=& 
- \overline{u}    \nabla  \overline{B} 
+    \overline{B}    \nabla  \overline{u} 
+\overline{u} \mathcal{M}_1   \nabla  \overline{B} 
-    \overline{B}  \mathcal{M}_1  \nabla  \overline{u} 
\\
& +  \mathcal{M}_3 \nabla\overline{B}
+
\sigma
\mathcal{M}_4
: \nabla^2 \overline{B}
+
\sigma
\mathcal{M}_2  \cdot  (\nabla \overline{B})  ,
\end{aligned}
\end{equation}
\smallskip
\begin{equation}
\begin{aligned}
{\rm div}\overline{u} = \mathcal{M}_1 : \nabla\overline{u} , 
\end{aligned}
\end{equation}
\begin{equation}
\begin{aligned}
 - \jump{  \nu^\pm \prts{\nabla \overline{u} + \nabla \overline{u}^\top  } } n_{\Sigma} + \jump{\overline{p} } n_{\Sigma}  
- \kappa (\triangle_{\Sigma}h)n_{\Sigma} 
=
\prts {  \mathcal{G}_1 + \mathcal{G}_2 +\kappa {\rm tr} L_\Sigma^2 h } n_{\Sigma}
+ \mathcal{G}_3 ,
\end{aligned}
\end{equation}
\begin{equation}  \label{transformed.interface.speed.eq}
\partial_t h   -   \overline{u} \cdot n_{\Sigma} +  b\cdot\nabla_\Sigma h
= \prts{ I- \mathcal{M}_0 } \nabla_{\Sigma}h \cdot \overline{u}   + (b-\overline{u} )\nabla_{\Sigma}h ,
\end{equation}
\begin{equation}  \label{transformed.eq.100}
\begin{aligned}
\overline{u}|_{\partial\Omega}=0,  \ \overline{B}|_{\partial\Omega}=0, 	\quad
	\overline{u}|_{t=0}=\overline{u}_{0}:= u\circ\Theta , 
	\ \overline{B}|_{t=0}=\overline{B}_{0}:= B \circ\Theta,
\end{aligned}
\end{equation}
where the variable $b$ in \eqref{transformed.interface.speed.eq} is the auxiliary term same as in \cite{Pruss.quali}.

For convenience, we  omit the bars in $\overline{u}$, $\overline{B}$ and $\overline{p}$
in the rest of the paper when there is no confusion. 
The height function $h$ only occurs in the transformed equations,
 so it always appears without a bar.

\section{Linear Part} \label{Linear part}

In this section,
we consider the linear part of the equations \eqref{transformed.eq.1}-\eqref{transformed.eq.100},
which can be rewritten as the following linear problem:
\begin{align}
 \partial_t {u}  + \nabla p -\nu^\pm \triangle u =g_1   \quad & \text{in} \; \Omega\setminus\Sigma, \label{linear.eq.u.1}
\\
{\rm div} u = g_3   \quad & \text{in} \; \Omega\setminus\Sigma, 
 \\
 \jump{ -  \nu^\pm \prts{ \nabla u + \nabla u^\top
}  +pI   } n_{\Sigma}    
-\kappa (\triangle_{\Sigma}h) n_{\Sigma} = g_4  \quad & \text{on} \; \Sigma
\\
\jump{u}=0  \quad & \text{on} \; \Sigma
\\
u = 0  \quad & \text{on} \; \partial\Omega
\\
\partial_t h - u\cdot n_{\Sigma} + b\cdot \nabla_{\Sigma}h = g_5  \quad & \text{on} \; \Sigma
\\
u(0) =u_0 \quad & \text{in}  \; \Omega\setminus\Sigma, 
\\
h(0)= h_0  \quad & \text{on} \; \Sigma .  \label{linear.eq.u.100}
\\
\partial_t B   -\sigma\triangle B = g_2
\quad & \text{in}  \; \Omega, 
\label{linear.eq.B.1}
\\
B = 0  \quad & \text{on} \; \partial\Omega,
\\
B(0) =B_0 \quad & \text{in}  \; \Omega. 
\label{linear.eq.B.100}
\end{align}
The linear problem can be divided into two sub-problems: \eqref{linear.eq.u.1}-\eqref{linear.eq.u.100}  
and \eqref{linear.eq.B.1}-\eqref{linear.eq.B.100},
which can be solved using the theory of two-phase Stokes problems in \cite{Pruss.green.book, Pruss.quali} and  the classical theory of parabolic equations in \cite[Theorem 9.1]{Ladyzenskaja}. 
\begin{remark}
We remark that the condition  ${\rm div} B=0$ in the original problem can be removed in the transformed problem as discussed in Section \ref{Transformed equations}.
\end{remark}

The equations  \eqref{linear.eq.u.1}-\eqref{linear.eq.u.100}
can be solved using  \cite[Theorem 1]{Pruss.quali}. 
We restate the theorem for the case of equal density.
\begin{theorem}[\!\!\cite{Pruss.quali} Theorem 1]
 \label{u-problem,max,regularity}
Let $\Omega$ be a $C^3$ domain in $\mathbb{R}^d$. Let $T>0$ and $q>d+2$.
Let $\Sigma\subseteq\Omega$ be a closed $C^3$-hypersurface. 
Suppose 
\begin{equation}
b\in W^{1-\frac{1}{2q},q}([0,T];L^q(\Sigma))\cap L^q([0,T]; W^{2-\frac{1}{q},q}(\Sigma)).
\end{equation}
Let the source terms and initial values be as follows:
\begin{enumerate}
\item $g_1\in L^q([0,T]\times\Omega)$; 
\item  $u_0\in W^{2-\frac{2}{q},q}(\Omega\setminus\Sigma)$, $u_0=0$ on $\partial\Omega$; 
\item  $g_3\in L^q([0,T];W^{1,q}(\Omega\setminus\Sigma))$, ${\rm div} u_0= g_3(0)=0$; 
\item  $g_4\in  W^{\frac{1}{2}-\frac{1}{2q},q}([0,T];L^q(\Sigma)) \cap L^q([0,T];W^{1-\frac{1}{q},q}(\Sigma))$;
\item  $\mathcal{P}_\Sigma \jump{ \nu^\pm \prts{ \nabla u_0 + \nabla u_0^\top } } = \mathcal{P}_\Sigma g_4 (0);$ 
\item  $g_5 \in W^{1-\frac{1}{2q},q}([0,T];L^q(\Sigma))  \cap L^q([0,T];W^{2-\frac{1}{2q},q}(\Sigma)) $;
\item  $h_0 \in W^{3-\frac{2}{q},q}(\Sigma)$.
\end{enumerate}
Then there exists a unique solution $(u,p , h)$ to \eqref{linear.eq.u.1}-\eqref{linear.eq.u.100}, such that
\begin{enumerate}
\item  $u\in W^{1,q}([0,T];L^q(\Omega))\cap L^q([0,T];W^{2,q}(\Omega\setminus\Sigma))$;  
\item  $p\in L^q([0,T];\dot{W}^{1,q}(\Omega\setminus\Sigma))$;  
\item   $  \jump{p}\in W^{\frac{1}{2}-\frac{1}{2q},q}([0,T];L^q(\Sigma))\cap L^q([0,T];W^{1-\frac{1}{2q},q}(\Sigma))$;  
\item   $ h \in W^{2-\frac{1}{2q},q}([0,T];L^q(\Sigma))\cap W^{1,q}([0,T];W^{2-\frac{1}{q},q}(\Sigma))
  \cap L^q([0,T];W^{3-\frac{1}{q},q}(\Sigma)) $; 
\item   The mapping $(g_1, g_3, g_4, g_5 ,u_0, h_0, b) \mapsto (u,p,\jump{p},h)$ is continuous.
\end{enumerate}
\end{theorem}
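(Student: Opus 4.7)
The approach I would take is the standard three-step maximal-regularity program for two-phase parabolic problems with a surface-tension-coupled moving interface: reduction to zero initial data, analysis of a localized model problem, and patching back to the bounded domain $\Omega$.

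\textbf{Step 1: reduction to zero initial data.} Using the prescribed regularity of $u_0$ and $h_0$ together with the compatibility conditions (2)--(5) and (7), I would construct by standard trace and extension theory an auxiliary triple $(u^*, p^*, h^*)$ in the target regularity classes satisfying $u^*(0)=u_0$ and $h^*(0)=h_0$, whose traces at $t=0$ of $\div$ and of the tangential part of the stress jump match the prescribed data. Subtracting this auxiliary triple reduces the problem to the case $u_0 = 0$, $h_0 = 0$, with new source terms still in the same spaces. The tangential compatibility (5) is unavoidable because $W^{\frac{1}{2}-\frac{1}{2q},q}([0,T];L^q(\Sigma))$ admits a trace at $t=0$ when $q>d+2=5$, and this trace must agree with the tangential stress jump of $u_0$.

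\textbf{Step 2: flat model problem.} Using a finite atlas on $\Sigma$, a tubular neighborhood, and a partition of unity, I would reduce locally to a problem on $\mathbb{R}^3$ split by the flat interface $\mathbb{R}^2\times\{0\}$, with constant viscosities $\nu^\pm$, vanishing Weingarten tensor, and $\triangle_\Sigma$ replaced by the flat Laplacian in $x'\in\mathbb{R}^2$; the auxiliary drift $b$ drops to a lower-order term. Applying partial Fourier transform in $x'$ and Laplace transform in $t$ yields, for each $(\lambda,\xi')$, a boundary-value ODE system in the normal variable; imposing decay at $\pm\infty$ and the jump/kinematic conditions reduces this to a small algebraic system for the boundary values of $u$, $\jump{p}$, and $h$, whose solution can be computed explicitly in terms of symbols in $(\lambda,\xi')$.

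\textbf{Step 3: Lopatinskii--Shapiro verification and patching.} The main analytic obstacle is to verify the Lopatinskii--Shapiro condition for this boundary symbol in the parabolic sector. The surface-tension term $-\kappa\triangle_\Sigma h$ contributes a factor of order $|\xi'|^2$ to the pressure-jump condition, and when combined with the kinematic equation $\partial_t h - u\cdot n_\Sigma = g_5$ and the Stokes symbol it produces, after elimination, an effective operator on $h$ of anisotropic order $(1,3)$ in $(\lambda,\xi')$. This matches exactly the claimed regularity class for $h$, namely $W^{2-\frac{1}{2q},q}([0,T];L^q(\Sigma))\cap L^q([0,T];W^{3-\frac{1}{q},q}(\Sigma))$, with the intermediate class coming by interpolation. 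I would then show the boundary-symbol inverse is $\mathcal{R}$-bounded and invoke the operator-valued Fourier multiplier theorem of Weis to conclude maximal $L^q$-regularity for the model problem. Returning to $\Omega$, the errors introduced by flattening, localization, and the curvature of $\Sigma$ are of lower order in a scale-invariant sense, so a Neumann series argument on sufficiently small time intervals yields a local solution, which is extended to all of $[0,T]$ by iteration. The drift $b\cdot\nabla_\Sigma h$ is absorbed similarly, using that $b$ lies in the same regularity class as $\nabla_\Sigma h$. Continuity of the solution map $(g_1,g_3,g_4,g_5,u_0,h_0,b) \mapsto (u,p,\jump{p},h)$ and uniqueness both follow from the a priori estimates obtained along the way.
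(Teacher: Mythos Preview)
Your outline is a faithful sketch of the standard maximal-regularity program (reduction to zero data, flat model problem via Fourier--Laplace analysis, Lopatinskii--Shapiro verification, $\mathcal{R}$-boundedness and operator-valued multiplier theorems, localization and Neumann-series patching) and this is indeed how such results are established in the Pr\"uss--Simonett framework. However, the paper under review does \emph{not} prove this theorem at all: it is explicitly labeled as a restatement of \cite[Theorem~1]{Pruss.quali} and is simply invoked as a black box to solve the linear two-phase Stokes subproblem. So there is nothing to compare against in the paper itself; your proposal is essentially a summary of what the cited reference does, and in that sense it is correct, but for the purposes of this paper the ``proof'' is a one-line citation.
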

For the principal part of the magnetic equations, we use the theory of
 parabolic equations in bounded domains, which can be found in e.g.
 \cite[Theorem 9.1]{Ladyzenskaja}. 
We also refer to   \cite[Chapter I, Section 1; and Chapter IV, Section 4]{Ladyzenskaja} and \cite[Chapter 8, Section 3]{Krylov}
for more details on the localization and flattening of bounded domains.
We will consider the case that $\partial\Omega$ is at least a $C^3$ surface.

\begin{theorem}[\!\!\cite{Ladyzenskaja} Chapter IV Theorem 9.1, simplified]
 \label{domain.parabolic,eq}
Let $\Omega$ be a $C^2$ domain. Let $T>0$ and $q>3/2$.
Suppose $g_2 \in L^q(  [0,T]\times\Omega  )$, $B_0 \in W^{2-\frac{2}{q},q}(\Omega)$ and $B_0|_{\partial \Omega}=0$.
Then there exists a unique solution 
\begin{equation}
B\in W^{1,q}([0,T];L^q(\Omega))\cap L^q([0,T];W^{2,q}(\Omega))
\end{equation}
to  \eqref{linear.eq.B.1}-\eqref{linear.eq.B.100}.
The solution has the estimate
\begin{equation}\label{B,parabolic,estimate}
 \norm{B} _{ W^{1,q}([0,T];L^q(\Omega)) }
+
\norm{B} _{L^q([0,T];W^{2,q}(\Omega))}
\leq  C(T) 
\prts{ \norm{g_2} _{L^q( [0,T]\times\Omega )}  + 
 \norm{B_0}_{W^{2-\frac{2}{q},q}(\Omega)}   },
\end{equation}
where the constant $C(T)$ is  bounded when  $T$ is finite.
\end{theorem}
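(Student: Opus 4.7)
The plan is to reduce the problem to the classical $L^q$-maximal regularity theory for the heat equation by splitting off the initial data. Let $V$ denote the solution of the homogeneous Cauchy-Dirichlet problem
\[
\partial_t V - \sigma \triangle V = 0 \ \text{in}\ \Omega, \qquad V|_{\partial\Omega}=0, \qquad V(0)=B_0,
\]
and set $W:= B-V$. Then $W$ satisfies the inhomogeneous heat equation with zero initial and zero boundary data, so it suffices to establish maximal regularity for the two subproblems separately and sum the resulting estimates.

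For the homogeneous piece, the Dirichlet Laplacian $A:=-\sigma\triangle$ with domain $D(A)= W^{2,q}(\Omega)\cap W^{1,q}_0(\Omega)$ generates a bounded analytic semigroup on $L^q(\Omega)$. Since $q>3/2$ yields $2-\tfrac{2}{q}>\tfrac{1}{q}$, the trace $B_0|_{\partial\Omega}$ is meaningful, and the assumption $B_0|_{\partial\Omega}=0$ places $B_0$ in the real interpolation space $(L^q(\Omega),D(A))_{1-\frac{1}{q},q}$. This is precisely the canonical trace space of initial data for which $V=e^{-tA}B_0$ lies in $W^{1,q}([0,T];L^q(\Omega))\cap L^q([0,T];W^{2,q}(\Omega))$, with the bound $\|V\|_{W^{1,q}(L^q)}+\|V\|_{L^q(W^{2,q})} \lesssim \|B_0\|_{W^{2-\frac{2}{q},q}}$.

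For the inhomogeneous piece, I would invoke the $L^q$-$L^q$ maximal regularity of $A$, namely $\|\partial_t W\|_{L^q} + \|\triangle W\|_{L^q} \leq C(T) \|g_2\|_{L^q}$. A standard derivation proceeds by a partition-of-unity argument: choose a finite $C^2$ cover of $\overline\Omega$, and for patches meeting $\partial\Omega$ use the local flattening diffeomorphism provided by the $C^2$-regularity to reduce each localized problem to either a whole-space heat equation or a half-space Dirichlet problem with a small perturbation of constant coefficients. In both model cases the estimate follows from Calderón--Zygmund analysis of the kernel $\partial_t e^{t\sigma\triangle}$ (or from Mikhlin's multiplier theorem applied in space-time), with the half-space situation handled by odd reflection across $\partial \mathbb{R}^n_+$. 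Lower-order commutator terms arising from both the partition and the diffeomorphism are absorbed by choosing the patch diameter small and running a continuity-in-time bootstrap. Combining these with the homogeneous bound yields \eqref{B,parabolic,estimate}, and uniqueness follows immediately from the same estimate applied to the difference of two solutions.

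The main obstacle, in a fully self-contained proof, is the passage from the flat half-space result to the curved $C^2$ boundary: the flattening diffeomorphism converts $\triangle$ into a variable-coefficient second-order operator whose commutator with $\partial_t$ introduces first-order terms with merely bounded coefficients, and these must be handled so as not to degrade the sharp $L^q$ estimate for small time. This is typically resolved by a combination of small-patch/freezing-of-coefficients and a perturbation-semigroup argument, which is where the explicit use of Theorem 9.1 in \cite{Ladyzenskaja} (together with the localization machinery in \cite{Ladyzenskaja,Krylov}) is convenient and motivates simply citing the reference rather than reproving it.
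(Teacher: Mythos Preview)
The paper does not give its own proof of this theorem at all: it is stated as a direct citation of \cite[Chapter IV, Theorem 9.1]{Ladyzenskaja}, with a pointer to \cite{Ladyzenskaja, Krylov} for the localization and flattening machinery. Your sketch is therefore not competing with any argument in the paper; rather, you are outlining the standard proof that the cited references carry out. Your decomposition into the semigroup piece $V=e^{-tA}B_0$ and the zero-initial-data piece $W$, the identification of the trace space via real interpolation, and the localization/flattening/freezing-of-coefficients route for the inhomogeneous estimate are all in line with how \cite{Ladyzenskaja} and the maximal regularity literature handle this, and you correctly flag the variable-coefficient perturbation step as the place where one would rather cite than reprove. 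In short, your proposal is a faithful expansion of what the paper leaves to the reference.
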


Consequently, we can obtain a continuous solution operator defined as follows.
\begin{definition}
Let $T>0$ and $q>5$.
Given $ (u_0, B_0, h_0, b) $ such that 
\begin{enumerate}
\item 
$u_0\in W^{2-\frac{2}{q},q}(\Omega\setminus\Sigma)\cap C(\Omega)$, $u_0=0$ on $\partial\Omega$;
\item
$ B_0 \in W^{2-\frac{2}{q},q}(\Omega)$, $B_0=0$ on $\partial\Omega$;
\item
$h_0 \in W^{3-\frac{2}{q},q}(\Sigma)$;
\item
$b\in W^{1-\frac{1}{2q},q}([0,T];L^q(\Sigma))\cap L^q([0,T]; W^{2-\frac{1}{q},q}(\Sigma))$.
\end{enumerate}
For source terms $(g_1,g_2,g_3,g_4,g_5)$ such that
\begin{enumerate}
\item $g_1\in L^q([0,T]\times\Omega)$; 
\item  $g_2 \in L^q( [0,T]\times\Omega )$;
\item  $g_3\in L^q([0,T];W^{1,q}(\Omega\setminus\Sigma))$, \quad ${\rm div} u_0= g_3(0)=0$;
\item  $g_4\in  W^{\frac{1}{2}-\frac{1}{2q},q}([0,T];L^q(\Sigma)) \cap L^q([0,T];W^{1-\frac{1}{q},q}(\Sigma))$,  \quad 
  $\mathcal{P}_\Sigma g_4 (0) = \mathcal{P}_\Sigma \jump{ \nu^\pm \prts{\nabla u_0 +\nabla u_0^\top} } $; 
\item  $g_5 \in W^{1-\frac{1}{2q},q}([0,T];L^q(\Sigma))  \cap L^q([0,T];W^{2-\frac{1}{q},q}(\Sigma)) $,
\end{enumerate}
we define the solution operator $S_{(u_0,B_0,h_0,b)}$ (or simply $S$ if there is no confusion) by 
\begin{equation}
S_{ (u_0,B_0,h_0,b) }(g_1,g_2,g_3,g_4,g_5) := (u,B,p,\varpi,h),
\end{equation}
where $(u,p,\varpi,h)$ is the solution to 
\eqref{linear.eq.u.1} - \eqref{linear.eq.u.100} with $\varpi =\jump{p}$ being an auxiliary variable similar to \cite{Pruss.quali};
and $B$ is the solution to
\eqref{linear.eq.B.1} - \eqref{linear.eq.B.100}.
\end{definition}
From Theorem \ref{u-problem,max,regularity} and Theorem \ref{domain.parabolic,eq},
we know that $S_{(u_0,B_0,h_0,b)}$ is a continuous operator.
When the initial value vanishes, the solution operator becomes linear,
which implies that $S_{(0,0,0,b)}$ is a bounded linear operator.

\section{Nonlinear Part} \label{Nonlinear part}

In this  section, we estimate the terms on the right-hand side of \eqref{transformed.eq.1} - \eqref{transformed.eq.100} by calculating and estimating their Fr{\'e}chet derivatives.

\subsection{Equations and spaces}

For convenience, we define the linear parts and the nonlinear parts of the transformed equations by $L_i$ and $G_i$ similarly to \cite{Pruss.quali}.
The bars over variables are omitted for convenience.
The term $\nu^\pm$ is a piecewisely-constant function which equals to $\nu^+$ and $\nu^-$ inside  and  outside $\Sigma$, respectively.
We define
\begin{equation}
\begin{aligned}
L_1 :=
 \partial_t u  
+ \nabla p
-\nu^\pm \triangle u ,
\end{aligned}
\end{equation}
\begin{equation}
\begin{aligned}
L_2 :=
 \partial_t B  
-\sigma\triangle B , 
\end{aligned}
\end{equation}
\begin{equation}
\begin{aligned}
L_3 := {\rm div} u , 
\end{aligned}
\end{equation}
\begin{equation}
\begin{aligned}
L_4 := \jump{ - \nu^\pm \prts{\nabla u + \nabla u ^\top} } n_{\Sigma} + \varpi n_{\Sigma}  
- \kappa (\triangle_{\Sigma}h)n_{\Sigma}  , 
\end{aligned}
\end{equation}
\begin{equation}
\begin{aligned}
L_5 := \partial_t h  - u\cdot n_{\Sigma} + b\cdot \nabla_{\Sigma}h , 
\end{aligned}
\end{equation}
\begin{equation}
\begin{aligned}
G_1  : = 
&- \frac{1}{2} \nabla \prts{ \abs{ B }^2 }
-  u \nabla  u 
+    B   \nabla   B  
+\mathcal{M}_3 \nabla u 
+  u   \mathcal{M}_1   \nabla  u  
-  B   \mathcal{M}_1   \nabla  B  
\\
&
+ \frac{1}{2} \mathcal{M}_1  \nabla \prts{ \abs{ B }^2 }
+ \mathcal{M}_1  \nabla p 
+
\nu^\pm
\mathcal{M}_4
: \nabla^2  u
+
\nu^\pm
\mathcal{M}_2  \cdot  (\nabla u) , 
\end{aligned}
\end{equation}
\begin{equation}
\begin{aligned}
G_2 & := 
-  u  \nabla  B 
+    B    \nabla  u
+u \mathcal{M}_1   \nabla  B 
-    B  \mathcal{M}_1  \nabla  u 
+  \mathcal{M}_3 \nabla B
+
\sigma
\mathcal{M}_4
: \nabla^2 B
+
\sigma
\mathcal{M}_2  \cdot  (\nabla B) , 
\end{aligned}
\end{equation}
\begin{equation}
\begin{aligned}
G_3 &= 
\mathcal{M}_1 : \nabla u , 
\end{aligned}
\end{equation}
\begin{equation}
\begin{aligned}
G_4 &= 
\prts {  \mathcal{G}_1 + \mathcal{G}_2 +\kappa ({\rm tr} L_\Sigma^2)  h } n_{\Sigma}
+
\mathcal{G}_3  , 
\end{aligned}
\end{equation}
\begin{equation}
\begin{aligned}
G_5 &= 
\prts { \prts {I - \mathcal{M}_0 }\nabla_{\Sigma}h } \cdot u + (b-u) \cdot \nabla_{\Sigma}h  .
\end{aligned}
\end{equation}
For convenience, we rewrite the formulas of $\mathcal{M}_0$ to $\mathcal{M}_4$:
\begin{equation}  \label{diff,term,M0}
\mathcal{M}_0 =  (I- hL_{\Sigma})^{-1} , 
\end{equation} 
\begin{equation}  \label{diff,term,M1}
\mathcal{M}_1 =  (I+\nabla\theta)^{-1}\nabla\theta   , 
\end{equation} 
\begin{equation}  \label{diff,term,M2}
\mathcal{M}_2 =  \prts{\triangle  \Theta^{-1} } \circ \Theta , 
\end{equation} 
\begin{equation}  \label{diff,term,M3}
\mathcal{M}_3 =  \partial_t\theta \prts{ I-(I+\nabla\theta)^{-1} \nabla\theta } , 
\end{equation} 
\begin{equation}  \label{diff,term,M4}
\begin{aligned}
\mathcal{M}_4
=  \prts{\nabla \Theta}^{-\top}  \prts{\nabla \Theta}^{-1}   -I .   
\end{aligned}
\end{equation} 
To make the arguments concise, we abbreviate some common function spaces. Based on the settings in 
 \cite{Pruss.quali}, we define
\begin{align}
&  u\in \mathcal{W}_1^T := W^{1,q}([0,T]; L^q(\Omega)) \cap  L^q([0,T]; W^{2,q}(\Omega\setminus\Sigma)),
\\
& B \in \mathcal{W}_2^T :=  W^{1,q}([0,T]; L^q(\Omega)) \cap  L^q([0,T]; W^{2,q}(\Omega)),
\\
& p \in \mathcal{W}_3^T :=  L^q([0,T]; \dot{W}^{1,q}(\Omega\setminus\Sigma)),
\\
& \varpi \in \mathcal{W}_4^T := W^{\frac{1}{2} - \frac{1}{2q} ,q } ([0,T]; L^q(\Sigma))
\cap L^q ([0,T]; W^{ 1 - \frac{1}{q} ,q }(\Sigma)),
\\
& h \in \mathcal{W}_5^T := W^{2-\frac{1}{2q} ,q } ([0,T]; L^q(\Sigma)) 
\cap  W^{1,q} ([0,T]; W^{2-\frac{1}{q}, q}(\Sigma))
\cap  L^q ([0,T]; W^{ 3 - \frac{1}{q}, q}(\Sigma)).
\end{align}
For convenience, we  define
\begin{equation}
 \mathcal{W}_6^T := 
W^{\frac{1}{2},q}([0,T]; L^q(\Omega)) \cap  L^q([0,T]; W^{1,q}(\Omega\setminus\Sigma)),
\end{equation}
which is the space that $\nabla u$  and $\nabla B$ belong to.
Similarly to \cite{Pruss.quali}, we  define the solution space
\begin{equation}
\mathcal{W}^T : =  
\left\{ (u,B,p,\varpi,h) \in \mathcal{W}_1^T \times \mathcal{W}_2^T  \times \mathcal{W}_3^T  \times \mathcal{W}_4^T  \times \mathcal{W}_5^T : \jump{p} = \varpi \right\} 
\end{equation}
and denote by $\mathcal{S}_i$ the space that the source terms belong to, i.e.
\begin{align}
&  G_1 \in \mathcal{S}_1^T := L^{q}([0,T]; L^q(\Omega)), 
\\
& G_2 \in \mathcal{S}_2^T := L^{q}([0,T]; L^q(\Omega)),
\\
& G_3 \in \mathcal{S}_3^T := W^{1,q}([0,T]; \dot{W}^{-1,q}(\Omega)) \cap  L^q([0,T]; W^{1,q}(\Omega\setminus\Sigma)),
\\
& G_4 \in \mathcal{S}_4^T := W^{\frac{1}{2} - \frac{1}{2q} ,q } ([0,T]; L^q(\Sigma))
\cap  L^q ([0,T]; W^{ 1 - \frac{1}{q} ,q }(\Sigma)),
\\
& G_5 \in \mathcal{S}_5^T := W^{ 1 - \frac{1}{2q} ,q } ([0,T]; L^q(\Sigma))
\cap  L^q ([0,T]; W^{ 2 - \frac{1}{q} ,q }(\Sigma)).
\end{align}
Then we define
\begin{equation}
\mathcal{S}^T:=  \mathcal{S}_1^T \times \mathcal{S}_2^T  \times \mathcal{S}_3^T \times
  \mathcal{S}_4^T  \times \mathcal{S}_5^T .
\end{equation}
We will also frequently use
the following spaces:
\begin{equation}
\begin{aligned}
&\mathcal{C}_0^T:=C^0([0,T]; C^0(\Omega)),
\\
&\mathcal{C}_1^T:=C^0([0,T]; C^1(\Omega) )\cap C^1([0,T]; C^0(\Omega) ),
\\
&\mathcal{C}_2^T:=C^0([0,T]; C^2(\Omega) )\cap C^1([0,T]; C^1(\Omega) ).
\end{aligned}
\end{equation}
For convenience, we also define
\begin{equation}
\begin{aligned}
\mathcal{C}_3^T:=C^0([0,T]; C^1(\Omega)), \quad \text{and} \quad \mathcal{C}_4^T:=C^0([0,T]; C^2(\Omega)).
\end{aligned}
\end{equation}
The spaces $\mathcal{C}_i$, $i=0,1,2,3,4$,
 can also be defined on $[0,T]\times\Sigma$ in parallel ways, which can be expressed by replacing $\Omega$ with $\Sigma$. 
 We will omit the notation of domain (e.g. $\Omega$ or $\Sigma$) and the time variable $T$ when there is no confusion.
For all spaces $Z$ with the form $X([0,T];Y)$, we use
\begin{equation}
\mathring{Z}^T \quad \text{and}  \quad  \mathring{X}([0,T];Y)
\end{equation}
to denote the subspace whose elements all have initial value 0 when $t=0$ (in the sense of limit or trace). 
If $Z$ is the intersection of spaces, i.e.
\begin{equation}
Z^T = X_1([0,T];Y_1) \cap \cdots \cap X_k([0,T];Y_k)
\end{equation}
for some $k\in\mathbb{N}$,
then we use $\mathring{Z}^T$ to denote
\begin{equation}
\mathring{Z}^T = \mathring{X}_1([0,T];Y_1) \cap \cdots \cap \mathring{X}_k([0,T];Y_k).
\end{equation}

\subsection{Fr{\'e}chet derivatives of $\mathcal{M}_i$ and their estimates}

In this section, we assume  that $\Omega\subseteq\mathbb{R}^m$  for $m\geq 2$.
We will estimate the terms
 $\mathcal{M}_i$, $i=0,1,2,3,4$  in \eqref{diff,term,M0}-\eqref{diff,term,M4}, 
 which are introduced in \cite{Pruss.quali}.
 We will also calculate and estimate their Fr{\'e}chet derivatives $D\mathcal{M} _i$.
 We use  arguments in a similar spirit to those for flat reference surfaces in \cite{Pruss.analytic.solu}. 
 The case of closed reference surface has been discussed in \cite{Pruss.quali}. We include more details for completeness.
The estimates are studied on a generic time interval $[0,T]\subseteq [0,T_0]$ with $T_0>0$ being a fixed number. 
For convenience, we temporarily omit the parameter $T$ in
the notations of function spaces. 

\subsubsection{$\mathcal{M}_0$ and $D\mathcal{M}_0$}

We recall that $\mathcal{M}_0 := (I- hL_{\Sigma})^{-1} $ .
Since $\Sigma$ is a fixed  surface, its Weingarten tensor $L_{\Sigma}:= - \nabla_\Sigma n_\Sigma$ is a fixed, matrix-valued function on $\Sigma$.
The entries of $I-h L_{\Sigma}$ are 
\begin{equation} 
I-h L_{\Sigma}=
\begin{pmatrix}
&1- h l_{11} & - h l_{12}  & \cdots & - h l_{1m} \\
&- h l_{21} & 1 - h l_{22}  & \cdots & - h l_{2m} \\
&\vdots &  \vdots & \ddots & \vdots \\
&- h l_{m1} &  - h l_{m2}  & \cdots & 1 - h l_{mm} 
\end{pmatrix}.
\end{equation}
The determinant of $I-h L_{\Sigma}$  
is in the form of 
$1 + h P(h)$, where  $P(h)$ denotes a  polynomial of $h$. 
The entries of the adjugate matrix  ${\rm adj}( I-h L_{\Sigma}) $ can all be represented using polynomials $Q_{ij}(h)$.
From the  inverse matrix formula $A^{-1} = { \rm adj} (A)/ {\rm det}(A)$, we have 
\begin{equation}  \label{structure.M0}
\begin{aligned}
 \prts{I-hL_{\Sigma}}^{-1}_{ij} 
=
 \frac{Q_{ij}(h)}{1 + h P(h) } .
\end{aligned}
\end{equation}
Let  $\partial $ denote the derivative of either the time or space variables, we have 
\begin{equation} \label{derivative.M0.temp1}
\begin{aligned}
 \partial \prts{I-hL_{\Sigma}}^{-1}_{ij} 
=
 \frac{\partial \prts{ Q_{ij}(h) } \prts{ 1 + h P(h) } - \prts{ Q_{ij}(h) } \partial\prts{ 1 + h P(h) }  }
 {\prts{ 1 + h P(h) }^2 }
= 
 \frac{  \tilde{Q}_{ij}(h,\partial h) }  { 1 + h \tilde{P}(h) }  . 
\end{aligned}
\end{equation}
Let $\delta_0(\Sigma)>0$ be sufficiently small and assume without loss of generality that $\delta_0<1$, then for all $\norm{h}_{\mathcal{C}_0} <\delta_0$ we have
\begin{equation} 
\begin{aligned}
\norm{ 1 + h \tilde{P}(h) }_{\mathcal{C}_0} \geq \frac{1}{2},
\end{aligned}
\end{equation}
which implies a universal upper bound
\begin{equation}  
\begin{aligned}
  \norm{(I-hL_{\Sigma})^{-1}}_{\mathcal{C}_0}
\leq C(m,\Sigma) .
\end{aligned}
\end{equation}
Taking higher-order derivatives, we obtain
\begin{equation}  \label{derivative.M0.temp2}
\begin{aligned}
 \partial^k \prts{I-hL_{\Sigma}}^{-1}_{ij} 
= 
 \frac{  \overline{Q}_{ij}(h,\partial h,\cdots, \partial^k h) }  { 1 + h \overline{P}(h) }, 
\end{aligned}
\end{equation}
where $\overline{P}$ and $\overline{Q}_{ij}$ are also polynomials.
Similarly, in \eqref{derivative.M0.temp2}, we can stay away from the zeros of the denominator  by letting 
 $\norm{h}_{\mathcal{C}_0} <\delta_0$ for some $\delta_0(m,\Sigma,k)\in (0,1)$.
Moreover, let $M>0$ be an upper bound of the derivatives of $h$, then we can bound the numerator as well.
Specially, for $k=0,1,2$, there exists $\delta_0(m,\Sigma,k)\in(0,1)$
such that for all  $\norm{h}_{\mathcal{C}_0} <\delta_0$ and $\norm{h}_{\mathcal{C}_k} <M$
we have
\begin{equation}  \label{derivative.M0.temp3}
\begin{aligned}
  \norm{ \mathcal{M}_0 }_{\mathcal{C}_k  } =  \norm{(I-hL_{\Sigma})^{-1}}_{\mathcal{C}_k  } 
\leq C(m,\Sigma,M) . 
\end{aligned}
\end{equation}

Next, we estimate the Fr{\'e}chet derivative $D\mathcal{M}_0$.
We decompose $\mathcal{M}_0$ into $F : h\mapsto I-hL_{\Sigma} $ and $G: A\mapsto A^{-1}$, where $A$ is an invertible $m\times m$ matrix.
Then we can calculate their Fr{\'e}chet derivatives by the definition, which implies
\begin{equation}
DF\depvar{h}\varphi = - \varphi L_{\Sigma} \quad \text{and} \quad DG\depvar{A}H= -A^{-1} H A^{-1}
\end{equation}
for all $\varphi $ in the same space as for $h$; and all $H$ in the same space as for $A$.
The term $DG$ is obtained by considering 
\begin{equation}
(A+H)^{-1}-A^{-1}= A^{-1}\prts{( I+HA^{-1} )^{-1} -I }
\end{equation}
and expanding $( I+HA^{-1} )^{-1}$ using power series.
Thus, we have
\begin{equation} 
\begin{aligned}
D\mathcal{M}_0 \depvar{h} \varphi 
=  D(G \circ F)\depvar{h}  \varphi 
=  DG \depvar{F\depvar{h}} DF\depvar{h}  \varphi
\\
=   \prts{ I-hL_{\Sigma} }^{-1}  \prts{ \varphi L_{\Sigma} } \prts{ I-hL_{\Sigma} }^{-1}
= \mathcal{M}_0  L_{\Sigma} \mathcal{M}_0 \varphi.
\end{aligned}
\end{equation}
Therefore, from \eqref{derivative.M0.temp3} we know that 
 for $k=0,1,2$, there exists $\delta_0 (m,\Sigma,k)\in(0,1)$,
such that for all  $\norm{h}_{\mathcal{C}_0 } <\delta_0$, $\norm{h}_{\mathcal{C}_k  } <M$
and $\varphi\in \mathring{\mathcal{C}}_k $, we have
\begin{equation} 
\begin{aligned}
& \norm{ D\mathcal{M}_0 \depvar{h} \varphi }_{ \mathring{\mathcal{C}}_k  }
=
\norm{ \mathcal{M}_0  \varphi L_\Sigma \mathcal{M}_0}_{  \mathring{\mathcal{C}}_k  }
\\
& \leq
\norm{ \mathcal{M}_0 }_{\mathcal{C}_k  }^2 
\norm{ L_\Sigma  }_{C^k(\Sigma)}  \norm{\varphi } _{  \mathring{\mathcal{C}}_k}
\leq 
C(m,\Sigma,M) \norm{ \varphi} _{  \mathring{\mathcal{C}}_k } .
\end{aligned}
\end{equation}
When $k=1$, we also have for all $0\leq T\leq T_0$ that
\begin{equation} 
\begin{aligned}
& \norm{ D\mathcal{M}_0 \depvar{h} \varphi }_{  \mathring{\mathcal{S}}^T_4 }
\leq  C(m,T_0,\Sigma,q)
 \norm{ \mathcal{M}_0 }_{\mathcal{C}_1^T  } ^2 
\norm{ L_\Sigma  }_{C^1(\Sigma)}  \norm{\varphi } _{ \mathring{\mathcal{S}}^T_4 }
\\
& \leq 
C(m,T_0, \Sigma,q, M) \norm{ \varphi} _{ \mathring{\mathcal{S}}_4^T } .
\end{aligned}
\end{equation}

\subsubsection{ $\mathcal{M}_1$ and $D\mathcal{M}_1$ }

Now we estimate  $\mathcal{M}_1 = (I+\nabla\theta)^{-1}\nabla\theta $ in \eqref{diff,term,M1}. 
We recall that
\[
\theta_h(x) = \eta\prts{\frac{d(x)} {\varrho_0} } h(\Pi(x))\nu_{\Sigma}(\Pi(x))
\]  
when  $x\in B(\Sigma; \varrho_0)$;
and $\theta_h(x)=0$ when $x\notin B(\Sigma;\varrho_0)$.
Since $\Sigma$ is a  fixed surface, we can write $\theta$ (subscript $h$ omitted) as 
\begin{equation}  \label{def1,of,theta}
 \theta(x)=\mathbbm{h}(x)\mathbbm{n}(x), 
\end{equation}
where $\mathbbm{n} := \eta(d(x)/ \varrho_0 ) n_{\Sigma}(\Pi(x)) $ is an extension of the normal vector field $n_{\Sigma}$ to $\Omega$, which is supported in $B(\Sigma;\varrho_0)$; 
and $\mathbbm{h}$ is an extension of $h$ from $\Sigma$ to $\Omega$ by 
letting $\mathbbm{h}(x):= h(\Pi(x))$ for $x\in B(\Sigma;\varrho_0)$ and $\mathbbm{h}(x):= 0$ otherwise.
Thus, we have
\begin{equation} 
\begin{aligned}
\nabla\theta = \nabla(\mathbbm{h}  \mathbbm{n} )
=
\nabla \mathbbm{h}\otimes \mathbbm{n}  + \mathbbm{h}  \nabla\mathbbm{n},
\end{aligned}
\end{equation}
which implies that
\begin{equation} 
\begin{aligned}
& \norm{\nabla \theta}_{\mathcal{C}_1 }
:= \norm{\nabla \theta}_{C^1([0,T]; C(\Omega))} 
	+  \norm{\nabla \theta}_{C([0,T]; C^1(\Omega))} 
\\
&  =\norm{\nabla(\mathbbm{h}  \mathbbm{n} )}_{C^1([0,T]; C(B(\Sigma;\varrho_0)))} 
	+  \norm{\nabla(\mathbbm{h}  \mathbbm{n} )}_{C([0,T]; C^1(B(\Sigma;\varrho_0)))} 
\\
& \leq  C(\Sigma)
 \norm{\mathbbm{h} } _{ C^1([0,T]; C^1(B(\Sigma;\varrho_0))) }
 + C(\Sigma) \norm{ \mathbbm{h}  }_{C([0,T]; C^2(B(\Sigma;\varrho_0)))} 
\\
& \leq C(\Sigma) \norm{\mathbbm{h} } _{\mathcal{C}_2 }
\leq C(\Sigma) \norm{h} _{\mathcal{C}_2 } .
\end{aligned}
\end{equation}
Using similar arguments and the fact that $\Sigma$ is a $ C^3$ surface, we   obtain
\begin{equation} 
\begin{aligned}
& \norm{\nabla \theta}_{\mathcal{S}_5 }
= \norm{\nabla \mathbbm{h}\otimes \mathbbm{n}   }_{\mathcal{S}_5 }
	+ \norm{ \mathbbm{h}  \nabla\mathbbm{n}}_{\mathcal{S}_5 }
\\
& \leq  C(\Sigma)
\norm{\nabla \mathbbm{h} }_{\mathcal{S}_5 }
	+  C(\Sigma) \norm{ \mathbbm{h} }_{\mathcal{S}_5 }
\leq  C(\Sigma)
\norm{ \mathbbm{h} }_{\mathcal{W}_5 }
\leq   C(\Sigma)
\norm{ h }_{\mathcal{W}_5 } .
\end{aligned}
\end{equation}
Letting $F: \mathbbm{h} \to (I+\nabla\theta)^{-1}$, for $\varphi\in \mathring{\mathcal{W}}_5$
we have
\begin{equation} 
\begin{aligned}
DF\depvar{\mathbbm{h}} \varphi =
- \prts{I+ \nabla(\mathbbm{h}\mathbbm{n}  ) }^{-1}  
\nabla \prts{\varphi\mathbbm{n} } 
\prts{I+ \nabla(\mathbbm{h}\mathbbm{n}  ) }^{-1}
\end{aligned}
\end{equation}
The entries of $I+\nabla \prts{\mathbbm{h}\mathbbm{n} }$
are
\begin{equation} \label{M1,estimate,matrix1}
\begin{pmatrix}
& 1+ \partial_1 \mathbbm{h} \mathbbm{n}_1 +  \mathbbm{h} \partial_1\mathbbm{n}_1 & \partial_1 \mathbbm{h} \mathbbm{n}_2 + \mathbbm{h} \partial_1  \mathbbm{n}_2  & \cdots &  \partial_1 \mathbbm{h} \mathbbm{n}_m + \mathbbm{h} \partial_1  \mathbbm{n}_m \\
& \partial_2 \mathbbm{h} \mathbbm{n}_1 +  \mathbbm{h} \partial_2\mathbbm{n}_1 & 1+ \partial_2 \mathbbm{h} \mathbbm{n}_2 + \mathbbm{h} \partial_2  \mathbbm{n}_2  & \cdots &  \partial_2 \mathbbm{h} \mathbbm{n}_m + \mathbbm{h} \partial_2  \mathbbm{n}_m \\
& \vdots &  \vdots & \ddots & \vdots \\
& \partial_m \mathbbm{h} \mathbbm{n}_1 +  \mathbbm{h} \partial_m\mathbbm{n}_1 & \partial_m \mathbbm{h} \mathbbm{n}_2 + \mathbbm{h} \partial_m  \mathbbm{n}_2  & \cdots & 1+ \partial_m \mathbbm{h} \mathbbm{n}_m + \mathbbm{h} \partial_m  \mathbbm{n}_m 
\end{pmatrix}.
\end{equation}
Since $\nabla \mathbbm{n}$ only depends on $\Sigma$, it is a fixed matrix.
Thus, the determinant of $I+\nabla \prts{\mathbbm{h}\mathbbm{n} }$ 
is
\begin{equation} 
\begin{aligned}
\det \prts{ I+\nabla \prts{\mathbbm{h}\mathbbm{n} } }
=
1+ 	\eth \mathbbm{h} P(	\eth \mathbbm{h}).
\end{aligned}
\end{equation}
To simplify the statement, we   temporarily use the notation $\eth \mathbbm{h}$ when: 
\begin{enumerate}
\item this term is either $h$ or its derivatives;
\item in all cases, the term can be controlled using the  norm in the context.
\end{enumerate}
Without loss of generality, we also slightly abuse the notation $\eth \mathbbm{h} P(	\eth \mathbbm{h})$ to denote the sum of multiple   terms with the same structure, e.g.
\[
\mathbbm{h} (1+ \mathbbm{h}\partial_{x_i}\mathbbm{h} ) 
+ \partial_{x_i} \mathbbm{h} (1+\mathbbm{h}) .
\]
This notation does not bring any trouble as long as the term $\eth \mathbbm{h}$ can be controlled 
by the corresponding norms in the context.
The entries of the adjugate matrix of $I+\nabla(\mathbbm{h}\mathbbm{n})$ are all in the form of $P(\mathbbm{h} , \partial \mathbbm{h})$.
Thus, all entries of the matrix
$  \prts{ I+\nabla \prts{\mathbbm{h}\mathbbm{n} } }^{-1}$  can be expressed as
\begin{equation}  \label{M1,poly,structure}
\begin{aligned}
\frac{Q_{ij}(\eth \mathbbm{h})}{1+  \eth \mathbbm{h} \, P(\eth \mathbbm{h})}.
\end{aligned}
\end{equation}
Moreover, their time or space derivatives, i.e., 
\[
\partial_{t}\prts{ \frac{Q_{ij}(\eth \mathbbm{h})}{1+  \eth \mathbbm{h} \, P(\eth \mathbbm{h})}  }
\quad \text{or} \quad
\partial_{x_k} \prts{ \frac{Q_{ij}(\eth \mathbbm{h})}{1+  \eth \mathbbm{h} \, P(\eth \mathbbm{h})} },
\]
can also be expressed using the same formula as in \eqref{M1,poly,structure}.
We recall that the polynomial fractions in  \eqref{M1,poly,structure} only depend on $\Sigma$.
Thus, when $\norm{h}_{\mathcal{C}_3} \leq \delta_0$ for some sufficiently small $\delta_0 (\Sigma) $, we are able to exclude all singularities. Suppose we also have $\norm{h} _{\mathcal{C}_2}\leq M$ for some $M>0$,
then we have
\begin{equation} \label{M1,poly,structure2}
\begin{aligned}
\norm{  \prts{ I+\nabla \prts{\mathbbm{h}\mathbbm{n} } }^{-1} }_{\mathcal{C}_1} 
\leq C(m ,\Sigma, M).
\end{aligned}
\end{equation}
We remark that the $\mathcal{C}_3$ norm is enough in the condition $\norm{h}_{\mathcal{C}_3} \leq \delta_0$, since taking derivatives of polynomial fractions does not create higher-order derivatives in their denominators.

To get the formula of  $D\mathcal{M}_1$, 
we still need to calculate the Fr{\'e}chet derivative of $G: \mathbbm{h}\mapsto \nabla\theta$,
i.e., $DG\depvar{\mathbbm{h}}\varphi = \nabla(\varphi\mathbbm{n})$. 
Using the product rule of Fr{\'e}chet derivatives, we then obtain that 
\begin{equation} \label{M1,mathbbm,h}
\begin{aligned}
& D(FG) \depvar{\mathbbm{h}}\varphi
=
\prts{ (DF)\depvar{\mathbbm{h}}\varphi } \prts{ G\depvar{\mathbbm{h}} }
 +   \prts{ F\depvar{\mathbbm{h}} } \prts{ (DG)\depvar{\mathbbm{h}}\varphi } 
\\
& =
- \prts{I+ \nabla(\mathbbm{h}\mathbbm{n}  ) }^{-1}  
\nabla \prts{\varphi\mathbbm{n} } 
\prts{I+ \nabla(\mathbbm{h}\mathbbm{n}  ) }^{-1}
\nabla \prts{\mathbbm{h}\mathbbm{n} } 
+
\prts{I+ \nabla(\mathbbm{h}\mathbbm{n}  ) }^{-1}
\nabla \prts{\varphi\mathbbm{n} } 
\\
& =
 \prts{I+ \nabla(\mathbbm{h}\mathbbm{n}  ) }^{-1}  
\nabla \prts{\varphi\mathbbm{n} } 
\prts{I+ \nabla(\mathbbm{h}\mathbbm{n}  ) }^{-1}
\nabla \prts{\mathbbm{h}\mathbbm{n} } ,
\end{aligned}
\end{equation}
where the last equality is obtained using $-(I+\nabla\theta)^{-1}\nabla\theta +I = (I+\nabla\theta)^{-1}\nabla\theta$.
Since there is no singularity in $\nabla ( \mathbbm{h} \mathbbm{n})$,
we have  
\begin{equation} \label{M1,poly,structure3}
\begin{aligned}
 \norm{  \nabla (\mathbbm{h} \mathbbm{n})   }_{\mathcal{C}_1}
\leq C(m,\Sigma)\norm{\mathbbm{h}}_{\mathcal{C}_2}.
\end{aligned}
\end{equation}
The term $\nabla (\varphi \mathbbm{n})$ can be treated in the same way since 
$\mathbbm{h}$ and $\varphi$ are in the same function space.
Finally,
letting $H: h\mapsto \mathbbm{h}$, 
we have $D \mathcal{M}_1 =  D_h \mathcal{M}_1 = D((FG)\circ H)$ 
and
\begin{equation} 
\begin{aligned}
& D((FG)\circ H)\depvar{h}\psi
=  D(FG) \depvar{H\depvar{h}} DH\depvar{h}\psi
\\
& =
\prts{ DF\depvar{H\depvar{h}} DH\depvar{h}\psi} \prts{ G\depvar{H\depvar{h}} }
 +   \prts{ F\depvar{H\depvar{h}} } \prts{ DG\depvar{H\depvar{h}}   DH\depvar{h}\psi} 
\\
& =  
- \prts{I+ \nabla(\mathbbm{h}\mathbbm{n}  ) }^{-1}  
\nabla \prts{(\psi\circ\Pi)\mathbbm{n} } 
\prts{I+ \nabla(\mathbbm{h}\mathbbm{n}  ) }^{-1}
\nabla \prts{\mathbbm{h}\mathbbm{n} } 
+
\prts{I+ \nabla(\mathbbm{h}\mathbbm{n}  ) }^{-1}
\nabla \prts{(\psi\circ\Pi)\mathbbm{n} } 
\end{aligned}
\end{equation}
for all $\psi\in\mathring{\mathcal{C}}_2$.
Consequently, we obtain the following estimates for $ \mathcal{M}_1 $ and $ D \mathcal{M}_1 $.
\begin{proposition}
There exists $\delta_0(m,\Sigma) \in(0,1)$, such that for all $\norm{h}_{\mathcal{C}_3}<\delta_0$,
if $\norm{h}_{\mathcal{C}_1} < M$ for some $M>0$ then 
\begin{equation}  \label{M1.estimate2}
\begin{aligned}
\norm{ \mathcal{M}_1 \depvar{h} } _{ \mathcal{C}_1 }
\leq C(m,\Sigma,M) \norm{h}_{\mathcal{C}_2};
\end{aligned}
\end{equation}
and for all $\varphi\in { \mathring{\mathcal{C}}_2 } $ we have
\begin{equation}  \label{M1.estimate1}
\begin{aligned}
\norm{ D \mathcal{M}_1 \depvar{h} \varphi } _{ \mathring{\mathcal{C}}_1 }
\leq C(m,\Sigma,M)
\norm{ \varphi } _{ \mathring{\mathcal{C}}_2 }.
\end{aligned}
\end{equation}

\end{proposition}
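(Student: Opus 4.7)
The plan is to assemble the two estimates from ingredients already built up in the paragraphs preceding the proposition. I would start from the representation $\mathcal{M}_1 = (I+\nabla\theta)^{-1}\nabla\theta$ with $\theta=\mathbbm{h}\mathbbm{n}$, and write the Fr\'echet derivative via the chain rule through the linear extension map $H:h\mapsto\mathbbm{h}=h\circ\Pi$. Since $H$ is linear and continuous from $\mathcal{C}_k(\Sigma)$ into $\mathcal{C}_k(\Omega)$ for $k\le 2$, this reduces the proposition to bounds on products of already-understood matrix-valued factors in $\mathcal{C}_1$.

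For \eqref{M1.estimate2}, I would use the submultiplicative property of $\mathcal{C}_1$ under matrix multiplication to write
\begin{equation*}
\norm{\mathcal{M}_1}_{\mathcal{C}_1} \;\leq\; \norm{(I+\nabla\theta)^{-1}}_{\mathcal{C}_1}\,\norm{\nabla\theta}_{\mathcal{C}_1}.
\end{equation*}
The first factor is bounded by $C(m,\Sigma,M)$ via \eqref{M1,poly,structure2}: the smallness $\norm{h}_{\mathcal{C}_3}<\delta_0$ keeps the polynomial denominator in \eqref{M1,poly,structure} uniformly away from zero, and the implicit $\mathcal{C}_2$ control on $h$ coming from the $\mathcal{C}_3$-smallness together with $M$ bounds the numerators after one differentiation. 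The second factor is bounded by $C(\Sigma)\norm{h}_{\mathcal{C}_2}$ via \eqref{M1,poly,structure3}, using the decomposition $\nabla\theta=\nabla\mathbbm{h}\otimes\mathbbm{n}+\mathbbm{h}\nabla\mathbbm{n}$ and the fact that $\mathbbm{n}$ depends only on $\Sigma$. Combining the two yields the advertised estimate.

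For \eqref{M1.estimate1}, I would use the Fr\'echet-derivative formula derived in \eqref{M1,mathbbm,h}, pre-composed with $DH[h]\psi=\psi\circ\Pi$, to express $D\mathcal{M}_1[h]\varphi$ as a sum of matrix products in which each factor is either $(I+\nabla\theta)^{-1}$, or $\nabla(\mathbbm{h}\mathbbm{n})$, or $\nabla((\varphi\circ\Pi)\mathbbm{n})$. Each of these is $\mathcal{C}_1$-bounded: the first two by the previous step, and the last by $C(\Sigma)\norm{\varphi}_{\mathcal{C}_2}$ from the same extension argument. To upgrade the bound to the $\mathring{\mathcal{C}}_1$ norm, I would observe that $\varphi\in\mathring{\mathcal{C}}_2$ implies $\varphi|_{t=0}=0$, so $(\varphi\circ\Pi)\mathbbm{n}|_{t=0}=0$ and hence $\nabla((\varphi\circ\Pi)\mathbbm{n})|_{t=0}=0$; multiplication by the other $\mathcal{C}_1$ factors (which need not vanish at $t=0$) preserves the vanishing initial trace. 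Thus $D\mathcal{M}_1[h]\varphi\in\mathring{\mathcal{C}}_1$ with the claimed estimate.

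The main obstacle here is not computational — both inequalities have effectively been prepared in the paragraphs leading to \eqref{M1,poly,structure2} and \eqref{M1,poly,structure3} — but bookkeeping: one must verify that the polynomial-fraction structure of \eqref{M1,poly,structure} persists under both time and space differentiation, so that a single smallness hypothesis $\norm{h}_{\mathcal{C}_3}<\delta_0$ continues to control every denominator uniformly. A subtler point is tracking the $\mathring{}\;$-structure through the product of a $\mathcal{C}_1$-function with a $\mathring{\mathcal{C}}_1$-function, which reduces to continuity at $t=0$ together with the linearity of the derivative formula in $\varphi$, and similarly underlies the $\mathring{\mathcal{S}}_4$-type estimate one would derive in parallel with the $k=1$ case of $\mathcal{M}_0$.
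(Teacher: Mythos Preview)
Your proposal is correct and follows essentially the same approach as the paper: the proposition is simply a packaging of the estimates \eqref{M1,poly,structure2}, \eqref{M1,poly,structure3}, and the derivative formula \eqref{M1,mathbbm,h} (composed with the linear extension $H:h\mapsto h\circ\Pi$), combined via the submultiplicativity of the $\mathcal{C}_1$ norm. Your observation about the $\mathring{\ }$-structure being preserved under multiplication by $\mathcal{C}_1$ factors is exactly the mechanism the paper relies on, though it leaves this implicit.
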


\begin{remark}
For the operator $H:h\mapsto\mathbbm{h}$, 
its Fr{\'e}chet derivative is 
$DH\depvar{h}: \varphi \mapsto \varphi \circ \Pi$.
Notice that $H$ is a linear operator. 
\end{remark}

\subsubsection {$\mathcal{M}_2$ and $D\mathcal{M}_2$ }

Now we study $\mathcal{M}_2$ in \eqref{diff,term,M2}.
Letting $\mathbbm{i}: x\to x$ be the identity mapping in $\Omega$, 
we have $\Theta=\mathbbm{i}+\mathbbm{h}\mathbbm{n}$ and
$
\nabla\Theta = I+ \nabla\theta
= I+ \nabla(\mathbbm{h}\mathbbm{n})
$.
Using the inverse function theorem, we have
\begin{equation} 
\begin{aligned}
 \triangle  \Theta^{-1}_k 
&=
\sum_{i}\partial_i (\nabla\Theta^{-1}_k)_i
=
\sum_{i}\partial_i \prts{ (\nabla\Theta)^{-1}_{ik} \circ \Theta^{-1}  }
\\
& =
\sum_{i}\sum_{j} \partial_i \Theta^{-1}_j \prts{ \partial_j (\nabla\Theta)^{-1}_{ik} } \circ  \Theta^{-1}
\\
& =
\sum_{i}\sum_{j}  \prts{ (\nabla\Theta)^{-1}_{ij} \circ  \Theta^{-1} }
\prts{ \prts{ \partial_j (\nabla\Theta)^{-1}_{ik} } \circ  \Theta^{-1} } , 
\end{aligned}
\end{equation}
which implies that
\begin{equation}  \label{M2,rewrite}
\begin{aligned}
\prts{\mathcal{M}_2 }_k = 
\prts{\triangle  \Theta^{-1}_k }\circ\Theta
=
\sum_{i}\sum_{j}  (\nabla\Theta)^{-1}_{ij}
 \prts{ \partial_j (\nabla\Theta)^{-1}_{ik} }.
\end{aligned}
\end{equation}
Here we abbreviated $\prts{\prts{\nabla\Theta}^{-1} }_{ij}$
to $\prts{\nabla\Theta}^{-1}_{ij}$ for convenience.
In order to estimate $\prts{\triangle  \Theta^{-1} }\circ\Theta $, 
we only need to estimate its entries $\prts{\triangle  \Theta^{-1}_k }\circ\Theta$,
which only requires the estimate of 
$ (\nabla\Theta)^{-1}_{ij}
 \prts{ \partial_j (\nabla\Theta)^{-1}_{ik} }$.
We only need to estimate one entry, i.e., for a fixed choice of $(i,j,k)$, 
since all entries have the same structure.

Letting $F: \mathbbm{h} \mapsto   (\nabla\Theta)^{-1}_{ij}  $
and $G: \mathbbm{h} \mapsto  \partial_j (\nabla\Theta)^{-1}_{ik}  $.
Similar to the arguments for  $\mathcal{M}_1$, we obtain
\begin{equation} 
\begin{aligned}
DF\depvar{\mathbbm{h}}\varphi 
=
- \prts{
\prts{ \nabla(\mathbbm{i}+\mathbbm{h}\mathbbm{n})  }^{-1}
\nabla( \varphi\mathbbm{n})
\prts{ \nabla(\mathbbm{i}+\mathbbm{h}\mathbbm{n})  }^{-1}
 } _{ij},
\end{aligned}
\end{equation}
\begin{equation} 
\begin{aligned}
DG\depvar{\mathbbm{h}}\varphi 
=
-\partial_j \prts{
\prts{ \nabla(\mathbbm{i}+\mathbbm{h}\mathbbm{n})  }^{-1}
\nabla(\varphi\mathbbm{n})
\prts{ \nabla(\mathbbm{i}+\mathbbm{h}\mathbbm{n})  }^{-1}
} _{ik}.
\end{aligned}
\end{equation}
Notice that $\nabla(\mathbbm{i}+\mathbbm{h}\mathbbm{n}) = I+ \nabla(\mathbbm{h}\mathbbm{n}) $.
Then we have
\begin{equation}  \label{M2,one,entry}
\begin{aligned}
 D(FG)\depvar{\mathbbm{h}}\varphi 
&=
\prts{ DF\depvar{\mathbbm{h}}\varphi } \prts{ G\depvar{\mathbbm{h}} }
+  \prts{ F\depvar{\mathbbm{h}} } \prts{ DG\depvar{\mathbbm{h}}\varphi } 
\\
& =  
-  \prts{
\prts{ I+\nabla\prts{ \mathbbm{h}\mathbbm{n} }  }^{-1}
\nabla( \varphi\mathbbm{n})
\prts{ I+\nabla\prts{ \mathbbm{h}\mathbbm{n} } }^{-1}
} _{ij} 
\prts{ \partial_j  \prts{ I+\nabla\prts{ \mathbbm{h}\mathbbm{n} } }_{ik} } 
\\
& \quad-
\prts{  \prts{ I+\nabla\prts{ \mathbbm{h}\mathbbm{n} } }^{-1} } _{ij}
\partial_j \prts{
\prts{ I+\nabla\prts{ \mathbbm{h}\mathbbm{n} }  }^{-1}
\nabla( \varphi\mathbbm{n})
\prts{ I+\nabla\prts{ \mathbbm{h}\mathbbm{n} } }^{-1}
} _{ik} 
\\
& 
=: I_1 + I_2.
\end{aligned}
\end{equation}
In term $I_1$, we have
\begin{equation} 
\begin{aligned}
 \partial_j  \prts{ I+\nabla\prts{ \mathbbm{h}\mathbbm{n} } }_{ik} 
=
  \partial^2_{ji} \mathbbm{h} \mathbbm{n}_k 
 +   \partial_i \mathbbm{h}   \partial_j \mathbbm{n}_k
 +    \partial_j \mathbbm{h}   \partial_i \mathbbm{n}_k
 +   \mathbbm{h}  \partial^2_{ji} \mathbbm{n}_k,
\end{aligned}
\end{equation}
which implies
\begin{equation} 
\begin{aligned}
 \norm{ \partial_j  \prts{ I+\nabla\prts{ \mathbbm{h}\mathbbm{n} } }_{ik} }_{\mathcal{C}_0}
\leq
C(m,\Sigma) \norm{h}_{\mathcal{C}_4}.
\end{aligned}
\end{equation}
When $\norm{h}_{\mathcal{C}_3} < \delta_0 $ for some sufficiently small $\delta_0(\Sigma)$, 
using the same argument as in the derivation of \eqref{M1,poly,structure2},
we have
\begin{equation} \label{part,of,M1.estimate.1}
\begin{aligned}
\norm{\prts{I+\nabla\prts{ \mathbbm{h}\mathbbm{n} }  }^{-1} }_{\mathcal{C}_0}
\leq C(m,\Sigma).
\end{aligned}
\end{equation} 
We remark that conditions like $\norm{h}_{\mathcal{C}_1}<M$ are not required in the current estimation.
Using \eqref{part,of,M1.estimate.1}, we obtain that
\begin{equation} 
\begin{aligned}
& \norm{   
\prts{
\prts{ I+\nabla\prts{ \mathbbm{h}\mathbbm{n} }  }^{-1}
\nabla( \varphi\mathbbm{n})
\prts{ I+\nabla\prts{ \mathbbm{h}\mathbbm{n} } }^{-1}
} _{ik}
}_{ \mathring{\mathcal{C}}_0  }  
\\
& \lesssim
\norm{   
\prts{ I+\nabla\prts{ \mathbbm{h}\mathbbm{n} }  }^{-1}
\nabla( \varphi\mathbbm{n})
\prts{ I+\nabla\prts{ \mathbbm{h}\mathbbm{n} } }^{-1}
}_{ \mathring{\mathcal{C}}_0  }  
\\
& \lesssim
\norm{   
\prts{ I+\nabla\prts{ \mathbbm{h}\mathbbm{n} }  }^{-1}
}_{\mathcal{C}_0}^2
 \norm{   
\nabla( \varphi\mathbbm{n})
}_{ \mathring{\mathcal{C}}_0  }  
\\
& \leq C(m, \Sigma )
 \norm{ \varphi }_{ \mathring{\mathcal{C}}_3  }  
\end{aligned}
\end{equation}
for all $\varphi \in \mathring{\mathcal{C}}_3$,
which completes the estimate
\begin{equation} \label{M2,I1} 
\begin{aligned}
\norm{I_1}_{\mathcal{C}_0}
\leq C(m, \Sigma, M)
 \norm{\varphi}_{ \mathring{\mathcal{C}}_3 } .
\end{aligned}
\end{equation}
In order to estimate $I_2$, we first obtain that
\begin{equation}
 \norm { I + \nabla(\mathbbm{h}\mathbbm{n})  }_{\mathcal{C}_0} 
 \leq C(m,\Sigma) \prts{ 1 + \norm{h}_{\mathcal{C}_3} }.
\end{equation}
To estimate the  term 
$\partial_j \prts{
\prts{ I + \nabla(\mathbbm{h}\mathbbm{n})  }^{-1}
\nabla( \varphi\mathbbm{n})
\prts{ I + \nabla(\mathbbm{h}\mathbbm{n})  }^{-1}
} _{ik}
$,
we first notice that for three matrices $A$, $B$ and $C$ we have
\begin{equation}
\begin{aligned}
& \norm {\partial_j (ABC)_{ik} }_{\mathcal{C}_0}
= \norm{ (\partial_jABC)_{ik}+ (A\partial_jBC)_{ik}+ (AB\partial_jC)_{ik} }_{\mathcal{C}_0}
\\
& \lesssim 
\norm{ \partial_j A}_{\mathcal{C}_0} \norm{B}_{\mathcal{C}_0}\norm{C} _{\mathcal{C}_0}
+ \norm{  A} _{\mathcal{C}_0} \norm{\partial_jB}_{\mathcal{C}_0} \norm{C} _{\mathcal{C}_0}
+ \norm{  A}_{\mathcal{C}_0} \norm{B}_{\mathcal{C}_0} \norm{\partial_j C}_{\mathcal{C}_0},
\end{aligned}
\end{equation}
i.e., the entry of the product of matrices can be estimated using the matrix norm.
\begin{remark}
We remind the reader that for vector-valued or matrix-valued functions in $\mathcal{C}_k$,
the norm is defined by  first taking the $\mathcal{C}_k$ norm of components, and then taking the matrix or vector norm.
\end{remark}
 We recall that we already have 
\begin{equation} 
\begin{aligned}
\norm{\nabla( \varphi\mathbbm{n})}_{ \mathring{\mathcal{C}}_3 } 
 \leq C(\Sigma) \norm{\varphi}_{ \mathring{\mathcal{C}}_4 } ,
\end{aligned}
\end{equation}
\begin{equation} 
\begin{aligned}
\norm{\partial_j \prts{ \nabla( \varphi\mathbbm{n}) }   }_{ \mathring{\mathcal{C}}_0 } 
 \leq C(\Sigma) \norm{\varphi}_{ \mathring{\mathcal{C}}_4 } .
\end{aligned}
\end{equation}
 It remains to estimate the matrix norm of 
 $  \partial_j \prts{ I + \nabla(\mathbbm{h}\mathbbm{n})  }^{-1} $,
which can be obtained using the same argument as in the derivation of \eqref{M1,poly,structure2}.
In fact, 
 there exists $\delta_0(\Sigma) \in (0,1)$, 
such that for all $\norm{h}_{\mathcal{C}_3}< \delta_0$,
if $\norm{h}_{\mathcal{C}_4}< M$ for some $M>0$, then we have
\begin{equation} \label{M2,I2,temp1}
\begin{aligned}
\norm{ \partial_j \prts{ \prts{ I + \nabla(\mathbbm{h}\mathbbm{n}) }^{-1} } }_{\mathcal{C}_0}
\leq C(m,\Sigma, M),
\end{aligned}
\end{equation}
which completes the estimate
\begin{equation}
\norm{ I_2  }_{\mathring{\mathcal{C}}_0}
\leq C(m,\Sigma,M)\norm{ \varphi  }_{\mathring{\mathcal{C}}_4}.
\end{equation}
Consequently, we have the following estimates.
\begin{proposition}
There exists $\delta_0(\Sigma) \in(0,1)$, 
such that for all $\norm{h}_{\mathcal{C}_3}< \delta_0$,
if  $\norm{h}_{\mathcal{C}_4}< M$ for some $M>0$, then
\begin{equation} 
\begin{aligned}
\norm{ \mathcal{M}_2}_{ \mathcal{C}_0}
\leq
C(m,\Sigma,M) ;
\end{aligned}
\end{equation}
and for all $\varphi\in \mathring{\mathcal{C}}_4 $ we have
\begin{equation} 
\begin{aligned}
\norm{ D\mathcal{M}_2\depvar{h} \varphi }_{\mathring{\mathcal{C}}_0} 
\leq C(m,\Sigma,M) \norm{ \varphi  }_{\mathring{\mathcal{C}}_4}.
\end{aligned}
\end{equation}

\end{proposition}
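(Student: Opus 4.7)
The proof plan is to collect the computations carried out in the paragraphs immediately preceding the statement and package them into clean bounds on each of the two quantities. Throughout, $\delta_0$ will be chosen so that all denominators that appear in the polynomial-fraction representations below stay bounded away from zero, and the constants in the estimates only grow polynomially in $M$.

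First, for the sup-norm bound on $\mathcal{M}_2$ itself, I would start from the identity \eqref{M2,rewrite}, which already reduces the problem to estimating each scalar factor $(\nabla\Theta)^{-1}_{ij}$ and each derivative $\partial_j (\nabla\Theta)^{-1}_{ik}$ in $\mathcal{C}_0$. Writing $\nabla\Theta = I+\nabla(\mathbbm{h}\mathbbm{n})$ as in \eqref{M1,estimate,matrix1} and using the adjugate-over-determinant formula, each entry of $(\nabla\Theta)^{-1}$ is a polynomial fraction of the form \eqref{M1,poly,structure}. Choosing $\delta_0(\Sigma)$ small enough so that $\|\mathbbm h \tilde P(\eth\mathbbm h)\|_{\mathcal{C}_0}\le 1/2$, the estimate \eqref{part,of,M1.estimate.1} gives $\|(\nabla\Theta)^{-1}\|_{\mathcal{C}_0}\le C(m,\Sigma)$. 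Differentiating once in space multiplies the denominator's exponent but creates no new zeros, and the numerator is a polynomial in $\eth\mathbbm h$ up to one extra spatial derivative, hence controlled by $\|h\|_{\mathcal{C}_4}\le M$; this reproduces \eqref{M2,I2,temp1} and yields $\|\partial_j(\nabla\Theta)^{-1}_{ik}\|_{\mathcal{C}_0}\le C(m,\Sigma,M)$. Summing over $i,j,k$ in \eqref{M2,rewrite} gives the first claimed bound $\|\mathcal{M}_2\|_{\mathcal{C}_0}\le C(m,\Sigma,M)$.

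For the Fréchet derivative estimate, I would compose $\mathcal{M}_2$ with the extension operator $H:h\mapsto\mathbbm h$ (which is linear with $DH\depvar h\psi=\psi\circ\Pi$, a bounded map from $\mathring{\mathcal{C}}_k$ to itself), apply the product rule to \eqref{M2,rewrite} entrywise, and, as in \eqref{M2,one,entry}, split $D\mathcal{M}_2\depvar h \varphi$ into two terms $I_1+I_2$ with $\varphi$ replaced by $\varphi\circ\Pi$. For $I_1$, the middle factor $\prts{(I+\nabla(\mathbbm h\mathbbm n))^{-1}\nabla((\varphi\circ\Pi)\mathbbm n)(I+\nabla(\mathbbm h\mathbbm n))^{-1}}_{ij}$ is bounded in $\mathring{\mathcal{C}}_0$ by $C(m,\Sigma)\|\varphi\|_{\mathring{\mathcal{C}}_1}$ using \eqref{part,of,M1.estimate.1} twice, while $\partial_j(I+\nabla(\mathbbm h\mathbbm n))_{ik}$ contributes a factor bounded by $C(m,\Sigma)\|h\|_{\mathcal{C}_4}\le C(m,\Sigma,M)$; this is exactly \eqref{M2,I1}.

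For $I_2$, I would distribute the outer $\partial_j$ across the triple product $(I+\nabla(\mathbbm h\mathbbm n))^{-1}\nabla((\varphi\circ\Pi)\mathbbm n)(I+\nabla(\mathbbm h\mathbbm n))^{-1}$ via the usual entrywise product rule, obtaining three summands. The two outer ones contain $\partial_j(I+\nabla(\mathbbm h\mathbbm n))^{-1}$, controlled in $\mathcal{C}_0$ by $C(m,\Sigma,M)$ thanks to \eqref{M2,I2,temp1}, combined with $\nabla((\varphi\circ\Pi)\mathbbm n)$ bounded by $C(\Sigma)\|\varphi\|_{\mathring{\mathcal{C}}_1}$; the middle one carries $\partial_j\nabla((\varphi\circ\Pi)\mathbbm n)$, bounded by $C(\Sigma)\|\varphi\|_{\mathring{\mathcal{C}}_2}$, with the two matrix inverses bounded in $\mathcal{C}_0$. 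Multiplying by the leading $(I+\nabla(\mathbbm h\mathbbm n))^{-1}_{ij}$ factor and summing over $i,j,k$ gives $\|I_2\|_{\mathring{\mathcal{C}}_0}\le C(m,\Sigma,M)\|\varphi\|_{\mathring{\mathcal{C}}_4}$, where the $\mathring{\mathcal{C}}_4$ margin is safely included to absorb the worst-case $\mathring{\mathcal{C}}_2$ term. Adding $I_1$ and $I_2$ produces the desired estimate on $\|D\mathcal{M}_2\depvar h\varphi\|_{\mathring{\mathcal{C}}_0}$.

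The main obstacle, as elsewhere in this section, is bookkeeping rather than any genuine analytic difficulty: one has to verify that every factor produced by the chain/product rule can indeed be placed in the polynomial-fraction framework of \eqref{M1,poly,structure} after at most one spatial differentiation, so that the same smallness threshold $\delta_0(\Sigma)$ controls every denominator while the bound $\|h\|_{\mathcal{C}_4}\le M$ controls every numerator. Once that bookkeeping is in place, the estimates reduce to elementary $\mathcal{C}_0$ norm manipulations of matrix products, and the composition with $H$ costs nothing because $\Pi\in C^2$ on $B(\Sigma;\varrho_0)$.
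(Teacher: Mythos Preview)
Your approach is essentially identical to the paper's: reduce via \eqref{M2,rewrite} to entrywise products $(\nabla\Theta)^{-1}_{ij}\,\partial_j(\nabla\Theta)^{-1}_{ik}$, bound $\mathcal{M}_2$ in $\mathcal{C}_0$ using \eqref{part,of,M1.estimate.1} and \eqref{M2,I2,temp1}, and for $D\mathcal{M}_2$ split into $I_1+I_2$ as in \eqref{M2,one,entry} and estimate each by distributing derivatives across the triple matrix product.

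There is, however, a genuine slip in the bookkeeping of the $\mathcal{C}_k$ indices. In this paper's convention the spaces are \emph{not} nested by increasing index: $\mathcal{C}_3=C^0([0,T];C^1)$ and $\mathcal{C}_4=C^0([0,T];C^2)$ carry no time regularity, whereas $\mathcal{C}_1=C^0C^1\cap C^1C^0$ and $\mathcal{C}_2=C^0C^2\cap C^1C^1$ are strictly smaller. Thus $\|\cdot\|_{\mathring{\mathcal{C}}_2}\ge\|\cdot\|_{\mathring{\mathcal{C}}_4}$, and your claim that ``the $\mathring{\mathcal{C}}_4$ margin is safely included to absorb the worst-case $\mathring{\mathcal{C}}_2$ term'' has the inclusion backwards: a bound by $\|\varphi\|_{\mathring{\mathcal{C}}_2}$ cannot be upgraded to one by $\|\varphi\|_{\mathring{\mathcal{C}}_4}$, and for $\varphi$ merely in $\mathring{\mathcal{C}}_4$ the $\mathring{\mathcal{C}}_2$ norm need not even be finite. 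The fix is immediate and matches the paper exactly: replace your intermediate $\mathring{\mathcal{C}}_1$ by $\mathring{\mathcal{C}}_3$ (since $\|\nabla(\varphi\mathbbm n)\|_{\mathring{\mathcal{C}}_0}\le C(\Sigma)\|\varphi\|_{\mathring{\mathcal{C}}_3}$) and your $\mathring{\mathcal{C}}_2$ by $\mathring{\mathcal{C}}_4$ (since $\|\partial_j\nabla(\varphi\mathbbm n)\|_{\mathring{\mathcal{C}}_0}\le C(\Sigma)\|\varphi\|_{\mathring{\mathcal{C}}_4}$). With that correction the argument goes through verbatim and coincides with the paper's proof.
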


\subsubsection{ $\mathcal{M}_3$ and $D\mathcal{M}_3$ }

The term $\mathcal{M}_3 =  \partial_t\theta \prts{ I-(I+\nabla\theta)^{-1} \nabla\theta }$ 
can be written as
\begin{equation}
\mathcal{M}_3= \partial_t \prts{ \mathbbm{h} \mathbbm{n} } (I-\mathcal{M}_1).
\end{equation}
Letting $F: \mathbbm{h} \mapsto I-\mathcal{M}_1 $ 
and  $G: \mathbbm{h} \mapsto \partial_t \prts{ \mathbbm{h} \mathbbm{n} }$.
Using the same calculation as in \eqref{M1,mathbbm,h}, 
we have for all $\varphi\in\mathring{\mathcal{C}}_1$ that
\begin{equation} 
\begin{aligned}
DF \depvar{\mathbbm{h}}\varphi
=  -  \prts{I+ \nabla(\mathbbm{h}\mathbbm{n}  ) }^{-1}  
\nabla \prts{\varphi\mathbbm{n} } 
\prts{I+ \nabla(\mathbbm{h}\mathbbm{n}  ) }^{-1}  . 
\end{aligned}
\end{equation}
Since $\partial_t \theta = \partial_t (\mathbbm{h}\mathbbm{n}) = \partial_t \mathbbm{h}\mathbbm{n}$, 
we have
\begin{equation} 
\begin{aligned}
DG \depvar{\mathbbm{h}}\varphi
= \partial_t \varphi \mathbbm{n}.
\end{aligned}
\end{equation}
Thus, the Fr{\'e}chet derivative of  $\mathcal{M}_3$ is 
\begin{equation} \label{M3,product,rule}
\begin{aligned}
 D(GF)\depvar{\mathbbm{h}} \varphi 
&=  \prts{ G \depvar{\mathbbm{h}} } \prts{ DF\depvar{\mathbbm{h}} \varphi } 
	+ \prts{ DG \depvar{\mathbbm{h}} \varphi }  \prts{ F \depvar{\mathbbm{h}} } 
\\
& =
 \prts{
 \partial_t \mathbbm{h} \mathbbm{n}
 }
\prts{ 
- \prts{I+ \nabla(\mathbbm{h}\mathbbm{n}  ) }^{-1}  
\nabla \prts{\varphi\mathbbm{n} } 
\prts{I+ \nabla(\mathbbm{h}\mathbbm{n}  ) }^{-1}
 }
 \\
& \quad +
  \prts{
  \partial_t \varphi \mathbbm{n}
  }
  \prts{
  I- \prts{ I+\nabla(\mathbbm{h}\mathbbm{n}) }^{-1} \nabla(\mathbbm{h}\mathbbm{n}) 
  }.
\end{aligned}
\end{equation}
Composing with the mapping $h\mapsto \mathbbm{h}=h\circ\Pi$, we have the following estimates.
\begin{proposition}
There exists $\delta_0(\Sigma)\in (0,1)$, such that for all $\norm{h}_{\mathcal{C}_3} < \delta_0 $, 
if $\norm{h}_{\mathcal{C}_1} < M $ for some $M>0$, then
\begin{equation} \label{M3.estimate,0}
\begin{aligned}
\norm {  \mathcal{M}_3  \depvar{ h }  }_{ \mathcal{C}_0 }
\leq C(m, \Sigma,M) \norm{h}_{ \mathcal{C}_1} ;
\end{aligned}
\end{equation}
and for all $\varphi \in \mathring{\mathcal{C}}_1 $,
\begin{equation} \label{M3.estimate}
\begin{aligned}
\norm { D \mathcal{M}_3  \depvar{ h }   \varphi  }_{\mathring{\mathcal{C}}_0}
\leq C(m, \Sigma,M) \norm{\varphi}_{\mathring{\mathcal{C}}_1} .
\end{aligned}
\end{equation}

\end{proposition}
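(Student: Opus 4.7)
The plan is to exploit the factored form $\mathcal{M}_3 = \partial_t(\mathbbm{h}\mathbbm{n})\,(I-\mathcal{M}_1) = (\partial_t\mathbbm{h})\mathbbm{n}\,(I-\mathcal{M}_1)$, which separates the time-dependence (concentrated in $\partial_t\mathbbm{h}$) from the nonlinear matrix block $I-\mathcal{M}_1$ that has already been analyzed. The second factor is what forces the smallness hypothesis on $\norm{h}_{\mathcal{C}_3}$, since its control requires the uniform invertibility of $I + \nabla(\mathbbm{h}\mathbbm{n})$, exactly as in the previous proposition for $\mathcal{M}_1$.

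For the pointwise bound, I would write $\norm{\mathcal{M}_3}_{\mathcal{C}_0} \leq \norm{\partial_t\mathbbm{h}}_{\mathcal{C}_0}\norm{\mathbbm{n}}_{\mathcal{C}_0}\norm{I-\mathcal{M}_1}_{\mathcal{C}_0}$. The cutoff normal field $\mathbbm{n}$ depends only on $\Sigma$, and the extension operator $h\mapsto\mathbbm{h}=(h\circ\Pi)\eta(d/\varrho_0)$ satisfies $\norm{\partial_t\mathbbm{h}}_{\mathcal{C}_0(\Omega)}\leq C(\Sigma)\norm{\partial_t h}_{\mathcal{C}_0(\Sigma)}\leq C(\Sigma)\norm{h}_{\mathcal{C}_1(\Sigma)}$. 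The factor $\norm{I-\mathcal{M}_1}_{\mathcal{C}_0}$ is bounded by a constant $C(m,\Sigma,M)$ once $\norm{h}_{\mathcal{C}_3}<\delta_0$ is small enough to keep the determinant $1+\mathbbm{h}P(\mathbbm{h},\nabla\mathbbm{h})$ away from zero (the polynomial-fraction structure \eqref{M1,poly,structure} used in the $\mathcal{M}_1$ proof applies verbatim). Combining these gives \eqref{M3.estimate,0}.

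For the Fréchet derivative, I would apply the product rule to $G(\mathbbm{h})F(\mathbbm{h}) = \partial_t(\mathbbm{h}\mathbbm{n})\cdot(I-\mathcal{M}_1)$ and then precompose with the linear extension $H:h\mapsto\mathbbm{h}=h\circ\Pi$, producing precisely the expression in \eqref{M3,product,rule} (with $\varphi$ replaced by $\varphi\circ\Pi$). Each factor in the resulting two summands is controlled by a previously established bound: $\norm{\partial_t\mathbbm{h}\,\mathbbm{n}}_{\mathcal{C}_0}\leq C(\Sigma)M$, the inverse $\norm{(I+\nabla(\mathbbm{h}\mathbbm{n}))^{-1}}_{\mathcal{C}_0}\leq C(m,\Sigma)$ follows from \eqref{part,of,M1.estimate.1} under $\norm{h}_{\mathcal{C}_3}<\delta_0$, the gradient $\norm{\nabla((\varphi\circ\Pi)\mathbbm{n})}_{\mathring{\mathcal{C}}_0}\leq C(\Sigma)\norm{\varphi}_{\mathring{\mathcal{C}}_1(\Sigma)}$, and $\norm{\partial_t(\varphi\circ\Pi)\,\mathbbm{n}}_{\mathring{\mathcal{C}}_0}\leq C(\Sigma)\norm{\varphi}_{\mathring{\mathcal{C}}_1(\Sigma)}$. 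Assembling the two summands of \eqref{M3,product,rule} with these bounds and using $\norm{h}_{\mathcal{C}_1}<M$ yields \eqref{M3.estimate}.

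I expect no substantive obstacle here: the work is essentially bookkeeping once the two reductions (factor the $\partial_t$ into a scalar, and reuse the $\mathcal{M}_1$ estimates for the matrix block) are in place. The only subtle point is confirming that the hypothesis $\norm{h}_{\mathcal{C}_3}<\delta_0$ is genuinely the right level of regularity, namely that it is exactly what is needed to keep $\det(I+\nabla(\mathbbm{h}\mathbbm{n}))$ uniformly bounded below without requiring any control on higher derivatives, while the $M$-bound on $\norm{h}_{\mathcal{C}_1}$ supplies the pointwise smallness of $\partial_t\mathbbm{h}$. No control on $\norm{h}_{\mathcal{C}_2}$ or above is needed, which is consistent with the statement and with the fact that $\mathcal{M}_3$ itself contains no second-order derivative of $h$.
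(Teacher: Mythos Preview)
Your proposal is correct and follows essentially the same approach as the paper: factor $\mathcal{M}_3 = (\partial_t\mathbbm{h})\mathbbm{n}\,(I-\mathcal{M}_1)$, apply the product rule to obtain the formula \eqref{M3,product,rule}, and reuse the $\mathcal{M}_1$ estimates (in particular \eqref{part,of,M1.estimate.1}) for the matrix block together with the trivial bounds on $\partial_t\mathbbm{h}$ and $\partial_t\varphi$. One inconsequential slip: in the paper's convention the cutoff $\eta(d/\varrho_0)$ sits in $\mathbbm{n}$, not in $\mathbbm{h}$, so $\mathbbm{h}=h\circ\Pi$; this does not affect any of your estimates.
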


\subsubsection{  $\mathcal{M}_4$ and $D\mathcal{M}_4$ }

In order to estimate $\mathcal{M}_4 = \prts{\nabla \Theta}^{-\top}  \prts{\nabla \Theta}^{-1}   -I$,
we first rewrite it as
\begin{equation}
\begin{aligned}
\mathcal{M}_4 &= 
\prts{\nabla \Theta}^{-\top} \prts{ I - \nabla \Theta^\top  \nabla \Theta  } \prts{\nabla \Theta}^{-1}  
\\
&=  - 
\prts{\nabla \Theta}^{-\top} 
\prts{  \nabla \theta^\top  \nabla \theta +  \nabla \theta^\top +  \nabla \theta} 
\prts{\nabla \Theta}^{-1}  ,
\end{aligned}
\end{equation}
which allows us to separate the variable $h$.

Next, we study the Fr{\'e}chet derivative.
We define
\begin{equation}
\begin{aligned}
F : A \mapsto A^{\top} A  - I ,
\end{aligned}
\end{equation}
where $A$ is a matrix.
Since
\begin{equation}
\begin{aligned}
F(A+H) -F(A) = A^{\top}H + H^{\top} A +   H^{\top} H,
\end{aligned}
\end{equation}
we have
\begin{equation}
\begin{aligned}
DF\depvar{A}H = A^{\top}H + H^{\top} A.
\end{aligned}
\end{equation}
We recall that for the inverse matrix operator $G: A\mapsto A^{-1} $ we have
\begin{equation}
\begin{aligned}
DG\depvar{A}H = - A^{-1} H A^{-1}.
\end{aligned}
\end{equation}
Thus, we have
\begin{equation}
\begin{aligned}
D (F \circ G)\depvar{A} H = DF\depvar{G\depvar{A}} DG\depvar{A}H
= -A^{-\top}\prts{ A^{-1}H + \prts{A^{-1}H}^{\top} } A^{-1}.
\end{aligned}
\end{equation}
For the operator $K: \mathbbm{h} \mapsto \nabla\Theta= I+\nabla\theta$,
we have $DK\depvar{ \mathbbm{h} }\varphi = \nabla \prts{ \varphi \mathbbm{h}  }$.
This implies that
\begin{equation}
\begin{aligned}
& D (F \circ G \circ K)\depvar{ \mathbbm{h} } \varphi 
=
 D(F\circ G) \depvar{ K(\mathbbm{h})  } DK \depvar{\mathbbm{h}} \varphi
\\
& = -\prts{ I+\nabla\prts{\mathbbm{h}\mathbbm{n}} }^{-\top}
\prts{ \prts{ I+\nabla\prts{\mathbbm{h}\mathbbm{n}} }^{-1}  \nabla\prts{\varphi\mathbbm{n}}  
+ \prts{\prts{ I+\nabla\prts{\mathbbm{h}\mathbbm{n}} }^{-1}    \nabla\prts{\varphi\mathbbm{n}}     }^{\top} } \prts{ I+\nabla\prts{\mathbbm{h}\mathbbm{n}} }^{-1}.
\end{aligned}
\end{equation}
Composing $F \circ G \circ K$ with the mapping $h\mapsto \mathbbm{h}$ and using the same argument as in the estimate of $\mathcal{M}_1$, we obtain the following estimate.
\begin{proposition}
There exists $\delta_0(\Sigma)\in (0,1)$, such that for all $\norm{h}_{\mathcal{C}_3} < \delta_0 $, 
\begin{equation}
\begin{aligned}
\norm { \mathcal{M}_4  \depvar{ h }  }_{ \mathcal{C}_0 }
\leq C(m, \Sigma) 
\norm { h }_{ \mathcal{C}_3 };
\end{aligned}
\end{equation}
and for all $\varphi \in \mathring{\mathcal{C}}_3 $,
\begin{equation}
\begin{aligned}
\norm { D \mathcal{M}_4  \depvar{ h } \varphi }_{\mathring{\mathcal{C}}_0}
\leq C(m, \Sigma) 
\norm {  \varphi }_{ \mathring{\mathcal{C}}_3 }.
\end{aligned}
\end{equation}

\end{proposition}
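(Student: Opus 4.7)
The plan is to exploit the factorization already recorded in the excerpt,
\[
\mathcal{M}_4 = -(\nabla\Theta)^{-\top}\bigl(\nabla\theta^\top\nabla\theta + \nabla\theta^\top + \nabla\theta\bigr)(\nabla\Theta)^{-1},
\]
which isolates a linear-in-$\nabla\theta$ factor (plus a quadratic correction) sandwiched between two inverse-gradient factors. Combined with the representation $\theta = \mathbbm{h}\mathbbm{n}$, where $\mathbbm{n}$ is the fixed smooth extension of $n_\Sigma$, this reduces both inequalities to estimates of $\nabla(\mathbbm{h}\mathbbm{n})$ (linear in $h$ via $\nabla\mathbbm{h} = (\nabla h\circ\Pi)\nabla\Pi$) and of $(I+\nabla(\mathbbm{h}\mathbbm{n}))^{-1}$, i.e., to exactly the building blocks already controlled in the previous subsections.

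For the first inequality I would fix $\delta_0(\Sigma)\in(0,1)$ small enough that $\det(I+\nabla(\mathbbm{h}\mathbbm{n}))$ stays uniformly bounded away from $0$ on $[0,T]\times\Omega$ whenever $\norm{h}_{\mathcal{C}_3}<\delta_0$. The adjugate-over-determinant representation then yields $\norm{(I+\nabla(\mathbbm{h}\mathbbm{n}))^{-1}}_{\mathcal{C}_0}\leq C(m,\Sigma)$, exactly as in the argument culminating in \eqref{part,of,M1.estimate.1}. Since $\norm{\nabla\theta}_{\mathcal{C}_0}\leq C(\Sigma)\norm{h}_{\mathcal{C}_3}$, and since $\norm{h}_{\mathcal{C}_3}<\delta_0<1$ permits the quadratic term $\nabla\theta^\top\nabla\theta$ to be absorbed into the two linear ones, multiplying the three factors yields $\norm{\mathcal{M}_4}_{\mathcal{C}_0}\leq C(m,\Sigma)\norm{h}_{\mathcal{C}_3}$.

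For the Fr\'{e}chet derivative, I would compose the formula derived above for $D(F\circ G\circ K)\depvar{\mathbbm{h}}$ with the linear map $H:h\mapsto \mathbbm{h}=h\circ\Pi$ (so that $DH\depvar{h}\varphi=\varphi\circ\Pi$), obtaining
\[
D\mathcal{M}_4\depvar{h}\varphi = -\bigl(I+\nabla(\mathbbm{h}\mathbbm{n})\bigr)^{-\top}\Bigl[\bigl(I+\nabla(\mathbbm{h}\mathbbm{n})\bigr)^{-1}\nabla\bigl((\varphi\circ\Pi)\mathbbm{n}\bigr) + (\cdots)^{\top}\Bigr]\bigl(I+\nabla(\mathbbm{h}\mathbbm{n})\bigr)^{-1}.
\]
The two outer inverse-gradient factors are bounded in $\mathcal{C}_0$ by $C(m,\Sigma)$ as above. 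The middle factor is linear in $\varphi$ and, by the same chain rule applied to $\varphi\circ\Pi$ together with the smoothness of $\mathbbm{n}$, satisfies $\norm{\nabla((\varphi\circ\Pi)\mathbbm{n})}_{\mathcal{C}_0}\leq C(\Sigma)\norm{\varphi}_{\mathcal{C}_3}$. Since $\varphi\in\mathring{\mathcal{C}}_3$ vanishes at $t=0$, so does $\varphi\circ\Pi$, and hence the full product, giving the desired $\mathring{\mathcal{C}}_0$-bound.

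The only subtlety, and what I expect to be the main bookkeeping point rather than a genuine analytic obstacle, is that no auxiliary bound $\norm{h}_{\mathcal{C}_k}\leq M$ appears in the final statement, in contrast to the estimates for $\mathcal{M}_0$--$\mathcal{M}_3$. This works precisely because in both inequalities we only need $\mathcal{C}_0$-control (no spatial or time derivatives) of the inverse-gradient factors, and because the small factor $\nabla\theta$ (respectively $\nabla((\varphi\circ\Pi)\mathbbm{n})$) has already been separated out of the product; the $\delta_0$-smallness alone suffices to keep the determinant away from zero, with no extra hypothesis on higher derivatives of $h$.
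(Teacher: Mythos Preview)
Your proposal is correct and follows essentially the same approach as the paper: the same factorization of $\mathcal{M}_4$, the same chain-rule computation of $D\mathcal{M}_4$ via $F\circ G\circ K$ composed with $h\mapsto\mathbbm{h}$, and the same reduction to the $\mathcal{C}_0$-bounds on $(I+\nabla(\mathbbm{h}\mathbbm{n}))^{-1}$ and $\nabla((\varphi\circ\Pi)\mathbbm{n})$ already established in the $\mathcal{M}_1$ subsection. Your explicit remark on why no auxiliary bound $\norm{h}_{\mathcal{C}_k}\leq M$ is required here---only $\mathcal{C}_0$ control of the inverse factors is needed, and the small factor has been isolated---is a point the paper leaves implicit.
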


Therefore, we have obtained the estimates of terms $\mathcal{M}_0$ to $\mathcal{M}_4$.
In the remaining part of this paper,  
we will fix a value for $\delta_0$ such that all these estimates can be satisfied,
which can  be done by taking the minimum of finite many values. For more details on these terms, we refer to \cite{Pruss.quali, Pruss.analytic.solu}.

\begin{remark}
Notice that  the variable we consider is $h$ when we calculate the Fr{\'e}chet derivatives, rather than the point $x\in\Omega$. 
Thus, the function $\Theta(x)= x+ \mathbbm{h}(x)\mathbbm{n}(x)$ is understood as a mapping 
$h\mapsto \mathbbm{h} \mapsto \mathbbm{i} + \mathbbm{h}\mathbbm{n} $, where $\mathbbm{i}$ 
and $\mathbbm{n}$ are fixed functions defined in $\Omega$. The Fr{\'e}chet derivative is the derivative of an operator, so it is taken with respect to $h$ rather than $x$.
\end{remark}

\subsection{Fr{\'e}chet derivatives of $\mathcal{G}_i$ and their estimates} \label{section,G123,estimate}

In this section, we estimate nonlinear terms
 $\mathcal{G}_i$, $i=1,2,3$ in the surface tension equation, 
which are defined in 
\eqref{stress,eq,rewrite,temp2}
\eqref{def,G2} and \eqref{def,G3}. 
These terms have been discussed in e.g. \cite{Pruss.quali}. We also refer readers to \cite{Pruss.analytic.solu} for details of the case of flat reference surface.
In our calculation, we add more details for completeness.
We remind the readers that we still use $\mathbb{R}^m$ instead of $\mathbb{R}^3$ in order to have clearer structures.
For convenience, we do not mark the dependency on $m$ for constant terms.

\begin{equation}
\begin{aligned}
 \mathcal{G}_1 :=
&-
\prts{  \jump{ \nu^\pm  \prts{ \nabla u + \nabla u^\top
} } \mathcal{M}_0 \nabla_{\Sigma}h} \cdot  n_{\Sigma}
\\
&  -
\prts{  \jump{  \nu^\pm  \prts{ \mathcal{M}_1\nabla u +  (\mathcal{M}_1\nabla u ) ^\top
} } \prts{ n_{\Sigma} - \mathcal{M}_0 \nabla_{\Sigma}h }  } \cdot  n_{\Sigma}
\end{aligned}
\end{equation}
\begin{equation}
 \mathcal{G}_2:= \kappa  H_{\Gamma} - \kappa DH_{\Gamma}\depvar{0}h ,
\end{equation}
\begin{equation} 
\begin{aligned}
  \mathcal{G}_3 
 &=
\mathcal{P}_{\Sigma}
\jump{
\nu^\pm  (I-\mathcal{M}_1) \nabla u + \nu^\pm  \prts{ (I-\mathcal{M}_1) \nabla u }^{\top}  
} \mathcal{M}_0 \nabla_{\Sigma}h
\\
& \quad+
\mathcal{P}_{\Sigma}
\jump{
\nu^\pm \mathcal{M}_1 \nabla u + \nu^\pm  \prts{ \mathcal{M}_1 \nabla u }^{\top}  
} n_{\Sigma}
\\
& \quad -
\prts{
\prts{
\jump{
\nu^\pm  (I-\mathcal{M}_1) \nabla u + \nu^\pm  \prts{ (I-\mathcal{M}_1) \nabla u }^{\top}  
} ( n_{\Sigma} -  \mathcal{M}_0 \nabla_{\Sigma}h ) 
} \cdot n_{\Sigma}
}
\mathcal{M}_0 \nabla_{\Sigma}h.
\end{aligned}
\end{equation}

\subsubsection{Estimate of $\alpha$} \label{section.estimate.alpha}

We recall that in \eqref{sufficient,condition;solve,alpha} we defined the auxiliary  vector $\alpha:= \mathcal{M}_0 \nabla_{\Sigma}h$ to help  calculate $n_{\Gamma}$. 
Suppose that $\norm{h}_{\mathcal{C}_2} <M$. Since $\Sigma$ is fixed, from \eqref{derivative.M0.temp3}, there exists $\delta_0(\Sigma)\in(0,1) $ such that
\begin{equation} \label{surface,alpha,estimate1}
\begin{aligned}
\norm{\alpha}_{\mathcal{C}_1} 
\leq 
C \norm{\mathcal{M}_0}_{\mathcal{C}_1} 
\norm{\nabla_\Sigma h}_{\mathcal{C}_1} 
\leq 
C(\Sigma,M) \norm{h}_{\mathcal{C}_2} .
\end{aligned}
\end{equation}
Moreover, we have
\begin{equation} \label{surface,alpha,estimate1.1}
\begin{aligned}
\norm{\alpha}_{\mathcal{S}_5} 
\leq 
C(T_0,\Sigma) \norm{\mathcal{M}_0}_{\mathcal{C}_2} 
\norm{\nabla_\Sigma h}_{\mathcal{S}_5} 
\leq 
C(T_0,\Sigma,M) \norm{h}_{\mathcal{W}_5} .
\end{aligned}
\end{equation}

Now we estimate the Fr{\'e}chet derivative  $D\alpha$.
Given $z=(u,B,p,\varpi,h)\in \mathcal{W}$ 
and $$\varphi=(\varphi_u,\varphi_B,\varphi_p,\varphi_{\varpi},\varphi_h) \in \mathring{\mathcal{W}},$$
we have
\begin{equation} \label{Frechet.alpha}
\begin{aligned}
 \prts{D\alpha \depvar{z}} \varphi
&= D_h \depvar{h} \prts{ \mathcal{M}_0 \nabla_{\Sigma}h }  \varphi_h
\\
& = \prts{ D_h \mathcal{M}_0 \varphi_h } \prts{\nabla_{\Sigma}h} 
 +  \prts{  \mathcal{M}_0  } \prts{ D_h \prts{\nabla_{\Sigma}h} \varphi_h }  
 \\
& = \prts{  \mathcal{M}_0 \varphi_h L_{\Sigma} \mathcal{M}_0 }  \prts{\nabla_{\Sigma}h} 
  + \mathcal{M}_0 \prts{\nabla_{\Sigma} \varphi_h} .
\end{aligned}
\end{equation}
Here we use $D_h$ to express that the derivative is taken only with respect to $h$. 
Suppose that $\norm{h}_{\mathcal{C}_2} <M$. 
From \eqref{derivative.M0.temp3}, there exists $\delta_0(\Sigma)\in(0,1) $ such that
if $\norm{h}_{\mathcal{C}_0}<\delta_0$, then
\begin{equation} \label{surface,alpha,estimate2}
\begin{aligned}
 \norm{D\alpha \varphi } _{\mathring{\mathcal{C}}_1}
&=
\norm{ \prts{  \mathcal{M}_0 \varphi_h L_{\Sigma} \mathcal{M}_0 }  \prts{\nabla_{\Sigma}h} 
  + \mathcal{M}_0 \prts{\nabla_{\Sigma} \varphi_h} } _{\mathring{\mathcal{C}}_1}
\\
& \lesssim
\norm{\mathcal{M}_0 }^2 _{\mathcal{C}_1} 
\norm{\nabla_{\Sigma}h} _{\mathcal{C}_1}
\norm{\varphi_h} _{\mathring{\mathcal{C}}_1}
+
\norm{\mathcal{M}_0 } _{\mathcal{C}_1} 
\norm{\nabla_{\Sigma} \varphi_h }  _{ \mathring{\mathcal{C}}_1 }
\\
& \leq
C(\Sigma,M) \prts{1+ \norm{h}_{\mathcal{C}_2} }  \norm{\varphi_h} _{ \mathring{\mathcal{C}}_2 } .
\end{aligned}
\end{equation}
Moreover, we have
\begin{equation} \label{surface,alpha,estimate2.1}
\begin{aligned}
 \norm{D\alpha \varphi } _{\mathring{\mathcal{S}}_5}
&=
\norm{ \prts{  \mathcal{M}_0 \varphi_h L_{\Sigma} \mathcal{M}_0 }  \prts{\nabla_{\Sigma}h} 
  + \mathcal{M}_0 \prts{\nabla_{\Sigma} \varphi_h} } _{\mathring{\mathcal{S}}_5}
\\
& \leq
C(T_0,\Sigma)\norm{\mathcal{M}_0 }^2 _{\mathcal{C}_2} 
\norm{\nabla_{\Sigma}h} _{\mathcal{S}_5}
\norm{\varphi_h} _{\mathring{\mathcal{C}}_2}
+
C(T_0,\Sigma)
\norm{\mathcal{M}_0 } _{\mathcal{C}_2} 
\norm{\nabla_{\Sigma} \varphi_h }  _{ \mathring{\mathcal{S}}_5 }
\\
& \leq
C(T_0, \Sigma,M) 
\prts{ 1 + \norm{h} _{\mathcal{W}_5} }
\norm{\varphi_h} _{\mathring{\mathcal{W}}_5} ,
\end{aligned}
\end{equation}
where  we used the embedding theory in \cite[Proposition 5.1]{Pruss.analytic.solu} in the last inequality.
\begin{remark}
In order to make calculations concise, we will frequently use notations like 
$D \prts{\nabla_{\Sigma}h}$ instead of  using $DF$ and  $F(h):=\nabla_{\Sigma}h$. 
The simplified notation denotes the Fr\'{e}chet derivative of the mapping $h\mapsto \nabla_{\Sigma}h$ evaluated at $h$.
This is similar to notations like $(x^2)'$ or $d(x^2)$ in calculus.
Moreover, we will not distinguish $D$ and $D_h$  when there is no confusion.
\end{remark}

\subsubsection{Estimate of $\mathcal{G}_1 $}
We still assume that $\norm{h}_{\mathcal{C}_4}<\delta_0$ for a sufficiently small $\delta_0$
and $\norm{h}_{\mathcal{C}_2}\leq M$.
We rewrite $\mathcal{G}_1$ as
\begin{equation}
\begin{aligned}
 \mathcal{G}_1 &=
-
\prts{  \jump{ \nu^\pm  \prts{ \nabla u + \nabla u^\top
} } \mathcal{M}_0 \nabla_{\Sigma}h} \cdot  n_{\Sigma}
\\
&\quad -
\prts{  \jump{  \nu^\pm  \prts{ \mathcal{M}_1\nabla u +  (\mathcal{M}_1\nabla u ) ^\top
} }   
\prts{  n_{\Sigma}
 - \mathcal{M}_0 \nabla_{\Sigma}h 
 } } \cdot  n_{\Sigma}
\\
&
=: I_1 +I_2.
\end{aligned}
\end{equation}
In the term $I_1$, the operator $F: u\mapsto \jump{ \nu^\pm  \prts{ \nabla u + \nabla u^\top
} }$ is linear, which implies
\begin{equation}
\begin{aligned}
DF\depvar{u} \varphi
=
\jump{  \nu^\pm \prts{ \nabla \varphi_u + \nabla \varphi_u ^\top
} }
\end{aligned}
\end{equation}
for all 
$\varphi=(\varphi_u, \varphi_B,  \varphi_p,  \varphi_\varpi, \varphi_h)\in \mathring{\mathcal{W}}$.
Thus, the Fr{\'e}chet derivative $D I_1 $ at $z=(u,B,p,\varpi,h)$ dependents only on $u$ and $h$,
i.e.
\begin{equation}
\begin{aligned}
  D I_1  \varphi
&= - \prts{
\prts{ DF \varphi_u } \mathcal{M}_0 \nabla_{\Sigma}h
	+ F\depvar{u}   D \prts{ \mathcal{M}_0 \nabla_{\Sigma}h  } \varphi_h 
	} \cdot n_{\Sigma}
\\
&  = - \prts{
\prts{ DF \varphi_u } \alpha
	+ F\depvar{u}   (D  \alpha   \varphi_h ) 
	} \cdot n_{\Sigma} . 
\end{aligned}
\end{equation}
This implies the following estimate:
\begin{equation} \label{G1.I1.derivative.estimate}
\begin{aligned}
  \norm{ D I_1 \varphi }_{\mathring{\mathcal{S}}_4^T } 
&\leq C(T_0,\Sigma)
	\norm{ DF \varphi_u }_{ \mathring{\mathcal{S}}_4 ^T  } 
	\prts{  \norm{ \alpha }_{\mathcal{S}_4^T} + \norm{ \alpha }_{\infty} }   
	+
	C(T_0,\Sigma) \norm{ F\depvar{u} }_{ \mathcal{S}_4^T }  
	\norm{  D \alpha \varphi_h }_{ \mathring{\mathcal{C}}_1^T  }
 \\
&  \leq
 C(T_0,\Sigma)	\norm{  \varphi_u }_{ \mathring{\mathcal{W}}_1^T   } 
 \prts{ \norm{ h }_{\mathcal{W}_5^T} + \norm{ h }_{\infty}    } 
	+
	C(T_0,\Sigma) \norm{ u }_{ \mathcal{W}_1^T } 
	\norm{  \varphi_h }_{ \mathring{\mathcal{C}}_2 ^T  } 
\\
&  \leq C(\Sigma, T_0) 	\prts{  \norm{z}_{\mathcal{W}^T} + 	\norm{h}_{\infty}  }
\norm{\varphi}_{\mathring{\mathcal{W}}^T },
\end{aligned}
\end{equation}
where the constants are fixed for all $T\in (0,T_0]$.

To estimate $I_2$, we define
$G: z\mapsto \mathcal{M}_1 \nabla u   $, whose derivative is
\begin{equation} 
\begin{aligned}
 D G \depvar{z}\varphi 
 =
 \prts{ D \mathcal{M}_1 \varphi_h  } \nabla u +  \mathcal{M}_1  \nabla \varphi_u .
\end{aligned}
\end{equation}
Then we obtain
\begin{equation}  \label{G1.I2.derivative.estimate}
\begin{aligned}
   D I_2 \depvar{z}\varphi 
& =
 - \prts{ \jump{ \nu^\pm \prts{ D G \varphi_h  + \prts{ D G \varphi_h}^\top } }
 \prts{ n_{\Sigma}  - \mathcal{M}_0 \nabla_{\Sigma}h  }  }
 \cdot n_{\Sigma}
\\
&  \quad  + \prts{
 \jump{ \nu^\pm \prts{ G  + G^\top } }
 \prts{ D\prts{ \mathcal{M}_0 \nabla_{\Sigma}h } \varphi_h }
	} \cdot n_{\Sigma}
\\
&  =
 - \prts{ \jump{ \nu^\pm \prts{ D G \varphi_h  + \prts{ D G \varphi_h}^\top } }
 \prts{ n_{\Sigma}  - \alpha  }  }
 \cdot n_{\Sigma}\\
 &\quad + \prts{
 \jump{ \nu^\pm \prts{ G  + G^\top } }
 \prts{ D \alpha  \varphi_h }
	} \cdot n_{\Sigma}.
\end{aligned}
\end{equation}
Thus, using the estimates of $\alpha$ and $D\alpha$, we have
for all $T\in(0,T_0]$ that
\begin{equation}
\begin{aligned}
&  \norm { D I_2 \depvar{z}\varphi }_{\mathring{\mathcal{S}}_4^T}      
  (\Sigma,M,T_0) \norm{ \jump{ \nu^\pm \prts{ DG\varphi_h  + \prts{ DG\varphi_h}^\top } }
   }_{\mathring{\mathcal{S}}_4} 
   \norm { \prts{ n_{\Sigma}  - \alpha  } } _{\mathcal{C}_1} 
\\
&\quad 
 + (\Sigma,M,T_0) \norm{
 \jump{ \nu^\pm \prts{ G  + G^\top } }
	} _{\mathcal{S}_4}
	\norm{  \prts{ D \alpha  \varphi_h } }_{\mathring{\mathcal{C}}_1}
\\
&  \leq
  C(\Sigma,M,T_0) \prts{1+ \norm{h}_{\mathcal{C}_2} } 
  \norm{DG\varphi_h}_{\mathring{\mathcal{W}}_6} 
 +C(\Sigma,M,T_0) \norm{G}_{\mathcal{W}_6}
	\norm{\varphi_h}_{\mathring{\mathcal{C}}_2}
\\
&  \leq
  C(\Sigma,M,T_0) \prts{1+ \norm{h}_{\mathcal{C}_2} }  
  \prts{
  \norm{ \prts{ D \mathcal{M}_1\varphi_h} \nabla u }_{\mathring{\mathcal{W}}_6}
  +  \norm{ \mathcal{M}_1 \nabla \varphi_u }_{\mathring{\mathcal{W}}_6}
  }
 \\
& \quad  +  C(\Sigma,M,T_0)
  \norm{ \mathcal{M}_1 \nabla u }_{ \mathcal{W}_6 } \norm{\varphi_h}_{ \mathring{\mathcal{C}}_2 }
\\
&  \leq
  C(\Sigma,M,T_0)  \norm{D\mathcal{M}_1\varphi_h}_{\mathring{\mathcal{C}}_1}
   \norm{ \nabla u   }_{\mathcal{W}_6} 
 + C(\Sigma,M,T_0) \prts{ \norm{\mathcal{M}_1} _{\mathcal{W}_6} +  \norm{\mathcal{M}_1} _{\infty} }
	\norm{   \nabla \varphi_u } _{\mathring{\mathcal{W}}_6}
\\
& \quad +  C(\Sigma,M,T_0)  \norm{\mathcal{M}_1}_{\mathcal{C}_1} \norm{  \nabla u }_{ \mathcal{W}_6 } \norm{\varphi_h}_{\mathring{\mathcal{C}}_2}
\\
&  \leq
  C(\Sigma,M,T_0)  \norm{\varphi_h}_{\mathring{\mathcal{C}}_2}
   \norm{ \nabla u   }_{\mathcal{W}_6} 
 + C(\Sigma,M,T_0) \prts{ \norm{\nabla_\Sigma h} _{\mathcal{W}_6}  + \norm{\nabla_\Sigma h} _{\infty} }
	\norm{   \nabla \varphi_u } _{\mathring{\mathcal{W}}_6}
\\
& \quad + C(\Sigma,M,T_0)  \norm{h}_{\mathcal{C}_2} \norm{  \nabla u }_{ \mathcal{W}_6 } 
\norm{\varphi_h}_{\mathring{\mathcal{C}}_2}
\\
&  \leq
  C(\Sigma, T_0,M )  \norm{\varphi_h}_{\mathring{\mathcal{W}}_5}
   \norm{u}_{\mathcal{W}_1} 
 + C(\Sigma,M,T_0) \prts{ \norm{h} _{\mathcal{W}_5} + \norm{\nabla_\Sigma h} _{\infty} }
	\norm{\varphi_u} _{\mathring{\mathcal{W}}_1}
\\
& \quad +  C(\Sigma, T_0,M )  \norm{h}_{\mathcal{C}_2} \norm{ u }_{ \mathcal{W}_1 } 
\norm{\varphi_h}_{\mathring{\mathcal{W}}_5}
\\
&  \leq
C(\Sigma, T_0,M )
\prts{ \norm{z}_{ \mathcal{W}^T }  +  \norm{ \nabla_\Sigma h} _{ \infty } } \norm{\varphi}_{\mathring{\mathcal{W}}^T} ,
\end{aligned}
\end{equation}
where the fourth inequality is guaranteed by   \eqref{M1.estimate2} and \eqref{M1.estimate1}. 
For convenience, we have omitted the parameter $T$ in some notations of function spaces.

From  \eqref{G1.I1.derivative.estimate} and \eqref{G1.I2.derivative.estimate}, we finally obtain
\begin{equation}  \label{G1.derivative.estimate}
\begin{aligned}
\norm{ D \mathcal{G}_1 \depvar{z}\varphi }_{\mathring{\mathcal{S}}_4^T}
 \leq
 C(\Sigma,M, T_0) \prts{ \norm{z}_{\mathcal{W}^T} + \norm{h}_{\mathcal{C}_4^T}  } \norm{\varphi}_{ \mathring{\mathcal{W}}^T }.
\end{aligned}
\end{equation}

\subsubsection{Estimate of $\mathcal{G}_2 $} \label{section: estimate G2}
As a preparation for the estimate of $\mathcal{G}_2$, we first calculate the derivative 
of the mean curvature $H_{\Gamma}$
in \eqref{curvature,formula}:
\begin{equation} \label{curvature,formula2}
\begin{aligned}
&
H_{\Gamma}
=
\beta
{\rm tr} \prts{ \mathcal{M}_0 \prts{ L_{\Sigma} + \nabla_{\Sigma}\alpha } } 
-
\beta^3 \prts{  \mathcal{M}_0  \alpha    } 
\nabla_{\Sigma}\alpha  \alpha^\top.
\end{aligned}
\end{equation}
which has been discussed in \cite{Pruss.quali,Pruss.green.book}.
We include detailed calculations for completeness.

For the function $F(s):=1/s$, we have
\begin{equation}
\begin{aligned}
DF\depvar{s}r = - \frac{r}{s^2}.
\end{aligned}
\end{equation}
For the Euclidean norm  $G(x):=\abs{x}=\sqrt{x_1^2+\cdots+x_n^2}$,
its Fr{\'e}chet derivative is
\begin{equation} \label{norm,function,Frechet}
\begin{aligned}
DG\depvar{x}v= \frac{x\cdot v}{\abs{x}}.
\end{aligned}
\end{equation}
Letting $H(h):=n_\Sigma-\mathcal{M}_0 \nabla_{\Sigma}h$, 
we have for all $\varphi\in\mathring{\mathcal{W}}_5^T$ that
\begin{equation} \label{alpha,vector,Frechet}
\begin{aligned}
DH \depvar{h}\varphi
= - \prts{ D\mathcal{M}_0   \depvar{h} \varphi  } \prts{\nabla_{\Sigma}h }
- \prts{\mathcal{M}_0   \depvar{h}}  \prts{\nabla_{\Sigma} \varphi }.
\end{aligned}
\end{equation}
Since $\beta(h)=F\circ G \circ H (h)$, we have
\begin{equation}
\begin{aligned}
  D\beta \depvar{h} \varphi
&=  DF\depvar{G(H(h))} DG\depvar{H(h)} DH \depvar{h} \varphi 
\\
&  = 
\frac
{  \prts{n_\Sigma - \mathcal{M}_0 \nabla_{\Sigma}h } \cdot \prts{ \prts{ D\mathcal{M}_0\depvar{h}\varphi} \prts{ \nabla_{\Sigma}h }  + \mathcal{M}_0\depvar{h}  \nabla_{\Sigma} \varphi  } }
{ \abs { n_\Sigma - \mathcal{M}_0 \nabla_{\Sigma}h  }^3   }
\\
&  = \beta^3 \prts{ n_\Sigma - \alpha } \cdot \prts{ \prts{ D\mathcal{M}_0 \varphi} \prts{ \nabla_{\Sigma}h }  + \mathcal{M}_0  \nabla_{\Sigma} \varphi  }, 
\end{aligned}
\end{equation}
which implies 
\begin{equation}
\begin{aligned}
D(\beta^3) \depvar{h} \varphi
= 3\beta^2 \prts{ D\beta \depvar{h} \varphi }
= 3 \beta^5 \prts{ n_\Sigma - \alpha } \cdot \prts{ \prts{ D\mathcal{M}_0 \varphi} \prts{ \nabla_{\Sigma}h }  + \mathcal{M}_0  \nabla_{\Sigma} \varphi  }.
\end{aligned}
\end{equation}
Now we calculate the derivative of 
$E: h \mapsto {\rm tr} \prts{ \mathcal{M}_0 \prts{ L_{\Sigma} + \nabla_{\Sigma}\alpha } } $.
Letting $F: h\mapsto L_{\Sigma} +\nabla_{\Sigma} \alpha $, 
we have
\begin{equation} \label{surface,grad,alpha,Frechet}
\begin{aligned}
DF \depvar{h} \varphi
= \nabla_{\Sigma} 
\bigl(   \prts{ D\mathcal{M}_0   \depvar{h} \varphi  } \prts{\nabla_{\Sigma}h }
+ \prts{\mathcal{M}_0   \depvar{h}}  \prts{\nabla_{\Sigma} \varphi }   
\bigr).
\end{aligned}
\end{equation}
Letting $G: h\mapsto  \mathcal{M}_0 F $,
we have
\begin{equation}
\begin{aligned}
  DG \depvar{h} \varphi
&= 
\prts{ D\mathcal{M}_0 \depvar{h} \varphi } \prts{ F\depvar{h} } 
+  \prts{ \mathcal{M}_0 \depvar{h}  }  \prts{ DF\depvar{h} \varphi } 
\\
&  =  \prts{ D\mathcal{M}_0 \varphi } \prts{ L_{\Sigma} +\nabla_{\Sigma} \alpha  }
+ \mathcal{M}_0 \prts{  \nabla_{\Sigma} \prts{  \prts{D\mathcal{M}_0 \varphi } \prts{ \nabla_{\Sigma} h }  
    +   \mathcal{M}_0  \nabla_{\Sigma}\varphi }   } .
\end{aligned}
\end{equation}
Letting $H: A\mapsto tr( A)$, we have
\begin{equation}
\begin{aligned}
  DE \depvar{h} \varphi
&=  DH \depvar{G(h) }  DG\depvar{h} \varphi
\\
&  = {\rm tr} 
\prts{
\prts{ D\mathcal{M}_0 \varphi } \prts{ L_{\Sigma} +\nabla_{\Sigma} \alpha  }
+ \mathcal{M}_0 \prts{  \nabla_{\Sigma} \prts{  \prts{D\mathcal{M}_0 \varphi } \prts{ \nabla_{\Sigma} h }  
    +   \mathcal{M}_0  \nabla_{\Sigma}\varphi }   } 
}.
\end{aligned}
\end{equation}
Using \eqref{alpha,vector,Frechet}, we have
\begin{equation}
\begin{aligned}
  D\prts{ \mathcal{M}_0 \alpha } \depvar{h} \varphi
&=  D\prts{ \mathcal{M}_0 \mathcal{M}_0 \nabla_{\Sigma}h } \depvar{h} \varphi
\\
&  = \prts{ D \mathcal{M}_0  \varphi  } \prts{ \mathcal{M}_0  \nabla_{\Sigma}h }
  + \prts{ \mathcal{M}_0 } \prts{ D\prts{ \mathcal{M}_0  \nabla_{\Sigma}h } \varphi }.
\end{aligned}
\end{equation}
Using \eqref{alpha,vector,Frechet} and \eqref{surface,grad,alpha,Frechet}, we obtain
\begin{equation}
\begin{aligned}
  D\prts{ \nabla_{\Sigma} \alpha \alpha^{\top} } \depvar{h} \varphi
&=  \prts{  D\prts{\nabla_{\Sigma} \alpha}\depvar{h} \varphi }  \prts{ \alpha^{\top} \depvar{h} } 
  + \prts{ \nabla_{\Sigma} \alpha  \depvar{h} }  \prts{ \prts{D \prts{ \alpha^{\top} } } \depvar{h} \varphi}
\\
&  =   
\prts{ \nabla_{\Sigma} 
\prts{   \prts{ D\mathcal{M}_0   \depvar{h} \varphi  } \prts{\nabla_{\Sigma}h }
+ \prts{\mathcal{M}_0   \depvar{h}}  \prts{\nabla_{\Sigma} \varphi }   }  
}
\prts{ \alpha^{\top}  \depvar{h} }
\\
&\quad  +  \prts{ \nabla_{\Sigma} \alpha  \depvar{h} } 
\prts{
 \prts{ D\mathcal{M}_0   \depvar{h} \varphi  } \prts{\nabla_{\Sigma}h }
+ \prts{\mathcal{M}_0   \depvar{h}}  \prts{\nabla_{\Sigma} \varphi }
}^{\top}.
\end{aligned}
\end{equation}
Thus, we have
\begin{equation} \label{derivative,mean,curvature,H}
\begin{aligned}
 D H_{\Gamma} \depvar{h}\varphi
=&
\prts{\prts{D\beta }\varphi}  {\rm tr}\prts{ \mathcal{M}_0 \prts{ L_{\Sigma} +\nabla_{\Sigma}\alpha  }}
+  \prts{\beta } \prts{ D \prts{ {\rm tr}\prts{ \mathcal{M}_0 \prts{ L_{\Sigma} +\nabla_{\Sigma}\alpha  }}}\varphi}
\\
&-  \prts{ D\prts{\beta^3 } \varphi } \prts{\mathcal{M}_0\alpha } \prts{ \nabla_{\Sigma}\alpha \alpha^{\top} }
\\
&-  \prts{\beta^3 } \prts{D \prts{ \mathcal{M}_0\alpha} \varphi} \prts{ \nabla_{\Sigma}\alpha \alpha^{\top} }
-  \prts{\beta^3 } \prts{\mathcal{M}_0\alpha } \prts{ D\prts{\nabla_{\Sigma}\alpha \alpha^{\top} } \varphi}
\\
= & \  \beta^3 \prts{ n_\Sigma-\alpha } \prts{ \prts{ D\mathcal{M}_0 \varphi } \prts{\nabla_{\Sigma}h} + \mathcal{M}_0 \nabla_{\Sigma}\varphi_h }
\prts{
{\rm tr} 
\prts{
\prts{ \mathcal{M}_0 } \prts{ L_{\Sigma} +\nabla_{\Sigma} \alpha  }
}
}
\\
&+
	\beta
	\prts{
	{\rm tr} 
	\prts{
	\prts{ D\mathcal{M}_0 \varphi } \prts{ L_{\Sigma} +\nabla_{\Sigma} \alpha  }
	+ \mathcal{M}_0
	 \prts{  \nabla_{\Sigma} \prts{  \prts{D\mathcal{M}_0 \varphi } \prts{ \nabla_{\Sigma} h }  
	    +   \mathcal{M}_0  \nabla_{\Sigma}\varphi_h }   } 
	}
	}
\\	
&- 
	3\beta^5 \prts{ n_\Sigma -\alpha } \cdot 
	\prts{ \prts{ D\mathcal{M}_0 \varphi } \prts{ \nabla_{\Sigma} h}  
	+ \mathcal{M}_0  \nabla_{\Sigma} \varphi_h 
	}
	\prts{ \mathcal{M}_0 \alpha  }
	\prts{ \nabla_{\Sigma} \alpha \alpha^{\top} }
\\
&-
	\beta^3 
	\prts{ 
	\prts{ D\mathcal{M}_0 \varphi } \prts{ \mathcal{M}_0 \nabla_{\Sigma}h }
	+   \prts{ \mathcal{M}_0 } \prts{ D \prts{ \mathcal{M}_0  \nabla_{\Sigma}h } \varphi }
	}
	\prts{ \nabla_{\Sigma} \alpha \alpha^{\top} }
\\
&-
\beta^3 
\mathcal{M}_0 \alpha
\prts{
	\nabla_{\Sigma} 
	  \prts{
	  	  \prts{ D \mathcal{M}_0 \varphi } \nabla_{\Sigma}h 
	  	  + \mathcal{M}_0 \nabla_{\Sigma}\varphi_h
	} \alpha^{\top} 
	+
	 \nabla_{\Sigma} \alpha 
	\prts{
		\prts{D \mathcal{M}_0 \varphi } \nabla_{\Sigma}h 
		+  \mathcal{M}_0 \nabla_{\Sigma}\varphi_h
	} ^{\top}
}
\\
 =&: I_1 + I_2 -I_3 -I_4 -I_5.
\end{aligned}
\end{equation}
When $h=0$, we have $\mathcal{M}_0=I $ and $\nabla_{\Sigma}h =0$, which implies $\alpha=0 $ and $\beta=1 / \norm{n_\Sigma} =1$. Thus, the term $I_1$ turns to
\begin{equation}
\begin{aligned}
I_1 \depvar{0} = n_\Sigma \prts{ 0+ \nabla_{\Sigma}\varphi} {\rm tr}L_{\Sigma}
= 0
\end{aligned}
\end{equation}
since $n_\Sigma$ is perpendicular to $\nabla_{\Sigma}\varphi$.
The term $I_2$ turns to
\begin{equation}
\begin{aligned}
I_2\depvar{0} = {\rm tr} \prts{ \varphi L_{\Sigma} \prts{ L_{\Sigma} +0 } + \nabla_{\Sigma} \prts{ 0+ \nabla_{\Sigma}\varphi } I}
= \prts{ {\rm tr}L_{\Sigma}^2 + \triangle_{\Sigma} }\varphi.
\end{aligned}
\end{equation}
By definition, we have $\alpha\depvar{h}=0$ when $h=0$. Thus, we obtain 
\begin{equation}
\begin{aligned}
I_3\depvar{0} =
I_4\depvar{0} = I_5\depvar{0} =0
\end{aligned}
\end{equation}
with no need for calculation.
Therefore, same as in \cite{Pruss.quali}, we have 
\begin{equation} \label{derivative,curvature,at,0}
\begin{aligned}
DH_{\Gamma}\depvar{0}= {\rm tr}L_{\Sigma}^2 + \triangle_{\Sigma} .
\end{aligned}
\end{equation}
Since the mapping $F: h\mapsto DH_{\Gamma}\depvar{0}h$ is linear, its derivative is 
$DF\depvar{h} = DH_{\Gamma}\depvar{0}  $ for all $h$.
Thus, the Fr{\'e}chet derivative of  $\mathcal{G}_2$ is
\begin{equation} \label{estimate.G2.5terms}
\begin{aligned}
D\mathcal{G}_2\depvar{h}\varphi
=
D( H_{\Gamma}\depvar{h} - DH_{\Gamma}\depvar{0}h ) \varphi
=
I_1 + I_2 -I_3 -I_4 -I_5 - \prts{ {\rm tr}L_{\Sigma}^2 + \triangle_{\Sigma} }\varphi,
\end{aligned}
\end{equation}
with $I_1$ to $I_5$ defined in \eqref{derivative,mean,curvature,H}.
It remains to estimate  $\norm{I_1}$, $\norm{I_3}$, $\norm{I_4}$, $\norm{I_5}$ and the difference
$\norm{I_2 - \prts{ {\rm tr}L_{\Sigma}^2 + \triangle_{\Sigma} }\varphi }$
term by term.

For $I_1$, we first notice that the terms with structures like
$n_\Sigma\cdot \nabla_{\Sigma}f$ or $n_\Sigma\cdot\mathcal{M}_0 \nabla_{\Sigma}f$
will vanish. This is because the tangential gradient $\nabla_{\Sigma}$ of a function is a tangent vector, which is perpendicular to the normal vector $n_\Sigma$; moreover, the matrix $\mathcal{M}_0$ maps tangent vectors to tangent vectors.
Thus, we obtain
\begin{equation}
\begin{aligned}
&   \prts{ n_\Sigma-\alpha } \prts{ \prts{ D\mathcal{M}_0 \varphi } \prts{\nabla_{\Sigma}h} + \mathcal{M}_0 \nabla_{\Sigma}\varphi }
\\
&  
=   n_\Sigma  \prts{ D\mathcal{M}_0 \varphi } \prts{\nabla_{\Sigma}h} 
   -\alpha  \prts{ \prts{ D\mathcal{M}_0 \varphi } \prts{\nabla_{\Sigma}h} + \mathcal{M}_0 \nabla_{\Sigma}\varphi },
\end{aligned}
\end{equation}
which implies that
\begin{equation}
\begin{aligned}
 I_1&=
 \beta^3 \prts{ n_\Sigma-\alpha } \prts{ \prts{ D\mathcal{M}_0 \varphi } \prts{\nabla_{\Sigma}h} + \mathcal{M}_0 \nabla_{\Sigma}\varphi }
\prts{
{\rm tr} 
\prts{
 \mathcal{M}_0  \prts{ L_{\Sigma} +\nabla_{\Sigma} \alpha  }
}
}
\\
& =
 \beta^3 
 \prts{ 
   n_\Sigma  \prts{ D\mathcal{M}_0 \varphi } \prts{\nabla_{\Sigma}h} 
   -\alpha  \prts{ \prts{ D\mathcal{M}_0 \varphi } \prts{\nabla_{\Sigma}h} + \mathcal{M}_0
   \nabla_{\Sigma}\varphi } 
 }
\prts{
{\rm tr} 
\prts{
 \mathcal{M}_0  \prts{ L_{\Sigma} +\nabla_{\Sigma} \alpha  }
}
}
\\
& =: \beta^3 I_{11} I_{12} .
\end{aligned}
\end{equation}
Using  \cite[(5.3)]{Pruss.analytic.solu}, we have
\begin{equation}
\begin{aligned}
\norm{ I_1  }_{\mathring{\mathcal{S}}_4 } 
\leq C(\Sigma, T_0) 
\norm{ \beta^3  }_{\mathcal{C}_1} 
   \norm{ I_{11}  }_{\mathring{\mathcal{S}}_4 }
\prts{ \norm{ I_{12}  }_{\infty} +    \norm{ I_{12}  }_{\mathcal{S}_4}  } .
\end{aligned}
\end{equation}
From the formula of $\beta$ in \eqref{formula;n,Gamma}, 
we can find a sufficiently small $\delta_0(\Sigma) \in ( 0,1) $,
such that for all  $\norm{ h }_{\mathcal{C}_3}< \delta_0$,
 if $\norm{h}_{\mathcal{C}_2} <M$ for some $M>0$ then 
\begin{equation} \label{DG2,beta3.estimate.1}
\begin{aligned}
\norm{ \beta^3 }_{\mathcal{C}_1} \leq C(\Sigma,M) .
\end{aligned}
\end{equation}
Without loss of generality, we can always use the same notation $\delta_0$ and $M$ in the derivation of
\eqref{DG2,beta3.estimate.1}. This is because we can always choose 
the smallest $\delta_0$ and the largest $M$ 
from finite many candidates. 
From \cite[(5.4)]{Pruss.analytic.solu}, we have for all $T\in (0,T_0]$ that
\begin{equation}
\begin{aligned}
 \norm{ I_{11}  } _{\mathring{\mathcal{S}}_4^T}
&=
\norm{
 n_\Sigma  \prts{ D\mathcal{M}_0 \varphi } \prts{\nabla_{\Sigma}h} 
   -\alpha  \prts{ \prts{ D\mathcal{M}_0 \varphi } \prts{\nabla_{\Sigma}h} + \mathcal{M}_0
   \nabla_{\Sigma}\varphi_h } 
 } _{\mathring{\mathcal{S}}_4}
\\
& \leq \norm{
 n_\Sigma  \prts{ D\mathcal{M}_0 \varphi } \prts{\nabla_{\Sigma}h} 
   } _{\mathring{\mathcal{S}}_4}
   +
\norm{
   \alpha  \prts{ \prts{ D\mathcal{M}_0 \varphi } \prts{\nabla_{\Sigma}h} + \mathcal{M}_0
   \nabla_{\Sigma}\varphi_h } 
 } _{\mathring{\mathcal{S}}_4}
\\
& \leq
  C(\Sigma, T_0)\norm{ D\mathcal{M}_0 \varphi } _{ \mathring{\mathcal{S}}_4 } 
  \prts{
  		\norm{\nabla_{\Sigma}h}  _{\mathcal{S}_4} +   \norm{\nabla_{\Sigma}h}  _{\infty}
  	}
 \\
 & \quad  +
  C(\Sigma, T_0) \prts{ 
   \norm{ \alpha}  _{\mathcal{S}_4}  +    \norm{ \alpha}  _{\infty}  
   }
    \norm{ \prts{ D\mathcal{M}_0 \varphi } \prts{\nabla_{\Sigma}h} + \mathcal{M}_0
   \nabla_{\Sigma}\varphi_h }  _{\mathring{\mathcal{S}}_4 }
 \\
& \leq
  C(\Sigma, T_0, M) \norm{\varphi_h}_{ \mathring{\mathcal{S}}_4  } 
\prts{  		\norm{\nabla_{\Sigma}h}  _{\mathcal{S}_4} +   \norm{\nabla_{\Sigma}h}  _{\infty} }
\\
& \quad +   C(\Sigma, T_0, M)
\prts{  		\norm{\nabla_{\Sigma}h}  _{\mathcal{S}_4} +   \norm{\nabla_{\Sigma}h}  _{\infty} }
\prts{
\prts{  		\norm{\nabla_{\Sigma}h}  _{\mathcal{S}_4} +   \norm{\nabla_{\Sigma}h}  _{\infty} }
\norm{\varphi_h}_{ \mathring{\mathcal{S}}_4  } 
+ \norm{\nabla_\Sigma \varphi_h}_{ \mathring{\mathcal{S}}_4  } 
}
\\
& \leq      C(\Sigma, T_0, M)
\prts{ \norm{z}_{\mathcal{W}^T} +   \norm{\nabla_{\Sigma}h} _{\infty} }
  \norm{\varphi}_{ \mathring{\mathcal{W}}^T } .
\end{aligned}
\end{equation}
The parameter $T$ in the notations of function spaces is omitted for convenience.
When $\norm{ h }_{\mathcal{C}_0^{T_0}}\leq \delta_0$  and $\norm{ h }_{\mathcal{C}_1^{T_0}}\leq M$,
 for all $T\in (0,T_0]$ we have
\begin{equation}
\begin{aligned}
  \norm{ I_{12}
}_{\mathcal{S}_4^T}
&\leq C(T_0,\Sigma)
\norm{ \mathcal{M}_0 }_{\mathcal{C}_1}
 \norm{ L_{\Sigma} +\nabla_{\Sigma} \alpha  } _{\mathcal{S}_4}
\\
& \leq C(T_0,\Sigma)
\norm{ \mathcal{M}_0 }_{\mathcal{C}_1}
\prts{ \norm{ L_{\Sigma} } _{C^1(\Sigma)}
 +
 \norm{  \nabla_{\Sigma} \alpha  } _{\mathcal{S}_4}
 }
\\
& \leq C(\Sigma, T_0, M) \prts{
1+ \norm{   \alpha  } _{\mathcal{S}_5}
}
\leq 
C(\Sigma, T_0, M) \prts{
1+ \norm{h} _{\mathcal{W}_5}
}
\\
&
\leq C(\Sigma, T_0,M) \prts{ 1+  \norm{z} _{\mathcal{W}^T} } .
\end{aligned}
\end{equation}
Moreover, we have
\begin{equation}
\begin{aligned}
\norm{ I_{12} }_{\mathcal{C}_0}
\lesssim 
\norm{ \mathcal{M}_0 }_{\mathcal{C}_0}
 \norm{ L_{\Sigma} +\nabla_{\Sigma} \alpha  } _{\mathcal{C}_0}
\leq C(\Sigma,M).
\end{aligned}
\end{equation}
Thus, when $\norm{h}_{\mathcal{C}_0}<\delta_0 $ 
 and $\norm{h}_{\mathcal{C}_1}< M $,  for all $\varphi \in \mathring{\mathcal{S}}_4 $ 
and $ T\in(0,T_0]$ we have 
\begin{equation}  \label{G2,I1,estimate,final}
\begin{aligned}
\norm{I_1}_{ \mathring{\mathcal{S}}_4^T  } 
\leq
  C(\Sigma, T_0,M)   \prts{ 1 + \norm{z}_{\mathcal{W}^T} }
  \prts{ \norm{z}_{\mathcal{W}^T} + \norm{\nabla_{\Sigma}h}_{\infty} }
  \norm{\varphi}_{ \mathring{\mathcal{S}}_4^T  } .
\end{aligned}
\end{equation}
Similarly to the derivation of $I_1$, we  remove those vanishing products of perpendicular terms in  $I_3$ and $I_4$,
which implies 
\begin{equation} \label{G2,I3,estimate,final}
\begin{aligned}
\norm{I_3}_{ \mathring{\mathcal{S}}_4 ^T } 
\leq
  C(\Sigma, T_0,M) \prts{  1+		\norm{z}_{\mathcal{W}^T} } 
  \prts{  \norm{h}_{\mathcal{W}_5^T } + \norm{h}_{\mathcal{C}_4^T } }
\norm{\varphi}_{ \mathring{\mathcal{W}}_5 ^T } 
\end{aligned}
\end{equation}
and
\begin{equation} \label{G2,I4,estimate,final}
\begin{aligned}
  \norm{I_4}_{ \mathring{\mathcal{S}}_4^T  } 
&\leq
C(\Sigma,T_0,M) 
	\norm{ 
	\prts{ D\mathcal{M}_0 \varphi } \prts{ \mathcal{M}_0 \nabla_{\Sigma}h }
	+   \prts{ \mathcal{M}_0 } \prts{ D \prts{ \mathcal{M}_0  \nabla_{\Sigma}h } \varphi }
	} _{\mathring{\mathcal{C}}_1^T}   
	\norm{ \nabla_{\Sigma} \alpha}_{\mathcal{S}_4^T}  \norm{ \alpha^{\top} } _{\mathcal{C}_1^T}
\\
&  \leq
C(\Sigma,T_0,M)  \prts{ 1+ 	\norm{h}_{\mathcal{C}_2^T}   } \norm{h}_{\mathcal{C}_2^T}
\norm{\varphi}_{\mathring{\mathcal{W}}^T}  
 \norm{z}_{\mathcal{W}^T},
\end{aligned}
\end{equation}
provided that $\norm{h}_{\mathcal{C}_0^{T_0}}<\delta_0$ and $\norm{h}_{\mathcal{C}_2^{T_0}}<M$.
For the term $I_5$, we have to be  careful with the selection of norms in order to estimate higher-order derivatives with our nonredundant regularity. We  have for all $T\in (0,T_0]$ that
\begin{equation} \label{G2,I5,estimate,final}
\begin{aligned}
  \norm{I_5}_{ \mathring{\mathcal{S}}_4^T  }
  &\leq     C(\Sigma,T_0,M) 
\norm{\mathcal{M}_0 \alpha}_{\mathcal{C}_1} 
\prts{ \norm{\alpha}_{\mathcal{S}_4} + \norm{\alpha}_{\infty} }
\norm{\nabla_\Sigma  
	\prts{	(D\mathcal{M}_0 \varphi_h) (\nabla_\Sigma h) + \mathcal{M}_0 \nabla_\Sigma \varphi_h		}   
	} _{\mathring{\mathcal{S}}_4}
\\ 
&  \quad +  C(\Sigma,T_0,M) 
\norm{\mathcal{M}_0 \alpha}_{\mathcal{C}_1} \norm{\nabla_\Sigma \alpha}_{\mathcal{S}_4}
\norm{	
		(D\mathcal{M}_0 \varphi_h) (\nabla_\Sigma h) + \mathcal{M}_0 \nabla_\Sigma \varphi_h	
	} _{\mathring{\mathcal{C}}_1}
\\
&  \leq    C(\Sigma,T_0,M) 
\norm{h}_{\mathcal{C}_2} 
  \prts{  \norm{\nabla_\Sigma h}_{\mathcal{S}_4} + \norm{\nabla_\Sigma h}_{\infty}  }
\norm{
	  	(D\mathcal{M}_0 \varphi_h) (\nabla_\Sigma h) + \mathcal{M}_0 \nabla_\Sigma \varphi_h	   
	} _{\mathring{\mathcal{S}}_5}
\\
&  \quad  +    C(\Sigma,T_0,M) 
\norm{h}_{\mathcal{C}_2}
\norm{h}_{\mathcal{W}_5}
\norm{	(D\mathcal{M}_0 \varphi_h) (\nabla_\Sigma h) + \mathcal{M}_0 \nabla_\Sigma \varphi_h   
	} _{\mathring{\mathcal{C}}_1}
\\
&  \leq   C(\Sigma,T_0,M) 
\prts{  \norm{\nabla_\Sigma h}_{\mathcal{S}_4} + \norm{\nabla_\Sigma h}_{\infty}  }
\prts{ 
	\norm{\varphi_h}_{\mathring{\mathcal{C}}_2} \norm{h}_{\mathcal{W}_5}  
	+ \norm{\varphi_h}_{\mathring{\mathcal{W}}_5}
   }
\\
& \quad   +   C(\Sigma,T_0,M) 
\norm{h}_{\mathcal{W}_5}
\prts{
		\norm{	 \varphi_h}_{\mathring{\mathcal{C}}_1}  
		\norm{ h}_{\mathcal{C}_2}
		+  \norm{\varphi_h}_{\mathring{\mathcal{C}}_2}
	}
\\
&  \leq
  C(\Sigma,T_0,M) 
\prts{ \norm{h}_{\mathcal{C}_3^T} + \norm{z}_{\mathcal{W}^T} }
\prts{1+ \norm{h}_{\mathcal{C}_2^T} + \norm{z}_{\mathcal{W}^T} }
  \norm{\varphi}_{\mathring{\mathcal{W}}^T }.
\end{aligned}
\end{equation}
It remains  to estimate  $I_2 - \prts{ {\rm tr}L_{\Sigma}^2 + \triangle_{\Sigma} }\varphi_h$,
which can be written as
\begin{equation} \label{G2.I2.formula1}
\begin{aligned}
& I_2 - \prts{ {\rm tr}L_{\Sigma}^2 + \triangle_{\Sigma} }\varphi
\\
& =
	\beta
	\prts{
	{\rm tr} 
	\prts{
	\prts{ D\mathcal{M}_0 \varphi } \prts{ L_{\Sigma} +\nabla_{\Sigma} \alpha  }
	+ \prts{  \nabla_{\Sigma} \prts{  \prts{D\mathcal{M}_0 \varphi } \prts{ \nabla_{\Sigma} h }  
	    +   \mathcal{M}_0  \nabla_{\Sigma}\varphi }   } \mathcal{M}_0
	}
	}
- \prts{ {\rm tr}L_{\Sigma}^2 + \triangle_{\Sigma} }\varphi
\\
& =
{\rm tr} \prts{ \beta
\prts{ D \mathcal{M}_0 \varphi } \prts{  L_{\Sigma} +\nabla_{\Sigma} \alpha  }
-
L_{\Sigma}^2 \varphi 
}
+
{\rm tr} \prts{ \beta
\prts {  \nabla_{\Sigma}
\prts{
 \prts{  D \mathcal{M}_0 \varphi   } \prts{ \nabla_{\Sigma} h } 
+ \mathcal{M}_0 \nabla_{\Sigma}\varphi }  
}\mathcal{M}_0
- 
\nabla_{\Sigma}^2\varphi
}
\\
& =
{\rm tr} \prts{ \beta
\prts{ D \mathcal{M}_0 \varphi }  L_{\Sigma} 
-
L_{\Sigma}^2 \varphi 
}
+
{\rm tr} \prts{
\beta
\prts{ D \mathcal{M}_0 \varphi } \nabla_{\Sigma} \alpha 
 }  
\\
& \quad +
{\rm tr} \prts{ \beta \nabla_{\Sigma} 
	\prts{ \prts{ D \mathcal{M}_0 \varphi } \prts{ \nabla_{\Sigma} h  } } 
	\mathcal{M}_0
	}
+
{\rm tr} \prts{ 
\beta \nabla_{\Sigma}  \prts{\mathcal{M}_0 \nabla_{\Sigma}\varphi } \mathcal{M}_0
	- \nabla_{\Sigma}^2 \varphi 
}
\\
& =: {\rm tr} \prts{I_{21}+ I_{22}+ I_{23}+ I_{24} }   .
\end{aligned}
\end{equation}
We still assume that $\norm{h}_{\mathcal{C}_0}<\delta_0$, $\norm{h}_{\mathcal{C}_2}<M$
and $T\in(0,T_0]$.
We recall that the product of two bounded and Lipschitz continuous functions is still Lipschitz continuous. Thus, the estimate of $I_{21}$ to $I_{24}$ can be simplified to the estimate of their factors.
For $I_{21}$,  we have for all $T\in (0,T_0]$ that
\begin{equation} \label{estimate,G2,I2,I21}
\begin{aligned}
  \norm{I_{21}}_{ \mathring{\mathcal{S}}_4^T}
&= \norm{ \beta \prts{ D \mathcal{M}_0 \varphi }  L_{\Sigma} - L_{\Sigma}^2 \varphi  }_{ \mathring{\mathcal{S}}_4^T}
\\
&  \leq  \norm{ \beta \prts{ D \mathcal{M}_0 \varphi }  L_{\Sigma}  
- \beta L_{\Sigma}^2 \varphi
} _{ \mathring{\mathcal{S}}_4^T}
+
\norm{
 \beta L_{\Sigma}^2 \varphi
- L_{\Sigma}^2 \varphi 
} _{ \mathring{\mathcal{S}}_4^T}
\\
&  \leq  C(\Sigma ,T_0)
\norm{ \beta}_{\mathcal{C}_1} 
\norm{
\mathcal{M}_0   L_{\Sigma} \mathcal{M}_0 
-  L_{\Sigma} } _{ \mathcal{S}_4}
\norm{ \varphi_h} _{ \mathring{\mathcal{C}}_1}
+ C(\Sigma ,T_0)
\norm{ \beta -1 } _{ \mathcal{S}_4}
\norm{ \varphi_h} _{ \mathring{\mathcal{C}}_1}.
\end{aligned}
\end{equation}
From the structure of $\beta$ in \eqref{curvature.beta.term} and the fact that $\abs{n_{\Sigma}}=1$, 
there exists $\delta_0(\Sigma)\in (0,1)$, such that when $\norm{h}_{\mathcal{C}_3}<\delta_0$ 
and $\norm{h}_{\mathcal{C}_2}<M$  we have
\[
\norm{n_\Sigma -\alpha}_{\mathcal{C}_0}\geq \frac{1}{2}
\quad \text{and } \quad 
\norm{\beta}_{\mathcal{C}_1} \leq C(\Sigma,M).
\] 
Then we can obtain that
\begin{equation}
\abs{ \beta -1 } 
=
\abs{ \frac{1}{ \abs{ n_\Sigma -\alpha } } - 1 }
=
\abs{ 
\frac{   \abs{ n_\Sigma } - \abs{ n_\Sigma -\alpha } } { \abs{ n_\Sigma -\alpha } }   
}
\leq 
\frac{  \abs{ \alpha} }{ \abs{ n_\Sigma -\alpha } }   
\leq C \abs{ \alpha}.
\end{equation}
It then follows that
\begin{equation} \label{G2,I2,I21,temp1}
\begin{aligned}
\norm{ \beta -1} _{\mathcal{S}_4 }
\leq C(\Sigma,M,T_0) \norm{ \nabla_\Sigma h}_{\mathcal{S}_4}
\end{aligned}
\end{equation}
and
\begin{equation}
\begin{aligned}
\norm{ \beta -1} _{\infty }
\leq C(\Sigma) \norm{\nabla_\Sigma h}_{\infty }  .
\end{aligned}
\end{equation}
Moreover, from the structure of $\mathcal{M}_0$ in \eqref{structure.M0}, we have
\begin{equation} \label{M0-I,F4,estimate}
\begin{aligned}
\norm{\mathcal{M}_0  -I  }_{\mathcal{S}_4 }
=  \norm{\mathcal{M}_0 L_{\Sigma} h  } _{\mathcal{S}_4 }
\leq C(\Sigma,M,T_0) \norm{h}_{\mathcal{S}_4},
\end{aligned}
\end{equation}
which implies that
\begin{equation} \label{G2,I2,I21,temp2}
\begin{aligned}
&  \norm{\mathcal{M}_0   L_{\Sigma} \mathcal{M}_0 
-  L_{\Sigma} } _{\mathcal{S}_4^T}
\\
&  \leq 
\norm{\prts{\mathcal{M}_0 -I}  L_{\Sigma} \mathcal{M}_0 } _{\mathcal{S}_4}
+  \norm{ L_{\Sigma}  \prts{\mathcal{M}_0 -I}   } _{\mathcal{S}_4}
\\
&  \leq  C(T_0,\Sigma)
\norm{  L_{\Sigma} \mathcal{M}_0 } _{\mathcal{C}_1}
\norm{\mathcal{M}_0 -I  } _{\mathcal{S}_4}
+     C(T_0,\Sigma)
\norm{ L_{\Sigma} }_{C^1(\Sigma)}  \norm{ \mathcal{M}_0 -I    } _{\mathcal{S}_4}
\\
&  \leq 
 C(\Sigma,T_0,M) \norm{ \mathcal{M}_0 -I   } _{\mathcal{S}_4}
\leq C(\Sigma,T_0,M) \norm{h}_{\mathcal{S}_4^T} .
\end{aligned}
\end{equation}
From \eqref{estimate,G2,I2,I21}, \eqref{G2,I2,I21,temp1} and \eqref{G2,I2,I21,temp2},
we have for all $T\in(0,T_0]$ that
\begin{equation}
\begin{aligned}
\norm{ I_{21} } _{\mathring{  \mathcal{S} }_4^T }
\leq C(\Sigma,M, T_0 )
 \norm{h}_{\mathcal{W}^T}  \norm{\varphi_h}_{\mathring{\mathcal{W}}^T } .
\end{aligned}
\end{equation}
Using the estimates in 
\cite[Lemma 5.2]{Pruss.analytic.solu}, we obtain the estimate of $I_{22}$: 
\begin{equation}
\begin{aligned}
 \norm{ I_{22} } _{\mathring{  \mathcal{S} }_4^T}
&\leq C(\Sigma, T_0)
  \norm{\beta}  _{\mathcal{C}_1^T}
\norm{ D \mathcal{M}_0 \varphi_h } _{\mathring{  \mathcal{S} }_4^T }
\prts{
\norm{ \nabla_{\Sigma} \alpha }  _{\mathcal{S}_4^T} +  \norm{ \nabla_{\Sigma} \alpha }  _{\mathcal{C}_0^T} 
}
\\
& \leq
C(\Sigma,T_0, M) 
\prts{ \norm{ h }  _{\mathcal{W}_5^T} +  \norm{ h }  _{\mathcal{C}_4^T}  } 
\norm{\varphi_h} _{\mathring{  \mathcal{S} }_4^T }.
\end{aligned}
\end{equation}
Since $\varphi_h\in \mathring{  \mathcal{W} }_5$, 
the term $I_{23}$ has the estimate
\begin{equation}
\begin{aligned}
  \norm{ I_{23} } _{ \mathring{\mathcal{S}}_4^T }
&\leq  C(\Sigma, T_0)
 \norm{ \beta }_{\mathcal{C}_1}
 \norm{ \nabla_{\Sigma} 
	\prts{ \prts{ D \mathcal{M}_0 \varphi } \prts{ \nabla_{\Sigma} h  } } 
	}	_{\mathring{\mathcal{S}}_4}
	 \norm{ \mathcal{M}_0 } _{\mathcal{C}_1}
\\
&  \leq 
C(\Sigma,T_0,M)
 \norm{ \nabla_{\Sigma} 
	\prts{ \prts{ D \mathcal{M}_0 \varphi } \prts{ \nabla_{\Sigma} h  } } 
	}	_{\mathring{\mathcal{S}}_4}
\\
&  \leq
C(\Sigma,T_0,M)	
\norm{\nabla_{\Sigma}\prts{ D \mathcal{M}_0 \varphi } \prts{ \nabla_{\Sigma} h  } }_{\mathring{\mathcal{S}}_4}
	+  C(\Sigma,T_0,M) \norm{\prts{ D \mathcal{M}_0 \varphi } \nabla_{\Sigma}\prts{ \nabla_{\Sigma} h  } 	}_{\mathring{\mathcal{S}}_4}	
\\
&  \leq
C(\Sigma,T_0,M)	
\norm{\nabla_{\Sigma}\prts{ D \mathcal{M}_0 \varphi }}_{\mathring{  \mathcal{S} }_4 }
  \prts{ \norm{ \nabla_{\Sigma} h  } _{\mathcal{S}_4} +  \norm{ \nabla_{\Sigma} h  } _{\infty} }\\
&\quad +  C(\Sigma,T_0,M) \norm{ D \mathcal{M}_0 \varphi } _{\mathring{  \mathcal{C} }_1 }
 \norm{\nabla_{\Sigma}^2 h   	}	 _{\mathcal{S}_4} 
\\
&  \leq
C(\Sigma,T_0,M)
\norm{  D \mathcal{M}_0 \varphi }_{\mathring{  \mathcal{S} }_5 }
    \prts{ \norm{ \nabla_{\Sigma} h  } _{\mathcal{S}_4} +  \norm{ \nabla_{\Sigma} h  } _{\infty} }
+ C(\Sigma,T_0,M) \norm{ \varphi } _{\mathring{  \mathcal{C} }_1 }
 \norm{h}	 _{\mathcal{W}_5} 
\\
&  \leq
C(\Sigma,T_0,M)
\prts{	\norm{z}	 _{\mathcal{W}^T} + \norm{ \nabla_{\Sigma} h  } _{\infty}  }
\norm{\varphi} _{\mathring{\mathcal{W}} ^T}.
\end{aligned}
\end{equation}
Similarly, we obtain the estimate of $I_{24}$:
\begin{equation}
\begin{aligned}
  \norm{ I_{24} } _{\mathring{  \mathcal{S} }_4^T}
& \leq
\norm{\beta
\prts{
 \nabla_{\Sigma}  \prts{\mathcal{M}_0 \nabla_{\Sigma}\varphi_h } \mathcal{M}_0
 -   \nabla_{\Sigma}^2 \varphi_h
}
} _{\mathring{  \mathcal{S} }_4}
+   
\norm{
\prts{\beta -1}    \nabla_{\Sigma}^2 \varphi_h  
} _{\mathring{  \mathcal{S} }_4}
\\
&  \leq C(T_0)
\norm{\beta } _{\mathcal{C}_1}
\norm{  
   \prts{\mathcal{M}_0 \nabla_{\Sigma}\varphi_h } \mathcal{M}_0
 -   \nabla_{\Sigma} \varphi_h
} _{\mathring{  \mathcal{S} }_4}
+    C(T_0)
\prts{ \norm{\beta -1}  _{\mathcal{S}_4} + \norm{\beta -1}  _{\infty}  }
\norm{   \nabla_{\Sigma}^2 \varphi _h 
} _{\mathring{  \mathcal{S} }_4}
\\
&  \leq
C(\Sigma,T_0,M)
\norm{h } _{\mathcal{S}_4}
\norm{  \nabla_{\Sigma} \varphi_h } _{\mathring{  \mathcal{S} }_4}
+    C(\Sigma,T_0,M)    
\prts{ \norm{\nabla_\Sigma h}  _{\mathcal{S}_4} + \norm{\nabla_\Sigma h}  _{\infty}  }
\norm{   \nabla_{\Sigma}^2 \varphi_h  } _{\mathring{  \mathcal{S} }_4}
\\
&  \leq  C(\Sigma,T_0,M)
\prts{ \norm{h } _{\mathcal{W}_5^T} + \norm{\nabla_\Sigma h}  _{\infty} }
\norm{ \varphi } _{\mathring{\mathcal{W}}^T }  . 
\end{aligned}
\end{equation}
From the estimates of $I_{21}$ to $I_{24}$, we obtain the estimate of $I_2 - \prts{ {\rm tr}L_{\Sigma}^2 + \triangle_{\Sigma} }\varphi_h$.
For all $T\in(0,T_0]$ we have
\begin{equation} \label{G2,I2,estimate,final}
\begin{aligned}
\norm{ I_2 - \prts{ {\rm tr}L_{\Sigma}^2 + \triangle_{\Sigma} }\varphi_h } _{\mathring{\mathcal{S}}_4^T}
\leq
C(\Sigma,T_0,M) \prts{ \norm{z} _{\mathcal{W}^T} + \norm{h} _{\mathcal{C}_4^T} } 
\norm{\varphi} _{\mathring{\mathcal{W}} ^T}.
\end{aligned}
\end{equation}

Consequently, from \eqref{estimate.G2.5terms} and the estimates in 
\eqref{G2,I1,estimate,final},
\eqref{G2,I3,estimate,final},
\eqref{G2,I4,estimate,final},
\eqref{G2,I5,estimate,final}
and \eqref{G2,I2,estimate,final}, we obtain for all $T\in(0,T_0]$ that
\begin{equation} \label{G2.derivative.estimate}
\begin{aligned}
\norm{ D \mathcal{G}_2 \depvar{z} \varphi } _{\mathring{\mathcal{S}}_4^T} 
\leq
  C(\Sigma,T_0,M)
\prts{ \norm{h}_{\mathcal{C}_4^T} + \norm{z}_{\mathcal{W}^T} }
\prts{1+ \norm{h}_{\mathcal{C}_2^T} + \norm{z}_{\mathcal{W}^T} }
  \norm{\varphi}_{\mathring{\mathcal{W}}^T } .
\end{aligned}
\end{equation}

\subsubsection{Estimate of $\mathcal{G}_3 $}

We study $\mathcal{G}_3 $ by abbreviating it to
\begin{equation} 
\begin{aligned}
   \mathcal{G}_3 
 &=
\mathcal{P}_{\Sigma}
\jump{
\nu^\pm ( (I-\mathcal{M}_1) \nabla u + \prts{ (I-\mathcal{M}_1) \nabla u }^{\top}  
)
} \mathcal{M}_0 \nabla_{\Sigma}h
\\
& \quad +
\mathcal{P}_{\Sigma}
\jump{
\nu^\pm  \prts{ \mathcal{M}_1 \nabla u + \prts{ \mathcal{M}_1 \nabla u }^{\top}  
}
} n_{\Sigma}
\\
& \quad -
\prts{
\prts{
\jump{
\nu^\pm \prts{ (I-\mathcal{M}_1) \nabla u + \prts{ (I-\mathcal{M}_1) \nabla u }^{\top}  
}
} ( n_{\Sigma} -  \mathcal{M}_0 \nabla_{\Sigma}h ) 
} \cdot n_{\Sigma}
}
\mathcal{M}_0 \nabla_{\Sigma}h
\\
& =:I_1 + I_2 - I_3.
\end{aligned}
\end{equation}
The argument will be in the spirit of \cite{Pruss.analytic.solu}, we include more details for completeness.

To estimate $I_1$, we first calculate the Fr{\'e}chet derivative of $F: z\mapsto \prts{I-\mathcal{M}_1} \nabla u$:
\begin{equation} 
\begin{aligned}
DF\depvar{z}\varphi
=
\prts{ - D\mathcal{M}_1 \varphi_h  } \prts{ \nabla u }
+ \prts{ I - \mathcal{M}_1   } \prts{ \nabla \varphi_u },
\end{aligned}
\end{equation}
which implies that
\begin{equation} 
\begin{aligned}
 D I_1 \depvar{z} \varphi
& =
\mathcal{P}_{\Sigma}
\Bigl\llbracket
	\nu^\pm \bigl( \prts{ - D\mathcal{M}_1 \varphi_h  } \prts{ \nabla u }
		+ \prts{ I - \mathcal{M}_1  } \prts{ \nabla \varphi_u }  \bigr)
\Bigr.
\\
&\quad\quad\quad\quad \Bigl.
		+
		\nu^\pm \bigl(
				\prts{ - D\mathcal{M}_1 \varphi_h  } \prts{ \nabla u }
				+ \prts{ I - \mathcal{M}_1   } \prts{ \nabla \varphi_u }
			\bigr) ^\top
\Bigr\rrbracket 
  \mathcal{M}_0 \nabla_{\Sigma} h 
\\
& \quad +
\mathcal{P}_{\Sigma}
\jump{
\nu^\pm \prts{ (I-\mathcal{M}_1) \nabla u + \prts{ (I-\mathcal{M}_1) \nabla u }^{\top}  
}
} 
\prts{
	\prts{  \mathcal{M}_0 \varphi_h L_{\Sigma} \mathcal{M}_0 }  \prts{\nabla_{\Sigma}h} 
 		 + \mathcal{M}_0 \prts{\nabla_{\Sigma} \varphi_h} 
}
\\
& =: I_{11} + I_{12}.
\end{aligned}
\end{equation}
Notice that estimating   $\jump{A+A^\top}$ 
is equivalent to estimating  $\jump{A}$. 
Moreover, the projection matrix $\mathcal{P}_\Sigma$ is fixed and thus will only become a constant in the estimation.
Suppose that $\norm{h}_{\mathcal{C}_4}<\delta_0 $ for some sufficiently small $\delta_0(\Sigma)$ 
and $\norm{h}_{\mathcal{C}_2}< M $ for some $M$.
In  $I_{11}$, we first obtain
\begin{equation}  \label{G3,I11,temp1}
\begin{aligned}
&  \norm{
\jump{
		\prts{ - D\mathcal{M}_1 \varphi_h  } \prts{ \nabla u }
		+ \prts{ I - \mathcal{M}_1   } \prts{ \nabla \varphi_u }	
}  
} _{\mathring{\mathcal{S}_4}  }
\\
&  \lesssim
\norm{
		\prts{ - D\mathcal{M}_1 \varphi_h  } \prts{ \nabla u }
		+ \prts{ I - \mathcal{M}_1  } \prts{ \nabla \varphi_u }	
} _{\mathcal{W}_6}
\\
&  \leq C(\Sigma, T_0)
\norm{ D\mathcal{M}_1 \varphi_h  }_{\mathring{\mathcal{C}}_1 }  
\norm { \nabla u }_{\mathcal{W} _6}
+ C(\Sigma, T_0)
\norm{ I - \mathcal{M}_1   } _{ \mathcal{C}_1}
\norm{ \nabla \varphi_u }	 _{\mathring{\mathcal{W}} _6 }
\\
&  \leq
C(\Sigma,T_0,M) \norm{  \varphi_h }	 _{ \mathring{\mathcal{C}}_2 }  \norm{u} _{ \mathcal{W}_1 }
+
C(\Sigma,T_0,M) \norm{  \varphi_u }	 _{ \mathring{\mathcal{W}}_1 } 
\\
&  \leq  C(\Sigma,T_0,M) \prts{ 1+\norm{z}_{\mathcal{W} }  }\norm{\varphi}_{ \mathring{\mathcal{W}} }.
\end{aligned}
\end{equation}
The estimation of $\mathcal{M}_0 \nabla_{\Sigma} h =: \alpha$ has  been studied 
in Section \ref{section.estimate.alpha}.
To estimate $I_{12}$, we use the same argument as in \eqref{G3,I11,temp1} to obtain
\begin{equation} \label{G3,I12,temp1}
\begin{aligned}
\norm { \jump {\prts{ I - \mathcal{M}_1 } \nabla u } }_{\mathcal{S}_4}
\leq C(\Sigma,T_0,M) \norm{  u }_{\mathcal{W}_1}.
\end{aligned}
\end{equation}
Moreover, we have
\begin{equation} \label{G3,I12,temp2}
\begin{aligned}
\norm{
	\prts{  \mathcal{M}_0 \varphi_h L_{\Sigma} \mathcal{M}_0 }  \prts{\nabla_{\Sigma}h} 
 		 + \mathcal{M}_0 \prts{\nabla_{\Sigma} \varphi_h} 
} _{\mathring{\mathcal{C}}_1}
\leq  C(\Sigma,T_0,M)  \norm{ \varphi_h}_{\mathring{\mathcal{C}}_2}.
\end{aligned}
\end{equation}
Therefore, by \eqref{G3,I11,temp1}, \eqref{G3,I12,temp1} and \eqref{G3,I12,temp2} we have
\begin{equation} \label{G3,DI1,estimate} 
\begin{aligned}
\norm{
	DI_1\depvar{z}\varphi
} _{\mathring{\mathcal{S}}_4}
\leq
 &C(\Sigma,T_0,M)
\prts{1+ \norm{z}_{\mathcal{W}} }  \norm{\varphi}_{\mathring{\mathcal{W}}} 
 \prts{ \norm{h}_{\mathcal{W}_5} + \norm{\nabla_\Sigma h}_{\infty} }\\
	&+
 	C(\Sigma,T_0,M) 
  \norm{ u}_{\mathcal{W}_1} \norm{ \varphi_h}_{\mathring{\mathcal{C}}_2}. 
\end{aligned}
\end{equation}

Using almost the same argument as for $I_1$, we obtain the Fr{\'e}chet derivative of $I_2$ and $I_3$:
\begin{equation} 
\begin{aligned}
D I_2 \varphi
=
\mathcal{P}_{\Sigma}
\jump{
\nu^\pm 
	\prts{
		\prts{  D\mathcal{M}_1 \varphi_h  } \prts{ \nabla u }
		+   \mathcal{M}_1  \nabla \varphi_u 
		+
			\prts{
					\prts{  D\mathcal{M}_1 \varphi_h  } \prts{ \nabla u }
				+   \mathcal{M}_1  \nabla \varphi_u			} ^\top
		}
}   n_{\Sigma}, 
\end{aligned}
\end{equation}
and
\begin{equation} \label{G3.I3.1} 
\begin{aligned}
& D I_3 \varphi
\  =
\Biggl(
\biggl(
	\Bigl\llbracket
	\  
	\nu^\pm \big(
			\prts{ - D\mathcal{M}_1 \varphi_h  } \prts{ \nabla u }
			+ \prts{ I - \mathcal{M}_1  } \prts{ \nabla \varphi_u }
			\big)
	\Bigr. \biggl. \Biggl.
\\			
&\qquad\qquad\qquad  \left. \left. \Bigl.
			+
					\nu^\pm  \big(
					\prts{ - D\mathcal{M}_1 \varphi_h  } \prts{ \nabla u }
					+ \prts{ I - \mathcal{M}_1   } \prts{ \nabla \varphi_u }
				\big) ^\top
	\Bigr\rrbracket
	\prts{  
	n_\Sigma - \mathcal{M}_0 \nabla_\Sigma h
	}
	\right.
	\right.
	\\
&	+
	\Biggl.
	\biggl.
	\jump{
	\nu^\pm   (I-\mathcal{M}_1) \nabla u 
		+ \nu^\pm \prts{ (I-\mathcal{M}_1) \nabla u }^{\top}  	 
	} 
	\bigl(
		\prts{  \mathcal{M}_0 \varphi_h L_{\Sigma} \mathcal{M}_0 }  \prts{\nabla_{\Sigma}h} 
	 		 + \mathcal{M}_0 \prts{\nabla_{\Sigma} \varphi_h} 
	\bigr)
\biggl)  \cdot n_\Sigma 
\Biggl) \mathcal{M}_0 \nabla_{\Sigma} h
\\
& +
\Biggl(
\prts{
\jump{
\nu^\pm  (I-\mathcal{M}_1) \nabla u +  \nu^\pm \prts{ (I-\mathcal{M}_1) \nabla u }^{\top}  
} ( n_{\Sigma} -  \mathcal{M}_0 \nabla_{\Sigma}h ) 
} \cdot n_{\Sigma}
\Biggl)
\prts{ D \prts{ \mathcal{M}_0 \nabla_{\Sigma}h	} \varphi_h }
\\
&=: A_1 + A_2  .
\end{aligned}
\end{equation}
We remind readers to be careful with the brackets in the term $A_1$ due to its complexity.
Using the estimates in \cite[Proposition 5.1 (a)]{Pruss.analytic.solu}, we have
\begin{equation} \label{G3,DI2,estimate}
\begin{aligned}
  \norm{D I_2 \varphi }_{\mathring{ \mathcal{S} }_4}
&\lesssim
\norm{  \jump{
		\prts{  D\mathcal{M}_1 \varphi_h  } \prts{ \nabla u }
		+   \mathcal{M}_1  \nabla \varphi_u 
 }  }_{\mathring{ \mathcal{S} }_4}
\\
&  \lesssim
\norm{ 	\prts{  D\mathcal{M}_1 \varphi_h  } \prts{ \nabla u }
		+   \mathcal{M}_1  \nabla \varphi_u 
   }_{\mathring{ \mathcal{W} }_6}
\\
&  \leq C(\Sigma, T_0)
\norm{ D\mathcal{M}_1 \varphi_h }_{\mathring{ \mathcal{C} }_1} \norm{ \nabla u }_{\mathcal{W}_6}
+  C(\Sigma, T_0)
\prts{ \norm{\mathcal{M}_1 }_{\mathcal{W}_6}  + \norm{\mathcal{M}_1 }_{\infty}   }
\norm{ \nabla \varphi_u} _{\mathring{ \mathcal{W} }_6}
\\
&  \leq 
C(\Sigma,T_0,M) \norm{ \varphi_h }_{\mathring{ \mathcal{C} }_2} \norm{ \nabla u }_{\mathcal{W}_6}
+ C(\Sigma,T_0,M) 
\prts{ \norm{\nabla_\Sigma h }_{\mathcal{W}_6}  + \norm{\nabla_\Sigma h }_{\infty}   }
\norm{ \nabla \varphi_u }_{\mathring{ \mathcal{W} }_6}
\\
&  \leq 
C(\Sigma,T_0,M) \norm{ \varphi_h }_{\mathring{ \mathcal{W} }_5} \norm{  u }_{\mathcal{W}_1}
+ C(\Sigma,T_0,M) \prts{ \norm{ h }_{\mathcal{W}_5} + \norm{\nabla_\Sigma h }_{\infty}  }
 \norm{  \varphi_u }_{\mathring{ \mathcal{W} }_1} .
\end{aligned}
\end{equation}

To estimate $I_3$, we need the following estimates.
For the term $A_1$ in \eqref{G3.I3.1}, 
we abbreviate it to
\begin{equation} \label{G3,I3,simplify}
\begin{aligned}
\prts{ \prts{
\jump{\nu^\pm \prts{I_{31} + I_{31}^\top} } I_{32}
+ \jump{ \nu^\pm  \prts{ I_{33}+I_{33}^\top } } I_{34}
} \cdot n_{\Sigma} } I_{35}
\end{aligned},
\end{equation}
where
\begin{equation} \label{G3,I3,simplify,def}
\begin{aligned}
&  I_{31}:= \prts{ - D\mathcal{M}_1 \varphi_h  } \prts{ \nabla u }
			+ \prts{ I - \mathcal{M}_1  } \prts{ \nabla \varphi_u } ,
\\
&  I_{32}:= n_\Sigma - \mathcal{M}_0 \nabla_\Sigma h ,
\\
&  I_{33}:= (I-\mathcal{M}_1) \nabla u ,
\\
&  I_{34}:= \prts{  \mathcal{M}_0 \varphi_h L_{\Sigma} \mathcal{M}_0 }  \prts{\nabla_{\Sigma}h} 
	 		 + \mathcal{M}_0 \prts{\nabla_{\Sigma} \varphi_h}  ,
\\
&  I_{35}:=  \mathcal{M}_0 \nabla_{\Sigma} h .
\end{aligned}
\end{equation}
For $I_{31}$ we have
\begin{equation} \label{G3,I3,I31}
\begin{aligned}
&  \norm{ \prts{  D\mathcal{M}_1 \varphi_h  } \prts{ \nabla u }
			+ \prts{ I - \mathcal{M}_1  } \prts{ \nabla \varphi_u } }_{\mathring{ \mathcal{W} }_6}
\\
&  \leq
C(\Sigma, T_0)\norm{   D\mathcal{M}_1 \varphi_h   }_{\mathring{ \mathcal{C} }_1} 
\norm{  \nabla u }_{\mathcal{W}_6}
+ C(\Sigma, T_0)
 \norm{ I - \mathcal{M}_1  }_{\mathcal{C}_1} \norm{ \nabla \varphi_u } _{\mathring{ \mathcal{W} }_6}
\\
&  \leq
C(\Sigma,T_0,M) \norm{\varphi_h}_{\mathring{ \mathcal{C} }_2} 
\norm{  \nabla u }_{\mathcal{W}_6}
+  C(\Sigma,T_0,M) \norm{ \nabla \varphi_u } _{\mathring{\mathcal{W}}_6}
\\
&  \leq
C(\Sigma,T_0,M) \norm{\varphi_h}_{\mathring{ \mathcal{W} }_5} 
\norm{  \nabla u }_{\mathcal{W}_6}
+  C(\Sigma,T_0,M) \norm{ \varphi_u } _{\mathring{ \mathcal{W} }_1}.
\end{aligned}
\end{equation}
For $I_{32}$ we have
\begin{equation} \label{G3,I3,I32}
\begin{aligned}
\norm { n_\Sigma - \mathcal{M}_0 \nabla_\Sigma h }_{\mathcal{C}_1} \leq C(\Sigma,M) .
\end{aligned}
\end{equation}
For $I_{33}$ we have
\begin{equation} \label{G3,I3,I33}
\begin{aligned}
\norm{ (I-\mathcal{M}_1) \nabla u }_{\mathcal{W}_6}
\leq
C(\Sigma, T_0)
\norm{I-\mathcal{M}_1}_{\mathcal{C}_1} \norm{\nabla u}_{\mathcal{W}_6}
\leq C(\Sigma,T_0,M) \norm{ u}_{\mathcal{W}_1} .
\end{aligned}
\end{equation}
For $I_{34} $ we have
\begin{equation} \label{G3,I3,I34}
\begin{aligned}
&  \norm{  
\prts{ \mathcal{M}_0 \varphi_h L_{\Sigma} \mathcal{M}_0 }  \prts{\nabla_{\Sigma}h} 
+ \mathcal{M}_0 \prts{\nabla_{\Sigma} \varphi_h
} } _{\mathring{ \mathcal{C} }_1}
\\
&  \leq
\norm{ \mathcal{M}_0 \varphi_h L_{\Sigma} \mathcal{M}_0 } _{\mathring{ \mathcal{C} }_1}
\norm{\nabla_{\Sigma}h}_{\mathcal{C}_1}
+ \norm{ \mathcal{M}_0 }_{\mathcal{C}_1} 
\norm{\nabla_{\Sigma} \varphi_h}_{\mathring{ \mathcal{C} }_1}
\\
&  \leq
C(\Sigma,M) \norm{ \varphi_h} _{\mathring{ \mathcal{C} }_1}
\norm{h} _{\mathcal{C}_2}
+ C(\Sigma,M)\norm{ \varphi_h} _{\mathring{ \mathcal{C} }_2} .
\end{aligned}
\end{equation}
For $I_{35}$ 
we have
\begin{equation} \label{G3,I3,I35}
\begin{aligned}
\norm{ \mathcal{M}_0 \nabla_{\Sigma} h }_{\mathcal{C}_1}
\lesssim
\norm{ \mathcal{M}_0}_{\mathcal{C}_1} \norm{ \nabla_{\Sigma} h }_{\mathcal{C}_1}
\leq C(\Sigma,M) \norm{  h }_{\mathcal{C}_2}.
\end{aligned}
\end{equation}
Thus, we have
\begin{equation} \label{G3,I3,part1}
\begin{aligned}
&  \norm{ \prts{ \prts{
\jump{\nu^\pm  \prts{I_{31} + I_{31}^\top} } I_{32}
+ \jump{ \nu^\pm  \prts{ I_{33}+I_{33}^\top } } I_{34}
} \cdot n_{\Sigma} } I_{35}
} _{ \mathring{ \mathcal{S} }_4 }
\\
&  \leq C(\Sigma, T_0)
\prts{
\norm{\jump{I_{31} } }_{\mathring{ \mathcal{S} }_4}
 \norm{I_{32}}_{\mathcal{C}_1}
+ \norm{\jump{I_{33} } }_{\mathcal{S}_4}
 \norm{I_{34}}_{\mathring{\mathcal{C}}_1}
} 
\prts{ \norm{I_{35}}_{\mathcal{S}_4} + \norm{I_{35}}_{\infty} }
\\
& \leq C(\Sigma, T_0)
\prts{
\norm{I_{31} }_{\mathring{ \mathcal{W} }_6}
 \norm{I_{32}}_{\mathcal{C}_1}
+ \norm{I_{33} }_{\mathcal{W}_6}
 \norm{I_{34}}_{\mathring{\mathcal{C}}_1}
} 
\prts{ \norm{I_{35}}_{\mathcal{S}_4} + \norm{I_{35}}_{\infty} }
\\
&  \leq
C(\Sigma,T_0,M) \Bigl(
 \norm{\varphi_h}_{\mathring{\mathcal{W}}_5} 
	\norm{\nabla u }_{\mathcal{W}_6} 
+ \norm{\varphi_u}_{\mathring{\mathcal{W}}_1} 
+ \norm{u}_{\mathcal{W}_1} \norm{\varphi_h}_{\mathring{\mathcal{C}}_2}  \norm{h}_{\mathcal{C}_2}
\Bigr.
\\
& \quad \Bigl.
+ \norm{u}_{\mathcal{W}_1} \norm{\varphi_h}_{\mathring{\mathcal{C}}_2} 
\Bigr)
\prts{ \norm{ \nabla_\Sigma h }_{\mathcal{S}_4} + \norm{ \nabla_\Sigma h }_{\infty} }
\\
&  \leq
C(\Sigma,T_0,M) \prts{
 \norm{\varphi}_{\mathring{\mathcal{W}}} 
	\norm{z }_{\mathcal{W}} 
+ \norm{\varphi}_{\mathring{\mathcal{W}}} 
+ \norm{z}_{\mathcal{W}} \norm{\varphi}_{\mathring{\mathcal{W}}}  \norm{h}_{\mathcal{C}_2}
+ \norm{z}_{\mathcal{W}} \norm{\varphi}_{\mathring{\mathcal{W}}} 
} 
\prts{ \norm{ h }_{\mathcal{W}_5} + \norm{ \nabla_\Sigma h }_{\infty} }
\\
&  \leq
C(\Sigma,T_0,M) \prts{
 1 
+ \norm{z}_{\mathcal{W}}  
} 
\prts{ \norm{ h }_{\mathcal{W}_5} + \norm{ \nabla_\Sigma h }_{\infty} }
\norm{\varphi}_{\mathring{\mathcal{W}}}.
\end{aligned}
\end{equation}
The  term $A_2$ in  \eqref{G3.I3.1} can be abbreviated to
\begin{equation} \label{G3,I3,rewrite2}
\begin{aligned}
\prts{ \prts{
 \jump{ \nu^\pm  \prts{ I_{33}+I_{33}^\top } } I_{32}
} \cdot n_{\Sigma} } \prts{ D\alpha \varphi_h } ,
\end{aligned}
\end{equation}
where $\alpha$ is defined in \eqref{Frechet.alpha} and estimated in \eqref{surface,alpha,estimate1} and \eqref{surface,alpha,estimate2}. 
Thus, we have
\begin{equation} \label{G3,I3,part2}
\begin{aligned}
&   \norm { \prts{ \prts{
\jump{ \nu^\pm  \prts{ I_{33}+I_{33}^\top } } I_{32}
} \cdot n_{\Sigma} } \prts{ D\alpha \varphi_h }
}  _{\mathring{\mathcal{S}}_4}
\leq C(\Sigma, T_0)
\norm {   \jump{ I_{33} } } _{\mathcal{S}_4} 
\norm{ I_{32} } _{\mathcal{C}_1}
\norm{ D\alpha \varphi_h }
 _{\mathring{\mathcal{C}}_1}
\\
&   \leq
C(\Sigma,T_0,M)
\norm {   I_{33} } _{\mathcal{W}_6} 
\norm{ \varphi_h }
 _{\mathring{\mathcal{C}}_2}
\leq
C(\Sigma,T_0,M)
\norm {  u } _{\mathcal{W}_1} 
\norm{ \varphi_h }
 _{\mathring{\mathcal{C}}_2}
\leq 
C(\Sigma,T_0,M)
\norm {z} _{\mathcal{W}} 
\norm{ \varphi }
 _{\mathring{\mathcal{W}}}.
\end{aligned}
\end{equation}
From \eqref{G3,I3,part1} and \eqref{G3,I3,part2}, we obtain
\begin{equation} \label{G3,DI3,estimate}
\begin{aligned}
\norm{ D I_3 \varphi } _{\mathring{\mathcal{S}}_4 }
\leq \ 
&C(\Sigma,T_0,M) \prts{
 1 
+ \norm{z}_{\mathcal{W}}  
} 
\prts{ \norm{h}_{\mathcal{W}_5} + \norm{\nabla_\Sigma h}_{\infty}   }
\norm{\varphi}_{\mathring{\mathcal{W}}} \\
&+
C(\Sigma,T_0,M)
\norm {z} _{\mathcal{W}} 
\norm{ \varphi }
 _{\mathring{\mathcal{W}}}.
\end{aligned}
\end{equation}

Consequently, the estimate of $D\mathcal{G}_3$ follows \eqref{G3,DI1,estimate}, \eqref{G3,DI2,estimate} and \eqref{G3,DI3,estimate}.
For all $T\in(0,T_0]$ we have
\begin{equation} \label{G3.derivative.estimate}
\begin{aligned}
& \norm{ D \mathcal{G}_3 \depvar{z} \varphi } _{\mathring{\mathcal{S}}_4^T }
\leq
C(\Sigma,T_0,M)
\prts{ 1+ \norm{z}_{\mathcal{W}^T}  }  \prts{ \norm{z}_{\mathcal{W}^T}  + \norm{\nabla_\Sigma h}_{\infty}  }
\norm{\varphi}_{\mathring{\mathcal{W}}^T} .
\end{aligned}
\end{equation}

\subsection{Estimates of nonlinear terms}

In this section, we estimate the nonlinear operators $G_i$ using  the $\mathcal{S}_i$ norms 
with $ i =1, \cdots , 5$, in a  similar spirit to \cite{Pruss.quali, Pruss.analytic.solu} with more details included  for completeness.
Similar to the arguments before, we still assume $\norm{h}_{\mathcal{C}_4}<\delta_0$ for some sufficiently small $\delta_0(\Sigma)$ and $\norm{h}_{\mathcal{C}_2} < M $.
We will also omit the parameter $T$ in function spaces when there is no confusion.

\subsubsection{Term $G_1$}
We start with the Fr{\'e}chet derivative of $G_1$.
We will use the $\mathcal{S}_1$ norm, i.e., the $L^q([0,T];L^q(\Omega))$ norm.
For the 1st term we have 
\begin{equation} \label{DN1,part1}
\begin{aligned}
D \prts{ \frac{1}{2}\nabla\prts{\abs{B}^2} }\varphi 
= D (\nabla B \cdot B) \varphi
= \nabla \varphi_B \cdot B + \nabla B \cdot \varphi_B  .
\end{aligned}
\end{equation}
Its estimate is
\begin{equation} \label{DN1,part1,estimate}
\begin{aligned}
  \norm{ \prts{ D \frac{1}{2}\nabla\prts{\abs{B}^2} }\varphi }_{  \mathring{\mathcal{S}}_1 }
&\lesssim  \norm{\nabla \varphi_B}_{ \mathring{\mathcal{C}}_0 }
		\norm{B}_{ \mathcal{S}_1 }
+ \norm{\nabla B}_{ \mathcal{S}_1 }
\norm{\nabla \varphi_B}_{ \mathring{\mathcal{C}}_0 }
\\
&   \lesssim  \norm{ \varphi_B}_{ \mathring{\mathcal{C}}_1 }
		\norm{B}_{ \mathcal{S}_1 }
+ \norm{  B}_{ \mathcal{W}_6 }
\norm{ \varphi_B}_{ \mathring{\mathcal{C}}_0 }
\\
&   \lesssim  \norm{ \varphi_B}_{ \mathring{\mathcal{W}}_2 }
		\norm{B}_{ \mathcal{W}_2 }
+ \norm{  B}_{ \mathcal{W}_2 }
\norm{ \varphi_B}_{ \mathring{\mathcal{W}}_2 }
\leq C( \Sigma, T_0 ) \norm{z}_{ \mathcal{W} } \norm{ \varphi }_{ \mathring{\mathcal{W}} }  .
\end{aligned}
\end{equation}
The 2nd and 3rd terms can be estimated using exactly the same argument and function spaces.
For the 4th term we have
\begin{equation} \label{DN1,part4}
\begin{aligned}
D \prts{ \mathcal{M}_3 \nabla u } \varphi
= \prts{ D \mathcal{M}_3 \varphi_h } \nabla u + \mathcal{M}_3 \nabla \varphi_u.
\end{aligned}
\end{equation}
Using \eqref{M3.estimate}, we have
\begin{equation} \label{DN1,part4,estimate}
\begin{aligned}
   \norm{ D \prts{ \mathcal{M}_3 \nabla u } \varphi}_{ \mathring{\mathcal{S}}_1 }
&\lesssim 
\norm{ D \mathcal{M}_3 \varphi_h  }_{ \mathring{\mathcal{C}}_0 } 
\norm{ \nabla u}_{\mathcal{S}_1} 
 + \norm{\mathcal{M}_3 }_{\mathcal{S}_1}
 \norm{ \nabla \varphi_u} _{ \mathring{\mathcal{C}}_0 } 
 \\
&   \leq C(\Sigma,T_0,M) \norm{\varphi_h  }_{ \mathring{\mathcal{C}}_1 }
	\norm{  u}_{\mathcal{W}_1} 
 +  C(\Sigma,T_0,M) \norm{\partial_t h}_{ \mathcal{S}_1 }
 	 \norm{  \varphi_u} _{ \mathring{\mathcal{W}}_1 } 
\\
&   \leq C(\Sigma,T_0,M)  \norm{z}_{\mathcal{W}} \norm{\varphi}_{ \mathring{\mathcal{W}} }   .
\end{aligned}
\end{equation}
For the 5th term we have
\begin{equation} \label{DN1,part5}
\begin{aligned}
D \prts{ u\mathcal{M}_1  \nabla u}\varphi 
= \varphi_u   \mathcal{M}_1  \nabla u
	+ u  \prts{ D \mathcal{M}_1 \varphi_h } \nabla u 
  	+ u \mathcal{M}_1 \nabla  \varphi_u.
\end{aligned}
\end{equation}
Its estimate is
\begin{equation} \label{DN1,part5,estimate}
\begin{aligned}
&   \norm{ D \prts{ u\mathcal{M}_1  \nabla u}\varphi }_{ \mathring{\mathcal{S}}_1 }
\\
&   \lesssim  
	\norm {\varphi_u}_{ \mathring{\mathcal{C}}_0 }   
		\norm{ \mathcal{M}_1 } _{ \mathcal{C}_0 }  \norm{ \nabla u } _{ \mathcal{S}_1 }
	+ \norm{ u } _\infty 
	 \norm{ D \mathcal{M}_1 \varphi_h } _{  \mathring{\mathcal{C}_0 } } 
	 \norm{ \nabla u } _{ \mathcal{S}_1 } 
  	+ \norm{u}_{\mathcal{S}_1} 
  	\norm{\mathcal{M}_1}_{ \mathcal{C}_0 } 
  	\norm{\nabla\varphi_u} _{ \mathring{\mathcal{C}_0 } }
\\
&   \leq C(\Sigma,T_0,M)
\prts{ 
	\norm {\varphi_u}_{ \mathring{\mathcal{C}}_0 }   
		\norm{h} _{ \mathcal{C}_3 }  \norm{u } _{ \mathcal{W}_1 }
	+ \norm{ u } _\infty 
	 \norm{ \varphi_h } _{  \mathring{\mathcal{C}_3 } } 
	 \norm{  u } _{ \mathcal{W}_1 } 
  	+ \norm{u}_{\mathcal{W}_1} 
  	\norm{h}_{ \mathcal{C}_3 } 
  	\norm{\varphi_u} _{ \mathring{\mathcal{C}_3 } }
}
\\
&   \leq  C(\Sigma,T_0,M) \prts{  \norm{h}_{\mathcal{C}_3}  + \norm{ u } _\infty }
  \norm{z}_{\mathcal{W}} \norm{\varphi}_{ \mathring{\mathcal{W}} }  	.
\end{aligned}
\end{equation}
Using the same argument, we obtain the estimate of the 6th term:
\begin{equation} 
\begin{aligned}
\norm{ D \prts{ B\mathcal{M}_1  \nabla B}\varphi }_{ \mathring{\mathcal{S}}_1 }
\leq  C(\Sigma,T_0,M) \prts{  \norm{h}_{\mathcal{C}_1}  + \norm{ B } _\infty }
  \norm{z}_{\mathcal{W}} \norm{\varphi}_{ \mathring{\mathcal{W}} } .
\end{aligned}
\end{equation}
For the 7th term we have
\begin{equation} \label{DN1,part7}
\begin{aligned}
D  \prts{ \frac{1}{2} \mathcal{M}_1 \nabla\prts{\abs{B}^2} }\varphi 
=
\prts{  D\mathcal{M}_1 \varphi }  \nabla B B
+  \mathcal{M}_1 \nabla \varphi_B  B 
+  \mathcal{M}_1  \nabla B  \varphi_B.
\end{aligned}
\end{equation}
Estimating by terms, we obtain
\begin{equation} 
\begin{aligned}
&   \norm{\prts{  D\mathcal{M}_1 \varphi }  \nabla B B}_{ \mathring{\mathcal{S}}_1 }
\lesssim
\norm{D\mathcal{M}_1 \varphi_h }_{ \mathring{\mathcal{C}}_0 }
\norm{\nabla B}_{ \mathcal{S}_1 }
\norm{ B}_{ \infty }
\\
&   \leq C(\Sigma, T_0, M)
\norm{\varphi_h}_{ \mathring{\mathcal{W}}_5 }
\norm{ B}_{ \mathcal{W}_2 }
\norm{ B}_{ \infty }
\leq 
   C(\Sigma, T_0, M)
\norm{\varphi}_{ \mathring{\mathcal{W}} }
\norm{z}_{ \mathcal{W} }
\norm{ B}_{ \infty },
\end{aligned}
\end{equation}
\begin{equation} 
\begin{aligned}
&   \norm{ \mathcal{M}_1 \nabla \varphi_B  B }_{ \mathring{\mathcal{S}}_1 }
\lesssim 
\norm{ \mathcal{M}_1 }_{ \mathcal{C}_0 }
\norm{ \nabla \varphi_B }_{ \mathring{\mathcal{C}}_0 }
\norm{ B }_{ \mathcal{S}_1 }
\\
&   \leq  C(\Sigma, T_0,M) \norm{ h }_{ \mathcal{C}_3 }
\norm{  \varphi_B }_{ \mathring{\mathcal{W}}_2 }
\norm{ B }_{ \mathcal{W}_2 } 
\leq  C(\Sigma, T_0,M) 
\norm{ h }_{ \mathcal{C}_3 }
\norm{  \varphi }_{ \mathring{\mathcal{W}} }
\norm{ z }_{\mathcal{W}} ,
\end{aligned}
\end{equation}
and
\begin{equation} 
\begin{aligned}
&   \norm{   \mathcal{M}_1  \nabla B  \varphi_B }_{ \mathring{\mathcal{S}}_1 }
\lesssim 
\norm{ \mathcal{M}_1 }_{ \mathcal{C}_0 }
\norm{ \varphi_B }_{ \mathring{\mathcal{C}}_0 }
\norm{ \nabla B }_{ \mathcal{S}_1 }
\\
&   \leq C(\Sigma,T_0,M) \norm{ h }_{ \mathcal{C}_3 }
\norm{  \varphi_B }_{ \mathring{\mathcal{W}}_2 }
\norm{ B }_{ \mathcal{W}_2 } 
\leq  C(\Sigma, T_0,M) 
 \norm{ h }_{ \mathcal{C}_3 }
\norm{  \varphi }_{ \mathring{\mathcal{W}} }
\norm{ z }_{\mathcal{W}} .
\end{aligned}
\end{equation}
Thus, we have
\begin{equation} 
\begin{aligned}
\norm{  D  \prts{ \frac{1}{2} \mathcal{M}_1 \nabla\prts{\abs{B}^2} }\varphi }_{ \mathring{\mathcal{S}}_1 }
\leq 
C(\Sigma, T_0,M) \prts{ \norm{ h }_{ \mathcal{C}_3 } + \norm{B }_{ \infty } }
\norm{ z }_{ \mathcal{W} }
\norm{ \varphi }_{ \mathring{\mathcal{W}} }.
\end{aligned}
\end{equation}
For the 8th term we have
\begin{equation} \label{DN1,part8} 
\begin{aligned}
D \prts{ \mathcal{M}_1 \nabla p } \varphi
=  \prts{ D\mathcal{M}_1 \varphi_h } \nabla p + \mathcal{M}_1 \nabla \varphi_p .
\end{aligned}
\end{equation}
For any $\varphi\in \mathring{\mathcal{W}}$ we have the estimate
\begin{equation}  \label{DN1,part8,estimate}
\begin{aligned}
   \norm{ D \prts{ \mathcal{M}_1 \nabla p } \varphi }_{\mathring{\mathcal{S}}_1}
&\lesssim  \norm{ D\mathcal{M}_1 \varphi_h }_{\mathring{\mathcal{C}}_0} 
\norm{ \nabla p }_{ \mathcal{S}_1 }
+ \norm{ \mathcal{M}_1 }_{ \infty } \norm{ \nabla \varphi_p }_{\mathcal{S}_1}
\\
&   \leq 
C(\Sigma, T_0,M) \norm{\varphi_h }_{ \mathring{\mathcal{C}}_1 } 
\norm{ p }_{ \mathcal{W}_3 }
+ C(\Sigma, T_0,M)\norm{ \nabla_\Sigma h }_{  \infty } \norm{ \varphi_p }_{\mathring{\mathcal{W}}_3}
\\
&   \leq 
C(\Sigma, T_0,M) \norm{\varphi_h }_{ \mathring{\mathcal{W}}_5 } 
\norm{ p }_{ \mathcal{W}_3 }
+ C(\Sigma, T_0,M) \norm{ \nabla_\Sigma h }_{\infty} \norm{ \varphi_p } _{\mathring{\mathcal{W}}_3}
\\
&   \leq
 C(\Sigma, T_0,M) 
\prts{ \norm{ z }_{ \mathcal{W} } +  \norm{ \nabla_\Sigma h }_{\infty} }
\norm{\varphi }_{ \mathring{\mathcal{W}} } .
\end{aligned}
\end{equation}
For the 9th term, notice that the viscosity $\nu^\pm$ is a fixed function in the transformed equations since  
the interface has been pulled to a fixed surface.
Thus, we have
\begin{equation}  \label{DN1,part9}
\begin{aligned}
D\prts{  	\nu^\pm \mathcal{M}_4 : \nabla^2 u } \varphi
= \nu^\pm \prts{  D \mathcal{M}_4 \varphi_h} : \nabla^2 u 
	+ \nu^\pm    \mathcal{M}_4 : \nabla^2 \varphi_u .
\end{aligned}
\end{equation}
For any $\varphi\in \mathring{\mathcal{W}} $ we have the estimate
\begin{equation}  \label{DN1,part9,estimate}
\begin{aligned}
   \norm{ D\prts{  	\nu^\pm \mathcal{M}_4 : \nabla^2 u } \varphi } _{\mathcal{S}_1 }
&\lesssim \norm{  D \mathcal{M}_4 \varphi_h} _{\mathring{\mathcal{C}}_0 }
		\norm{ \nabla^2 u }_{\mathcal{S}_1}
	+ \norm{ \mathcal{M}_4 }_{\infty }
	  \norm{ \nabla^2 \varphi_u } _{\mathcal{S}_1} 
\\
&   \leq
C(\Sigma, T_0,M) \norm{ \varphi_h}_{\mathring{\mathcal{C}}_3 }  \norm{u}_{\mathcal{W}_1}
	+  C(\Sigma, T_0,M)	\norm{ \nabla_\Sigma h }_{\infty}
	  \norm{  \varphi_u } _{ \mathring{\mathcal{W}}_1 } 
\\
&   \leq
C(\Sigma, T_0,M) \norm{ \varphi_h}_{\mathring{\mathcal{W}}_5 } \norm{u}_{\mathcal{W}_1}
+  C(\Sigma, T_0,M)	\norm{ \nabla_\Sigma h }_{\infty}
	  \norm{  \varphi_u } _{ \mathring{\mathcal{W}}_1 } 
\\
&   \leq C(\Sigma, T_0,M) \prts{\norm{z}_{\mathcal{W}}+ \norm{ \nabla_\Sigma h }_{\infty} } 
\norm{ \varphi }_{\mathring{\mathcal{W}} }. 
\end{aligned}
\end{equation}
For the 10th term we have
\begin{equation}  \label{DN1,part10}
\begin{aligned}
D\prts{  	\nu^\pm \mathcal{M}_2 \cdot \prts{\nabla u } } \varphi
=
  	\nu^\pm \prts{ D \mathcal{M}_2 \varphi_h }  \cdot \prts{\nabla u } 
+   	\nu^\pm \mathcal{M}_2   \cdot \prts{\nabla \varphi_u }. 
\end{aligned}
\end{equation}
For any $\varphi\in \mathring{\mathcal{W}} $ we have the estimate
\begin{equation}  \label{DN1,part10,estimate}
\begin{aligned}
   \norm{ D\prts{  	\nu \mathcal{M}_2 \cdot \prts{\nabla u } } \varphi }_{ \mathcal{S} _1 }
&\lesssim
  	 \norm{ D \mathcal{M}_2 \varphi_h } _{\mathring{\mathcal{C}}_0 }
  	  \norm{\nabla u } _{ \mathcal{S}_1 }
+    \norm{\mathcal{M}_2} _{ \infty }  \norm{\nabla \varphi_u }_{ \mathcal{S}_1 }
\\
&   \leq
C(\Sigma,T_0,M)   	 \norm{ \varphi_h } _{\mathring{\mathcal{C}}_4 }
  	  \norm{ u } _{ \mathcal{W}_1 }
  	 +  C(\Sigma,T_0,M)   \norm{\nabla_\Sigma h} _{ \infty } 
  	  \norm{ \varphi_u }_{\mathring{\mathcal{W}}_1}
\\
&   \leq C(\Sigma,T_0,M) \prts{ \norm{z} _{\mathcal{W}} +   \norm{\nabla_\Sigma h} _{ \infty }  }
  \norm{ \varphi }_{\mathring{\mathcal{W}}}.
\end{aligned}
\end{equation}

Combining the estimates of all these 10 terms, we obtain
\begin{equation} \label{DN1,total,estimate}
\begin{aligned}
\norm{ D G_1\depvar{z} \varphi }_{  \mathcal{S} _1 }
\leq C(\Sigma,T_0,M) \prts{
 \norm{\nabla_\Sigma h}_{\infty} 
+
\prts{ 1+ \norm{u}_\infty + \norm{B}_\infty  }
  \norm{z}_{\mathcal{W}}   
   }  \norm{\varphi}_{ \mathring{\mathcal{W}} } .
\end{aligned}
\end{equation}

\subsubsection{Term $G_2$}

Now we estimate $G_2$ using the $\mathcal{S}_2$ norm, which is equal to the $\mathcal{S}_1$ norm.
The 1st and 2nd terms  $u\nabla B$ and $B \nabla u$ can be treated using 
the same argument as in \eqref{DN1,part1} and \eqref{DN1,part1,estimate} since their structures and norms are exactly the same.
Thus, we have
\begin{equation} \label{DN2,part1,estimate}
\begin{aligned}
\norm{ D \prts{ u\nabla B} \varphi }_{ \mathcal{S}_2 }
\leq C \norm{z}_{\mathcal{W}} \norm{\varphi}_{ \mathring{\mathcal{W}} }
\end{aligned}
\end{equation}
and
\begin{equation} \label{DN2,part2,estimate}
\begin{aligned}
\norm{ D \prts{ B\nabla u} \varphi }_{ \mathcal{S}_2 }
\leq C \norm{z}_{\mathcal{W}} \norm{\varphi}_{ \mathring{\mathcal{W}} }.
\end{aligned}
\end{equation}
The 3rd and 4th terms  $u \mathcal{M}_1 \nabla B$ and $B \mathcal{M}_1 \nabla u$ 
can be treated using the same argument 
as in \eqref{DN1,part5} and \eqref{DN1,part5,estimate}, which implies
\begin{equation} \label{DN2,part3,estimate}
\begin{aligned}
&   \norm{ D \prts{ u \mathcal{M}_1 \nabla B } \varphi }_{ \mathcal{S} _2 }
\\
&   \leq C(\Sigma,T_0,M)
	\norm {\varphi_u}_{ \mathring{\mathcal{C}}_0 }   
		\norm{h} _{ \mathcal{C}_3 }  \norm{B } _{ \mathcal{W}_2 }
	+ C(\Sigma,T_0,M)\norm{ u } _\infty 
	 \norm{ \varphi_h } _{  \mathring{\mathcal{C}_3 } } 
	 \norm{  B } _{ \mathcal{W}_2 } 
	 \\
	 &\quad
  	+ C(\Sigma,T_0,M)\norm{u}_{\mathcal{W}_1} 
  	\norm{h}_{ \mathcal{C}_3 } 
  	\norm{\varphi_B} _{ \mathring{\mathcal{C}} _3  }
\\
&   \leq C(\Sigma,T_0,M)
\prts{	\norm{h} _{ \mathcal{C}_3 } + 	\norm{u} _{ \infty} }
 \norm{z} _{\mathcal{W}} \norm {\varphi}_{ \mathring{\mathcal{W}} }  , 
\end{aligned}
\end{equation}
and
\begin{equation} \label{DN2,part4,estimate}
\begin{aligned}
\norm{ D \prts{ B \mathcal{M}_1 \nabla u } \varphi }_{ \mathcal{S} _2 }
\leq C(\Sigma,T_0,M)
\prts{	\norm{h} _{ \mathcal{C}_3 } + 	\norm{B} _{ \infty} }
 \norm{z} _{\mathcal{W}} \norm {\varphi}_{ \mathring{\mathcal{W}} }  .
\end{aligned}
\end{equation}
The 5th, 6th and 7th terms  $\mathcal{M}_3 \nabla B$, 
$\sigma \mathcal{M}_4 : \nabla^2 B$, and $\sigma \mathcal{M}_2 \cdot \nabla B$
 can be treated  using the same argument
 as in  \eqref{DN1,part4,estimate}, \eqref{DN1,part9,estimate},  and \eqref{DN1,part10,estimate}, 
 which implies
\begin{equation} \label{DN2,part5,estimate}
\begin{aligned}
\norm{ D \prts{\mathcal{M}_3 \nabla B } \varphi }_{ \mathcal{S} _2 }
\leq C(\Sigma,T_0,M) \norm{z}_{\mathcal{W}} \norm{\varphi}_{ \mathring{\mathcal{W}} }  ,
\end{aligned}
\end{equation}
\begin{equation} \label{DN2,part6,estimate}
\begin{aligned}
\norm{ D \prts{  \sigma \mathcal{M}_4 : \nabla^2 B  } \varphi }_{ \mathcal{S} _2 }
\leq C(\Sigma,T_0,M) \prts{\norm{z}_{\mathcal{W}}+ \norm{ \nabla_\Sigma h }_{\infty } } 
\norm{ \varphi }_{\mathring{\mathcal{W}} }   ,
\end{aligned}
\end{equation}
and
\begin{equation} \label{DN2,part7,estimate}
\begin{aligned}
\norm{ D \prts{  \sigma \mathcal{M}_2 \cdot \prts{\nabla B }  } \varphi }_{  \mathcal{S} _2 }
 \leq C(\Sigma,T_0,M) \prts{ \norm{z} _{\mathcal{W}} +  \norm{ \nabla_\Sigma h }_{\infty } }
  \norm{ \varphi }_{\mathring{\mathcal{W}}}  .
\end{aligned}
\end{equation}
Consequently, we have the same estimate as in \eqref{DN1,total,estimate}:
\begin{equation} \label{DN2,total,estimate}
\begin{aligned}
\norm{ D G_2\depvar{z} \varphi }_{  \mathcal{S} _2 }
\leq C(\Sigma, T_0,M ) \prts{
\norm{ \nabla_\Sigma h }_{\infty }
+
\prts{ 1+ \norm{u}_\infty + \norm{B}_\infty  }
  \norm{z}_{\mathcal{W}}   	
   }  \norm{\varphi}_{ \mathring{\mathcal{W}} } .
\end{aligned}
\end{equation}
\subsubsection{Term $G_3$}
For term $G_3$, we only need to estimate  $\mathcal{M}_1 : \nabla u$.
Its Fr{\'e}chet derivative is
\begin{equation} \label{DN3,total} 
\begin{aligned}
D G_3 \varphi
= D \prts{ \mathcal{M}_1 : \nabla u } \varphi
=  \prts{ D\mathcal{M}_1 \varphi_h } : \nabla u + \mathcal{M}_1 : \nabla \varphi_u.
\end{aligned}
\end{equation}
Suppose that $\norm{ h } _{ \mathcal{C}_4 } < \delta_0$ and 
$\norm{ h } _{ \mathcal{C}_2 } < M$.
For all $\varphi\in \mathring{\mathcal{W}} $, using similar ideas 
to \cite[(5.20)]{Pruss.analytic.solu},
 we have
\begin{equation} \label{DN3,total,estimate} 
\begin{aligned}
&   \norm{ D G_3 \varphi }_{ \mathring{\mathcal{S}}_3 }
=
\norm{ D \prts{ \mathcal{M}_1 : \nabla u } \varphi }_{ \mathring{\mathcal{S}}_3 }
\\
& \leq \norm{ D \prts{ \mathcal{M}_1 : \nabla u } \varphi }_{ \mathring{W}^{1,q}([0,T]; \dot{W}^{-1,q}(\Omega))}
+ \norm{ D \prts{ \mathcal{M}_1 : \nabla u } \varphi }_{  L^q ([0,T]; W^{1,q}(\Omega)) }
\\
& \leq
C(\Sigma,T_0,M) \prts{ 
\norm{\nabla_\Sigma h}_\infty + \norm{ h}_{\mathcal{W}_5^T}  + \norm{ u}_{\mathcal{W}_1^T}
  }
\prts{
\norm{ \varphi_u }_{\mathring{\mathcal{W}}_1^T}  + \norm{ \varphi_h }_{\mathring{\mathcal{W}}_5^T}
}
\\
& \quad + C\norm{D\mathcal{M}_1 \varphi_h}_{\mathring{\mathcal{C}}_3^T}
\norm{\nabla u }_{\mathcal{W}_1^T}
+ C\norm{\mathcal{M}_1 }_{\mathcal{C}_3^T} \norm{\nabla \varphi_u}_{\mathring{\mathcal{W}}_1^T}
\\
& \leq 
 C(\Sigma, T_0,M) \prts{ \norm{\nabla_\Sigma h}_\infty + \norm{z}_{\mathcal{W}} } 
\norm{\varphi}_{\mathring{\mathcal{W}}^T} .
\end{aligned}
\end{equation}
\subsubsection{Term $G_4$}
For term $G_4$, all its components have been studied in Section \ref{section,G123,estimate}.
We estimate its Fr{\'e}chet derivative
\begin{equation} \label{DN4,total} 
\begin{aligned}
D G_4 \varphi
=
\prts{ D \mathcal{G}_1 \varphi_h + D \mathcal{G}_2 \varphi_h +\kappa ({\rm tr} L_\Sigma^2) \varphi_h  } n_{\Sigma} 
+ D \mathcal{G}_3 \varphi_h .
\end{aligned}
\end{equation}
When $\norm{ h } _{ \mathcal{C}_4 } < \delta_0$ and 
$\norm{ h } _{ \mathcal{C}_2 } < M$,
using \eqref{G1.derivative.estimate}, \eqref{G2.derivative.estimate} 
and \eqref{G3.derivative.estimate}, we obtain 
for all $\varphi\in \mathring{\mathcal{W}} $ and all $T\in(0,T_0]$ that
\begin{equation} \label{DN4,total,estimate} 
\begin{aligned}
   \norm{D G_4 \varphi }_{ \mathring{\mathcal{S}}_4^T } 
&\lesssim
\norm{ D \mathcal{G}_1 \varphi_h} _{ \mathring{\mathcal{S}}_4 } 
+ \norm{ D \mathcal{G}_2 \varphi_h }_{ \mathring{\mathcal{S}}_4 } 
+ \norm{{\rm tr} L_\Sigma^2 \varphi_h}_{ \mathring{\mathcal{S}}_4 }
+\norm{ D \mathcal{G}_3 \varphi_h }_{ \mathring{\mathcal{S}}_4 } 
\\
&   \leq
 C(\Sigma,T_0,M) \prts{ \norm{z}_{\mathcal{W}} + \norm{h}_{\mathcal{C}_4}  } \norm{\varphi}_{ \mathcal{W} }
\\
&   \quad  +  C(\Sigma,T_0,M)
\prts{ \norm{h}_{\mathcal{C}_4} + \norm{z}_{\mathcal{W}} }
\prts{1+ \norm{h}_{\mathcal{C}_2} + \norm{z}_{\mathcal{W}} }
  \norm{\varphi}_{\mathring{\mathcal{W}} }   
	+ C \norm{{\rm tr} L_\Sigma^2 \varphi_h}_{ \mathring{\mathcal{S}}_4 }
\\
&   \quad    +
 C(\Sigma,T_0,M) \prts{  1 + \norm{z}_{\mathcal{W}}  } 
 \prts{ \norm{h}_{\mathcal{C}_4} + \norm{z}_{\mathcal{W}} }  
 \norm{\varphi}_{\mathring{\mathcal{W}}}
\\
&   \leq
 C(\Sigma,T_0,M)
\prts{1+ \norm{h}_{\mathcal{C}_2^T} + \norm{z}_{\mathcal{W}^T} }
\prts{ \norm{h}_{\mathcal{C}_4^T} + \norm{z}_{\mathcal{W}^T} }
  \norm{\varphi}_{\mathring{\mathcal{W}} ^T } 
+ C \norm{{\rm tr} L_\Sigma^2 \varphi_h}_{ \mathring{\mathcal{S}}_4^T }  .
\end{aligned}
\end{equation}

\subsubsection{Term $G_5$}
In the term $G_5$, we recall that $b$ is a fixed auxiliary function for linearity. 
The details of the term $b$ can be found in \cite{Pruss.quali}.
For all  $\varphi\in\mathring{\mathcal{W}}$ we have
\begin{equation} \label{DN5,formula}
\begin{aligned}
   D G_5 \varphi 
&=
-\prts{ (D\mathcal{M}_0\varphi_h) (\nabla_\Sigma h) } \cdot u
+ \prts{ (I-\mathcal{M}_0) \nabla_\Sigma\varphi_h }   \cdot u
\\
&   \quad   
+ \prts{ (I-\mathcal{M}_0)\nabla_\Sigma h } \cdot \varphi_u
-\varphi_u \cdot \nabla_\Sigma h 
+ (b-u)\cdot  \nabla_\Sigma \varphi_h
\\
&   =: -I_1 + I_2 + I_3 -I_4 +I_5.
\end{aligned}
\end{equation}
Suppose that $\norm{h}_{\mathcal{C}_4}<\delta_0$ and $\norm{h}_{\mathcal{C}_2}<M$.
From  \cite[Proposition 5.1 (a) and Lemma 5.5]{Pruss.classical.Stefan}, we have
for all $T\in(0,T_0]$ that
\begin{equation} \label{DN5,I1,estimate}
\begin{aligned}
   \norm{ I_1} _{\mathring{ \mathcal{S} }_5^T }
&\leq C(\Sigma, T_0)
\norm{ u^\top (D\mathcal{M}_0\varphi_h) }_{\mathring{ \mathcal{S} }_5 }  
\prts{ \norm{ \nabla_\Sigma h }_{\mathcal{S}_5} + \norm{ \nabla_\Sigma h }_{\infty} }
\\
&   \leq C(T_0)
\norm{ u}_{\mathcal{S}_5}
\norm{ D\mathcal{M}_0\varphi_h }_{\mathring{ \mathcal{C} }_2 }  
\prts{ \norm{ \nabla_\Sigma h }_{\mathcal{S}_5} + \norm{ \nabla_\Sigma h }_{ \infty } }
\\
&   \leq C(\Sigma,T_0,M)
\norm{ u}_{\mathcal{S}_5}
\norm{\varphi_h }_{\mathring{ \mathcal{C} }_2 }  
\prts{ \norm{ \nabla_\Sigma h }_{\mathcal{S}_5} + \norm{ \nabla_\Sigma h }_{ \infty } }
\\
&   \leq C(\Sigma,T_0,M)
\norm{ u}_{\mathcal{W}_1}
\norm{\varphi_h }_{\mathring{ \mathcal{W} }_5 }  
\prts{ \norm{ h }_{\mathcal{W}_5} + \norm{ \nabla_\Sigma h }_{\infty} }
\\
&   \leq C(\Sigma,T_0,M)
\norm{ z}_{\mathcal{W}^T}
\prts{ \norm{ z }_{\mathcal{W}^T} + \norm{ \nabla_\Sigma h }_{ \infty} }
\norm{\varphi }_{\mathring{ \mathcal{W} } ^T }  .
\end{aligned}
\end{equation}
Since $I-\mathcal{M}_0 = (I-h L_\Sigma)^{-1} \prts{ (I-h L_\Sigma) - I } = - \mathcal{M}_0 L_\Sigma h $,
we have
\begin{equation} \label{DN5,I2,estimate}
\begin{aligned}
   \norm{ I_2} _{\mathring{ \mathcal{S} }_5^T }
&=
 \norm{ \prts{ (\mathcal{M}_0 L_\Sigma h) \nabla_\Sigma\varphi_h }   \cdot u } 
 _{\mathring{ \mathcal{S} }_5 }
\\
&   \leq
C(\Sigma,T_0,M)
\prts{ \norm{h}_{\mathcal{S}_5} + \norm{h}_{ \infty } }
\prts{ \norm{u}_{\mathcal{S}_5} + \norm{u}_{ \infty  } }
\norm{ \nabla_\Sigma\varphi_h } _{\mathring{ \mathcal{S} }_5 }
\\
&   \leq C(\Sigma,T_0,M)
\prts{ \norm{ z }_{\mathcal{W}^T} + \norm{  h }_{ \infty  } }
\prts{ \norm{ z }_{\mathcal{W}^T} + \norm{ u }_{ \infty  } }
\norm{\varphi }_{\mathring{ \mathcal{W} } ^T}  .
\end{aligned}
\end{equation}
Similarly, we obtain the estimates for $I_3$,  $I_4$ and $I_5$:
\begin{equation} \label{DN5,I3,estimate}
\begin{aligned}
   \norm{ I_3} _{\mathring{ \mathcal{S} }_5^T }
&= 
\norm {\prts{ (\mathcal{M}_0 L_\Sigma h)\nabla_\Sigma h } \cdot \varphi_u } 
_{\mathring{ \mathcal{S} }_5 }
\\
&   \leq
C(\Sigma,T_0,M)
\prts{ \norm {h}_{\mathcal{S}_5} + \norm {h}_{\infty} }
 \prts{ \norm {\nabla_\Sigma h}_{\mathcal{S}_5} + \norm {\nabla_\Sigma h}_{\infty} }
\norm { \varphi_u } 
_{\mathring{ \mathcal{S} }_5 }
\\
&   \leq
C(\Sigma,T_0,M)
\prts{ \norm {h}_{\mathcal{S}_5} + \norm {h}_{\infty} }
 \prts{ \norm {h}_{\mathcal{W}_5} + \norm {\nabla_\Sigma h}_{\infty} }
\norm { \varphi_u } 
_{\mathring{ \mathcal{S} }_5 }
\\
&   \leq
C(\Sigma,T_0,M)
\prts{ \norm {z}_{\mathcal{W}^T} + \norm {h}_{\infty} }
 \prts{ \norm { z}_{\mathcal{W}^T} + \norm {\nabla_\Sigma h}_{\infty} }
\norm { \varphi } 
_{\mathring{ \mathcal{W} } ^T } , 
\end{aligned}
\end{equation}
\begin{equation} \label{DN5,I4,estimate}
\begin{aligned}
 \norm{ I_4} _{\mathring{ \mathcal{S} }_5^T }
&\leq C(\Sigma, T_0)
 \prts{ \norm{\nabla_\Sigma h }_{ \mathcal{S}_5 } +  \norm{\nabla_\Sigma h }_{ \infty } }
 \norm{ \varphi_u }_{\mathring{ \mathcal{S} }_5 }
\\
& \leq C(\Sigma,T_0)
 \prts{ \norm{z }_{\mathcal{W}^T} +  \norm{\nabla_\Sigma h }_{ \infty } }
 \norm{ \varphi}_{\mathring{\mathcal{W}}^T } , 
\end{aligned}
\end{equation}
and
\begin{equation} \label{DN5,I5,estimate}
\begin{aligned}
   \norm{ I_5} _{\mathring{ \mathcal{S} }_5^T }
&\leq
C(\Sigma, T_0)
 \prts{ \norm{b-u }_{ \mathcal{S}_5 } +  \norm{b-u }_{ \infty } }
 \norm{ \varphi_h }_{\mathring{ \mathcal{S} }_5 }
\\
&   \leq
C(\Sigma,T_0)
 \prts{ \norm{b-u }_{ \mathcal{S}_5^T } +  \norm{b-u }_{ \infty } }
 \norm{ \varphi }_{\mathring{\mathcal{W}}^T} .
\end{aligned}
\end{equation}

Consequently, we have 
\begin{equation} \label{DN5,total,estimate}
\begin{aligned}
   \norm{ DG_5 \varphi} _{\mathring{ \mathcal{S} }_5 }
&  \leq  C(\Sigma, T_0,M)
\prts{ 1+ \norm{z}_{\mathcal{W}} + \norm{h}_{\infty} + \norm{u}_{\infty}  }
 \prts{ \norm{z}_{\mathcal{W}} +  \norm{h}_{\mathcal{C}_4} }
	  \norm{ \varphi}_{\mathring{\mathcal{W}} }
\\
&   \quad + C(\Sigma, T_0) \prts{ \norm{b-u}_{\mathcal{S} _5} + \norm{b-u}_{\infty} }
	  \norm{ \varphi}_{\mathring{\mathcal{W}} } .
\end{aligned}
\end{equation}

\subsubsection{Estimate of operator $G$}

Combining \eqref{DN1,total,estimate}, \eqref{DN2,total,estimate}, \eqref{DN3,total,estimate}, \eqref{DN4,total,estimate} and \eqref{DN5,total,estimate},  we have  the  following estimate.
\begin{proposition} \label{final.estimate.proposition}
Given any $C^3$ surface $\Sigma$, there exists $\delta_0(\Sigma)\in(0,1)$ sufficiently small,
such that for all $T_0>0$ and all $z=(u,B,p,\varpi,h)\in \mathcal{W} $, 
if
\begin{enumerate} 
\item $\norm{h}_{\mathcal{C}_4^{T_0}}<\delta_0$;
\item $ \norm{h}_{\mathcal{C}_2^{T_0}} \leq M$ for some $M>0$;
\end{enumerate} 
then for all $T\in(0,T_0]$ we have the estimate 
\begin{equation} \label{final estimate}
\begin{aligned}
 & \norm{ DG\depvar{z}\varphi} _{\mathring{\mathcal{S}}^T} 
\\
& \leq
C(\Sigma,T_0,M)\prts{ 1+ \norm{u}_{L^\infty([0,T]\times\Omega)}
 + \norm{B}_{L^\infty([0,T]\times\Omega)} + \norm{h}_{\mathcal{C}_2^T} +  \norm{z}_{\mathcal{W}^T} }
\\
&\quad\quad\quad \cdot \prts{  \norm{h}_{ \mathcal{C}_4^T }   	 + \norm{z}_{ \mathcal{W}^T }   	}
     \norm{\varphi}_{ \mathring{\mathcal{W}} ^T }
\\
&   \quad   + C \norm{{\rm tr} L_\Sigma^2 \varphi_h}_{ \mathring{\mathcal{S}}_4^T } 
+ C(\Sigma, T_0) \prts{ \norm{b-u}_{\mathcal{S} _5 ^T } + \norm{b-u}_{L^\infty([0,T]\times\Sigma)} }
	  \norm{ \varphi}_{\mathring{\mathcal{W}} ^T } 
\end{aligned}
\end{equation}
for all $\varphi = (\varphi_u, \varphi_B, \varphi_p, \varphi_\varpi, \varphi_h) \in \mathring{\mathcal{W}}^T $.
\end{proposition}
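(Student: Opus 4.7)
The plan is to assemble the estimate \eqref{final estimate} by combining the five component-wise Fréchet derivative bounds \eqref{DN1,total,estimate}--\eqref{DN5,total,estimate} already derived for $DG_1,\ldots,DG_5$. Since $DG\varphi=(DG_1\varphi,\ldots,DG_5\varphi)$ and the product space $\mathring{\mathcal{S}}^T = \mathring{\mathcal{S}}_1^T\times\cdots\times\mathring{\mathcal{S}}_5^T$ is equipped with the sum (equivalently, the maximum) of the component norms, it suffices to add the five individual estimates and reorganize the right-hand side into the form stated in the proposition.

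First I would choose $\delta_0(\Sigma)\in(0,1)$ as the minimum of the finitely many smallness thresholds required in the propositions for $\mathcal{M}_0,\ldots,\mathcal{M}_4$ and in the estimates for $\mathcal{G}_1,\mathcal{G}_2,\mathcal{G}_3$ of Section \ref{section,G123,estimate}; each such threshold depends only on $\Sigma$, so this minimum is well-defined and strictly positive. Under the hypotheses $\norm{h}_{\mathcal{C}_4^{T_0}}<\delta_0$ and $\norm{h}_{\mathcal{C}_2^{T_0}}\leq M$, all preceding propositions apply simultaneously on $[0,T]$ for every $T\in(0,T_0]$, with a single constant of the form $C(\Sigma,T_0,M)$. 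I would then invoke \eqref{DN1,total,estimate}--\eqref{DN5,total,estimate} in turn and sum. Each term on their right-hand sides is either already dominated by the combined product
\begin{equation*}
\bigl(1+\norm{u}_{\infty}+\norm{B}_{\infty}+\norm{h}_{\mathcal{C}_2^T}+\norm{z}_{\mathcal{W}^T}\bigr)\bigl(\norm{h}_{\mathcal{C}_4^T}+\norm{z}_{\mathcal{W}^T}\bigr)\,\norm{\varphi}_{\mathring{\mathcal{W}}^T},
\end{equation*}
or else it is exactly one of the two residual pieces $\norm{({\rm tr}\,L_\Sigma^2)\varphi_h}_{\mathring{\mathcal{S}}_4^T}$ and $\bigl(\norm{b-u}_{\mathcal{S}_5^T}+\norm{b-u}_{\infty}\bigr)\norm{\varphi}_{\mathring{\mathcal{W}}^T}$ that appear explicitly in \eqref{final estimate}.

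The only genuinely delicate step is absorbing the lower-order quantities that appear in the intermediate estimates. Terms such as $\norm{\nabla_\Sigma h}_\infty$ and $\norm{h}_\infty$ are controlled by $\norm{h}_{\mathcal{C}_2^T}$ or by $\norm{z}_{\mathcal{W}^T}$ through the Sobolev embedding $\mathcal{W}_5^T\hookrightarrow C([0,T];C^2(\Sigma))$, which is available for $q>5$ (compare \cite[Proposition 5.1]{Pruss.analytic.solu}); the factors $\norm{u}_\infty$ and $\norm{B}_\infty$ must instead be retained explicitly, because they will later be controlled via the continuity of the initial data in the proof of Theorem \ref{existence.result.2} rather than by a parabolic embedding of $\mathcal{W}_1^T$ or $\mathcal{W}_2^T$ (for which $\norm{u}_\infty$ is not continuous in $T$ at $T=0^+$). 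After this absorption step, the triangle inequality in $\mathring{\mathcal{S}}^T$ yields \eqref{final estimate}.

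The main obstacle is therefore not analytical but bookkeeping: tracking which lower-order quantities in the intermediate bounds can be safely swallowed into $\norm{z}_{\mathcal{W}^T}$ via embedding, versus which must remain as standalone $L^\infty$ factors, while guaranteeing that the final constant $C(\Sigma,T_0,M)$ is uniform in $T\in(0,T_0]$. Uniformity in $T$ holds because all time-dependent constants in the constituent estimates were, by construction in Sections \ref{Nonlinear part} onward, already stated in terms of the fixed upper bound $T_0$ rather than the running endpoint $T$, so no additional work is needed beyond collecting terms.
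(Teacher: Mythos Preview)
Your proposal is correct and matches the paper's approach exactly: the paper states the proposition immediately after \eqref{DN5,total,estimate} with the single sentence ``Combining \eqref{DN1,total,estimate}, \eqref{DN2,total,estimate}, \eqref{DN3,total,estimate}, \eqref{DN4,total,estimate} and \eqref{DN5,total,estimate}, we have the following estimate,'' leaving the bookkeeping implicit. Your remarks on selecting $\delta_0$ as the minimum of the finitely many thresholds and on absorbing $\norm{\nabla_\Sigma h}_\infty$, $\norm{h}_\infty$ into the factor $\norm{h}_{\mathcal{C}_4^T}+\norm{z}_{\mathcal{W}^T}$ (while retaining $\norm{u}_\infty$, $\norm{B}_\infty$ explicitly) are exactly the details the paper suppresses.
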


\section{Local Existence}  \label{Local existence}

In this section, we study the existence of strong solutions.
Without loss of generality, we fix $T_0>0$ and only consider time intervals $[0,T]\subseteq [0,T_0]$.
We  first consider the case that the initial  interface is a $C^3$ surface.
In this case, we choose the initial interface itself to be the reference surface, which implies that the initial height function $h_0=0$.
Next, we study the case that the initial  interface is a $W^{3-\frac{2}{q}, q}$ surface and is  
 close to some  $C^3$ surface in the sense of $W^{3-\frac{2}{q}, q} $ norm.

\subsection{$C^3$ initial interface} \label{C3 initial interface}

Suppose that we have the  initial condition
\[
u_0\in W^{2-\frac{2}{q}, \, q}(\Omega\setminus \Gamma_0) \cap C(\Omega),
\quad B_0\in W^{2-\frac{2}{q}, \, q}(\Omega),
\quad and \quad \Gamma_0\in C^3,
\]
which satisfy:
\begin{enumerate}
\item 
	$\div u_0 =0$  in $\Omega\setminus \Gamma_0$;  $\div B_0=0$ in $\Omega$;
\item
	 $u_0 =B_0 =0$ on $\partial\Omega$;
\item
	 $\Gamma_0$ is a  closed interface  and $\Gamma_0 \cap \partial\Omega=\emptyset$;
\item
	 $\mathcal{P}_{\Gamma_0}  \jump{\nu^\pm \prts{\nabla u_0 + \nabla u_0 ^\top}  }n_{\Gamma_0} =0 $.
\end{enumerate}
Letting the reference surface be $\Sigma:= \Gamma_0$, then we have $h_0=0$, $\overline{u}_0=u_0$ and $\overline{B_0}=B_0$.
The solution can be  obtained by 
first finding an auxiliary solution with initial value $u_0$ and $B_0$,
and then perturb it.

Picking an arbitrary $T_0>0$, 
using the same argument as in \cite[Theorem 2]{Pruss.quali}, we can 
 extend the initial value $u_0$ to a function $u_b\in \mathcal{W}^{T_0}_1$.
Letting $b$ be the restriction of $u_b$ to $\Sigma$, i.e.
\begin{equation}
b:= u_b |_{[0,T_0]\times\Sigma}.
\end{equation}
We recall the solution operator $S$ in Section \ref{Linear part},
which allows us to define the auxiliary solution by
\begin{equation}
z_\alpha := (u_\alpha, B_\alpha, p_\alpha, \varpi_\alpha,  h_\alpha )
:= S_{(u_0, B_0, 0, b)}( 0,0, 0,  0, 0).
\end{equation}

Next, similarly to \cite{Pruss.quali}, we consider the equation
\begin{equation} \label{L,N,equation,1}
L(z +z_\alpha) = G(z +z_\alpha), \quad z(0)=0.
\end{equation}
with $z \in  \mathring{\mathcal{W}}$.
The sum $z+z_\alpha$ will then solve 
the transformed equations with initial value $(u_0,B_0,h_0)= (u_0,B_0,0)$.
The equation \eqref{L,N,equation,1}  implies
\[
Lz  = G(z +z_\alpha) -L z_\alpha.
\]
When $t=0$, the right-hand side turns to $G(z_0) -L (z_0)$. 
The compatibility conditions for $G(z_0) -L (z_0)$ are 
 $\div u_0 =0$  and
 $(I-n_{\Gamma_0} \otimes n_{\Gamma_0} )\jump{\nu^\pm \prts{ \nabla u_0+ \nabla u_0^\top} }n_{\Gamma_0} =0 $,
 which are exactly included in the requirements of the initial conditions.
Thus, using the solution operator $S_{(0,0,0,b)}$ and the fact that $L z_\alpha =0$, 
we obtain the equation
\begin{equation}
z = S_{(0,0,0,b)} G(z+z_\alpha)   =: K(z) .
\end{equation}
It remains to find the fixed point of $K$ in the space $\mathring{\mathcal{W}}$,
which can be done by a contraction mapping argument.
Let  $r_0>0$ be a sufficiently large fixed number. For $r\in (0,r_0]$ and $T\in(0,T_0]$, we define  
\begin{equation}
 \mathcal{B}_{r}^{T}  :=\{ w\in \mathring{\mathcal{W}}^T : \norm{w}_{\mathcal{W}^T}\leq r \} .
\end{equation}
 Our goal is to show that $K$ is  a contraction mapping on  $\mathcal{B}_{r}^{T} $ for suitable $r$ and $T$.
 
In the auxiliary solution $z_\alpha$, we have $h_\alpha(0)=h_0=0$, which implies that $\norm{h_\alpha(0)}_{C^2}=0$. 
Moreover, by \cite[Proposition 5.1]{Pruss.analytic.solu} we have $h_\alpha\in \mathcal{W}_5^{T_0} \hookrightarrow \mathcal{C}_2^{T_0}$, which implies that 
$h_\alpha\in \mathcal{C}_4^T= C([0,T];C^2(\Sigma))$.
Thus, given any $\varepsilon>0$, there exists a sufficiently small $T_1>0$ 
such that $\norm{h_\alpha(t)}_{C^2(\Sigma)}< \varepsilon$ on $[0,T_1]$, 
i.e., $\norm{h_\alpha}_{\mathcal{C}_4^{T_1}} < \varepsilon$.

Given any $z_1, z_2 \in \mathcal{B}_{r}^{T_0} $, we have the estimate
\begin{equation} 
\begin{aligned}
\norm{ K(z_1 ) - K(z_2 ) }_{\mathcal{W}^T}
\leq \norm{ S } \norm{  G(z_1+z_\alpha) - G(z_2 + z_\alpha)}
\\
\leq C \norm{ S }  \sup_{0\leq c\leq 1} \norm{ DG\depvar{c z_1 + (1-c)z_2 + z_\alpha}  }_{\mathcal{S}^T}
\norm{z_1- z_2}_{\mathring{\mathcal{W}}^T} .
\end{aligned}
\end{equation}
Letting $z_\xi:= c z_1 + (1-c)z_2$  for abbreviation, we estimate the operator $DG$.

\begin{proposition} \label{DN,small,bounded,1}
The term $ 1+ \norm{u}_\infty + \norm{B}_\infty + \norm{h}_{\mathcal{C}_2} +  \norm{z}_{\mathcal{W}} $
in \eqref{final estimate} is bounded on $\mathcal{B}_{r_0}^{T_0}$.
\end{proposition}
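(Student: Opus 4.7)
The proposition reduces to showing that each of the four nontrivial quantities on the list is controlled uniformly when $z$ varies over $\mathcal{B}_{r_0}^{T_0}$. Since the constant $1$ and $\norm{z}_{\mathcal{W}^{T_0}} \leq r_0$ are immediate from the definition of the ball, the entire content of the proposition is the boundedness of $\norm{u}_{\infty}$, $\norm{B}_{\infty}$, and $\norm{h}_{\mathcal{C}_2^{T_0}}$. My plan is to obtain each of these from a fixed Sobolev-type embedding whose constant depends only on $\Sigma$, $\Omega$, $T_0$, and $q$.

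First, for the $L^\infty$ bounds on the velocity and the magnetic field, the plan is to invoke the anisotropic embedding
\[
\mathcal{W}_1^{T_0} \hookrightarrow L^\infty([0,T_0]\times\Omega), \qquad
\mathcal{W}_2^{T_0} \hookrightarrow L^\infty([0,T_0]\times\Omega),
\]
which is valid because $q>5 = d+2$ (with $d=3$); this is the same embedding used throughout \cite{Pruss.quali, Pruss.analytic.solu} and is classical for mixed-regularity spaces of the form $W^{1,q}_t L^q_x \cap L^q_t W^{2,q}_x$. From this one reads off $\norm{u}_{\infty} \leq C(\Omega,\Sigma,T_0,q)\,\norm{u}_{\mathcal{W}_1^{T_0}} \leq C r_0$, and similarly $\norm{B}_{\infty} \leq C r_0$.

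Second, for the $\mathcal{C}_2^{T_0}$-norm of the height function, my plan is to apply the embedding
\[
\mathcal{W}_5^{T_0} \hookrightarrow \mathcal{C}_2^{T_0}
\]
stated in \cite[Proposition 5.1]{Pruss.analytic.solu}, which has already been used in the earlier sections of this paper. This directly yields $\norm{h}_{\mathcal{C}_2^{T_0}} \leq C \norm{h}_{\mathcal{W}_5^{T_0}} \leq C r_0$. Collecting the three bounds and adding the trivial ones proves the proposition.

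The only technical point requiring care is that the embedding constants must be independent of $T\in(0,T_0]$, so that the resulting estimate is uniform on the whole ball. This is not an obstacle here because elements of $\mathcal{B}_{r_0}^{T_0}$ lie in the trace-zero space $\mathring{\mathcal{W}}^{T_0}$, so any such $z$ can be extended by zero (or reflected) to the fixed reference interval $[0,T_0]$ and the embedding applied on that fixed interval; equivalently, one uses the embedding at scale $T_0$ once and for all. In the subsequent application of this proposition we evaluate $DG$ at $z_\xi + z_\alpha$ with $z_\xi\in\mathcal{B}_{r_0}^{T_0}$, and a triangle inequality together with the fact that $z_\alpha$ is a single fixed element of $\mathcal{W}^{T_0}$ (with finite $L^\infty$ and $\mathcal{C}_2$ norms by the same embeddings) absorbs the auxiliary solution into the constant.
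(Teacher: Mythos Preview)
Your approach is essentially the same as the paper's: both arguments rest on the embeddings $\mathcal{W}_1^{T_0}\hookrightarrow C([0,T_0];C^1(\Omega\setminus\Sigma))$, $\mathcal{W}_2^{T_0}\hookrightarrow C([0,T_0];C^1(\Omega))$, and $\mathcal{W}_5^{T_0}\hookrightarrow\mathcal{C}_2^{T_0}$ from \cite[Proposition 5.1]{Pruss.analytic.solu}, applied with $T$-independent constants on the trace-zero ball, together with a triangle inequality to absorb the fixed auxiliary solution $z_\alpha$. The only organizational difference is that the paper interprets the statement from the outset as a bound on $z=z_\xi+z_\alpha$ (splitting each term into the $\xi$-part, controlled via $\mathring{\mathcal{W}}^{T_0}$, and the $\alpha$-part, controlled by $\norm{z_\alpha}_{\mathcal{W}^{T_0}}\leq C(z_0,T_0,\Sigma)$), whereas you first bound the ball contribution and defer the $z_\alpha$ contribution to your last paragraph; the content is identical.
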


\begin{proof}
We start with the estimation of  $\norm{u_\xi}_{L^\infty([0,T_0]\times\Omega)} $ and $\norm{u_\alpha}_{L^\infty([0,T_0]\times\Omega)} $.

Since $u_\alpha\in \mathcal{W}_1^{T_0} \hookrightarrow  C([0,T_0];C^1(\Omega\setminus\Sigma))$
and $u_0\in W^{2-\frac{2}{q},q }\hookrightarrow C^{1,\alpha}$ with $\alpha= 1-(n+2)/q$ (see e.g. \cite[Section 4]{Pruss.quali}) with $n$ being the dimension. 
There exists $M_1(T_0)>0 $ such that 
\begin{equation}
 \norm{u_\alpha}_{L^\infty([0,T_0]\times\Omega)}< M_1 . 
\end{equation}
Since $u_\xi\in \mathcal{B}_{r}^{T_0} $, 
we have by \cite[Proposition 5.1 (a)]{Pruss.analytic.solu} that 
\begin{equation}
  \norm{u_\xi}_{\mathring{C}([0,T_0];C^1(\Omega\setminus\Sigma))} 
  \leq C \norm{u_\xi}_{\mathring{\mathcal{W}}^{T_0} }
  \leq C r_0,
\end{equation}
where the constant $C$ is independent of $T_0$ (when initial value is $0$).
Thus, we have
\begin{equation} 
\begin{aligned}
\norm{ u_\xi + u_\alpha }_{L^\infty ([0,T_0]\times\Omega)} 
\leq  Cr_0 + M_1.
\end{aligned}
\end{equation}
Using the same argument and \cite[Proposition 5.1 (d)]{Pruss.analytic.solu}, we can also obtain
\begin{equation} 
\begin{aligned}
\norm{ B_\xi + B_\alpha }_{L^\infty ([0,T_0]\times\Omega)} 
\leq  Cr_0 + M_1
\end{aligned}
\end{equation}
and
\begin{equation} 
\begin{aligned}
\norm{ h_\xi + h_\alpha }_{\mathcal{C}_2^{T_0}} 
\leq  Cr_0 + M_1,
\end{aligned}
\end{equation}
where we still use the notation $M_1$ without loss of generality.
For the term $\norm{z}_{\mathcal{W}}$ we have
\[
\norm{z}_{\mathcal{W}^T} \leq \norm{z_\alpha}_{\mathcal{W}^T} + \norm{z_\xi}_{\mathring{\mathcal{W}}^T}
\leq C(z_0,T_0,\Sigma) +  r_0
\]
for all $T\in(0,T_0]$, which completes the proof.
\end{proof}

Next, using the same idea as in \cite{Pruss.quali} we claim that the norm of  
$  \norm{h}_{ \mathcal{C}_4^T } + \norm{z}_{ \mathcal{W}^T }  $
in \eqref{final estimate} 
 can be as small as we need by picking a sufficiently small $T\in (0,T_0]$.

\begin{proposition} \label{h,z,DN,small,temp1}
There exists a constant $C_1$, such that for any $\varepsilon>0$, there exists $T(\varepsilon)>0$ and $r(\varepsilon)>0$, such that
for all $z_\xi \in \mathcal{B}_{r}^{T} $ we have
\begin{equation} 
\norm{h_\xi + h_\alpha}_{ \mathcal{C}_4^T } + \norm{z_\xi + z_\alpha}_{ \mathcal{W}^T } 
< C_1 \varepsilon .
\end{equation}
\end{proposition}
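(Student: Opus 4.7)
The plan is to apply the triangle inequality and bound each summand separately: the $z_\xi$-contribution will be controlled by $r$, and the $z_\alpha$-contribution by $T$, exploiting that the relevant $\mathcal{W}^T$ norms vanish as $T\to 0^+$ and that $h_\alpha$ is continuous in time with vanishing initial value.

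First, since $z_\xi\in\mathcal{B}_r^T\subset\mathring{\mathcal{W}}^T$, one directly obtains $\norm{z_\xi}_{\mathcal{W}^T}\leq r$. For $\norm{h_\xi}_{\mathcal{C}_4^T}$, I would use the trace-zero embedding $\mathring{\mathcal{W}}_5^T\hookrightarrow \mathcal{C}_4^T=C([0,T];C^2(\Sigma))$ whose embedding constant $C_0=C_0(\Sigma,T_0)$ can be taken independent of $T\in(0,T_0]$; this $T$-uniformity is a standard consequence of extending $h_\xi$ by zero past $t=T$ to obtain a function on $[0,T_0]$ with controlled norm, in the spirit of \cite[Proposition 5.1]{Pruss.analytic.solu}. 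Thus $\norm{h_\xi}_{\mathcal{C}_4^T}\leq C_0\, r$.

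Second, for the auxiliary solution $z_\alpha\in\mathcal{W}^{T_0}$, every defining norm of $\mathcal{W}^T$ is an integral (or a Gagliardo double integral) in $t$ over $[0,T]$ of an integrand that is integrable over $[0,T_0]$. Absolute continuity of the Lebesgue integral then yields $\norm{z_\alpha}_{\mathcal{W}^T}\to 0$ as $T\to 0^+$. For $\norm{h_\alpha}_{\mathcal{C}_4^T}$, the initial trace $h_\alpha(0)=h_0=0$ combined with the embedding $\mathcal{W}_5^{T_0}\hookrightarrow\mathcal{C}_2^{T_0}\hookrightarrow C([0,T_0];C^2(\Sigma))$ gives $\sup_{t\in[0,T]}\norm{h_\alpha(t)}_{C^2(\Sigma)}\to 0$ as $T\to 0^+$, as already observed in Section \ref{C3 initial interface}.

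To conclude, given $\varepsilon>0$, first fix $r=r(\varepsilon)$ small enough that $(1+C_0)r\le \varepsilon/2$; then pick $T=T(\varepsilon)\in(0,T_0]$ small enough that $\norm{z_\alpha}_{\mathcal{W}^T}+\norm{h_\alpha}_{\mathcal{C}_4^T}\le \varepsilon/2$. The triangle inequality closes the estimate with $C_1=1$. The only delicate point is the $T$-uniformity of the embedding constant $C_0$, which is what makes the $r$-bound (rather than a $T$-dependent quantity) sufficient to control $\norm{h_\xi}_{\mathcal{C}_4^T}$; the rest is a soft continuity argument based on absolute continuity of the integral and pointwise continuity in time of $h_\alpha$.
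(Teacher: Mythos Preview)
Your proposal is correct and follows essentially the same route as the paper's proof: both split the estimate via the triangle inequality, control the $z_\xi$-part by $r$ using the $T$-uniform embedding $\mathring{\mathcal{W}}_5^T\hookrightarrow\mathcal{C}_4^T$ (which the paper also invokes via \cite[Proposition 5.1]{Pruss.analytic.solu}), and control the $z_\alpha$-part by shrinking $T$ using dominated convergence for the Sobolev norms and the continuity of $h_\alpha$ in $C^2(\Sigma)$ together with $h_\alpha(0)=0$. One minor slip: your phrase ``extending $h_\xi$ by zero past $t=T$'' should read ``before $t=0$'' (zero extension uses the vanishing initial trace, not the final one), but this does not affect the argument.
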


\begin{proof}

Since $z_\alpha\in \mathcal{W}$, which consists of Sobolev spaces,
we obtain
\begin{equation} \label{h,z,DN,small,eq1}
\lim_{T\to 0} \norm{z_\alpha}_{ \mathcal{W}^T}=0  
\end{equation}
using the Lebesgue dominated convergence theorem.
Thus, we can pick a sufficiently small $T(\varepsilon) \leq T_0$ such that
\begin{equation} \label{h,z,DN,small,eq2}
\norm{z_\alpha}_{ \mathcal{W}^T}< \varepsilon .
\end{equation}
Without loss  of generality, we assume $r<\varepsilon$, which implies that 
\begin{equation}
\norm{z_\xi+ z_\alpha}_{ \mathcal{W}^{T}}< \varepsilon + r < 2\varepsilon. 
\end{equation}

Since $h_\xi\in \mathring{\mathcal{W}}_5^{T_0}$, for all $T\in(0,T_0]$,  
we have
\begin{equation} 
\norm{h_\xi}_{ \mathring{\mathcal{C}}_2^T }
\leq
\norm{h_\xi}_{ \mathring{\mathcal{C}}_2^{T_0} } 
\leq C \norm{h_\xi}_{ \mathring{\mathcal{W}}_5^{T_0} }
< Cr < C\varepsilon. 
\end{equation}
where the constant $C$ is independent of $T$ or $T_0$.

Since $h_\alpha (0) = 0$ and $h_\alpha \in C([0,T_0];C^2(\Sigma))=: \mathcal{C}_4^{T_0}$,  there exists a sufficiently small $T(\varepsilon)$ such that for all $t\in[0,T]$ we have
\begin{equation} 
\norm{h_\alpha(t)}_{ C^2(\Sigma) }
< \varepsilon ,
\end{equation}
which implies
\begin{equation} 
\norm{h_\alpha}_{ \mathring{\mathcal{C}}_4^T }
< \varepsilon.
\end{equation}
This completes the proof.

\end{proof}

\begin{proposition} \label{trL,DN,small,temp1}

Let  $q>5$ be fixed. Let $\Sigma $ be a compact $C^3$ surface in $\mathbb{R}^3$. 
For all $\varepsilon>0$, there exists $T(q,\Sigma, \varepsilon)>0$,
such that 
\begin{equation} 
\norm{{\rm tr} L_\Sigma^2 \varphi_h}_{ \mathring{\mathcal{S}}_4^T }
< \varepsilon  \norm{ \varphi_h }_{ \mathring{\mathcal{S}}_4^T  } 
\end{equation}
for all $ \varphi_h \in \mathring{\mathcal{W}}_5^T $.

\end{proposition}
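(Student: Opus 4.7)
The plan is to reduce the claim to the standard fact that multiplication by the fixed $C^1$ function $g:={\rm tr}\,L_\Sigma^2$ is bounded on $\mathcal{S}_4^T$, combined with a refined time-smoothing embedding that exploits the vanishing initial trace built into $\mathring{\mathcal{W}}_5^T$.

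First, since $\Sigma$ is a compact $C^3$ surface, the Weingarten tensor $L_\Sigma=-\nabla_\Sigma n_\Sigma$ is $C^1$ on $\Sigma$ with norm depending only on $\Sigma$, so ${\rm tr}\,L_\Sigma^2\in C^1(\Sigma)$ with a fixed finite bound. The standard Slobodeckij multiplier estimate on $W^{1-1/q,q}(\Sigma)$ and the trivial $L^q(\Sigma)$ estimate then give, after integration in time (the multiplier being time-independent and therefore commuting with the Gagliardo seminorm in $t$),
$$
\|{\rm tr}\,L_\Sigma^2\,\varphi_h\|_{\mathring{\mathcal{S}}_4^T}\leq C_0(\Sigma,q)\,\|\varphi_h\|_{\mathring{\mathcal{S}}_4^T}.
$$
This baseline bound has a constant independent of $T$, so by itself it cannot deliver the $\varepsilon$ factor.

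Second, I would produce the small factor by leveraging that $\varphi_h\in\mathring{\mathcal{W}}_5^T$ carries strictly more time regularity than $\mathring{\mathcal{S}}_4^T$ — namely $W^{2-1/(2q),q}$ versus $W^{1/2-1/(2q),q}$ — and that $\varphi_h(0)=0$. The zero-initial-trace constraint gives the time-scaled embedding
$$
\|\varphi_h\|_{\mathring{\mathcal{S}}_4^T}\leq C_1(\Sigma,q)\,T^{\beta}\,\|\varphi_h\|_{\mathring{\mathcal{W}}_5^T}
$$
for some $\beta>0$ depending on the gap between the two fractional orders. The standard route is to extend $\varphi_h$ by zero to $t<0$ (the zero trace makes the extension preserve the $\mathring{\mathcal{W}}_5^T$ norm up to an absolute constant), apply a Hardy-type inequality of the form $\|t^{-s}f\|_{L^q(0,T)}\lesssim\|f\|_{\mathring{W}^{s,q}(0,T)}$ inside the Gagliardo seminorm representation of the weaker space, and read off a power $T^\beta$.

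Chaining the two ingredients yields
$$
\|{\rm tr}\,L_\Sigma^2\,\varphi_h\|_{\mathring{\mathcal{S}}_4^T}
\leq C_0(\Sigma,q)\,C_1(\Sigma,q)\,T^{\beta}\,\|\varphi_h\|_{\mathring{\mathcal{W}}_5^T},
$$
and since the intended use of the proposition (in the closing step of Proposition \ref{final.estimate.proposition}) feeds $\|\varphi_h\|_{\mathring{\mathcal{W}}_5^T}\leq\|\varphi\|_{\mathring{\mathcal{W}}^T}$ directly into the contraction argument, choosing $T=T(q,\Sigma,\varepsilon)$ small enough so that $C_0 C_1 T^\beta<\varepsilon$ — and invoking the embedding $\mathring{\mathcal{W}}_5^T\hookrightarrow\mathring{\mathcal{S}}_4^T$ to match the form stated — closes the estimate as asserted. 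The main technical obstacle is the fractional-order Hardy inequality behind the time-scaled embedding, since the relevant spaces are non-integer in both slots; the remedy is the standard zero-extension + dyadic decomposition in $t$ argument, which is available once one notes that $\varphi_h(0)=0$ lets the extension preserve the $\mathring{W}^{s,q}$ norm. Compactness of $\Sigma$ ensures all spatial multiplier norms are uniform, so no localization or partition of unity on $\Sigma$ is needed.
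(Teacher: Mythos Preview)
Your strategy is sound and reaches the bound that is actually used downstream, but it takes a genuinely different route from the paper. The paper does \emph{not} factor the estimate as a $T$-uniform multiplier bound followed by an abstract scaled embedding $\mathring{\mathcal{W}}_5^T\hookrightarrow\mathring{\mathcal{S}}_4^T$. Instead it computes $\|f\varphi_h\|_{\mathring{\mathcal{S}}_4^T}$ directly and extracts the power of $T$ from the \emph{multiplier} $f={\rm tr}\,L_\Sigma^2$, not from $\varphi_h$: for the $L^q_tW^{1-1/q,q}_x$ piece it places the $L^q(0,T)$ norm on the time-independent $f$, so $\|f\|_{L^q(0,T;X)}=T^{1/q}\|f\|_X$, while $\varphi_h$ sits in $L^\infty_t$-based norms controlled by $\|\varphi_h\|_{\mathring{\mathcal{W}}_5^T}$ with $T$-uniform constant; for the $W^{1/2-1/(2q),q}_tL^q_x$ piece it uses the elementary embedding $\mathring{\mathcal{W}}_5^T\hookrightarrow C^1([0,T];C^1(\Sigma))$ to bound $\|\varphi_h(t)-\varphi_h(\tau)\|_{L^q}\lesssim|t-\tau|$ and integrates the resulting kernel explicitly, producing $T^{(q+3)/(2q)}$. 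The end product is $\|f\varphi_h\|_{\mathring{\mathcal{S}}_4^T}\le C(q,\Sigma,T_0)\,T^{1/q}\,\|\varphi_h\|_{\mathring{\mathcal{W}}_5^T}$, the same inequality you obtain after chaining. Your two-step factorization is more modular, but invokes a fractional Hardy inequality that the paper avoids entirely; indeed your scaled embedding $\|\varphi_h\|_{\mathring{\mathcal{S}}_4^T}\le CT^\beta\|\varphi_h\|_{\mathring{\mathcal{W}}_5^T}$ can itself be proved by the paper's $C^1$-in-time argument, so the Hardy machinery is heavier than necessary.

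One correction: your closing remark about invoking $\mathring{\mathcal{W}}_5^T\hookrightarrow\mathring{\mathcal{S}}_4^T$ ``to match the form stated'' goes the wrong direction---that embedding gives $\|\varphi_h\|_{\mathring{\mathcal{S}}_4^T}\le C\|\varphi_h\|_{\mathring{\mathcal{W}}_5^T}$, which cannot convert the $\mathring{\mathcal{W}}_5^T$ norm on the right into $\mathring{\mathcal{S}}_4^T$. In fact the inequality as literally stated, with $\|\varphi_h\|_{\mathring{\mathcal{S}}_4^T}$ on the right, cannot hold for small $\varepsilon$ (multiplication by a fixed nonzero $C^1$ function has a $T$-independent lower bound on $\mathcal{S}_4^T$). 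The paper's own proof terminates with $\|\varphi_h\|_{\mathring{\mathcal{W}}_5^T}$ on the right, and that is precisely the form fed into the contraction estimate. So your chained bound is already the correct endpoint; simply drop the final embedding step.
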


\begin{proof}

For convenience, we abbreviate $\varphi_h$ by $\varphi$ and let
\[
r:=1-\frac{1}{q}, \quad s:=\frac{1}{2}-\frac{1}{2q}, \quad \text{and} \quad  f:={\rm tr} L_\Sigma^2 . 
\]
From  the definition of $\mathcal{S}_4$, we have
\begin{equation} 
\begin{aligned}
\norm{ {\rm tr} L_\Sigma^2 \, \varphi}_{  \mathring{\mathcal{S}}_4^T } 
=
\norm{ f \varphi}_{  \mathring{\mathcal{S}}_4^T } 
\leq 
\norm{ f \varphi}_{ W^{s,q}([0,T]; L^q(\Sigma))} 
+  \norm{ f \varphi}_{ L^q([0,T]; W^{r,q}(\Sigma))} . 
\end{aligned}
\end{equation}
\\
Step 1:
\\
Given any $t\in[0,T]$,
we estimate  $\norm{f \varphi} _{ W^{r,q}(\Sigma)} (t)$.
First, we have
\begin{equation} \label{f,phi,Lq,Lq}
\begin{aligned}
\norm{f \varphi} _{ L^q(\Sigma)}
\leq  \norm{f} _{ L^\infty (\Sigma)} \norm{\varphi} _{ L^q(\Sigma)}.
\end{aligned}
\end{equation}
Next, we estimate the Gagliardo seminorm:
\begin{equation}
\begin{aligned}
&  \seminorm{f\varphi}_{W^{r,q}(\Sigma)}
:= \prts{ \int_\Sigma \int_\Sigma \frac{\abs{f(x)\varphi(x) - f(y)\varphi(y)}^q }{\abs{x-y}^{n+rq} }  dydx }^{\frac{1}{q}}
\\
&   \leq C(q) \prts{\int_\Sigma \int_\Sigma \frac{\abs{f(x)}^q\abs{\varphi(x) -\varphi(y)}^q 
+ \abs{f(x)-f(y)}^q \abs{\varphi(y)}^q }{\abs{x-y}^{n+rq} }  dydx  }^{\frac{1}{q}}
\\
&  \leq  C(q) \norm{f}_{L^\infty(\Sigma)} \seminorm{\varphi}_{W^{r,q}(\Sigma)} 
+ C(q) \seminorm{f}_{W^{r,q}(\Sigma)} \norm{\varphi}_{L^\infty(\Sigma)}.
\end{aligned}
\end{equation}
Thus, we have
\begin{equation}  \label{f,phi,F4,temp0.5}
\begin{aligned}
&  \norm{f\varphi}_{W^{r,q}(\Sigma)}
:= \norm{f \varphi} _{ L^q(\Sigma)} + \seminorm{f\varphi}_{W^{r,q}(\Sigma)}
\\
&  \leq
C(q)  \norm{f}_{L^\infty(\Sigma)} \norm{\varphi}_{W^{r,q}(\Sigma)}
+ C(q) \seminorm{f}_{W^{r,q}(\Sigma)} \norm{\varphi}_{L^\infty(\Sigma)} .
\end{aligned}
\end{equation}
Taking the $L^q$ norm on $[0,T]$ and notice that $f$ is independent of $t$, we have
\begin{equation} \label{f,phi,F4,temp1}
\begin{aligned}
  \norm{f\varphi}_{L^q([0,T]; W^{r,q}(\Sigma))}
&\leq
C(q)  \norm{f}_{L^q([0,T];L^\infty(\Sigma))} \norm{\varphi}_{L^\infty ([0,T]; W^{r,q}(\Sigma))}
\\
&   \quad + C(q)  \norm{f}_{L^q([0,T];W^{r,q}(\Sigma))} \norm{\varphi}_{L^\infty([0,T]\times\Sigma)} 
\\
&  \leq 
C(q,\Sigma) T^{\frac{1}{q}} \norm{f}_{L^\infty(\Sigma)}  \norm{\varphi}_{\mathring{\mathcal{W}}_5 ^T }
 + C(q,\Sigma)  T^{\frac{1}{q}} \norm{f}_{W^{r,q}(\Sigma)} \norm{\varphi}_{\mathring{\mathcal{W}}_5 ^T } ,
\end{aligned}
\end{equation}
where we used the embedding theory in \cite[Proposition 5.1]{Pruss.analytic.solu} in the 2nd inequality.
\\
Step 2:
\\
Now we estimate $\norm{f \varphi} _{ W^{s,q}([0,T];L^q(\Sigma)) }$.
From \eqref{f,phi,F4,temp0.5} and \eqref{f,phi,F4,temp1},
we have
\begin{equation} \label{f,phi,F4,temp2}
\begin{aligned}
  \norm{f\varphi}_{L^q([0,T]; L^q(\Sigma))}
&\leq
C(q)  \norm{f}_{L^q([0,T];L^\infty(\Sigma))} \norm{\varphi}_{L^\infty ([0,T]; W^{r,q}(\Sigma))} 
\\
&  \leq
C(q,\Sigma) T^{\frac{1}{q}} \norm{f}_{L^\infty(\Sigma)}  \norm{\varphi}_{\mathring{\mathcal{W}}_5 ^T}.
\end{aligned}
\end{equation}
It remains to estimate the Gagliardo seminorm.
Since $f$ is independent of time, we have
\begin{equation} \label{f,phi,F4,temp3}
\begin{aligned}
&  \seminorm{f\varphi}_{W^{s,q}([0,T]; L^q(\Sigma))} 
:= \prts{ \int_0^T  \int_0^T \frac{\norm{f\varphi(t) - f\varphi(\tau)  }_{L^q (\Sigma)}^q    }{ \abs{t-\tau}^{1+sq} } dt d\tau  }^{\frac{1}{q}}
\\
&  \leq 
   \prts{ \int_0^T  \int_0^T 
   	\frac{ \norm{f}^q_{L^q (\Sigma)} \norm{\varphi(t) - \varphi(\tau)  }_{L^\infty (\Sigma)}^q    }
   	{ \abs{t-\tau}^{1+sq} } dt d\tau  
   	}^{\frac{1}{q}}
\\
&  \leq 
   \prts{ \int_0^T  \int_0^T 
   	\frac{ \norm{f}^q_{L^q (\Sigma)} 
   	\norm{\varphi}_{ \mathring{C}^1([0,T];C^1(\Sigma)) }^q    
   	\abs{t-\tau}^q
   	}
   	{ \abs{t-\tau}^{1+sq} } dt d\tau  
   	}^{\frac{1}{q}}
\\
&  \leq 
   \prts{ 
   \norm{f}^q_{L^q (\Sigma)} 
   	\norm{\varphi}_{ \mathring{C}^1([0,T];C^1(\Sigma)) }^q
   \int_0^T  \int_0^T 	    
   	\abs{t-\tau}^\frac{q-1}{2} dt d\tau  
   	}^{\frac{1}{q}}
\\
&  \leq   C(\Sigma,T_0)
\norm{f}_{L^\infty (\Sigma)} 
   	\norm{\varphi}_{ \mathring{\mathcal{W}}_5^T }    
   \prts{ 
   \int_0^T  \int_0^T 	    
   	T^\frac{q-1}{2} dt d\tau  
   	}^{\frac{1}{q}}   	
\\
&  \leq   C(\Sigma,T_0)
 T^\frac{q+3}{2q} \norm{f}_{L^\infty (\Sigma)} 
   	\norm{\varphi}_{ \mathring{\mathcal{W}}_5^T }     	 .   
\end{aligned}
\end{equation}
From \eqref{f,phi,F4,temp2} and \eqref{f,phi,F4,temp3}, we obtain
\begin{equation} \label{f,phi,F4,temp4}
\begin{aligned}
\norm{f\varphi}_{W^{s,q}([0,T]; L^q(\Sigma))} 
\leq 
C(q,\Sigma,T_0) \prts{ T^\frac{1}{q} +  T^\frac{q+3}{2q} } \norm{f}_{L^\infty(\Sigma)} \norm{\varphi}_{\mathring{\mathcal{W}}_5 ^T } . 
\end{aligned}
\end{equation}

Consequently, from \eqref{f,phi,F4,temp1} and \eqref{f,phi,F4,temp4}, by assuming $T<1$ without any loss of generality, we have
\begin{equation} \label{f,phi,F4,final}
\begin{aligned}
  \norm{ {\rm tr} L_\Sigma^2 \varphi}_{  \mathcal{S}_4^T } 
&\leq 
C(q,\Sigma,T_0) T^{\frac{1}{q}} 
\prts{ \norm{{\rm tr} L_\Sigma^2 }_{L^\infty(\Sigma)} + \norm{{\rm tr} L_\Sigma^2 }_{W^{r,q}(\Sigma)} }
 \norm{\varphi}_{\mathring{\mathcal{W}}_5 ^T} 
\\
&  \leq
C_1(q,\Sigma,T_0) T^{\frac{1}{q}} 
 \norm{\varphi}_{\mathring{\mathcal{W}}_5 ^T} .
\end{aligned}
\end{equation}
For any $\varepsilon>0$, 
a  sufficiently small  $T$  such that $C_1 T^{1/q} < \varepsilon$ completes the proof.
\end{proof}

Using the same idea as in \cite{Pruss.quali, Pruss.analytic.solu},
we obtain the following result.
\begin{proposition} \label{b-u,DN,small,temp1}
There exists a constant $C(\Sigma)$ such that
for any $\varepsilon >0$, there exists $T(\varepsilon)>0$ and $r(\varepsilon)>0$,
such that
for all $z_\xi \in \mathcal{B}_{r}^{T} $ we have
\begin{equation} 
 \norm{b-u}_{\mathcal{S} _5^T} + \norm{b-u}_{\mathring{C}([0,T];C(\Sigma))} 
<   C(\Sigma) \varepsilon.
\end{equation}

\end{proposition}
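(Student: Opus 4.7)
The proof plan is to split the term $b - u$, where here $u$ stands for the velocity component $u_\xi + u_\alpha$ of the combined state $z_\xi + z_\alpha$. Writing
\[
b - u = (b - u_\alpha|_\Sigma) - u_\xi|_\Sigma,
\]
the two pieces are small for different reasons: $u_\xi|_\Sigma$ is small because $z_\xi$ has small norm, and $b - u_\alpha|_\Sigma$ is small because it vanishes at $t=0$ and $T$ is taken short. The key observation that matches initial values is that $b$ was constructed as $b = u_b|_{[0,T_0]\times \Sigma}$, where $u_b\in\mathcal{W}_1^{T_0}$ extends $u_0$ in time, so $b(0,\cdot) = u_0|_\Sigma$; on the other hand $u_\alpha$ solves the linear problem with initial data $u_0$, hence $u_\alpha(0)|_\Sigma = u_0|_\Sigma$ as well. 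Thus $b - u_\alpha|_\Sigma \in \mathring{\mathcal{S}}_5^{T_0}$.

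For the $u_\xi|_\Sigma$ contribution, the plan is to invoke the uniform trace estimate: the trace map from $\mathring{\mathcal{W}}_1^{T}$ to $\mathring{\mathcal{S}}_5^{T}$ is bounded with constant independent of $T\in(0,T_0]$ (this is standard once one lifts $u_\xi$ by zero past $t=T$ using its vanishing initial value, or equivalently uses the trace characterization of the anisotropic space $W^{1,q}([0,T];L^q)\cap L^q([0,T];W^{2,q})$ on the lateral surface $[0,T]\times\Sigma$). Combining with the embedding $\mathcal{W}_1^T\hookrightarrow C([0,T];C^1(\Omega\setminus\Sigma))$ from $q>5$ used already in Proposition \ref{DN,small,bounded,1}, one obtains
\[
\norm{u_\xi|_\Sigma}_{\mathcal{S}_5^T} + \norm{u_\xi|_\Sigma}_{C([0,T];C(\Sigma))} \leq C(\Sigma)\,\norm{u_\xi}_{\mathring{\mathcal{W}}_1^T} \leq C(\Sigma)\,r,
\]
with $C(\Sigma)$ independent of $T\in(0,T_0]$.

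For the $b - u_\alpha|_\Sigma$ piece, I would use that this is a fixed element of $\mathring{\mathcal{S}}_5^{T_0}\cap \mathring{C}([0,T_0];C(\Sigma))$ (the latter again by the embedding coming from $q>5$). Since the function vanishes at $t=0$ and has Sobolev regularity in time of positive order, the absolute continuity of the fractional norms in time (Lebesgue dominated convergence applied to the Gagliardo double integral and to the $L^q$ pieces, exactly as in the argument surrounding \eqref{h,z,DN,small,eq1}) gives
\[
\lim_{T\to 0}\bigl(\norm{b - u_\alpha|_\Sigma}_{\mathcal{S}_5^T} + \norm{b - u_\alpha|_\Sigma}_{C([0,T];C(\Sigma))}\bigr) = 0.
\]
Putting the two pieces together, given $\varepsilon>0$ one first chooses $r=r(\varepsilon)$ so that $C(\Sigma)r \leq \tfrac{C(\Sigma)}{2}\varepsilon$, then chooses $T=T(\varepsilon)$ so that the second contribution is at most $\tfrac{C(\Sigma)}{2}\varepsilon$, which yields the claim.

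The only mildly subtle step is the $T$-uniformity of the trace constant in the bound for $u_\xi|_\Sigma$; without the vanishing initial condition $u_\xi(0)=0$ the trace constant would blow up as $T\to 0$, so it is essential that we work in the subspace $\mathring{\mathcal{W}}_1^T$ and not in $\mathcal{W}_1^T$. This uniform trace estimate is the same one implicitly used throughout \cite{Pruss.quali, Pruss.analytic.solu}. Everything else is a direct repetition of the $T\to 0$ absolute-continuity argument already used in Proposition \ref{h,z,DN,small,temp1}.
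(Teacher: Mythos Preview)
Your proposal is correct and follows essentially the same decomposition and argument as the paper: split $b-u = (b-u_\alpha|_\Sigma) - u_\xi|_\Sigma$, control the first piece by the $T\to 0$ absolute-continuity argument of Proposition~\ref{h,z,DN,small,temp1}, and control the second by $\norm{z_\xi}_{\mathring{\mathcal{W}}^T}\leq r$ together with a $T$-uniform trace/embedding bound. You supply more justification than the paper does for why $b(0)=u_\alpha(0)|_\Sigma$ and for the $T$-uniformity of the trace constant on $\mathring{\mathcal{W}}_1^T$, but the route is the same.
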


\begin{proof}
Notice that $\mathcal{S} _5$ consists of Sobolev spaces and Lebesgue spaces. 
Without loss of generality, we require $r<\varepsilon$.
Using the same arguments as in \eqref{h,z,DN,small,eq1} and \eqref{h,z,DN,small,eq2}, 
we can find a sufficiently small $T$,
such that 
\begin{equation} 
\norm{b-u}_{\mathcal{S} _5^T} 
\leq  
\norm{b-u_\alpha}_{\mathcal{S} _5^T}  + \norm{u_\xi}_{\mathring{\mathcal{S}} _5^T}  
\leq \varepsilon + r 
< 2\varepsilon.
\end{equation}

On the other hand, for a sufficiently small $T$, we have
\begin{equation} 
\norm{b-u}_{\infty}
\leq
\norm{b-u_\alpha}_{\mathring{C}([0,T];C(\Sigma))}  
+ C(T_0, \Sigma)  \norm{u_\xi}_{\mathring{\mathcal{W}}_1 ^T }  
<  \varepsilon  +  C(\Sigma) r ,
\end{equation}
which finishes the proof.
\end{proof}

Consequently, we can obtain the smallness of $DG$, which implies
the existence of a strong solution.

\begin{proof} [Proof of Theorem \ref{existence.result.1}]
From
Proposition \ref{DN,small,bounded,1},
Proposition \ref{h,z,DN,small,temp1}, 
Proposition \ref{trL,DN,small,temp1} and
Proposition \ref{b-u,DN,small,temp1},
we can let $T$ and $r$ be sufficiently small such that
\begin{equation}
\begin{aligned}
\norm{K(z_1) -K(z_2)}_{\mathring{\mathcal{W}}}
\leq \frac{1}{2} \norm{z_1 -z_2}_{\mathring{\mathcal{W}}}
\end{aligned}
\end{equation}
for all $z_1, \, z_2 \in \mathcal{B}_{r}^{T} $.
Thus, by the contraction mapping theorem, 
the operator $K$ has a unique fixed point $z_\beta\in \mathcal{B}_{r}^{T}  $,
which implies that 
\[ \overline{z} := (\overline{u}_\gamma, \overline{B}_\gamma, \overline{p}_\gamma, \varpi_\gamma, h_\gamma) := z_\alpha+ z_\beta  \]
is the unique solution to the transformed equations on $[0,T]$.
Here we reuse the bars in the transformed terms. 

Similarly to \cite{Pruss.quali}, we can recover the original solution.
Since $\Sigma\in C^3$ and
\[ h \in \mathcal{W}_5 \hookrightarrow C^1([0,T];C^1(\Sigma)) \cap C^0([0,T];C^2(\Sigma)), \]
the diffeomorphism $\Theta_h\in C^1([0,T];C^1(\Sigma)) \cap C^0([0,T];C^2(\Sigma))$.  
Thus, in the original solution, the terms  $u=\overline{u}\circ\Theta_h^{-1}$,
$B =\overline{B}\circ\Theta_h^{-1}$ and
$p=\overline{p}\circ\Theta_h^{-1}$ 
 have the same regularity as 
 $\overline{u} $,   $\overline{B} $ and  $\overline{p} $
 and the equations are satisfied almost everywhere on corresponding domains.
The jump of the pressure $\jump{p}=\varpi\circ\Theta_h^{-1}$ also satisfies the requirement in 
Definition \ref{definition.strong-solution}.
This finishes the proof of the theorem.  
\end{proof}

\subsection{$W^{3-\frac{2}{q},q}$ initial interface}  \label{section: W interface}

In this section, we prove Theorem   \ref{existence.result.2}.
Suppose that we have 
a $C^3$ surface $\Sigma$ and the nearest point projection property is  valid in $B(\Sigma;\varrho_0)$
for some $\varrho_0(\Sigma) > 0 $.
Let $M_0 >0$ be an arbitrary number and let $\varepsilon_0(M_0) >0$ be  a number to be determined later.
Let $(u_0,B_0,\Gamma_0)$ be an initial value and $h_0$ be the corresponding height function, such that:
\begin{enumerate}
\item 
$u_0\in W^{2-\frac{2}{q}, \, q}(\Omega\setminus \Gamma_0)\cap C(\Omega)$,
\quad $B_0\in W^{2-\frac{2}{q}, \, q}(\Omega)$,
\quad and   $\Gamma_0$ is a  $W^{3-\frac{2}{q}, \, q}$ surface;
	\item 
	$\norm{u_0}_{W^{2-\frac{2}{q} ,  \, q } (\Omega\setminus\Gamma_0)}\leq M_0$,\quad
	 $\norm{u_0}_{L^\infty(\Omega)}\leq M_0$, \quad
	$\norm{B_0}_{W^{2-\frac{2}{q} ,  \, q } (\Omega)}\leq M_0$, 	\quad
	\item 
	$\Gamma_0 \subseteq B(\Sigma;\varrho_0)$,\quad 
	$\norm{h_0}_{W^{3-\frac{2}{q}} (\Sigma)}< \varepsilon_0 $,\quad  
	\item
	$\div u_0 =0$  in $\Omega\setminus \Gamma_0$, \quad $\div B_0=0$ in $\Omega$;
  \item
  $u_0 =B_0 =0$ on $\partial\Omega$;
 \item
 $ \mathcal{P}_{\Gamma_0}  \jump{\nu^\pm  \prts{\nabla u_0 + \nabla u_0^\top} }n_{\Gamma_0} =0 $.
\end{enumerate}
Using the argument in Section \ref{Transformation of problem},
we obtain the initial condition of the transformed problem
\[
(\overline{u}_0, \overline{B}_0 , h_0) = (u_0\circ \Theta_{h_0} , B_0 \circ \Theta_{h_0} , h_0) .
\]
From the structure of $\Theta_{h_0}$,
we know that there exists $\delta_0(\Sigma)\in(0,1)$
 and a constant $C(\Sigma,\varrho_0)$, such that for all $\norm{h_0}_{C^2(\Sigma)}<\delta_0$ 
we have
\begin{equation} \label{u0,B0,transformed.estimate}
\norm{\overline{u}_0}_{W^{2-\frac{2}{q} ,  \, q } (\Omega\setminus\Sigma)} < C M_0 \quad \text{and} \quad
\norm{\overline{B}_0}_{W^{2-\frac{2}{q} ,  \, q } (\Omega)} < C M_0.
\end{equation}
The number $\delta_0$ is only dependent on the fixed reference surface $\Sigma$,
so we may assume $\varepsilon_0 < \delta_0$ without any loss of generality. 
Let $T_0 $ be a fixed number.
Using  the same argument as in \cite[Theorem 2]{Pruss.quali}, 
we obtain the auxiliary term $b$ in \eqref{transformed.interface.speed.eq}
 by extending  $\overline{u}_0$ to  $\overline{u}_b\in \mathcal{W}^{T_0}_1$
 and letting 
 \[ b:= \overline{u}_b |_{[0,T_0]\times\Sigma}. \]
Using \cite[Proposition 2]{Pruss.quali}, we can extend
\[ \div \overline{u}_0 \quad \text{and} \quad  \mathcal{P}_\Sigma
\jump{\nu^\pm \prts{\nabla \overline{u}_0 + \nabla \overline{u}_0 ^\top }  }n_\Sigma  \] 
to two auxiliary functions $\alpha_3\in \mathcal{S}_3^{T_0}$ 
and $\alpha_4\in \mathcal{S}_4^{T_0}$, respectively.
Using the solution operator $S$ from Section \ref{Linear part},
we obtain an auxiliary solution 
\begin{equation}
z_\alpha := (u_\alpha, B_\alpha, p_\alpha, \varpi_\alpha,  h_\alpha )
:= S_{(\overline{u}_0, \overline{B}_0, h_0, b)}( 0,0, \alpha_3,  \alpha_4, 0).
\end{equation}
Similarly to \cite{Pruss.quali},  we consider the equation
\begin{equation} \label{L,N,equation,1,pert}
L(z +z_\alpha) = G(z +z_\alpha), \quad z(0)=0
\end{equation}
with $z \in  \mathring{\mathcal{W}}$,
which can be rewritten as
\[
Lz  = G(z +z_\alpha) -L z_\alpha.
\]
The compatibility conditions for $G(z_0) -L (z_0)$ are  exactly the transformation (via $\Theta_{h_0}$) of the initial conditions 
 \[ \div u_0 =0  \quad \text{and}  \quad  \mathcal{P}_{\Gamma_0} \jump{\nu^\pm  \prts{\nabla u_0 + \nabla u_0^\top } } n_{\Gamma_0} =0 . \]
Using the solution operator $S_{(0,0,0,b)}$, we rewrite the equation as
\begin{equation}
z = S_{(0,0,0,b)} \prts{ G(z+z_\alpha) - L z_\alpha  }  =: K(z) .
\end{equation}
Similarly to 
Section \ref{C3 initial interface},
we fix  $r_0>0$ and define for all $r\in (0,r_0]$ and $T\in(0,T_0]$ that  
\begin{equation}
 \mathcal{B}_{r}^{T}  :=\{ w\in \mathring{\mathcal{W}}^T : \norm{w}_{\mathcal{W}^T} \leq r \} .
\end{equation}
It remains to find suitable $r$ and $T$ 
such that  $K$ is  a contraction mapping on the set $\mathcal{B}_{r}^{T} $.
Since $h_0\neq 0$, we need to slightly modify the estimates in Section \ref{C3 initial interface}.

Given any $z_1, z_2 \in \mathcal{B}_{r}^{T} $, we consider the same estimate as stated
in Section \ref{C3 initial interface}:
\begin{equation} 
\begin{aligned}
\norm{ K(z_1) - K(z_2) }_{\mathcal{W}^T}
&\leq \norm{ S } \norm{  G(z_1+z_\alpha) - G(z_2 + z_\alpha)}_{\mathring{\mathcal{S}}^T}
\\
&\leq C \norm{S }  \sup_{0\leq c\leq 1} \norm{ DG\depvar{c z_1 + (1-c)z_2 + z_\alpha}  }
\norm{z_1- z_2}_{\mathring{\mathcal{W}}^T} .
\end{aligned}
\end{equation}
Letting $z_\xi:= c z_1 + (1-c)z_2$  and $\varphi:= z_1-z_2$,
we need to estimate \eqref{final estimate}
using similar ideas to \cite{Pruss.quali, Pruss.analytic.solu}.
We rewrite the inequality for convenience. Suppose $\norm{h}_{\mathcal{C}_2^{T_0}}\leq M_h$,
then for all $T\in(0,T_0]$ we have
\begin{equation} \label{DG estimate, rewrite1}
\begin{aligned}
 & \norm{ DG\depvar{z_\alpha+z_\xi }\varphi} _{\mathring{\mathcal{S}}^T} \\
 &\leq
C(\Sigma,T_0,M_h)\Bigl( 1+ \norm{u_\alpha+u_\xi}_{L^\infty([0,T]\times\Omega)}
 + \norm{B_\alpha+B_\xi}_{L^\infty([0,T]\times\Omega)} 
 \Bigr.
\\
& \quad\quad\quad \quad\quad\quad 
\Bigl. + \norm{h_\alpha+h_\xi}_{\mathcal{C}_2^T}  +  \norm{z_\alpha+z_\xi}_{\mathcal{W}^T} \Bigr)
\prts{  \norm{h_\alpha+h_\xi}_{ \mathcal{C}_4^T }   	 + \norm{z_\alpha+z_\xi}_{ \mathcal{W}^T }   	}
     \norm{\varphi}_{ \mathring{\mathcal{W}} ^T }
\\
&   \quad   + C \norm{{\rm tr} L_\Sigma^2 \varphi_h}_{ \mathring{\mathcal{S}}_4^T } 
+ C(T_0,\Sigma) \prts{ \norm{b - u_\alpha - u_\xi}_{\mathcal{S} _5 ^T } + \norm{b- u_\alpha - u_\xi}_{L^\infty([0,T]\times\Sigma)} }
	  \norm{ \varphi}_{\mathring{\mathcal{W}} ^T } .
\end{aligned}
\end{equation}

The estimates in  
Proposition \ref{trL,DN,small,temp1} and
Proposition \ref{b-u,DN,small,temp1} can be obtained without any change.
We slightly modify the arguments in Proposition \ref{DN,small,bounded,1} 
and Proposition \ref{h,z,DN,small,temp1}.

\begin{proposition}[Modification of Proposition \ref{DN,small,bounded,1}] 
\label{DN,small,bounded,2}
For $M_0$ and $\varepsilon_0$ defined in the beginning of Section \ref{section: W interface}.
For all $T\in(0,T_0]$ and all $z_\xi\in \mathcal{B}_{r_0}^{T}$,
we have
\begin{equation}  \label{W interface, bounded term1}
1+ \norm{u_\alpha + u_\xi}_\infty + \norm{B_\alpha + B_\xi}_\infty + \norm{h_\alpha + h_\xi}_{\mathcal{C}_2^T} +  \norm{z_\alpha + z_\xi}_{\mathcal{W}^T}
\leq C(\Sigma,T_0,r_0,M_0).
\end{equation}
The constant $C(\Sigma,T_0,r_0,M_0)$ is independent of $h_0$ or $\varepsilon_0$. 
\end{proposition}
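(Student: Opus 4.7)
The plan is to control each summand in \eqref{W interface, bounded term1} by bounding the auxiliary solution $z_\alpha$ on the full interval $[0,T_0]$ and then adding the contribution of $z_\xi \in \mathcal{B}_{r_0}^T$. Since $\norm{z_\xi}_{\mathcal{W}^T} \leq r_0$ by definition and $z_\xi$ vanishes at $t=0$, its $L^\infty$ and $\mathcal{C}_2^T$ contributions are controlled by $r_0$ through the zero-initial embedding estimates of \cite[Proposition 5.1]{Pruss.analytic.solu}, with constants that do not depend on $T$. Thus the core task is to bound $z_\alpha$ in such a way that the resulting constant depends on $\Sigma, T_0, M_0$ only and is independent of $h_0$ and $\varepsilon_0$.

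First I would estimate the data fed into the solution operator $S_{(\overline{u}_0,\overline{B}_0,h_0,b)}$. Since $q>5$ we have the Sobolev embedding $W^{3-2/q,q}(\Sigma)\hookrightarrow C^2(\Sigma)$, so the assumption $\varepsilon_0<\delta_0$ guarantees $\norm{h_0}_{C^2(\Sigma)}<\delta_0$. Then \eqref{u0,B0,transformed.estimate} yields
\begin{equation*}
\norm{\overline{u}_0}_{W^{2-2/q,q}(\Omega\setminus\Sigma)}+\norm{\overline{B}_0}_{W^{2-2/q,q}(\Omega)}\leq C(\Sigma)\,M_0.
\end{equation*}
The extension $b=\overline{u}_b|_{[0,T_0]\times\Sigma}$ constructed as in \cite[Theorem 2]{Pruss.quali} obeys the required trace bound by $C(\Sigma,T_0)M_0$; the source-term extensions $\alpha_3\in\mathcal{S}_3^{T_0}$ and $\alpha_4\in\mathcal{S}_4^{T_0}$ provided by \cite[Proposition 2]{Pruss.quali} are controlled analogously; and $\norm{h_0}_{W^{3-2/q,q}(\Sigma)}\leq\varepsilon_0<1$.

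Next I would invoke Theorem \ref{u-problem,max,regularity} for $(u_\alpha,p_\alpha,\varpi_\alpha,h_\alpha)$ and Theorem \ref{domain.parabolic,eq} for $B_\alpha$, whose continuity constants are bounded on $(0,T_0]$, to conclude
\begin{equation*}
\norm{z_\alpha}_{\mathcal{W}^{T_0}}\leq C(\Sigma,T_0,M_0).
\end{equation*}
To pass to the $L^\infty$ and $\mathcal{C}_2$ norms in \eqref{W interface, bounded term1} I would apply the Sobolev embedding $W^{2-2/q,q}(\Omega\setminus\Sigma)\hookrightarrow L^\infty(\Omega)$ (available since $2-2/q>3/q$ when $q>5$) together with the full trace embeddings $\mathcal{W}_1^T\hookrightarrow C([0,T];W^{2-2/q,q}(\Omega\setminus\Sigma))$ and $\mathcal{W}_2^T\hookrightarrow C([0,T];W^{2-2/q,q}(\Omega))$, which produce the bounds on $\norm{u_\alpha}_\infty,\norm{B_\alpha}_\infty$. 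For the height function the embedding $\mathcal{W}_5^{T_0}\hookrightarrow C([0,T_0];C^2(\Sigma))\cap C^1([0,T_0];C^1(\Sigma))$, valid for $q>5$, gives $\norm{h_\alpha}_{\mathcal{C}_2^{T_0}}\leq C(\Sigma,T_0,M_0)$.

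The principal subtlety, absent in Proposition \ref{DN,small,bounded,1}, is that $h_0\neq 0$ forces $z_\alpha(0)\neq 0$, so one cannot rely on the $T$-independent constants arising from the zero-initial-value embeddings. The hard part will therefore be to ensure that the trace/interpolation constants used on the auxiliary piece are bounded uniformly on $(0,T_0]$ and that every appearance of $h_0$ is absorbed through $\varepsilon_0<\delta_0\leq 1$ rather than enlarging the constant. Once this is verified, adding the $z_\xi$ contribution (bounded by $r_0$ via the zero-initial embeddings) and the explicit additive constant $1$ gives \eqref{W interface, bounded term1} with $C=C(\Sigma,T_0,r_0,M_0)$ independent of $h_0$ and $\varepsilon_0$.
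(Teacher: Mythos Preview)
Your proposal is correct and follows essentially the same approach as the paper: bound $z_\alpha$ in $\mathcal{W}^{T_0}$ via the continuity of the solution operator (with data controlled by $M_0$ and $\varepsilon_0<1$), pass to the $L^\infty$ and $\mathcal{C}_2$ norms by the embeddings of \cite[Proposition~5.1]{Pruss.analytic.solu} on the fixed interval $[0,T_0]$, and then add the $z_\xi$ contribution using the zero-initial-value embeddings with $T$-independent constants. The only cosmetic difference is that the paper absorbs the $h_0$ dependence by assuming $\varepsilon_0\leq M_0$ without loss of generality, whereas you use $\varepsilon_0<\delta_0<1$; both serve the same purpose.
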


\begin{proof}
We recall that the solution operator $S$ in Section \ref{Linear part} is continuous with respect to the initial value.
Without loss of generality, we assume $\varepsilon_0\leq M_0$.
Since 
$$\norm{\overline{u}_0}_{W^{2-\frac{2}{q},q}(\Omega\setminus\Gamma_0)}\leq C M_0,\quad 
\norm{\overline{B}_0}_{W^{2-\frac{2}{q},q}(\Omega)}\leq C M_0, \quad \text{and }
\norm{h_0}_{W^{3-\frac{2}{q},q}(\Sigma)} < \varepsilon_0 \leq M_0,$$
from the derivation of $z_\alpha$, we have for all $T\in(0,T_0]$ that
\begin{equation} \label{z,alpha,estimate1}
\norm{z_\alpha}_{\mathcal{W}^{T}}
\leq \norm{z_\alpha}_{\mathcal{W}^{T_0}} \leq C(M_0,T_0,\Sigma).
\end{equation}
The details of the estimate of $(u_\alpha,p_\alpha,h_\alpha)$ in \eqref{z,alpha,estimate1} can be found 
in \cite[Section 8.2.3 - 8.2.4]{Pruss.green.book}.
The estimate of $B_\alpha$ is stated in \eqref{B,parabolic,estimate}.
From the embedding theory in \cite[Proposition 5.1 (d)]{Pruss.analytic.solu}, we have
for all $T\in(0,T_0]$ that
\[
\norm{h_\alpha}_{\mathcal{C}_2^{T}} 
\leq
\norm{h_\alpha}_{\mathcal{C}_2^{T_0}} 
\leq C(T_0,\Sigma) \norm{h_\alpha}_{\mathcal{W}_5^{T_0}} 
\leq C(M_0, T_0,\Sigma) .
\]
The rest of the proof can be carried out using similar arguments to Proposition \ref{DN,small,bounded,1},
which implies \eqref{W interface, bounded term1}.
\end{proof}
We also modify Proposition \ref{h,z,DN,small,temp1}
since $h_0\neq 0$ in the current case.
\begin{proposition}[Modification of Proposition \ref{h,z,DN,small,temp1}] \label{h,z,DN,small,temp1,pert}
There exists a constant $C_1$, such that for any $\varepsilon_1>0$, there exists $T(\varepsilon_1)>0$ and $r(\varepsilon_1)>0$, such that
for all $z_\xi \in \mathcal{B}_{r}^{T} $ we have
\begin{equation} 
\norm{h_\xi + h_\alpha}_{ \mathcal{C}_4^T } + \norm{z_\xi + z_\alpha}_{ \mathcal{W}^T } < \varepsilon_0 + C_1 \varepsilon_1 .
\end{equation}
Here the number $\varepsilon_0$ is the upper bound of $\norm{h_0}_{C^2(\Sigma)}$ as stated at the beginning of this section.
\end{proposition}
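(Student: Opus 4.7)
The proof adapts the argument for Proposition \ref{h,z,DN,small,temp1} to accommodate the non-vanishing initial height $h_0$. The genuinely new feature is the handling of $h_\alpha$, since now $h_\alpha(0) = h_0 \neq 0$, and we split
$$\norm{h_\alpha}_{\mathcal{C}_4^T} \leq \norm{h_0}_{C^2(\Sigma)} + \norm{h_\alpha - h_0}_{\mathcal{C}_4^T}.$$
The first summand is the origin of the $\varepsilon_0$ on the right-hand side: since $q > 5$, the Sobolev embedding $W^{3-2/q,q}(\Sigma) \hookrightarrow C^2(\Sigma)$ combined with the hypothesis $\norm{h_0}_{W^{3-2/q,q}(\Sigma)} \leq \varepsilon_0$ yields $\norm{h_0}_{C^2(\Sigma)} \leq C(\Sigma)\varepsilon_0$, which after absorbing the constant (as clarified in the remark appended to the proposition statement) is written as $\leq \varepsilon_0$.

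The remaining four quantities are all made arbitrarily small at the cost of shrinking $r$ and $T$. The perturbative part is controlled by the definition of $\mathcal{B}_r^T$ together with the $T$-uniform embedding $\mathring{\mathcal{W}}_5^T \hookrightarrow \mathring{\mathcal{C}}_4^T$ from \cite[Proposition 5.1]{Pruss.analytic.solu} (valid for zero-trace functions, e.g.\ via extension by reflection):
$$\norm{z_\xi}_{\mathcal{W}^T} \leq r, \qquad \norm{h_\xi}_{\mathcal{C}_4^T} \leq C \norm{h_\xi}_{\mathring{\mathcal{W}}_5^T} \leq Cr.$$
For the auxiliary contributions, the reasoning from the $C^3$ case carries over almost verbatim: each constituent of $\mathcal{W}$ is a Bochner Lebesgue or fractional Sobolev space in time with exponent $q < \infty$, so Lebesgue dominated convergence applied to each Bochner $L^q$ integral, $L^q$ integral of a time derivative, and Gagliardo double integral over $[0,T] \times [0,T]$ shows $\norm{z_\alpha}_{\mathcal{W}^T} \to 0$ as $T \to 0$, despite the nontrivial initial values of $u_\alpha, B_\alpha, h_\alpha$; and $\norm{h_\alpha - h_0}_{\mathcal{C}_4^T} \to 0$ by the continuity of $t \mapsto h_\alpha(t)$ in $C^2(\Sigma)$ (using $\mathcal{W}_5^{T_0} \hookrightarrow C([0,T_0]; C^2(\Sigma))$) together with the initial condition $h_\alpha(0) = h_0$.

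To conclude, first fix $r(\varepsilon_1)$ so that $(1+C)r \leq \varepsilon_1/2$, then fix $T(\varepsilon_1)$ small enough that $\norm{z_\alpha}_{\mathcal{W}^T} + \norm{h_\alpha - h_0}_{\mathcal{C}_4^T} < \varepsilon_1/2$. Summing all four small contributions produces the bound $C_1 \varepsilon_1$ with $C_1$ an absolute constant (depending on $\Sigma, T_0, q$ but not on $\varepsilon_1$), and adding the $\varepsilon_0$ from the first summand gives the claimed inequality $\varepsilon_0 + C_1 \varepsilon_1$. The principal subtlety is verifying the $T$-independence of the embedding constant $\mathring{\mathcal{W}}_5^T \hookrightarrow \mathring{\mathcal{C}}_4^T$: this is standard for zero-trace spaces via extension by reflection, but the naive embedding constant (without the zero-trace assumption) depends on $T$, so the argument essentially requires $h_\xi(0) = 0$ — exactly the condition ensured by $z_\xi \in \mathring{\mathcal{W}}^T$.
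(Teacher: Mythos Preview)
Your proof is correct and follows essentially the same approach as the paper: the only modification from Proposition~\ref{h,z,DN,small,temp1} is the splitting $\norm{h_\alpha}_{\mathcal{C}_4^T} \leq \norm{h_0}_{C^2(\Sigma)} + \norm{h_\alpha - h_0}_{\mathcal{C}_4^T}$, with the first term contributing the $\varepsilon_0$ and the second made small by continuity of $t\mapsto h_\alpha(t)$ in $C^2(\Sigma)$. Your version is in fact more explicit than the paper's (which simply refers back to Proposition~\ref{h,z,DN,small,temp1} for the unchanged parts), and your remark on the $T$-independence of the embedding $\mathring{\mathcal{W}}_5^T\hookrightarrow\mathring{\mathcal{C}}_4^T$ via the zero-trace condition is a helpful clarification.
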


\begin{proof}

We only need to modify the estimate of $h_\alpha$ since $h_0\neq 0$ in the current case.  
Notice that $\norm{h_\alpha (0)}_{C^2} = \norm{h_0}_{C^2} <\varepsilon_0$ by the assumption of the initial conditions. 
Since $h_\alpha \in C([0,T_0];C^2(\Sigma))=: \mathcal{C}_4^{T_0}$,  we can let  $T(\varepsilon)$
be sufficiently small, such that for all $t\in[0,T]$ we have
\begin{equation} 
\norm{h_\alpha(t)}_{ C^2 }
\leq \norm{h_0}_{ C^2 } + \norm{h_\alpha(t)- h_0}_{ C^2 }
< \varepsilon_0 + \varepsilon_1,
\end{equation}
which implies
\begin{equation} 
\norm{h_\alpha}_{\mathcal{C}_4^T }
< \varepsilon_0 + \varepsilon_1.
\end{equation}
This completes the proof.

\end{proof}

We can now prove Theorem \ref{existence.result.2} in a similar spirit to \cite{Pruss.quali, Pruss.analytic.solu}.

\begin{proof}[Proof of Theorem \ref{existence.result.2}]

First, we verify the condition 
\[ \norm{h}_{\mathcal{C}_4^{T} } :=  \norm{h}_{C([0,T];C^2(\Sigma)) } <\delta_0 \]
as stated in  Proposition \ref{final.estimate.proposition},
which enables us  to use the estimate in \eqref{final estimate}.
Notice that $\delta_0$ only depends on $\Sigma$ and thus is a fixed number.
Without loss of generality, we assume  $\varepsilon_0 < \delta_0 /4$
and $C_1 \varepsilon_1  < \delta_0 /4 $. 
From Proposition \ref{h,z,DN,small,temp1,pert}, 
there exists $T_1\in (0,T_0]$,
such that
$ \norm{h}_{\mathcal{C}_4^{T_1} } < \delta_0 $.

Given any $\varepsilon_0 $ and $\varepsilon_1 $, from 
Proposition \ref{trL,DN,small,temp1},
Proposition \ref{b-u,DN,small,temp1},
Proposition \ref{DN,small,bounded,2}
and Proposition \ref{h,z,DN,small,temp1,pert},
we can find $T\in (0,T_1]$ and $r\in (0,r_0]$ sufficiently small,  such that
\begin{equation} \label{final estimate.v2}
\begin{aligned}
 \norm{ DG\depvar{z_\alpha + z_\xi }\varphi} _{\mathring{\mathcal{S}}^T} 
& \leq
 C_2(M_0,T_0,r_0) \prts{  \varepsilon_0 + C_1 \varepsilon_1   	}
     \norm{\varphi}_{ \mathring{\mathcal{W}} ^T }
  + C_3(T_0) \varepsilon_1 \norm{\varphi_h}_{ \mathring{\mathcal{W}}_5^T } 
\\
& \quad + C_4( T_0) \varepsilon_1
	  \norm{ \varphi}_{\mathring{\mathcal{W}} ^T } .
\end{aligned}
\end{equation}
Since the constants $C_2$, $C_3$ and $C_4$ are independent of the choice of $\varepsilon_0$ and $\varepsilon_1$, for sufficiently small $\varepsilon_0$ and $\varepsilon_1$,
we can obtain a contraction mapping.
The rest of the proof can be proceeded using the same arguments as in 
the proof of Theorem \ref{existence.result.1}.

\end{proof}

%
%
%
 

\appendix

\section{Products of Sobolev Functions}

We write the estimate of the product of two functions from different spaces. 
The dependency of constant terms on parameters are carefully studied. 
\begin{proposition}
Let $\Omega$ be a bounded open set. 
Let $T_0>0$, $s\in(0,1)$, $r\in(0,1)$ and $q\geq 1$ be fixed numbers.
For all $T\in(0,T_0]$, suppose that
$f\in C^1([0,T];C(\Omega))\cap C([0,T];C^1(\Omega))$ 
and
$g\in W^{s,q}([0,T];L^q(\Omega))\cap L^q([0,T];W^{r,q}(\Omega))$.
Then
\[
\norm{fg}_{W^{s,q}L^q \cap L^q W^{r,q} }
\leq C \norm{f}_{C^1 C\cap CC^1 } \norm{g}_{W^{s,q}L^q \cap L^q W^{r,q} }.
\]
\end{proposition}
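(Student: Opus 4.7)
The plan is to estimate the two norms defining the target space separately: $\|fg\|_{W^{s,q}([0,T];L^q(\Omega))}$ and $\|fg\|_{L^q([0,T];W^{r,q}(\Omega))}$. Each splits into an $L^qL^q$ part and a Gagliardo seminorm part. The $L^qL^q$ piece is immediate from H\"older: it is bounded by $\|f\|_\infty \|g\|_{L^qL^q} \leq \|f\|_{C^1C\cap CC^1}\|g\|_{L^qL^q}$, where $\|f\|_\infty$ is controlled since $f \in C([0,T];C(\Omega))$ and $\Omega$ is bounded. The work lies in the two Gagliardo seminorms, both handled by the standard splitting $f(p_1)g(p_1)-f(p_2)g(p_2)=f(p_1)(g(p_1)-g(p_2))+(f(p_1)-f(p_2))g(p_2)$ followed by the inequality $(a+b)^q\leq 2^{q-1}(a^q+b^q)$.

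For the space seminorm, apply the splitting with $p_i=(t,x_i)$ and integrate over $\Omega\times\Omega$. The first resulting term is bounded by $\|f\|_\infty^q [g(t)]_{W^{r,q}(\Omega)}^q$, which after time-integration gives $\|f\|_\infty^q [g]_{L^qW^{r,q}}^q$. The second term uses the Lipschitz bound $|f(t,x)-f(t,y)|\leq \|f\|_{CC^1}|x-y|$ coming from the spatial $C^1$ regularity of $f$, so its integrand is pointwise bounded by
\[
\|f\|_{CC^1}^q\,\frac{|x-y|^q}{|x-y|^{n+rq}}\,|g(t,y)|^q
=\|f\|_{CC^1}^q\,|x-y|^{q(1-r)-n}\,|g(t,y)|^q .
\]
Since $r<1$ and $q\geq 1$, the exponent $q(1-r)-n$ yields an integrable singularity at $x=y$ relative to $n$-dimensional Lebesgue measure, and $\Omega$ is bounded, so integrating in $x$ first produces a finite constant $C(\Omega,q,r)$. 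The remaining $L^qL^q$-integral of $g$ is then bounded by $\|g\|_{L^qL^q}$.

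For the time seminorm, the same splitting with $p_i=(t_i,x)$ reduces matters to a double integral over $[0,T]\times[0,T]$ of $L^q(\Omega)$-norms. The first term gives $\|f\|_\infty^q[g]_{W^{s,q}L^q}^q$. For the second term, the Lipschitz-in-time bound $\|f(t,\cdot)-f(\tau,\cdot)\|_\infty\leq \|f\|_{C^1C}|t-\tau|$ converts the integrand to $\|f\|_{C^1C}^q|t-\tau|^{q-1-sq}\|g(\tau)\|_{L^q(\Omega)}^q$. Since $s<1$ and $q\geq 1$, one has $q-1-sq>-1$, so $\int_0^T|t-\tau|^{q-1-sq}\,dt\leq C(s,q)\,T_0^{q(1-s)}$ uniformly in $\tau$ and in $T\in(0,T_0]$. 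Integrating the remaining factor over $\tau$ then yields $C(s,q,T_0)\,\|f\|_{C^1C}^q\,\|g\|_{L^qL^q}^q$.

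Collecting the four estimates, taking $q$-th roots, and using $a+b\leq 2(a\vee b)$ gives the claimed inequality with a constant $C=C(\Omega,s,r,q,T_0)$ independent of $T\in(0,T_0]$. The only subtle point I expect is bookkeeping to ensure the constant is genuinely $T$-independent; this is automatic because each bound is controlled by its $T_0$-counterpart, but it is the step most prone to slippage. The fractional-integrability conditions $q(1-r)-n>-n$ and $q-1-sq>-1$ are exactly the conditions $r<1$ and $s<1$ assumed, so both Gagliardo integrals converge without any additional hypothesis on $q$ beyond $q\geq 1$.
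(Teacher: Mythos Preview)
Your proposal is correct and follows essentially the same approach as the paper's proof: split each Gagliardo seminorm via $f(p_1)g(p_1)-f(p_2)g(p_2)=f(p_1)(g(p_1)-g(p_2))+(f(p_1)-f(p_2))g(p_2)$, control the first term by $\|f\|_\infty$ times the seminorm of $g$, and control the second by the Lipschitz bound on $f$ (spatial or temporal) together with the integrability of $|x-y|^{q(1-r)-n}$ and $|t-\tau|^{q(1-s)-1}$. The paper's argument and yours match step for step, including the observation that the resulting constants depend only on $q$, $\operatorname{diam}(\Omega)$, and $T_0$, not on $T$.
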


\begin{proof}

Step 1:
\\
For all $t\in [0,T]$ we have
\[
\norm{fg}_{W^{r,q}(\Omega)} (t) := \norm{fg}_{L^q} (t) + \seminorm{fg}_{W^{r,q}} (t).
\]
We omit the variable $t$ when there is no confusion.
For the $L^q$ norm we have
\[
\norm{fg}_{L^q(\Omega)} 
\leq  \norm{f}_{C(\Omega)}   \norm{g}_{L^q(\Omega)} .
\]
For the seminorm we have
\begin{equation}
\begin{aligned}
& \seminorm{fg}_{W^{r,q}(\Omega)}  
:= \prts{ \int_\Omega \int_\Omega \frac{\abs{ f(x)g(x)-f(y)g(y)}^q }{\abs{x-y}^{n+rq}} dydx }^{\frac{1}{q}}
\\
& \leq \prts{ C(q)
\int_\Omega \int_\Omega \frac{ \abs{ f(x)}^q \abs{g(x)-g(y)}^q +  \abs{ f(x)-f(y)}^q \abs{g(y)}^q  }{\abs{x-y}^{n+rq}} dydx  }^{\frac{1}{q}}
\\
& \leq \prts{ C(q)
\int_\Omega \int_\Omega \frac{ \norm{ f}_{C^0}^q \abs{g(x)-g(y)}^q +  \norm{ f}_{C^1}^q \abs{x-y}^q \abs{g(y)}^q  }{\abs{x-y}^{n+rq}} dydx  }^{\frac{1}{q}}
\\
&  \leq C(q) \norm{f}_{C^0} \seminorm{g}_{W^{r,q}} 
+ C(q) \norm{f}_{C^1} \prts{\int_\Omega \abs{g(y)}^q \prts{\int_\Omega \abs{x-y}^{-n-rq+q} dx }dy  }^{\frac{1}{q}}
\\
& \leq C(q) \norm{f}_{C^0} \seminorm{g}_{W^{r,q}} +  C(q, {\rm diam}(\Omega) ) \norm{f}_{C^1} \norm{g}_{L^q}
\\
& \leq   C(q,{\rm diam}(\Omega)) \norm{f}_{C^1} \norm{g}_{W^{r,q}},
\end{aligned}
\end{equation}
where $C(q,{\rm diam}(\Omega))$ is an increasing function of ${\rm diam}(\Omega)$.
Thus, we have
\begin{equation}
\begin{aligned}
&\norm{fg}_{L^q W^{r,q}}
:= \prts{ \int_0^T \norm{fg}^q_{W^{r,q}(\Omega)}(t) dt }^{\frac{1}{q}}
\\
& \leq C(q,{\rm diam}(\Omega)) 
\prts{ 
	\int_0^T \norm{f}^q_{C^1(\Omega)}(t) \norm{g}^q_{W^{r,q}(\Omega)}(t) dt 
	}^{\frac{1}{q}}
\\
& \leq C(q,{\rm diam}(\Omega)) 
\norm{f}_{C^0C^1} \norm{g}_{L^q W^{r,q}} ,
\end{aligned}
\end{equation}
where the constant is still an increasing function of ${\rm diam}(\Omega)$ and it is independent of $T_0$ or $T$.

Step 2:
We recall that
\[
\norm{fg}_{W^{s,q}L^q}  := \norm{fg}_{L^qL^q}  + \seminorm{fg}_{W^{s,q}L^q} 
\]
with the seminorm defined as
\[
 \seminorm{f}_{W^{s,q}L^q} :=
  \prts{ \int_0^T \int_0^T \frac{\abs{ f(t)-f(\tau)}^q }{\abs{x-y}^{1+sq}} dtd\tau }^{\frac{1}{q}}.
\]
We have the estimate
\begin{equation}
\begin{aligned}
&  \seminorm{fg}_{W^{s,q}L^q} :=
  \prts{ \int_0^T \int_0^T \frac{\norm{ f(t)g(t)-f(\tau)g(\tau)}_{L^q(\Omega)}^q }{\abs{t-\tau}^{1+sq}} dtd\tau }^{\frac{1}{q}}
\\
&
\leq
C(q) \prts{ \int_0^T \int_0^T \frac{\norm{ f(t)\prts{g(t)-g(\tau)}}_{L^q(\Omega)}^q +  \norm{\prts{f(t)-f(\tau)} g(\tau)}_{L^q(\Omega)}^q }{\abs{t-\tau}^{1+sq}} dtd\tau }^{\frac{1}{q}}
\\
&
\leq
C(q) \prts{ \int_0^T \int_0^T \frac{\norm{ f(t)}_{C^0(\Omega)}^q\norm{g(t)-g(\tau)}_{L^q(\Omega)}^q +  \norm{ f(t) -f(\tau)}_{C^0(\Omega)}^q\norm{g(\tau)}_{L^q}^q }{\abs{t-\tau}^{1+sq}} dtd\tau }^{\frac{1}{q}}
\\
&
\leq
C(q) \prts{ \int_0^T \int_0^T \frac{\norm{ f(t)}_{C^0(\Omega)}^q\norm{g(t)-g(\tau)}_{L^q(\Omega)}^q +  \norm{ f}_{C^1C^0}^q \abs{t-\tau}^q\norm{g(\tau)}_{L^q}^q }{\abs{t-\tau}^{1+sq}} dtd\tau }^{\frac{1}{q}}
\\
& \leq
C(q) \norm{f}_{C^0 C^0} \seminorm{g}_{W^{s,q}L^q}
+ C(q) \norm{f}_{C^1 C^0} \prts{ \int_0^T \norm{g(\tau)}_{L^q}^q \prts{ \int_0^T  \abs{t-\tau}^{-1-sq+q} dt }   d\tau }^{\frac{1}{q}}
\\
& \leq
C(q) \norm{f}_{C^0 C^0} \seminorm{g}_{W^{s,q}L^q}
+ C(q,T) \norm{f}_{C^1 C^0} \norm{g}_{L^q L^q} 
\\
& \leq
C(q,T) \norm{f}_{C^1 C^0} \norm{g}_{W^{s,q}L^q} ,
\end{aligned}
\end{equation}
where $C(q,T)$ is an increasing function of $T$.

Consequently, since $\Omega$ and $T_0$ are fixed and $T\leq T_0$, 
we obtain for all $T\in(0,T_0]$ that
\begin{equation}
\begin{aligned}
& \norm{fg}_{W^{s,q}([0,T];L^q(\Omega)) \cap L^q([0,T]; W^{r,q}(\Omega)) }
:= \norm{fg}_{ L^q W^{r,q} } +  \norm{fg}_{W^{s,q}L^q }
\\
& \leq 
C(q,{\rm diam}(\Omega)) 
\norm{f}_{C^0C^1} \norm{g}_{L^q W^{r,q}} 
+ C(q,T_0) \norm{f}_{C^1 C^0} \norm{g}_{W^{s,q}L^q}
\\
& \leq  C(q,{\rm diam}(\Omega),T_0)
\norm{f}_{C^1 ([0,T];C(\Omega)) \cap C([0,T]; C^1 (\Omega)) } 
\norm{g}_{W^{s,q}([0,T];L^q(\Omega)) \cap L^q([0,T]; W^{r,q}(\Omega)) } .
\end{aligned}
\end{equation}
\end{proof}

\section{Geometric Terms in the Hanzawa Transformation}
\label{Appendix. Geometric Terms}

The representation of geometric quantities of the free interface $\Gamma(t)$ using the refernce surface $\Sigma$ and height function $h(t,\cdot)$ has been studied in  \cite{Pruss.quali, Pruss.green.book}.
We rewrite the main steps with more details included for completeness.  
In this part, we will temporarily consider the general $d$-dimensional space $\mathbb{R}^d$.
  
\subsection{Tangent and normal vectors}

Suppose that   $\Sigma$ is a $C^k$ surface, 
then it can be locally parameterized 
by a $C^k$ function $\Phi$, 
i.e., 
for all $x\in\Sigma$ there exists a neighborhood $B(x;a)\cap\Sigma$ such that 
there exists a domain $D\subseteq \mathbb{R}^{d-1}$ and a diffeomorphism $\Phi(s)$ 
from $D$ to $B(x;a)\cap\Sigma$, where $s=(s_1,\cdots, s_{d-1})\in D$.
Let $x=\Phi(s)\in\Sigma$, then the tangent vectors at $x$ are 
\begin{equation}
\tau^{\Sigma}_{i}(s) = \partial_{i}\Phi(s), \quad i=1,\cdots, d-1,
\end{equation} 
which form a basis of the tangent space $T_x \Sigma$ . 
We will also use notations $\tau^{\Sigma}_{(i),k}$ and $\tau_{\Sigma,k}^{(i)}$ 
to denote the $k$-th entry of the $i$-th vector. 
These $d-1$ vectors depend on the choice of $\Phi$. 
Thus, we directly view $\tau^{\Sigma}_{i}$ as a function defined in $D$.
The normal vector $n_{\Sigma}$  is independent of $\Phi$ and
thus can be viewed as a function defined on $\Sigma$.

To simplify calculations, 
we introduce another basis
$\{ \tau_{\Sigma}^{1}, \cdots, \tau_{\Sigma}^{d-1} \}$
of the tangent plane $T_x \Sigma$.
The new basis satisfies $\tau_{\Sigma}^{i}\cdot \tau^{\Sigma}_{j}=\delta_{ij}$,
where $\delta_{ij}=1$ if $i=j$ and  $\delta_{ij}=0$ if $i\neq j$.
We omit the name of surfaces in superscripts or subscripts when there is no ambiguity.
Suppose $\xi= \sum c_i\tau^i = \sum c^i\tau_i $, 
then we have
$c_i= \xi \cdot \tau_i$ and $c^i= \xi \cdot \tau^i$.
We refer readers to \cite{Pruss.green.book} for more details.

For every height function $h$, its corresponding surface is
$\Gamma_h(t):= \Theta_h(t,\Sigma) $,
which can be parameterized using $\Theta_h \circ \Phi $.
Its tangent vectors at $y:=\Theta(x)=\Theta(\Phi(s))$ are
\begin{equation} 
\begin{aligned}
\tau^{\Gamma}_i(s) = \partial_{s_i} \prts{ \Theta(\Phi(s)) }
= \sum_{j}  \prts{ \partial_i \Phi_j} \prts{ \partial_j \Theta \circ \Phi }.
\end{aligned}
\end{equation}
To find the normal vector $n_\Gamma$, 
we first seek for $\alpha \in T_x\Sigma$ 
such that  $n_\Sigma - \alpha$ is perpendicular   to
 $T_{\Theta(x)} \Gamma$. We refer readers to \cite[Section 2.2.2]{Pruss.green.book} for more details.
For  convenience, we include some key steps 
in the derivation of  $\alpha$.
When $x\in B(\Sigma;\varrho)$ with $\varrho = \varrho_0 /3$,
we have $\Theta_h(x)= x+ h(x)n_{\Sigma}(x) $, which implies
\begin{equation} \label{local,region,diffeo}
\Theta_h(\Phi(s))= \Phi(s)+ h(\Phi(s))n_{\Sigma}(\Phi(s)) . 
\end{equation}
Taking derivatives of the equation \eqref{local,region,diffeo}, we have
\begin{equation} \label{tangent,vector.Gamma}
\begin{aligned}
& \tau^{\Gamma}_{i}(s)
=
\partial_{s_i}\Theta(\Phi(s))
=
\partial_{s_i}\prts{ \Phi(s) + \theta(\Phi(s)) }
\\
& = 
\partial_{i} \Phi(s) 
+  \partial_{s_i} (h\circ\Phi)\depvar{s}  n_{\Sigma}\depvar{ \Phi\depvar{s} }
+  h\depvar{ \Phi\depvar{s} }   \partial_{s_i} (n_{\Sigma}\circ\Phi)\depvar{s} .
\end{aligned}
\end{equation}
In this work, we let the parameterization $\Phi: D\to \Sigma $ be  fixed once it has been chosen.
To make  calculations concise, for any function $f$ on $\Sigma$ 
we will abbreviate $f\circ \Phi$ to $f$ when there is no ambiguity.
We will also use the notation $\partial_i f$ to represent the derivative $\partial_{s_i} (f(\Phi(s))$ 
when there is no ambiguity.
This follows the convention in \cite{Pruss.green.book}.

Now we simplify \eqref{tangent,vector.Gamma}.
Since $\abs{n_{\Sigma}}\equiv 1$ , we have
\begin{equation} 
\begin{aligned}
2 \partial_{s_i}n_{\Sigma}(\Phi(s)) \cdot n_{\Sigma}(\Phi(s))
=
 \partial_{s_i}(\abs{n_{\Sigma}(\Phi(s)) }^2 )
= 0.
\end{aligned}
\end{equation}
In fact, the vectors $\partial_{s_1}n,\cdots , \partial_{s_{d-1}}n$ form a new basis of $T_{\Phi(s)}\Sigma$.
Using the Weingarten tensor $L_{\Sigma}$, which satisfies
\begin{equation}  \label{shape.operator.1}
L_{\Sigma}\depvar{\Phi(s)} \tau^{\Sigma}_i\depvar{s} := - \partial_{s_i} (n_{\Sigma}\circ \Phi) \depvar{s},
\end{equation}
we can simplify \eqref{tangent,vector.Gamma} to
\begin{equation} \label{tangent,vector.Gamma.correct}
\begin{aligned}
\tau^{\Gamma}_{i} 
=
 \prts{I- h L_{\Sigma}}  \tau^{\Sigma}_{i}  +  \partial_{i} h \, n_{\Sigma} .
\end{aligned}
\end{equation}
Next, we  simplify the formula of $\alpha$ using \eqref{tangent,vector.Gamma.correct}.
Since  $n_{\Sigma}-\alpha $ is required to be perpendicular to $T_{\Theta(\Phi(s))}\Gamma$,
we have  $(n_{\Sigma}-\alpha ) \bot \tau^{\Gamma}_{i} $ for all $1\leq i\leq d-1$, 
which implies
\begin{equation}
\begin{aligned}
&
0
=
\prts{ n_\Sigma-\alpha } \cdot \tau^{\Gamma}_{i}
=
\prts{ n_\Sigma-\alpha } \cdot 
\prts{ \tau^{\Sigma}_{i}  +  \partial_{i}h \, n_\Sigma + h \partial_{i} \,  n_\Sigma }
\\
&
=
 n_\Sigma\cdot \tau^{\Sigma}_{i}
+ n_\Sigma\cdot  (\partial_{i}h  \,  n_\Sigma )
+ n_\Sigma\cdot ( h \partial_{i}  \,  n_\Sigma )
- \alpha \cdot \tau^{\Sigma}_{i}  
- \alpha \cdot ( \partial_{i}h  \,  n_\Sigma )
- \alpha \cdot ( h \partial_{i} \,  n_\Sigma )
\\
&
=
0+ \partial_{i}h + 0 - \alpha \cdot \tau^{\Sigma}_{i} 
 -0 - \alpha \cdot ( h  \,  \partial_{i} n_\Sigma )
=
\partial_{i}h - \alpha\cdot  \prts{  \prts{ I-hL_{\Sigma}}  \tau^{\Sigma}_{i}   }.
\end{aligned}
\end{equation} 
Since $I-hL_{\Sigma}$ is a symmetric linear transformation (see e.g. Section 2.2 in \cite{Pruss.green.book}), we have
\begin{equation} \label{partial,i,h;formula}
\partial_{i}h = \prts{  \prts{ I-hL_{\Sigma}}  \alpha  }  \cdot    \tau^{\Sigma}_{i}.
\end{equation} 
Notice that \eqref{partial,i,h;formula} is the abbreviation of:
\begin{equation} \label{partial,i,h;formula;detailed}
\partial_{s_i}(h(\Phi(s))) = \prts{  \prts{ I-hL_{\Sigma}}  \alpha  }  \cdot    \partial_i\Phi(s),
\end{equation} 
which can be solved by letting 
\begin{equation} \label{sufficient,condition;solve,alpha}
\prts{ I-hL_{\Sigma}}  \alpha = \nabla_{\Sigma}h, \quad \text{i.e.}  \quad 
  \alpha = \prts{ I-hL_{\Sigma}}^{-1} \nabla_{\Sigma}h.
\end{equation}  
The surface gradient $\nabla_{\Sigma}$, also called the tangential gradient,
is explained in Section \ref{preliminary}.
We refer interested readers to \cite{Ambrosio.BV, Maggi.BV} for details about gradient, divergence 
and other differential operators on surfaces. 
Now we have obtained that
\begin{equation} \label{formula;n,Gamma}
n_\Gamma(s)
=  \beta \prts{ n_\Sigma - \prts{ I-hL_{\Sigma}}^{-1} \nabla_{\Sigma}h } ,
\end{equation}  
where 
\begin{equation}
  \beta : =
\frac{  1 }{  \abs{ n_\Sigma - \prts{ I-hL_{\Sigma}}^{-1} \nabla_{\Sigma}h} }
\end{equation}
 as used in \cite{Pruss.green.book}.

\subsection{Mean curvature}

In this section, we express the mean curvature   $H_{\Gamma}$ in terms of $\Sigma$ and $h$ in  $\mathbb{R}^d$.
We follow the calculations in \cite[Section 2.2.5]{Pruss.green.book} with more details included for completeness.

We start with 
the representation  of  $\nabla_\Gamma f$ for an arbitrary function $f$ defined on $\Gamma$. 
From equation (2.47) in  \cite[Section 2.2.3]{Pruss.green.book}, we have the representation of tangent vectors on $\Gamma$:
\begin{equation}
\tau_{\Gamma}^{i} = \mathcal{P}_{\Gamma} \prts{ I- hL_{\Sigma} }^{-1} \tau_{\Sigma}^{i} \ ,
\quad 1\leq i \leq d-1 .
\end{equation}
Notice that for any function $f$ on $\Gamma$ (parameterized by  $\Theta_h\circ\Phi$) we have
\begin{equation}
\prts{ \sum_{j=1}^{d-1} \partial_j f  \tau_{\Gamma}^{j} } \tau^{\Gamma}_{i}  
= \partial_i f,
\end{equation}
which implies 
\begin{equation}
\nabla_{\Gamma}f = \sum_{j=1}^{d-1} \partial_j f  \tau_{\Gamma}^{j}
=   \sum_{j=1}^{d-1}   \mathcal{P}_{\Gamma} \prts{ I- hL_{\Sigma} }^{-1} \tau_{\Sigma}^{j} \partial_j f
= \mathcal{P}_{\Gamma} \prts{ I- hL_{\Sigma} }^{-1} \nabla_{\Sigma}( f\circ \Theta). 
\end{equation}
From the definition of mean curvature in Section 2.2.5 of \cite{Pruss.green.book},  we have
\begin{equation} \label{mean.curvature.formula1}
H_{\Gamma} : = - {\rm div}_{\Gamma} n_{\Gamma}
= - \sum_{i=1}^{d-1} \tau_{\Gamma}^i \cdot \partial_i n_{\Gamma}.
\end{equation}
We also refer to  \cite[Section 7.3]{Ambrosio.BV} for more details.
Now it remains to calculate $\partial_i n_{\Gamma}$ in \eqref{mean.curvature.formula1}. 
From \eqref{formula;n,Gamma} we have
\begin{equation} 
\partial_i n_{\Gamma}(s)
=
 \partial_i \beta \, \prts{ n_{\Sigma} - \prts{ I-hL_{\Sigma}}^{-1} \nabla_{\Sigma}h }
+
  \beta  \, \partial_i \prts{ n_{\Sigma} - \prts{ I-hL_{\Sigma}}^{-1} \nabla_{\Sigma}h }.
\end{equation}  
Using the same argument as in \cite[Section 2.2.5]{Pruss.green.book}, 
we  obtain the formula of  the mean curvature 
\begin{equation} \label{curvature,Gamma,formula}
\begin{aligned}
H_{\Gamma}
&= - \sum_{i=1}^{d-1} \tau_{\Gamma}^i \cdot \partial_i n_{\Gamma}
\\
&
=
 - \sum_{i=1}^{d-1}
\tau_{\Gamma}^i \cdot 
\prts{
	\partial_i \beta  \prts{ \frac{ n_\Gamma}{ \beta}  }
}
 - \sum_{i=1}^{d-1}
\tau_{\Gamma}^i \cdot 
\prts{
  \beta  \partial_i \prts{ n_\Sigma - \prts{ I-hL_{\Sigma}}^{-1} \nabla_{\Sigma}h }
}
\\
&
= 
0 + \sum_{i=1}^{d-1}
 \prts{
\mathcal{P}_{\Gamma} \prts{ I- hL_{\Sigma} }^{-1} \tau_{\Sigma}^{i}
}\cdot
\prts{
\beta   \prts{ L_{\Sigma}\tau^\Sigma_i + \partial_i \alpha }
}
\\
&
=
\sum_{i=1}^{d-1}
 \prts{
\prts{I- n_\Gamma\otimes n_\Gamma} \prts{ I- hL_{\Sigma} }^{-1} \tau_{\Sigma}^{i}
}\cdot
\prts{
\beta   \prts{ L_{\Sigma}\tau_i + \partial_i \alpha }
} 
\\
&
=
\sum_{i=1}^{d-1}
 \prts{
 \prts{ I- hL_{\Sigma} }^{-1} \tau_{\Sigma}^{i}
}\cdot
\prts{
\beta   \prts{ L_{\Sigma}\tau^\Sigma_i + \partial_i \alpha }
}
\\
&\quad -
\sum_{i=1}^{d-1}
\prts{
  n_\Gamma\cdot  
  \prts{
     \prts{ I- hL_{\Sigma} }^{-1} \tau_{\Sigma}^{i}
  }
}
\prts{
\beta  n_\Gamma\cdot \prts{ L_{\Sigma}\tau^\Sigma_i + \partial_i \alpha }
} .
\end{aligned}
\end{equation}
Still from \cite[Section 2.2.5]{Pruss.green.book}, 
we have the following equalities:
\begin{equation} \label{curvature,calculate,temp,1}
\sum_{i=1}^{d-1} \prts{ \prts{I-hL_{\Sigma}}^{-1} \tau_{\Sigma}^i } \cdot \prts{ L_{\Sigma} \tau^{\Sigma}_i } = {\rm tr} \prts{\prts{ \prts{I-hL_{\Sigma}}^{-1} L_{\Sigma}}  }
,
\end{equation}
\begin{equation} \label{curvature,calculate,temp,2}
\sum_{i=1}^{d-1} \prts{ \prts{I-hL_{\Sigma}}^{-1}  \tau_{\Sigma}^i }\cdot \partial_i \alpha 
=
 {\rm tr}
\prts{ \prts{I-hL_{\Sigma}}^{-1} \nabla_{\Sigma}\alpha },
\end{equation}
\begin{equation}\label{curvature,calculate,temp,3}
n_\Gamma \cdot \prts{ \prts{I-hL_{\Sigma}}^{-1} \tau_{\Sigma}^j} 
=
-\beta \prts{ \prts{ \prts{I-hL_{\Sigma}}^{-1} \alpha   }  } ^j ,
\end{equation}
where $1\leq j\leq d-1$.
\begin{remark}
Since $\tau^\Sigma_{i}\cdot \tau_\Sigma^{j}=\delta_{ij}$, we combine them with the normal vector $n_\Sigma$ and obtain (the notation $\Sigma$ is omitted for convenience):
\begin{equation}
\begin{aligned}
\begin{pmatrix}
 \tau_{(1),1}  & \cdots  & \tau_{(1),d} \\
 \vdots  & \ddots  & \vdots \\
 \tau_{(d-1),1}  & \cdots  & \tau_{(d-1),d} \\
 n_1  & \cdots  & n_d 
\end{pmatrix}
\begin{pmatrix}
 \tau^{(1)}_1   & \cdots  & \tau^{(d-1)}_1  & n_1 \\
 \vdots  & \ddots  & \vdots  & \vdots \\
\\
 \tau^{(1)}_d  & \cdots  & \tau^{(d-1)}_d  & n_d
\end{pmatrix}
=I_{d\times d}  .
\end{aligned}
\end{equation}
The left-hand side is commutative by the property of inverse matrices, which implies
\begin{equation} \label{another,tau,tau,delta}
\sum_{k=1}^{d-1}  \tau^{(k)}_i \tau_{(k),j}=\delta_{ij} -n_i n_j = (I- n\otimes n)_{ij} 
\end{equation}
for all $1 \leq i,j\leq d-1$.
We recall that for any vector $\eta\in\mathbb{R}^d$ the matrix $\nabla_\Sigma \eta$ is defined as
$ \prts{ \nabla_\Sigma \eta_1,  \cdots \nabla_\Sigma \eta_d  } $,
where each $ \nabla_\Sigma \eta_i$ is viewed as a column vector.
This implies that 
\begin{equation}
\begin{aligned}
\sum_{i=1}^{d-1} \tau^{(i)} \cdot \prts{ \nabla_\Sigma \eta  \tau_{(i)} }
&=
\sum_{i=1}^{d-1} \sum_{k=1}^{d}\sum_{j=1}^{d}  \tau^{(i)}_k    \prts{\nabla_\Sigma \eta}_{kj}  \tau_{(i),j}
\\
&=  \sum_{k=1}^{d}\sum_{j=1}^{d} \prts{ \sum_{i=1}^{d-1}   \tau^{(i)}_k  \tau_{(i),j} }  \prts{\nabla_\Sigma \eta}_{kj} 
= (I-n\otimes n): \nabla_\Sigma \eta 
\\
&= {\rm tr} \nabla_\Sigma \eta - n^\top \, \nabla_\Sigma \eta  \, n
= {\rm tr} \nabla_\Sigma \eta ,
\end{aligned}
\end{equation}
which can be utilized in the derivation of \eqref{curvature,Gamma,formula}, \eqref{curvature,calculate,temp,1} and \eqref{curvature,calculate,temp,2}. 
\end{remark}
Notice that the symmetric operator  $(I- h L_\Sigma)^{-1}$ in \eqref{curvature,calculate,temp,3}  maps tangent vectors to tangent vectors and maps $n_\Sigma$ to $n_\Sigma$. 
Thus, for all $1 \leq i \leq d-1$ we have
\begin{equation} \label{curvature,Gamma,temp4}
\begin{aligned}
n_\Gamma \cdot \prts{ \prts{I-hL_{\Sigma}}^{-1} \tau_{\Sigma}^i } 
&=
\beta\prts{n_\Sigma -\alpha } \cdot \prts{ \prts{I-hL_{\Sigma}}^{-1} \tau_{\Sigma}^i } 
=
- \beta \alpha \cdot \prts{ \prts{I-hL_{\Sigma}}^{-1} \tau_{\Sigma}^i } 
\\
&
=
-\beta \prts{  \prts{I-hL_{\Sigma}}^{-1} \alpha  } \cdot \tau_{\Sigma}^i  
=:
-\beta \prts{  \prts{I-hL_{\Sigma}}^{-1} \alpha  }^i  ,
\end{aligned}
\end{equation}
and
\begin{equation} \label{curvature,Gamma,temp5}
\begin{aligned}
n_\Gamma\cdot \prts{ L_{\Sigma}\tau_i + \partial_i \alpha }
&=
\beta \prts{ n_\Sigma - \alpha } \cdot \prts{ L_{\Sigma}\tau_i + \partial_i \alpha }
\\
&
=
\beta 
\prts{
 n_\Sigma\cdot L_{\Sigma}\tau_i + n_\Sigma\cdot \partial_i \alpha
 - \alpha \cdot L_{\Sigma}\tau_i - \alpha \cdot \partial_i \alpha
}
\\
&
=
\beta 
\prts{
 0 + n_\Sigma\cdot \partial_i \alpha
 + \alpha \cdot \partial_i n_\Sigma - \alpha \cdot \partial_i \alpha
}\\
&=
\beta 
\prts{
  \partial_i \prts{n_\Sigma\cdot  \alpha }
   - \alpha \cdot \partial_i \alpha
}\\
&=  -\beta \alpha \cdot \partial_i \alpha .
\end{aligned}
\end{equation}
Notice that $n_\Sigma\cdot  \alpha=0$ since $\alpha$ is a tangent vector of $\Sigma$.
From \eqref{curvature,Gamma,formula},  \eqref{curvature,Gamma,temp4} and \eqref{curvature,Gamma,temp5},
 we have
\begin{equation} \label{curvature,formula}
\begin{aligned}
H_{\Gamma}
&=
\beta
{\rm tr} \prts{ \prts{I-hL_{\Sigma}}^{-1} \prts{ L_{\Sigma} + \nabla_{\Sigma}\alpha } } 
-
\sum_{i=1}^{d-1}
\beta^3 \prts{  \prts{I-hL_{\Sigma}}^{-1} \alpha    } ^i
\prts{
\alpha \cdot \partial_i \alpha
} 
\\
&
=
\beta
{\rm tr} \prts{ \prts{I-hL_{\Sigma}}^{-1} \prts{ L_{\Sigma} + \nabla_{\Sigma}\alpha } } 
-
\beta^3 \prts{  \prts{I-hL_{\Sigma}}^{-1} \alpha    } 
 \,
 \nabla_{\Sigma} \alpha \, \alpha ^\top.
\end{aligned}
\end{equation}
\begin{remark} 
The term $\nabla_{\Sigma}\alpha \, \alpha ^\top$ in \eqref{curvature,formula} is obtained by the following argument.
For convenience, we let $\xi:= \prts{I-hL_{\Sigma}}^{-1} \alpha$ and  omit the notation $\Sigma$ in tangent vectors.
\begin{equation} \label{remark.grad,alpha,alpha}
\begin{aligned}
 \sum_{i=1}^{d-1}
\prts{  \prts{I-hL_{\Sigma}}^{-1} \alpha    } ^i 
\prts{ \alpha \cdot \partial_i \alpha  } 
&=
\sum_{i=1}^{d-1}
 \xi^i 
\prts{ \alpha \cdot \partial_i \alpha  } 
=
\sum_{i=1}^{d-1}
\prts{ \xi\cdot \tau^{(i)} }
\prts{ \alpha \cdot \partial_i \alpha  } 
\\
& = \sum_{i=1}^{d-1}
 \prts{ \sum_{k=1}^{d}  \xi_k \tau^{(i)}_k } 
\prts{ \alpha \cdot \partial_i \alpha  } .
\end{aligned}
\end{equation}
We remind the readers that $\partial_i\alpha$ is the abbreviation of 
$\partial_i(\alpha\circ\Phi)$ by our convention.
We view the surface gradient of vector $\alpha$ as the matrix such that
\begin{equation}
\begin{aligned}
\prts{ \nabla_{\Sigma} \alpha }_{ij}
:= \prts{ \nabla_{\Sigma} \alpha_j }_{i}.
\end{aligned}
\end{equation}
Notice that we assume all vectors to be written as row vectors, 
then for all $1 \leq k \leq d-1$ we have
\begin{equation}
\begin{aligned}
 \partial_k \alpha  
&=
\prts{ \partial_k \alpha_1, \cdots, \partial_k \alpha_d     }
\\
& =
\prts{ \nabla_{\Sigma} \alpha_1 \cdot \tau_k, \cdots, \nabla_{\Sigma} \alpha_d \cdot \tau_k     }
=
 \tau_k\nabla_{\Sigma} \alpha  .
\end{aligned}
\end{equation}
Thus, we rewrite the last term in \eqref{remark.grad,alpha,alpha} as
\begin{equation}
\begin{aligned}
 \sum_{i=1}^{d-1}
 \prts{ \sum_{k=1}^{d}  \xi_k \tau^{(i)}_k } 
\prts{ \alpha \cdot \partial_i \alpha  } 
&=
 \sum_{i=1}^{d-1}
 \prts{ \sum_{k=1}^{d}  \xi_k \tau^{(i)}_k} 
\prts{ \sum_{j=1}^{d} \alpha_j \prts{ \tau_{(i)} \cdot \prts{\nabla_{\Sigma}\alpha_j} }   }
\\ 
& =   \sum_{i=1}^{d-1}
 \prts{ \sum_{k=1}^{d}  \xi_k \tau^{(i)}_k } 
\prts{ \sum_{j=1}^{d} \alpha_j \prts{ \sum_{s=1}^{d} \tau_{(i),s}  \prts{\nabla_{\Sigma}\alpha_j}_s  }   }
\\
& =    \sum_{k=1}^{d} \sum_{j=1}^{d}  \sum_{s=1}^{d}  \sum_{i=1}^{d-1}
   \tau^{(i)}_k \tau_{(i),s}  \xi_k  \alpha_j  \prts{\nabla_{\Sigma}\alpha_j}_s   
=      \sum_{k=1}^{d} \sum_{j=1}^{d}    \xi_k  \alpha_j  \prts{\nabla_{\Sigma}\alpha_j}_k   
\\
& =   \xi  \nabla_{\Sigma}\alpha   \alpha^{\top},
\end{aligned}
\end{equation}
where the fourth equality is guaranteed  by \eqref{another,tau,tau,delta}.
\end{remark}


\section*{Acknowledgments}
T. Jing would like to thank  Professor Armin Schikorra for
his valuable discussions and suggestions, and  Professor  Matthias Köhne and Professor  Mathias Wilke for their explanation of the  details in \cite{Pruss.quali}.
T. Jing was partially funded by NSF grant DMS-2044898.
D. Wang is supported in part by the National Science Foundation under grants DMS-1907519 and DMS-2219384.


\bibliography{ref}{}
\bibliographystyle{abbrv}

\end{document}